\tikzstyle{box} = [rectangle,text centered, draw=black]
\tikzstyle{arrow} = [thick, ->, >=stealth]
\theoremstyle{plain}
\subjclass[2010]{28A33, 28A78, 28A80, 35J05, 35P20, 42B37, 47A60, 47A75, 47J10, 58J05 (primary), and 26A33 (secondary)}
\newtheorem*{theorem*}{Theorem}
\renewcommand{\epsilon}{\varepsilon}
\newcommand{\N}{{\mathbb N}}
\newcommand{\R}{{\mathbb R}}
\renewcommand{\phi}{\varphi}
\newcommand{\la}{\lambda}
\numberwithin{equation}{section}
\newtheorem{theorem}{Theorem}[section]
\newtheorem{proposition}[theorem]{Proposition}
\newtheorem{corollary}[theorem]{Corollary}
\newtheorem{lemma}[theorem]{Lemma}
\theoremstyle{definition}
\theoremstyle{remark}
\theoremstyle{assumption}
\title[Restriction of Laplace-Beltrami eigenfunctions]{Restriction of Laplace-Beltrami eigenfunctions to arbitrary sets on manifolds}
\author{Suresh Eswarathasan}
\address{Department of Mathematics and Statistics, Dalhousie University, 6316 Coburg Road, Halifax, Nova Scotia, Canada B3H 4R2}
\email{sr766936@dal.ca}
\author{Malabika Pramanik}
\address{University of British Columbia, Department of Mathematics, 1984 Mathematics Road, Vancouver, BC V6T 1Z2}
\email{malabika@math.ubc.ca}
\date{\today}
\begin{document}
{\allowdisplaybreaks

\begin{abstract}
Given a compact Riemannian manifold $(M, g)$ without boundary, we estimate the Lebesgue norm of Laplace-Beltrami eigenfunctions when restricted to a wide variety of subsets $\Gamma$ of $M$.  The sets $\Gamma$ that we consider are Borel measurable, Lebesgue-null but otherwise arbitrary with positive Hausdorff dimension. 
%In the range of $p \geq p_0$, where $p_0$ is a critical-type exponent depending on the Hausdorff dimension of $E$ and ambient dimension $n$, 
\vskip0.1in 
\noindent Our estimates are based on Frostman-type ball growth conditions for measures supported on $\Gamma$. For large Lebesgue exponents $p$, these estimates provide a natural generalization of $L^p$ bounds for eigenfunctions restricted to submanifolds, previously obtained in \cite{Ho68, Ho71, Sog88, BGT07}.  
%by H\"ormander\cite{Ho68, Ho71}, Sogge \cite{Sog88}, and Burq-G\'erard-Tzvetkov \cite{BGT07}. 
Under an additional measure-theoretic assumption on $\Gamma$, the estimates are shown to be sharp in this range. As evidence of the genericity of the sharp estimates, we provide a large family of random, Cantor-type sets that are not submanifolds, where the above-mentioned sharp bounds hold almost surely. 
%\vskip0.1in
%\noindent The novelty of our approach is the combination of techniques from geometric measure theory with well-known tools from harmonic and microlocal analysis. Random Cantor sets have appeared in a variety of contexts before, specifically in fractal geometry, multiscale analysis, additive combinatorics and fractal percolation \cite{{KP76}, {LP09}, {LP11}, {SS17}, {SS18}}. They play a significant role in  the study of optimal decay rates of Fourier transforms of measures, and in the identification of sets with arithmetic and geometric structures. Our methods, though inspired by earlier work, are not Fourier-analytic in nature.  
%supported on fractal-type sets, with some well-known tools from microlocal analysis.  Random Cantor-type sets have appeared in a variety of problems outside of microlocal analysis, particularly in harmonic analysis, geometric combinatorics, and fractal percolation 
\end{abstract}

\maketitle

\tableofcontents

\section{Introduction}  
\noindent The study of eigenfunctions of Laplacians lies at the interface of several areas of mathematics, including analysis, geometry, mathematical physics and number theory.  These special functions arise in physics and in partial differential equations as modes of periodic vibration of drums and membranes. In quantum mechanics, they represent the stationary energy states of a free quantum particle on a Riemannian manifold. 
%More generally, eigenfunctions of Schr\"odinger operators represent stationary energy states of atoms and molecules in quantum mechanics.  
\vskip0.1in
\noindent Let $(M,g)$ denote a compact, connected, $n$-dimensional Riemannian manifold without boundary. The ubiquitous (positive) Laplace-Beltrami operator on $M$, denoted $-\Delta_g$, is the primary focus of this article.  It is well-known \cite[Chapter 3]{So} that the spectrum of this operator is non-negative and discrete. Let us denote its eigenvalues by $\{ \lambda_j^2: j \geq 0\}$, and the corresponding eigenspaces by $\mathbb E_j$. Without loss of generality, the positive square roots of the distinct eigenvalues can be arranged in increasing order, with \[0 = \lambda_0 < \lambda_1 < \lambda_2 < \cdots \lambda_j < \cdots  \rightarrow \infty.\]  It is a standard fact \cite[Chapter 3]{So} that each $\mathbb E_j$ is finite-dimensional. Further, the space $L^2(M, dV_g)$ (of functions on $M$ that are square-integrable with respect to the canonical volume measure $dV_g$) admits an orthogonal decomposition in terms of $\mathbb E_j$:\[ L^2(M, dV_g) = \bigoplus_{j=0}^{\infty} \mathbb E_j. \] 
%Furthermore we can consider a sequence of associated eigenfunctions $\{ \varphi_{\lambda_j} \}_j$ that gives an $L^2(M, dV_g)$-orthonormal basis. Thanks to this, it is possible to write $L^2(M) = \oplus_{\lambda_j^2} E_{\lambda_j^2}$, where $E_{\lambda_j^2}$ is the eigenspace associated to the eigenvalue $\lambda_j^2$.
\vskip0.1in
\noindent One of the fundamental questions surrounding Laplace-Beltrami eigenfunctions targets their concentration phenomena, via high-energy asymptotics or high-frequency behaviour. There are many avenues for this study, 
%Semiclassical Wigner measures provide one way to measure concentration, 
as exemplified in the seminal work of Shnirelman \cite{Shnirelman}, Zelditch \cite{Zelditch}, Colin de V\`erdiere \cite{CdV}, G\'erard and Leichtnam \cite{GL}, Zelditch and Zworski \cite{ZZ}, Helffer, Martinez and Robert \cite{HMR}, Rudnick and Sarnak \cite{{Rudnick-Sarnak94},{Sarnak}}, Lindenstrauss \cite{Lindenstrauss}, and Anantharaman \cite{Anantharaman}. One such approach involves studying the growth of the $L^p$ norms of these eigenfunctions. The contribution of this article lies in this category. Specifically, we describe the $L^2(M) \rightarrow L^p(\Gamma)$ mapping property of a certain spectral projector (according to the spectral decomposition above), where $\Gamma$ is a Lebesgue-null subset of $M$. In particular, $\Gamma$ does not enjoy any smooth structure, a point of departure from prior work where this feature was heavily exploited.
%and contrasts previous results in the literature by removing some heavily used smooth structure whilst generalizing them, in a sense.  
We begin by reviewing the current research landscape that will help place the main results Theorems \ref{mainthm-restriction-arbitrary}-\ref{mainthm-example} in context.  

\subsection{Literature review}
The Weyl law, itself a major topic in spectral theory, provides an $L^{\infty}$ bound on eigenfunctions on $M$ \cite{Ho68}. The first results that establish  $L^p$ eigenfunction bounds for $p < \infty$ are due to Sogge \cite{Sog88}. 
%The important work of Sogge \cite{Sog88} was the first to establish general $L^p$ bounds for eigenfunctions on $M$, with the very first results of this type being $L^{\infty}$ bounds thanks to  H\"ormander's related work \cite{Ho68} on the Weyl Law (a major topic in spectral theory as well).  Sogge's work initiated a long-standing study of such bounds under various geometric or dynamical assumptions by several authors.  The general result of Sogge states:
\begin{theorem}\cite{Sog88} \label{thm:Sogge}
Given any manifold $M$ as above and $p \in [2, \infty]$, there exists a constant $C = C(M,p) > 0$ such that the following inequality holds for all $\lambda \geq 1$: 
\begin{align} &\| \varphi_{\lambda} \|_{L^p(M)} \leq C (1 + \lambda)^{\delta(n, p)} \| \varphi_{\lambda} \|_{L^2(M)},  \text{ with } \label{Sogge-estimate}  \\   
\delta(n, p) &=  \left\{ 
\begin{aligned}
& \frac{n-1}{4} - \frac{n-1}{2p}, &\text{ if }  &2 \leq p \leq \frac{2(n+1)}{n-1}, \\
&\frac{n-1}{2} - \frac{n}{p}, &\text{ if }  &\frac{2(n+1)}{n-1} \leq p \leq \infty.
\end{aligned} \right\} \label{Sogge-exponent}
\end{align}
Here $\varphi_{\lambda}$ is any eigenfunction of $- \Delta_g$ corresponding to the eigenvalue $\lambda^2$. The bound is sharp for the $n$-dimensional unit sphere $M = \mathbb S^n$, equipped with the surface measure.  
%$S^2,g_{\mbox{round}})$.
\end{theorem}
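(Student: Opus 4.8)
The plan is to follow the classical route that reduces the single-eigenfunction bound to a spectral cluster estimate, and that in turn to an oscillatory integral estimate of Carleson--Sj\"olin type. First I would pass from the sharp spectral projector onto $\mathbb E_j$ to a smooth approximate projector. Fix $\rho \in \mathcal{S}(\mathbb R)$ with $\rho(0)=1$ and $\widehat\rho$ supported in $(-\epsilon_0,\epsilon_0)$, where $\epsilon_0$ is less than the injectivity radius of $(M,g)$. Since $\varphi_\lambda = \rho(\lambda - \sqrt{-\Delta_g})\varphi_\lambda$, it suffices to prove
\[ \bigl\| \rho(\lambda - \sqrt{-\Delta_g}) f \bigr\|_{L^p(M)} \leq C (1+\lambda)^{\delta(n,p)} \| f \|_{L^2(M)}, \qquad \lambda \geq 1, \ f \in L^2(M). \]
By the spectral theorem and Fourier inversion,
\[ \rho(\lambda - \sqrt{-\Delta_g}) = \frac{1}{2\pi}\int_{\mathbb R} \widehat\rho(t)\, e^{it\lambda}\, e^{-it\sqrt{-\Delta_g}}\, dt, \]
so finite propagation speed together with $\supp\widehat\rho \subset (-\epsilon_0,\epsilon_0)$ confines the kernel of $\rho(\lambda - \sqrt{-\Delta_g})$ to $\{ d_g(x,y) \leq \epsilon_0 \}$, below the injectivity radius.

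On that scale I would substitute H\"ormander's parametrix for the half-wave operator $e^{-it\sqrt{-\Delta_g}}$, writing it as a Fourier integral operator with phase $\psi(x,y,\xi)$ homogeneous of degree one in $\xi$ and equal to $d_g(x,y)|\xi|$ on the relevant critical set. Plugging this into the formula above and applying stationary phase in the $t$ and radial $\xi$ variables yields a kernel for $\rho(\lambda - \sqrt{-\Delta_g})$ of the form
\[ K_\lambda(x,y) = \lambda^{n-1}\, a_\lambda(x,y)\, e^{i\lambda d_g(x,y)} + O(\lambda^{n-2}), \]
with $a_\lambda$ supported in $\{ d_g(x,y)\leq\epsilon_0 \}$ and obeying $|\partial^\alpha_{x,y} a_\lambda(x,y)| \leq C_\alpha (1+\lambda d_g(x,y))^{-(n-1)/2}$. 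The on-diagonal estimate $\|K_\lambda(x,\cdot)\|_{L^2(M)} \leq C\lambda^{(n-1)/2}$ and Cauchy--Schwarz give the endpoint $p=\infty$ with $\delta(n,\infty)=\tfrac{n-1}{2}$; interpolating this with the critical bound below covers the whole range $p \geq \frac{2(n+1)}{n-1}$.

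The heart of the argument, and the step I expect to be the main obstacle, is the critical case $p_c = \frac{2(n+1)}{n-1}$, where $\delta(n,p_c) = \frac{n-1}{2(n+1)}$. By $TT^*$, this is equivalent to bounding $\rho(\lambda - \sqrt{-\Delta_g})^2$ from $L^{p_c'}(M)$ to $L^{p_c}(M)$ with norm $O(\lambda^{2\delta(n,p_c)})$; the kernel of the square has the same structure as $K_\lambda$. Decompose it dyadically according to $d_g(x,y) \sim 2^k\lambda^{-1}$ for $1 \leq 2^k \lesssim \lambda$, together with a near-diagonal piece $d_g(x,y) \lesssim \lambda^{-1}$ disposed of by Young's inequality. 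On each annulus, after rescaling, one has an oscillatory integral operator with phase $\lambda d_g(x,y)$; the essential geometric input is that the mixed Hessian of $d_g$ satisfies the Carleson--Sj\"olin rotational curvature hypothesis --- equivalently, geodesic spheres have non-vanishing Gaussian curvature --- so Stein's oscillatory integral theorem (itself proved by an $L^2$ almost-orthogonality decomposition followed by Sobolev embedding) yields the sharp $L^2 \to L^{p_c}$ bound on each dyadic piece, with the resulting series in $k$ converging precisely at the exponent $p_c$. Interpolating the critical bound with the trivial $L^2 \to L^2$ estimate then gives the full range $2 \leq p \leq p_c$.

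Finally, sharpness on $\mathbb S^n$ is witnessed by two explicit families of spherical harmonics: the zonal harmonics, which concentrate near a pair of antipodal points and saturate the bound for $p \geq p_c$, and the highest-weight (Gaussian beam) harmonics, concentrated in a $\lambda^{-1/2}$-neighbourhood of a closed geodesic, which saturate it for $2 \leq p \leq p_c$; a direct computation of the $L^p$ and $L^2$ norms of these families matches the exponent $\delta(n,p)$ in both regimes.
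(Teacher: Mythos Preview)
The paper does not give its own proof of this theorem: it is quoted as a background result from \cite{Sog88}, and the surrounding discussion simply points the reader to \cite{So} for ``an argument that involves well-known oscillatory integral estimates of H\"ormander applied to the smooth spectral projector.'' Your outline is exactly that classical argument --- smooth projector, half-wave parametrix, Carleson--Sj\"olin/Stein at the critical exponent, interpolation, and zonal/highest-weight saturation on $\mathbb S^n$ --- and is correct as sketched.
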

\noindent Historically, an important motivation and source of inspiration for this line of investigation has been the Fourier restriction problem, which explores the behaviour of the Fourier transform when restricted to curved surfaces in Euclidean spaces. In fact the Stein-Tomas $L^2$ restriction theorem \cite{Tomas}, originating in Euclidean harmonic analysis, was a key ingredient in an early proof of Theorem \ref{thm:Sogge} for the sphere. Indeed, Theorem \ref{thm:Sogge} may be viewed as a form of discrete restriction on $M$ where the frequencies are given by the spectrum of the manifold, see for example \cite{Sogge86}.  Conversely, it is possible to recover the $L^2$ restriction theorem for the sphere from a spectral projection theorem such as Theorem \ref{thm:Sogge} applied to the $n$-dimensional flat torus. The lecture notes of Yung \cite[Section 2]{Yung-notes} contain a discussion of these implications.  
%Moreover, considering this theorem on the $n$-torus with the flat metric and combining it with a scaling argument (which is itself reminiscent of techniques that are now called semiclassical) allows us to recover the Tomas-Stein restriction theorem on the sphere $S^{n-1}$. 
\vskip0.1in
\noindent Theorem  \ref{thm:Sogge} permits a number of independent proofs. For an argument that involves well-known oscillatory integral estimates of H\"ormander applied to the smooth spectral projector (denoted $\rho(\sqrt{-\Delta_g} - \lambda)$), we refer the reader to the treatise \cite{So}. The semi-classical approach of Koch, Tataru and Zworski \cite{KTZ07} has also yielded many powerful applications.
%One can prove Theorem \ref{thm:Sogge} in a number of ways including some well-known oscillatory integral estimates of H\"ormander (see the treatise by Sogge \cite{So}) applied to the smooth spectral projector $\rho( \lambda - \sqrt{-\Delta_g})$ or the semi-classical approach of Koch-Tataru-Zworski \cite{KTZ07}; each of these two perspectives has yielded many powerful applications.  
\vskip0.1in
\noindent Finer information on eigenfunction growth may be obtained through $L^p$ bounds on $\varphi_{\lambda}$ when restricted to smooth submanifolds of $M$.  One expects $\varphi_{\lambda}$ to assume large values on small sets. Thus its $L^p$-norm on a Lebesgue-null set such as a submanifold, if meaningful, is typically expected to be larger in comparison with the $L^p$ norm taken over the entire manifold $M$, as given by Theorem \ref{thm:Sogge}. The first step in this direction is due to Reznikov \cite{Rez10}, who studied eigenfunction restriction phenomena on hyperbolic surfaces via representation theoretic tools. The most general results to date on restricted norms of Laplace eigenfunctions are by Burq, G\'erard and Tzvetkov \cite{BGT07}, and independently by Hu \cite{Hu09}. The work of Tacy \cite{Tacy} has extended these results to the setting of a semi-classical pseudo-differential operator (not merely the Laplacian) on a Riemannian manifold, while removing logarithmic losses at a critical threshold.  Another particular endpoint result is due to Chen and Sogge \cite{CS14}. We have summarized below the currently known best eigenfunction restriction estimates for a general manifold, combined from this body of work and for easy referencing later.
%both through microlocal-type techniques.  
%We state these theorems for the purpose of comparison:
\begin{theorem}{\cite{{BGT07},{Hu09}, {Tacy}}}\label{thm:smooth_restriction}
Let $\Sigma \subset M$ be a smooth $d$-dimensional submanifold of $M$, equipped with the canonical measure $d \sigma$ that is naturally obtained from the metric $g$.  Then for each $p \in [2, \infty]$, there exists a constant $C = C(M, \Sigma, p) >0$ such that for any $\lambda \geq 1$ and any Laplace eigenfunction $\varphi_{\lambda}$ associated with the eigenvalue $\lambda^2$, the following estimate holds:  
\begin{equation}  \| \varphi_{\lambda} \|_{L^p(\Sigma, d\sigma)} \leq C \, (1 + \lambda)^{\delta(n,d,p)} \| \varphi_{\lambda} \|_{L^2(M, dV_g)}. \label{eigenfunction-restriction-submanifold} 
\end{equation} 
The exponent $\delta(n, d, p)$ admits a multi-part description. Specifically, 
\begin{equation}
\delta(n, n-1, p) = \left\{ 
\begin{aligned}  
&\frac{n-1}{4} - \frac{n-2}{2p}, & \text{ for } & 2 \leq p \leq \frac{2n}{n-1},\\
& \frac{n-1}{2} - \frac{n-1}{p}, & \text{ for }  & \frac{2n}{n-1} \leq p \leq \infty.
\end{aligned} \right\} \label{BGT-exponent-1} 
\end{equation} 
For $d \ne n-1$, 
\begin{equation} 
\delta(n,d,p) = \frac{n-1}{2} - \frac{d}{p}, \quad \text{ for }  2 \leq p \leq \infty \text{ and } (d, p) \ne (n-2, 2). \label{dlessthann-3}
\end{equation}
%In the case $d \leq n-3$ or if $(n, d) =(3,2)$, the exponent $p=2$ is included in the range \eqref{dlessthann-3}.  Otherwise, 
For $(d, p) = (n-2, 2)$, the exponent $\delta(n,d,p)$ is still given by \eqref{dlessthann-3}; however, there is an additional logarithmic factor $\log^{1/2}(\lambda)$ appearing in the right hand side of inequality \eqref{eigenfunction-restriction-submanifold}.
\end{theorem}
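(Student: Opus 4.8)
\medskip\noindent\emph{Sketch of the proof.} Since a kernel‑truncated spectral projector acts as the identity on each eigenspace, the plan is to fix $\rho\in\mathcal S(\R)$ with $\rho(0)=1$ and $\operatorname{supp}\widehat\rho$ contained in a neighbourhood of the origin of radius smaller than $\tfrac12\operatorname{inj}(M)$, and to prove the operator bound $\bigl\|\rho(\sqrt{-\Delta_g}-\lambda)\bigr\|_{L^2(M,dV_g)\to L^p(\Sigma,d\sigma)}\lesssim(1+\lambda)^{\delta(n,d,p)}$; specializing to $f=\varphi_\lambda$, for which $\rho(\sqrt{-\Delta_g}-\lambda)\varphi_\lambda=\varphi_\lambda$, then yields \eqref{eigenfunction-restriction-submanifold}. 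Writing $R_\Sigma$ for the restriction map to $\Sigma$ and $T_\lambda\eqd R_\Sigma\circ\rho(\sqrt{-\Delta_g}-\lambda)$, a $TT^\ast$ argument reduces the task to showing $\|T_\lambda T_\lambda^\ast\|_{L^{p'}(\Sigma)\to L^p(\Sigma)}\lesssim(1+\lambda)^{2\delta(n,d,p)}$, where $T_\lambda T_\lambda^\ast$ is the integral operator on $\Sigma$ whose kernel $K_\lambda(x,y)$, $x,y\in\Sigma$, is the restriction to $\Sigma\times\Sigma$ of the Schwartz kernel of $\rho^2(\sqrt{-\Delta_g}-\lambda)$.

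The next step is to extract the structure of $K_\lambda$ from the half‑wave propagator. Fourier inversion gives $\rho^2(\sqrt{-\Delta_g}-\lambda)=\tfrac1{2\pi}\int\widehat{\rho^2}(t)\,e^{it\lambda}\,e^{-it\sqrt{-\Delta_g}}\,dt$, and since $\operatorname{supp}\widehat{\rho^2}$ lies within the injectivity radius, the Hadamard parametrix for $e^{-it\sqrt{-\Delta_g}}$ is valid on the whole range of integration. Carrying out the $t$–integral produces, up to terms of order $\lambda^{n-2}$ and a smoothing remainder (both handled identically with room to spare), a kernel $K_\lambda(x,y)=\lambda^{n-1}\,U\!\bigl(\lambda\,d_g(x,y)\bigr)\,w(x,y)$, where $d_g$ is the geodesic distance, $w$ is smooth and supported near the diagonal, $|U(r)|\lesssim(1+r)^{-(n-1)/2}$, and $U(r)=\sum_{\pm}e^{\pm ir}b_\pm(r)$ with $b_\pm$ symbols of order $-(n-1)/2$ for $r\gtrsim1$. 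One then decomposes $K_\lambda=\sum_{k\ge0}K_\lambda^{(k)}$ according to the dyadic size $d_g(x,y)\sim 2^k/\lambda$ (with $K_\lambda^{(0)}$ the piece where $d_g(x,y)\lesssim1/\lambda$), the sum running over $2^k\lesssim\lambda$, and estimates the $L^{p'}(\Sigma)\to L^p(\Sigma)$ norm of each summand.

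Two mechanisms enter, and the exponent $\delta(n,d,p)$ results from optimizing between them. On the support of $K_\lambda^{(k)}$ one has $|K_\lambda^{(k)}|\lesssim\lambda^{n-1}2^{-k(n-1)/2}$ on a set of $d$–dimensional width $\sim 2^k/\lambda$, so Young's inequality on $(\Sigma,d\sigma)$ gives $\|K_\lambda^{(k)}\|_{L^{p'}(\Sigma)\to L^p(\Sigma)}\lesssim\lambda^{n-1-2d/p}\,2^{k(2d/p-(n-1)/2)}$; for large $p$ this is summable in $k$ and dominated by the near‑diagonal contribution $\lambda^{n-1-2d/p}=\lambda^{2\delta(n,d,p)}$. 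For small $p$ this crude bound is too lossy and one instead exploits the oscillation $e^{\pm i\lambda d_g(x,y)}$: the operator $(K_\lambda^{(k)})^\ast K_\lambda^{(k)}$ has kernel $\int_\Sigma e^{i\lambda(d_g(x,z)-d_g(y,z))}(\cdots)\,d\sigma(z)$, and an oscillatory–integral estimate for this phase on $\Sigma$ yields an $L^2(\Sigma)\to L^2(\Sigma)$ bound for $K_\lambda^{(k)}$ that gains a power of $2^k/\lambda$ over the Young bound. For $d\le n-2$ the two families balance at the same point for every $p\in[2,\infty]$, producing the single line \eqref{dlessthann-3} — equivalently, one establishes the cases $p=2$ and $p=\infty$ (the latter being the sup bound of Theorem~\ref{thm:Sogge}) and interpolates; the exceptional case $(d,p)=(n-2,2)$ is precisely where the dyadic sum in the $L^2$ estimate is logarithmically divergent, accounting for the $\log^{1/2}\lambda$ loss. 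For $d=n-1$ the two mechanisms exchange dominance at $p=\tfrac{2n}{n-1}$, so one proves the estimate at $p=2$ (exponent $\tfrac14$), at the critical $p=\tfrac{2n}{n-1}$ (exponent $\tfrac{n-1}{2n}$), and at $p=\infty$, and interpolates on the two resulting subintervals to obtain \eqref{BGT-exponent-1}.

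The main obstacle is the $L^2(\Sigma)\to L^2(\Sigma)$ oscillatory estimate at the intermediate scales: for an arbitrary smooth submanifold the phase $z\mapsto d_g(x,z)-d_g(y,z)$ restricted to $\Sigma$ can degenerate — this is exactly what happens where $\Sigma$ is tangent to a geodesic sphere (for $d=n-1$) or contains a geodesic segment — so plain stationary phase does not suffice, particularly for hypersurfaces in dimension $n\ge3$. Overcoming this requires either a Hörmander/Carleson–Sjölin type $L^2$ bound for oscillatory integral operators with such phases, together with a careful localization near the tangency set, or, following \cite{BGT07}, a reduction via the parametrix to a model submanifold (a geodesic sphere or a totally geodesic submanifold) for which the relevant integral is evaluated explicitly through Bessel asymptotics. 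Finally, endpoint exponents and the removal of logarithmic losses away from $(d,p)=(n-2,2)$ are obtained by the refinements of \cite{Hu09}, \cite{CS14} and \cite{Tacy}, which completes the argument.
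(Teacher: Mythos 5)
The paper does not prove Theorem~\ref{thm:smooth_restriction}: it is stated with citations to \cite{BGT07}, \cite{Hu09} and \cite{Tacy}, and the only in-paper commentary is the brief remark in the literature review attributing the cited proofs to a delicate analysis of oscillatory representations of the smoothed spectral projector restricted to $\Sigma$ (in \cite{BGT07}, \cite{CS14}), or alternatively to mapping properties of Fourier integral operators with degenerate canonical relations (in \cite{Hu09}). There is therefore no proof in the paper to compare your sketch against.

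That said, your sketch is a faithful reconstruction of the \cite{BGT07} route and is consistent with that description. The reduction to the smoothed projector via its action on eigenspaces, the $TT^\ast$ step, the parametrix-derived pointwise kernel bound $|K_\lambda(x,y)|\le C\lambda^{n-1}(1+\lambda d_g(x,y))^{-(n-1)/2}$ (compare the paper's Theorem~\ref{thm:kernel_expansion}, which records exactly this asymptotic expansion from \cite{BGT07}), the dyadic decomposition with Young's inequality dominating for $p>4d/(n-1)$, the need for an oscillatory $L^2(\Sigma)\to L^2(\Sigma)$ gain at smaller $p$, the interpolation at $p=2$, $p=2n/(n-1)$, $p=\infty$ when $d=n-1$ and at $p=2$, $p=\infty$ when $d\le n-2$, and the logarithmic endpoint at $(d,p)=(n-2,2)$ all match the stated exponents, and your arithmetic for the Young bound and the interpolation checks out. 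The one substantive compression is the intermediate-scale $L^2\to L^2$ oscillatory estimate, which is the technical crux of \cite[Theorem~3]{BGT07}: the degeneracy set of the phase $z\mapsto d_g(x,z)-d_g(y,z)$ on $\Sigma$ (tangency to geodesic spheres, geodesics contained in $\Sigma$) must be handled carefully, and you flag this as the main obstacle and name the relevant tools without carrying them out. As a reconstruction of a result that the paper itself treats as known, this is the appropriate level of detail.
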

\noindent The proofs in \cite{BGT07} and \cite{CS14} use a delicate analysis of oscillatory representations of the smoothed spectral projector $\chi(\sqrt{-\Delta_g}-\lambda)$ restricted to submanifolds $\Sigma$, combined with refined estimates influenced by the considered geometry.  Alternatively, \cite{Hu09} uses general mapping properties for Fourier integral operators with prescribed degenerate canonical relations to obtain bounds for the oscillatory integral operators in question. There are several recurrent features in these proofs; namely, stationary phase methods, arguments involving integration by parts and operator-theoretic convolution inequalities. This methodology heavily relies on the fact that the underlying measures are induced by Lebesgue, which in turn is a consequence of $M$ and $\Sigma$ being smooth manifolds. The present article explores the accessibility of this machinery in the absence of smoothness, and aims to find working substitutes when such methods are unavailable.  This leads to a discussion of our main results.  
%to recover the results from \cite{BGT07}; see the work of Hu \cite{Hu09}.  
%A shared advantage of all these methods used to prove Theorems \ref{thm:Sogge} and \ref{thm:smooth_restriction} is the ability to intricately blend stationary phase applications with further integration-by-parts thanks in part to the underlying measure spaces $M$ and $\Sigma$ being smooth.  This leads to a discussion of our main result.

\subsection{Main results}
There is a common theme in Theorems \ref{thm:Sogge} and \ref{thm:smooth_restriction} above; namely, the left hand side of both inequalities \eqref{Sogge-estimate} and \eqref{eigenfunction-restriction-submanifold} involves the $L^p$ norm of an eigenfunction $\varphi_{\lambda}$ but over different submanifolds of $M$ (including $M$ itself), and with respect to natural measures on these submanifolds. An interesting feature of the exponents $\delta(n,p)$ and $\delta(n, d,p)$ 
%occurring in Theorems \ref{thm:Sogge} and \ref{thm:smooth_restriction} respectively 
is that for large $p$, they are both of the form $(n-1)/2 - \alpha/p$, where 
\begin{align} \alpha &= \text{dimension of the space on which the $L^p$ norm of $\varphi_{\lambda}$ is measured} \nonumber \\ &= \left\{ \begin{aligned} &\text{dim}(M) = n &\text{ in Theorem }\ref{thm:Sogge}, \\ 
&\text{dim}(\Sigma) = d &\text{ in Theorem } \ref{thm:smooth_restriction}.  
 \end{aligned} \right\} \label{form-alpha}  \end{align} 
%In consideration of Theorems \ref{thm:Sogge} and \ref{thm:smooth_restriction}, 
In view of this commonality in \eqref{Sogge-exponent}, \eqref{BGT-exponent-1} and \eqref{dlessthann-3}, we pose the following question: 
\begin{itemize} 
\item 
%Is there a class of ``sparser" sets 
{\em{Given an arbitrary Borel set $\Gamma \subseteq \Sigma$, does there exist a measure $\mu$ supported on $\Sigma$ with respect to which we can estimate the growth of the  eigenfunctions $\varphi_{\lambda}$? }}
\end{itemize} 
The nontrivial situation arises when $\Gamma$ is Lebesgue-null, i.e., $\mu$ is singular with respect to the canonical measure on $\Sigma$. The optimal scenario would be to obtain bounds that reflect the dimensionality of the set $\Gamma$ in the same way that Theorems \ref{thm:Sogge} and \ref{thm:smooth_restriction} do.  We answer this by presenting the main results of our article, namely Theorems \ref{mainthm-restriction-arbitrary}, \ref{mainthm-restriction-ball}, \ref{mainthm-sharpness},  \ref{mainthm-example} and Corollary \ref{maincor-example}. 
\subsubsection{Restriction to arbitrary Borel sets}   \label{arbitrary-subsection} 
%where $A$ is a countable index set, we consider $f \in C^{\infty}(M)$ and a partition of unity $\{\chi_j\}_j$ which is subordinate to this atlas; that is, each $\chi_j$ is a smooth function with its support in $U_j$.
%As we are in the Riemannian setting, we can define the integral 
%\begin{equation}
 %\int_M f \, dV_g := \sum_j \int_{U_j} f\left( \phi_j(x) \right) \chi_j(\phi_j(x)) \sqrt{\det{g(x)}} \, dx
%\end{equation} 
%where $x$ are the local coordinates in the open set $U_j \subset \R^n$ and $dx$ is the corresponding Lebesgue measure.  Now, consider 
\noindent The general set-up is as follows. Given a compact $n$-dimensional Riemannian manifold $(M, g)$, let $\Sigma \subseteq M$ be a smooth embedded (sub)manifold of dimension $1 \leq d \leq n$, equipped with the restricted Riemannian metric naturally endowed by $g$. Let $(U, \mathfrak{u})$ be a local coordinate chart on $\Sigma$, where $U \subseteq \mathbb R^d$ is an open set containing $[0,1]^d$ and $\mathfrak{u}: U \rightarrow \mathfrak{u}(U) \hookrightarrow \Sigma$ is a smooth embedding.
\vskip0.1in
\noindent Given any $\epsilon \in [0, 1)$, let $E \subseteq [0, 1]^d$ be an arbitrary Borel set of Hausdorff dimension $\text{dim}_{\mathbb H}(E) = d(1 - \epsilon)$. We refer the reader to the classical textbook of Mattila \cite[Chapter 4]{M95} for the definitions and properties of Hausdorff dimension of sets in Euclidean spaces. The Borel set $E \subseteq [0,1]^d$ generates a corresponding Borel set $\Gamma = \Gamma[E]$ in $\Sigma$ by setting $\Gamma := \mathfrak{u}(E)$. Conversely, every Borel subset $\Gamma$ in $\mathfrak{u}([0,1]^d) \subseteq \Sigma$ can be identified with a set $E = \mathfrak{u}^{-1}(\Gamma)\subseteq [0,1]^d$. Similarly, any measure $\nu$ supported on $\Gamma$ corresponds with a measure $\mu = \mathfrak{u}^{\ast}\nu$ on $E$ via the pull-back $\mathfrak{u}^{\ast}$; i.e., 
%Here $\nu$ denotes the probability measure supported on $\Gamma = \mathfrak{u}(E)$ obtained as the push-forward of the measure $\mu$; namely 
\begin{equation}  \label{mu-nu-relation} 
\nu(A) := \mu(\mathfrak{u}^{-1}(A)) \quad \text{for all Borel sets $A \subseteq \Sigma$}. 
\end{equation} 
The converse is also true; any Borel measure $\mu$ generates another measure $\nu$ on $\Gamma$ through its push-forward, given by the same relation \eqref{mu-nu-relation}. 
Since $\mathfrak{u}$ is a diffeomorphism, and thus bi-Lipschitz, it preserves Hausdorff dimension \cite[Corollary 2.4]{F90}; hence $\text{dim}_{\mathbb H}(\Gamma) = \text{dim}_{\mathbb H}(E) = d(1- \epsilon)$. Let us define our critical exponent  
\begin{equation} \label{def-p0}
p_0 = p_0(n, d, \epsilon) := \frac{4d(1- \epsilon)}{n-1}. 
\end{equation} 
Our first result specifies a family of restricted eigenfunction estimates for every such set $\Gamma$. 
\begin{theorem} \label{mainthm-restriction-arbitrary}
Let $M$, $\Sigma$, $\Gamma$ and $p_0$ be as above. Then for every $\kappa > 0$ sufficiently small, there exists a probability measure $\nu^{(\kappa)}$ supported on $\Gamma$ such that for all $\lambda \geq 1$ and all $p \in [2, \infty]$, we have the eigenfunction estimate
\begin{equation}
\|\varphi_{\lambda}\|_{L^p(\Gamma, \nu^{(\kappa)})} \leq C_{\kappa, p} (1 + \lambda)^{\theta_p} \Theta(\lambda; \kappa, p) \, \|\varphi_{\lambda}\|_{L^2(M, dV_g)}.  \label{eigenfunction-restriction-estimate} 
\end{equation}
Here $\varphi_{\lambda}$ denotes any $L^2$-eigenfunction associated with the eigenvalue $\lambda^2$ for the Laplace-Beltrami operator $-\Delta_g$ on $M$. For $p_0 > 2$, the exponent $\theta_p$ is given by  
\begin{equation}
\theta_p = \theta_p(n,d, \epsilon) :=
\left\{ \begin{aligned}
&\frac{n-1}{4} & \text{ if } &2 \leq p \leq p_0 - \frac{4 \kappa}{n-1}, \\
&\frac{n-1}{2} - \frac{d(1-\epsilon)}{p} & \text{ if }  & p \geq p_0 - \frac{4 \kappa}{n-1}. 
\end{aligned} \right\} \label{our-exponent} 
\end{equation}
The function $\Theta$ represents a power loss of $\kappa/p$ beyond the critical threshold: 
\[ \Theta(\lambda; \kappa, p) := \left\{ \begin{aligned} &1 &\text{ if } &2 \leq p \leq p_0 - \frac{4 \kappa}{n-1}, \\ &(1 + \lambda)^{\frac{\kappa}{p}} \bigl( \log \lambda \bigr)^{\frac{1}{p}}  & \text{ if }  &p = p_0 - \frac{4 \kappa}{n-1}, \\ &(1 + \lambda)^{\frac{\kappa}{p}}  & \text{ if }  &p > p_0 - \frac{4\kappa}{n-1}.  \end{aligned} \right\} \] 
For $p_0 \leq 2$ and  $2 \leq p \leq \infty$, we set \[ \theta_p  = \theta_p(n,d, \epsilon) := \frac{n-1}{2} - \frac{d(1- \epsilon)}{p} \quad \text{ and } \quad \Theta(\lambda; \kappa, p) = (1 + \lambda)^{\frac{\kappa}{p}}. \] 
The positive constant $C_{\kappa, p}$ in \eqref{eigenfunction-restriction-estimate} may depend on $n, d, \epsilon, \kappa$ and $p$ but is independent of $\lambda$.  The probability measure $\nu^{(\kappa)}$ in \eqref{eigenfunction-restriction-estimate}, given by Frostman's lemma, obeys the following volume growth condition: there exists $C_{\kappa} > 0$ such that for all $x \in \Sigma$ and $r > 0$
\begin{equation}  \nu^{(\kappa)}\bigl(B_g(x;r) \bigr) \leq C_{\kappa} r^{d(1 - \epsilon)- \kappa}, \label{nu-vol-growth} \end{equation}  where $B_g(x;r) \subseteq M$ denotes the Riemannian ball centred at $x$ of radius $r$.
\end{theorem}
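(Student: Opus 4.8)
The plan is to produce $\nu^{(\kappa)}$ from Frostman's lemma, to reduce \eqref{eigenfunction-restriction-estimate} to a norm bound for the smoothed spectral projector $\chi_\lambda:=\rho(\sqrt{-\Delta_g}-\lambda)$, and then to prove that bound for an \emph{arbitrary} probability measure obeying a ball-growth condition, via a Stein--Tomas-type argument. Since $\mathfrak u$ is a smooth embedding on a neighbourhood of the compact cube $[0,1]^d$, it is bi-Lipschitz there, so $\dist_g(\mathfrak u(a),\mathfrak u(b))\asymp|a-b|$. Frostman's lemma applied to $E$, which has $\dim_{\mathbb H}(E)=d(1-\epsilon)$, furnishes for each small $\kappa>0$ a probability measure $\mu^{(\kappa)}$ on $E$ with $\mu^{(\kappa)}(B(a,r))\le C_\kappa r^{d(1-\epsilon)-\kappa}$; its push-forward $\nu^{(\kappa)}:=\mathfrak u_\ast\mu^{(\kappa)}$ is a probability measure on $\Gamma$ obeying \eqref{nu-vol-growth}, and we write $s:=d(1-\epsilon)-\kappa$ for its Frostman exponent. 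Choosing $\rho\in\mathcal S(\mathbb R)$ even, nonnegative, with $\rho(0)=1$ and $\widehat\rho$ supported in a neighbourhood of $0$ much smaller than the injectivity radius of $M$, we have $\chi_\lambda\varphi_\lambda=\varphi_\lambda$, so it suffices to establish $\|\chi_\lambda\|_{L^2(M,dV_g)\to L^p(\Gamma,\nu^{(\kappa)})}\lesssim(1+\lambda)^{\theta_p}\,\Theta(\lambda;\kappa,p)$.

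For this I would use $TT^\ast$. With $R_\nu\colon L^2(M)\to L^2(\nu)$ the restriction operator and $T:=R_\nu\chi_\lambda$, one has $\|T\|_{L^2(M)\to L^p(\nu)}^2=\|S_\lambda\|_{L^{p'}(\nu)\to L^p(\nu)}$, where $S_\lambda:=R_\nu\chi_\lambda^2R_\nu^\ast$ is the integral operator on $(\Gamma,\nu)$ with kernel the Schwartz kernel $K_\lambda(x,y)$ of $\rho^2(\sqrt{-\Delta_g}-\lambda)$. Hörmander's parametrix for the half-wave propagator, together with finite propagation speed (see \cite{So}), gives $|K_\lambda(x,y)|\lesssim\lambda^{n-1}\bigl(1+\lambda\,\dist_g(x,y)\bigr)^{-(n-1)/2}$ while $\dist_g(x,y)$ stays below a fixed constant, with an $O(\lambda^{-\infty})$ error beyond. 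Decompose $K_\lambda=\sum_{j=0}^{J}K_\lambda^{(j)}$ with $J\sim\log_2\lambda$, $K_\lambda^{(j)}$ supported where $\dist_g(x,y)\sim 2^{j}/\lambda$ (and $j=0$ absorbing $\dist_g(x,y)\lesssim 1/\lambda$). For the operator $S_\lambda^{(j)}$ with kernel $K_\lambda^{(j)}$, one has trivially $\|S_\lambda^{(j)}\|_{L^1(\nu)\to L^\infty(\nu)}\lesssim\lambda^{n-1}2^{-j(n-1)/2}$, while the Schur test combined with the ball-growth bound gives $\|S_\lambda^{(j)}\|_{L^2(\nu)\to L^2(\nu)}\lesssim\lambda^{n-1}2^{-j(n-1)/2}\sup_x\nu\bigl(B_g(x;2^{j}/\lambda)\bigr)\lesssim\lambda^{\,n-1-s}\,2^{\,j(s-(n-1)/2)}$.

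Riesz--Thorin interpolation at $\theta=2/p\in[0,1]$ then gives, for all $p\in[2,\infty]$, $\|S_\lambda^{(j)}\|_{L^{p'}(\nu)\to L^p(\nu)}\lesssim\lambda^{\,n-1-2s/p}\,2^{\,j(2s/p-(n-1)/2)}$, and summing over $0\le j\le J$, with $\tilde p_0:=4s/(n-1)=p_0-4\kappa/(n-1)$, splits into three regimes: for $p>\tilde p_0$ the geometric series converges, so $\|\chi_\lambda\|_{L^2\to L^p(\nu)}\lesssim\lambda^{(n-1)/2-s/p}=\lambda^{(n-1)/2-d(1-\epsilon)/p}\,\lambda^{\kappa/p}$, which is exactly $\lambda^{\theta_p}\Theta$ with $\Theta=(1+\lambda)^{\kappa/p}$ (at $p=\infty$ this is the on-diagonal bound $\lambda^{(n-1)/2}$); for $p<\tilde p_0$ the top term $j=J$ dominates, so $\|\chi_\lambda\|_{L^2\to L^p(\nu)}\lesssim\lambda^{(n-1)/4}$, the plateau value with $\Theta=1$; and at $p=\tilde p_0$ a logarithm is lost, the crude triangle-inequality sum yielding $\lambda^{(n-1)/4}(\log\lambda)^{1/2}$. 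When $p_0\le 2$ one has $s<(n-1)/2$, hence $\tilde p_0<2$, so only the first regime occurs and the bound reads $\lambda^{(n-1)/2-d(1-\epsilon)/p}\lambda^{\kappa/p}$ for all $p\in[2,\infty]$, as asserted. Unwinding the chart $\mathfrak u$ then transfers the estimate to $\Gamma$ itself.

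The main obstacle is twofold. The pointwise bound on $K_\lambda$ --- Hörmander's parametrix, stationary phase in the angular variables, finite propagation speed --- is classical but is precisely where the smoothness of $M$ enters, and emphatically \emph{not} the smoothness of $\Gamma$, which is the whole point of the theorem; it must be set up with enough uniformity to drive the Schur test at every dyadic scale. The genuinely delicate step is the critical exponent $p=\tilde p_0$: summing the $\sim\log\lambda$ pieces $S_\lambda^{(j)}$ by the triangle inequality loses $(\log\lambda)^{1/2}$, whereas \eqref{eigenfunction-restriction-estimate} demands the smaller factor $(\log\lambda)^{1/p}$; recovering the sharp power forces one to replace the hard dyadic truncation by an analytic family of operators and run Stein's interpolation theorem into an appropriate Lorentz space on $(\Gamma,\nu)$, in the spirit of the Bak--Seeger refinement of the Stein--Tomas theorem. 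The remainder --- transporting through $\mathfrak u$, checking that the constants depend only on $n,d,\epsilon,\kappa,p$ and blow up as $p\downarrow\tilde p_0$ (which forces the $p$-dependence of $C_{\kappa,p}$), and reconciling the sub-cases of \eqref{our-exponent} --- is routine bookkeeping.
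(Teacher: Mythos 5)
Your overall architecture -- Frostman's lemma to produce $\nu^{(\kappa)}$, reduction to the smoothed spectral projector, $TT^{\ast}$, the H\"ormander-parametrix kernel bound $|K_\lambda|\lesssim\lambda^{n-1}(1+\lambda\,d_g)^{-(n-1)/2}$, and the use of the ball-growth condition $\nu^{(\kappa)}(B_g(x;r))\lesssim r^s$ with $s=d(1-\epsilon)-\kappa$ -- is exactly the paper's. Where you diverge is in how the $L^{p'}(\nu)\to L^p(\nu)$ bound for the normal operator $S_\lambda = \mathcal T_\lambda\mathcal T_\lambda^{\ast}$ is extracted from the kernel bound. You decompose the kernel into dyadic annuli $K_\lambda^{(j)}$ supported where $d_g\sim 2^j/\lambda$, prove $L^1\to L^\infty$ and $L^2\to L^2$ bounds for each piece, Riesz--Thorin interpolate at $\theta=2/p$, and sum the $\sim\log\lambda$ pieces. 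The paper instead applies a single Schur-type inequality (Proposition~\ref{l:youngsineq}, their generalized Young's inequality) with $r=p$, $q=p'$, $s=p/2$: the $L^{p'}\to L^p$ norm of $S_\lambda$ is controlled by $\mathtt a_p:=\bigl(\sup_u\int\langle\lambda(u-v)\rangle^{-p(n-1)/4}d\mu(v)\bigr)^{2/p}$, and \emph{that} integral is what gets decomposed dyadically (Proposition~\ref{reduction-mainprop}). Both use a dyadic sum, but the paper's sum lives inside a quantity that is later raised to the power $1/p$, not inside an operator-norm triangle inequality.

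This difference is not merely cosmetic: it produces a genuine gap in your argument at the critical exponent $p=\tilde p_0 = 4s/(n-1)$. As you correctly notice, summing the $\sim\log\lambda$ dyadic operator norms by the triangle inequality yields $\lambda^{(n-1)/4}(\log\lambda)^{1/2}$, whereas the theorem claims $\lambda^{(n-1)/4}(\log\lambda)^{1/p}$ with $p=\tilde p_0>2$, i.e.\ a strictly smaller power of the logarithm. Your proposed repair -- replacing the hard dyadic truncation by an analytic family and running Stein interpolation into a Lorentz space \`a la Bak--Seeger -- would very likely work but is far heavier than needed. The paper sidesteps the issue entirely: because the dyadic sum occurs inside $\mathtt a_p^{p/2}=\sup_u\int\mathcal K_\lambda^{p/2}d\mu$, the $\log\lambda$ produced at the critical $p$ emerges from a scalar sum, and extracting $\mathtt a_p$ then involves a single $(\cdot)^{1/p}$, which delivers $(\log\lambda)^{1/p}$ for free. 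So the Schur/generalized-Young route is both simpler and sharper here, and you should prefer it if you want the statement exactly as written. Away from the critical line your computation is correct and matches the paper's exponents, including the plateau $\lambda^{(n-1)/4}$ for $p<\tilde p_0$ and the observation that when $p_0\le 2$ only the supercritical regime occurs.
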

%\quad \text{ provided } \quad E(\omega) \neq \emptyset.  \]  
%Let $\Sigma$ be a smooth, $d$-dimensional submanifold of $M$ as in Theorem \ref{thm:smooth_restriction}.  We have the following:

\noindent {\bf{Remarks: }} 
\begin{enumerate}[1.]
\item The proof of Theorem \ref{mainthm-restriction-arbitrary} appears in Section \ref{proofs-mainthms-section}. 
\vskip0.1in
\item The estimate \eqref{eigenfunction-restriction-estimate} is sharp for $p \geq \max(2, p_0)$, except possibly for the infinitesimal blow-up factor of $(1+\lambda)^{\kappa/p}$. More precisely, for every $\epsilon \in [0,1)$, $d \leq n$ and $p \geq \max(2, p_0)$, the bound in \eqref{eigenfunction-restriction-estimate} is realized, ignoring sub-polynomial losses, for certain sets of dimension $d(1 - \epsilon)$ in $M = \mathbb S^n$. The exact sharpness statements appear in Corollary \ref{maincor-sharpness} and \ref{maincor-example} below.  The estimate is not sharp for $2 \leq p < p_0$, when $p_0 > 2$. This is an artifact of our proof strategy. See remark \ref{remark-nonoptimal} following Theorem \ref{mainthm-restriction-ball} for a discussion of the non-optimality for small $p$.  
\vskip0.1in
\item For an arbitrary Borel set $\Gamma$, the information available about measures supported on it is limited. As a result, the measure $\nu^{(\kappa)}$ that realizes \eqref{eigenfunction-restriction-estimate} varies with $\kappa$ in general. Thus we are able to prove \eqref{eigenfunction-restriction-estimate} only for all $\kappa > 0$ and not for $\kappa = 0$. On the other hand, if $\Gamma$ is a (sub)-manifold of $M$, it follows from \cite{{So}, {BGT07}, {Hu09}, {Tacy}} that there is a natural Lebesgue-induced measure on $\Gamma$ for which \eqref{eigenfunction-restriction-estimate} does hold with $\kappa = 0$. Our claim in this paper is that such an improvement holds in a generic sense. In Theorem \ref{mainthm-example} and Corollary \ref{maincor-example} below, we provide a large class of sparse subsets $\Gamma$ that are not submanifolds, each supporting {\em{a single probability measure $\nu$ that obeys \eqref{nu-vol-growth} for all $\kappa > 0$}}, even though $C_{\kappa} \nearrow \infty$ as $\kappa \searrow 0$. For this measure $\nu$, we show that a stronger version of \eqref{eigenfunction-restriction-estimate} holds, with $\kappa = 0$. However, $\Theta$ is then replaced by a function of slow growth in the range $p \geq \max(2, p_0)$. A precise functional form for $\Theta$ that quantifies the infinitesimal blowup is provided; see \eqref{generic-sharp} and \eqref{our-Phi}. \label{sharpness-remark}
\vskip0.1in
\item For $p \geq \max(2, p_0)$, the exponent $\theta_p$ in Theorem \ref{mainthm-restriction-arbitrary} is of the same form alluded to in \eqref{form-alpha}, namely $\theta_p = (n-1)/2 - \alpha/p$ with $\alpha = d(1-\epsilon) = \text{dim}_{\mathbb H}(\Gamma)$.  Thus our result may be viewed as a natural interpolation between the global estimates in \cite{Sog88} and the smooth restriction estimates in \cite{BGT07}, bridging the estimates across a family of arbitrary Borel sets with continuously varying Hausdorff dimensions.   
%\item Note that when $\epsilon = 0$ and we allow $d$ to vary, then the estimates for $p \in [p_0, \infty]$ overlap with those in Theorems \ref{thm:Sogge} and \ref{thm:smooth_restriction} in the respective ranges above their critical exponents hence making our result a \textit{natural interpolation} between the global estimates and smooth restrictions estimates.  However we must also admit that in many cases, our critical exponent $p_0$ is slightly greater than those of Sogge and Burq-G\'erard-Tzvetkov thus making all the stated estimates overlap in a slightly ``shorter" infinite interval.
\vskip0.1in 
\item To the best of our knowledge, Theorem \ref{mainthm-restriction-arbitrary} is the first result of its kind in several distinct categories. First, it offers eigenfunction bounds restricted to {\em{any Borel subset of positive Hausdorff dimension}}, for every manifold $M$ and every smooth submanifold $\Sigma$ therein. Second, even for integers $m$, our result produces new sets of dimension $m$, for example with $(n, d, \epsilon) = (2, 2, 1/2)$, that are not necessarily contained in any $m$-dimensional submanifold, and yet capture the same eigenfunction growth bounds as smooth submanifolds of the same dimension, up to sub-polynomial losses. Third, when $\epsilon = 0$, our result provides examples of singular measures supported on submanifolds with respect to which the eigenfunctions obey the same $L^p$ growth bounds, up to any prescribed $\kappa$-loss, as with the induced Lebesgue measure on the same submanifold.  
%This is reminiscent of an earlier article by \L aba and Pramanik \cite{LP11}, where the authors construct a random Cantor-type measure with respect to which the maximal averaging operator has the same $L^p$ mapping properties as the Hardy-Littlewood maximal function (where the underlying measure is Lebesgue). 
\vskip0.1in
\item As in \cite{{BGT07}, {Hu09}}, the proof of Theorem \ref{mainthm-restriction-arbitrary} yields estimates not merely for the eigenfunctions $\varphi_{\lambda}$, but also for the smoothed spectral projector on $M$.  The sharpness statement for $p \geq p_0$ continues to hold for such operators. 
\end{enumerate} 
\vskip0.1in
\subsubsection{Ball growth conditions imply restricted eigenfunction estimates} Theorem \ref{mainthm-restriction-arbitrary} will be derived as a consequence of a more general phenomenon, whereby {\em{any}} probability measure $\nu$ on  $\Gamma \subseteq \mathfrak{u}([0,1]^d)$ obeying a ball growth condition gives rise to an eigenfunction growth estimate restricted to $\Gamma$. Theorem \ref{mainthm-restriction-ball} in this subsection specifies this correspondence. As preparation for it, we introduce a class of functions that will be used throughout this article:
\begin{equation} \label{functions-of-slow-growth} 
\mathfrak F := \left\{ \Psi: [1, \infty) \rightarrow [1, \infty)  \Biggl| \;  \begin{aligned} &\Psi \text{ is continuous and nondecreasing for large $t$,} \\ & \Psi(t) t^{- \kappa} \searrow 0  \text{ as } t \rightarrow \infty, \text{ for all } \kappa > 0. \end{aligned} \right\} 
\end{equation}    
In other words, functions in $\mathfrak F$ grow slower than any positive power of $t$; we will refer to members of $\mathfrak F$ as {\em{functions of slow growth}}.  
\vskip0.1in
\noindent As in Section \ref{arbitrary-subsection}, let $(M,g)$ denote a compact Riemannian manifold. Suppose that $\Sigma$ is a smooth submanifold of dimension $d$, equipped with a coordinate chart $(U, \mathfrak{u})$. 
\begin{theorem} \label{mainthm-restriction-ball} 
Let $\alpha \in (0, d]$ and $\Psi \in \mathfrak F$. %Given any Borel set 
Let $\Gamma \subseteq \mathfrak{u}([0,1]^d) \subseteq \Sigma$ be a Borel subset of $M$ endowed with a probability measure $\nu$ that obeys the estimate 
\begin{equation}  \sup \left\{  \frac{\nu(B_g(x;r))}{r^{\alpha} \Psi(1/r)} : x \in \Sigma, \; r > 0  \right\} := C_0 < \infty. \label{mu-ball-condition}  \end{equation} 
Then for every $p \in [2, \infty]$, there exists a constant $C = C(p, C_0)>0$ such that for all $\lambda \geq 1$, 
\begin{equation} \label{eigenfunction-growth-nu} 
\|\varphi_{\lambda}\|_{L^p(\Gamma, \nu)} \leq C (1 + \lambda)^{\vartheta_p} \overline{\Psi}_p(\lambda) \, \|\varphi_{\lambda}\|_{L^2(M, dV_g)}.
\end{equation}   
The exponent $\vartheta_p$ admits a description analogous to \eqref{our-exponent};  
\begin{equation}
\vartheta_p = \vartheta_p(n, \alpha) :=
\left\{ \begin{aligned}
&\frac{n-1}{4}, & \text{ if } &2 \leq p \leq \max (2, p_{\ast}), \\
&\frac{n-1}{2} - \frac{\alpha}{p}, & \text{ if }  & p \geq \max(2, p_{\ast}). 
\end{aligned} \right\}  \text{ where } p_{\ast} := \frac{4 \alpha}{n-1}. \label{our-exponent-2}
 \end{equation}
The function $\overline{\Psi}_p$ lies in the class of functions $\mathfrak F$ in \eqref{functions-of-slow-growth}, and is related to $\Psi$ by 
\[ \bigl[ \overline{\Psi}_p (\lambda) \bigr]^p := \Psi(\lambda) \times \begin{cases} 1 &\text{ if } p \ne p_{\ast}, \\ \log \lambda &\text{ if } p = p_{\ast}. \end{cases} \] 
\end{theorem}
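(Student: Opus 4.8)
The plan is to prove the stronger operator statement in which $\varphi_\lambda$ is replaced by $\chi_\lambda f := \chi(\sqrt{-\Delta_g}-\lambda)f$ for an arbitrary $f\in L^2(M)$, where $\chi$ is a fixed real, even Schwartz function with $\chi(0)=1$ whose Fourier transform is supported in an interval about the origin shorter than the injectivity radius of $M$. Since $\chi_\lambda\varphi_\lambda=\varphi_\lambda$, this yields \eqref{eigenfunction-growth-nu}, and (exactly as in \cite{BGT07,Hu09}) the same bound for the smoothed spectral projector. The only geometric input is H\"ormander's kernel estimate (see \cite[Ch.~5]{So}): the Schwartz kernel $K_\lambda(x,y)$ of $\chi_\lambda$, and likewise that of $\chi_\lambda^2$, satisfies $|K_\lambda(x,y)|\le C\lambda^{n-1}(1+\lambda\,d_g(x,y))^{-(n-1)/2}$ uniformly, and vanishes once $d_g(x,y)$ exceeds a fixed constant, by finite propagation speed.

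Let $R_\lambda\colon L^2(M)\to L^p(\Gamma,\nu)$, $R_\lambda f=(\chi_\lambda f)|_\Gamma$; we must bound $\|R_\lambda\|$ ($\nu$ has compact support and $\chi_\lambda$ is smoothing, so $R_\lambda$ is bounded and the duality below is standard). By $TT^\ast$, $\|R_\lambda\|^2=\|R_\lambda R_\lambda^\ast\|_{L^{p'}(\nu)\to L^p(\nu)}$, and up to harmless conjugations $R_\lambda R_\lambda^\ast$ is the integral operator $T_\lambda$ whose kernel is that of $\chi_\lambda^2$ restricted to $\Gamma\times\Gamma$. Decompose this kernel dyadically in $d_g(x,y)$ as $\sum_{j=0}^{J_\lambda}K_j$, with $K_0$ supported in $\{d_g\le 1/\lambda\}$, $K_j$ in $\{d_g\sim 2^j/\lambda\}$, and $J_\lambda\asymp\log\lambda$; write $T_\lambda=\sum_jT_j$. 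For each shell one has $\|T_j\|_{L^1(\nu)\to L^\infty(\nu)}\le\sup|K_j|\lesssim\lambda^{n-1}2^{-j(n-1)/2}$, and, using \eqref{mu-ball-condition} in the form $\nu(B_g(x;C2^j/\lambda))\lesssim(2^j/\lambda)^\alpha\Psi(\lambda/2^j)$, the Schur estimate $\|T_j\|_{L^2(\nu)\to L^2(\nu)}\le\sup_x\int|K_j(x,y)|\,d\nu(y)\lesssim\lambda^{n-1-\alpha}2^{j(\alpha-(n-1)/2)}\Psi(\lambda/2^j)$. Riesz--Thorin interpolation with $\theta=2/p$ then gives, for $2<p<\infty$,
\[ \|T_j\|_{L^{p'}(\nu)\to L^p(\nu)}\ \lesssim\ \lambda^{(n-1)-2\alpha/p}\,2^{\,j\left(2\alpha/p-(n-1)/2\right)}\,\Psi(\lambda/2^j)^{2/p}, \]
while for $p=2$ (resp.\ $p=\infty$) one uses the $L^2\to L^2$ (resp.\ $L^1\to L^\infty$, i.e.\ $\sup|K_\lambda^2|\lesssim\lambda^{n-1}$) bound directly. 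The $j$-sum is geometric with ratio $\rho=2^{2\alpha/p-(n-1)/2}$, and $p_\ast=4\alpha/(n-1)$ is exactly the value where $\rho=1$. For $p>\max(2,p_\ast)$ one has $\rho<1$: the sum is controlled by $j=0$, and since $(n-1)-2\alpha/p=2\vartheta_p$ one gets $\|R_\lambda\|\lesssim\lambda^{\vartheta_p}\Psi(\lambda)^{1/p}$, which is \eqref{eigenfunction-growth-nu} with $\overline\Psi_p=\Psi^{1/p}$ (the case $p=2$ when $p_\ast\le2$ is the same, via the direct $L^2$ bound). For $2\le p<p_\ast$ (possible only if $p_\ast>2$) one has $\rho>1$: the sum is controlled by $j=J_\lambda$, where $2^{J_\lambda}\asymp\lambda$ forces $\Psi(\lambda/2^{J_\lambda})=O(1)$ and the $\lambda$-powers collapse to $\lambda^{(n-1)/2}$, giving $\|R_\lambda\|\lesssim\lambda^{(n-1)/4}=\lambda^{\vartheta_p}$ with \emph{no} slowly-growing loss — consistent with \eqref{eigenfunction-growth-nu}. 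Combined with $p=\infty$, this proves \eqref{eigenfunction-growth-nu} for all $p\in[2,\infty]$ except possibly $p=p_\ast$ (relevant only when $p_\ast\ge2$); and $\overline\Psi_p\in\mathfrak F$ because products, powers and logarithms of functions in $\mathfrak F$ remain in $\mathfrak F$.

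The delicate point, which I expect to be the main obstacle, is the critical exponent $p=p_\ast$: there $\rho=1$, all $\asymp\log\lambda$ shells contribute equally, and summing $\|T_j\|$ term by term costs a full power of $\log\lambda$ in $\|T_\lambda\|$, hence $(\log\lambda)^{1/2}$ after the $TT^\ast$ square root. When $p_\ast\le2$ the only borderline value is $p_\ast=2$, where this $(\log\lambda)^{1/2}$ is precisely the loss $\overline\Psi_{p_\ast}^{p_\ast}=\Psi\log\lambda$ asks for, so the argument is complete. When $p_\ast>2$, however, the theorem demands the sharper factor $(\log\lambda)^{1/p_\ast}$, which requires summing the $\log\lambda$ shells more efficiently than by the triangle inequality — for instance by interpolating an analytic family $T^z=\sum_j2^{j\ell(z)}T_j$ chosen so that $T^{2/p_\ast}=T_\lambda$, or by exploiting quasi-orthogonality of the dyadic pieces of $\chi_\lambda f$ in $L^{p_\ast}(\nu)$ — so that the logarithmic count is carried by the interpolation exponent $1/p_\ast$ rather than by $1/2$. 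All the remaining ingredients are a routine assembly of the wave-kernel estimate, $TT^\ast$, the ball-growth hypothesis \eqref{mu-ball-condition}, and Riesz--Thorin interpolation.
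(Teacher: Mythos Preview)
Your approach via dyadic decomposition and Riesz--Thorin interpolation between $L^2\to L^2$ and $L^1\to L^\infty$ is correct away from the critical exponent, and in the subcritical range $2\le p<p_\ast$ it even improves on the paper's bound by eliminating the $\Psi^{1/p}$ factor. However, at $p=p_\ast>2$ there is a genuine gap: summing $\|T_j\|$ by the triangle inequality gives $(\log\lambda)^{1/2}$ after the $TT^\ast$ square root, whereas the theorem demands $(\log\lambda)^{1/p_\ast}$. You correctly flag this, but the remedies you sketch (analytic interpolation, quasi-orthogonality) are neither carried out nor obviously available for a general measure $\nu$.

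The paper resolves this more simply than you anticipate, and the key is to avoid interpolating the dyadic pieces altogether. Instead of bounding each $T_j$ separately, the paper applies a generalized Young/Schur inequality (Proposition~\ref{l:youngsineq}) with exponents $(r,q,s)=(p,p',p/2)$, which controls $\|T_\lambda\|_{L^{p'}(\nu)\to L^p(\nu)}$ directly by
\[
\mathtt a_p \;=\; \Bigl[\sup_{u}\int \langle\lambda(u-v)\rangle^{-p(n-1)/4}\,d\mu(v)\Bigr]^{2/p}.
\]
The dyadic decomposition is then performed \emph{inside} this integral (Proposition~\ref{reduction-mainprop}), and the resulting sum $\sum_j 2^{j(p(n-1)/4-\alpha)}\Psi(2^j)$ contributes $\log\lambda$ at $p=p_\ast$ \emph{before} the exponent $2/p$ is applied. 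After the $TT^\ast$ square root one obtains exactly $[\Psi(\lambda)\log\lambda]^{1/p_\ast}$, as claimed. In short: the paper places the dyadic sum inside the kernel norm rather than outside the operator norm, so the logarithm inherits the exponent $1/p$ rather than $1/2$. This is both simpler and sharper than the interpolation route at the critical index.
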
 

\noindent {\bf{Remarks: }} 
\begin{enumerate}[1.]
\item We prove Theorem \ref{mainthm-restriction-ball} in Section \ref{proofs-mainthms-section}. 
\vskip0.1in
\item Let us note that the exponent $p_{\ast}$ in \eqref{our-exponent-2} coincides with $p_0$ in \eqref{def-p0} when $\alpha = d(1 - \epsilon)$. 
\vskip0.1in
\item \label{interchange-ball} At the beginning of Section \ref{arbitrary-subsection}, we have observed that any set-measure pair $(\Gamma, \nu)$ on $\Sigma \subseteq M$ corresponds uniquely to a similar pair $(E, \mu)$ on $[0,1]^d$, via the coordinate chart $\mathfrak u$. Using this correspondence, the volume growth condition \eqref{mu-ball-condition} for $\nu$ is equivalent to an analogous one for $\mu$, possibly with a different constant $C_0$:
\begin{equation} 
 \sup \left\{  \frac{\mu(B(u;r))}{r^{\alpha} \Psi(1/r)} : u \in \mathbb R^n, \; r > 0  \right\} := C_0 < \infty, \label{mu-ball-condition-2} 
\end{equation}    
where $B(u;r)$ represents the Euclidean ball centred at $u$ of radius $r$. Henceforth, we will use these conditions interchangeably. The microlocal representation of the spectral projection operator involves Euclidean coordinates via the coordinate chart $\mathfrak u$ (see Section \ref{microlocal-preliminaries-section}), so for practical purposes we rely on \eqref{mu-ball-condition-2}. 
\vskip0.1in
\item  As in Theorem \ref{mainthm-restriction-arbitrary}, the estimate  \eqref{eigenfunction-growth-nu} is sharp for $p \geq \max(2, p_{\ast})$ up to a function of slow growth, but not otherwise. In Theorem \ref{mainthm-sharpness} below and under an additional restriction on $\nu$, we prove a lower bound on $||\varphi_{\lambda}||_{L^p(\Gamma, \nu)}$ that is comparable to the upper bound in \eqref{eigenfunction-growth-nu}, for $M = \mathbb S^n$ and an infinite sequence of eigenvalues $\lambda$. In Theorem \ref{mainthm-example}, we show that these measure-theoretic restrictions are generically satisfied, in the sense mentioned in Remark \ref{sharpness-remark} following Theorem \ref{mainthm-restriction-arbitrary}.  
\vskip0.1in 
\item \label{remark-nonoptimal} Our proof technique may be viewed as an adaptation of \cite[Theorem 1]{BGT07}, which itself is based on Young's convolution inequality and ignores oscillations inherent in the underlying spectral projection operator. While this yields sharp results in the full range $2 \leq p \leq \infty$ if $p_{\ast} \leq 2$, it fails to do so when $p_{\ast} > 2$ even for submanifolds $\Sigma$, as observed in \cite{BGT07}. We refer the interested reader to the discussion in \cite[p.\ 479]{BGT07}, where the authors explain why a strategy such as Young's inequality, that does not take advantage of the oscillations in the spectral projector, can only yield sharp eigenfunction estimates for large $p$ in a non-optimal range. Indeed the critical threshold of $p$ for this restricted range, as specified in \cite[inequality (6.4)]{BGT07}, is $4 \text{ dim}(\Sigma)/(\text{dim}(M) - 1))$, which corresponds exactly with our $p_{\ast} = 4 \alpha/(n-1)$. See also the remark following Proposition \ref{gen-Young-prop} on page \pageref{Young-prop-remark}.  
\vskip0.1in
\noindent In order to effectively harness the oscillation in the spectral projector, one would need to generalize \cite[Theorem 3]{BGT07} to the context of general Borel sets. At the moment we do not know whether Hausdorff dimensionality alone, as manifested in the form of the ball growth condition \eqref{mu-ball-condition}, is sufficient to obtain these more refined estimates.   
%to a requires new ideas and presents substantial technical challenges. We plan to return to this in a future project.      
\end{enumerate} 
\subsubsection{Conditional sharpness of Theorems \ref{mainthm-restriction-arbitrary} and \ref{mainthm-restriction-ball}} 
The exponent $\vartheta_p$ in the estimate \eqref{eigenfunction-growth-nu} is sharp for $p \geq p_{\ast}$, in the following sense.  
\begin{theorem} \label{mainthm-sharpness} 
%in the range $p \geq p_0$ in that given $d \leq n$, a submanifold $\Sigma$ and $\epsilon$, there exists a set  $E_{\omega_0} \subset \Sigma$ of Hausdorff dimension $d(1- \epsilon)$ and 
Suppose that  $M = \mathbb S^n$ is the $n$-dimensional unit sphere, and $\Sigma$ is any $d$-dimensional submanifold of $M$, $d \leq n$. Let $\Psi, \Xi \in \mathfrak F$, where $\mathfrak F$ is as in \eqref{functions-of-slow-growth}. 
\vskip0.1in 
\noindent Fix any $\alpha \in (0, d]$, and let $p_{\ast} = 4 \alpha/(n-1)$, as in \eqref{our-exponent-2}.  Assume that $\nu$ is a probability measure supported on $\Gamma \subseteq \mathfrak{u}([0,1]^d) \subseteq \Sigma$ obeying \eqref{mu-ball-condition}, and that 
\begin{equation} \label{AD-local}
\limsup_{r \rightarrow 0} \bigl[ r^{-\alpha} \; \Xi(1/r) \nu(B_g(y_0;r)) \bigr] =: \kappa_0 > 0 \quad \text{ for some } y_0 \in \Gamma. 
\end{equation} 
Then for every $p \geq \max(1, p_{\ast}/2)$, there exists a constant $c = c(\kappa_0, p) > 0$ and a sequence of $L^2$-normalized spherical harmonics $\{ \varphi_{\lambda_j} : j \geq 1 \}$ with $\lambda_j \nearrow \infty$ such that for all $\lambda_j$ sufficiently large,
%for $\mathbb P^{\ast}$-almost every set $\Gamma_{\omega}$ obtained above and for every $p \geq p_0$, one can find a constant $C = C(\omega, p)  > 0$ verifying the lower bound 
\begin{equation}  \| \varphi_{\lambda_j} \|_{L^p(\Gamma, \nu)} \geq c (1 + \lambda_j)^{\varkappa_p} \bigl[ \Lambda_p(\lambda_j) \bigr]^{-1}, \text{ with } \varkappa_p := \alpha\Bigl( \frac{2}{p_{\ast}} - \frac{1}{p}\Bigr). \label{sphere-lower-bound}  \end{equation}  
%Here $\nu$ denotes the push-forward of $\mu$, as defined in \eqref{mu-nu-relation}. 
The blowup factor $\Lambda_p \in \mathfrak F$ is related to $\Psi$ and $\Xi$ by 
\begin{equation}  \Lambda_p(\lambda) := \bigl[ \Psi(C\lambda) \bigr]^{\frac{1}{2p'}} \; \bigl[ \Xi(C \lambda) \bigr]^{\frac{1}{2} + \frac{1}{2p}} \quad \text{ for some large constant $C$.} \label{Lambda-in-Psi-Xi} \end{equation}    

\end{theorem}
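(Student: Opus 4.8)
The plan is to prove the lower bound \eqref{sphere-lower-bound} by testing the restricted $L^p$ norm against a single, well-chosen spherical harmonic on $M = \mathbb S^n$, exploiting the known concentration profiles of zonal and highest-weight (Gaussian beam) harmonics, and playing them off against the mass lower bound \eqref{AD-local} for $\nu$ near the point $y_0$. First I would recall the two extremal families: the $L^2$-normalized zonal harmonic $Z_{\lambda}$ of degree $k$ (so $\lambda = \sqrt{k(k+n-1)}$), which satisfies $|Z_\lambda(x)| \sim \lambda^{(n-1)/2}$ on the cap $\{\dist(x, x_0) \lesssim \lambda^{-1}\}$ around its pole $x_0$ and decays away from it; and the highest-weight harmonic $Q_\lambda$ concentrated on a $\lambda^{-1/2}$-tube around a geodesic. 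These are precisely the examples that show sharpness of Theorem \ref{thm:Sogge} and Theorem \ref{thm:smooth_restriction}, and the critical exponent $p_\ast = 4\alpha/(n-1)$ signals which family dominates: for $p$ near $p_\ast$ the zonal example should be the right one, since $\varkappa_{p_\ast} = \alpha/p_\ast = (n-1)/4$ matches the plateau value of $\vartheta_p$.

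The key steps, in order. (1) Reduce to a local statement: since $y_0 \in \Gamma \subseteq \Sigma \subseteq \mathbb S^n$ and the relevant harmonics can be centered/oriented arbitrarily by the transitivity of $\mathrm{SO}(n+1)$, place the pole $x_0$ of the zonal harmonic at $y_0$. (2) Extract a good radius: from \eqref{AD-local} choose a sequence $r_j \searrow 0$ with $\nu(B_g(y_0; r_j)) \geq \tfrac12 \kappa_0 \, r_j^{\alpha} [\Xi(1/r_j)]^{-1}$, and set $\lambda_j := r_j^{-1}$ (rounded to an admissible spherical-harmonic frequency). (3) Lower-bound the integral: on $B_g(y_0; c\lambda_j^{-1})$ one has $|\varphi_{\lambda_j}| = |Z_{\lambda_j}| \gtrsim \lambda_j^{(n-1)/2}$, hence
\[
\|\varphi_{\lambda_j}\|_{L^p(\Gamma,\nu)}^p \;\geq\; \int_{B_g(y_0; c\lambda_j^{-1})} |Z_{\lambda_j}|^p \, d\nu \;\gtrsim\; \lambda_j^{p(n-1)/2}\, \nu\!\left(B_g(y_0; c\lambda_j^{-1})\right) \;\gtrsim\; \lambda_j^{p(n-1)/2 - \alpha}\,[\Xi(C\lambda_j)]^{-1},
\]
which upon taking $p$-th roots gives the exponent $\tfrac{n-1}{2} - \tfrac{\alpha}{p}$. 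For $p$ in this regime ($p \geq p_\ast$, where $\varkappa_p = \tfrac{n-1}{2} - \tfrac{\alpha}{p}$) this is already the claimed bound, with only the $\Xi$-factor appearing. (4) For the lower range $\max(1, p_\ast/2) \leq p \leq p_\ast$, where $\varkappa_p = \alpha(\tfrac{2}{p_\ast} - \tfrac1p) = \tfrac{n-1}{2} - \tfrac{\alpha}{p} $ no longer — here $\varkappa_p < (n-1)/4$ when $p<p_\ast$ — one must be more careful: the bare plateau bound $\lambda^{(n-1)/4}$ is too strong to hope for, and instead I expect to interpolate or to use a truncated/rescaled zonal harmonic adapted to the scale $r_j$, i.e. sum $Z_\lambda$-type bumps over a $r_j$-net, so that the $\nu$-mass condition \eqref{mu-ball-condition} (the upper Frostman bound with $\Psi$) enters and produces the $[\Psi(C\lambda)]^{1/(2p')}$ factor in $\Lambda_p$. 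Concretely, build a test function $u = \sum_i a_i Z_\lambda^{(i)}$ with poles $x_i$ on an $r$-separated subset of $\Gamma$, use near-orthogonality of the $Z_\lambda^{(i)}$ in $L^2(M)$ to control $\|u\|_{L^2(M)}$, and estimate $\|u\|_{L^p(\Gamma,\nu)}$ from below using that the bumps have disjoint supports in $\Gamma$ up to the Frostman overlap bound; optimizing the number of poles against $\Psi$ yields the stated $\Lambda_p$. (5) Finally, track constants so that $c$ depends only on $\kappa_0$ and $p$, and check that $\Lambda_p \in \mathfrak F$ (a product of positive powers of slow-growth functions composed with $\lambda \mapsto C\lambda$, which remains slow-growth).

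The main obstacle I anticipate is step (4): getting the sharp blowup factor $\Lambda_p$ in the subcritical range $p < p_\ast$. The single zonal harmonic gives the right answer only at and above $p_\ast$; below it, one genuinely needs to exploit that $\nu$ can spread mass over many scale-$r$ balls, and the precise exponents $\tfrac{1}{2p'}$ on $\Psi$ and $\tfrac12 + \tfrac{1}{2p}$ on $\Xi$ in \eqref{Lambda-in-Psi-Xi} must come out of a careful optimization balancing the $L^2(M)$-normalization of the superposition against the lower Frostman mass \eqref{AD-local} and the upper Frostman bound \eqref{mu-ball-condition}. A secondary technical point is the rounding of $\lambda_j = 1/r_j$ to an actual eigenvalue $\sqrt{k(k+n-1)}$ of $-\Delta_{\mathbb S^n}$ and verifying the concentration estimates for $Z_{\lambda_j}$ are uniform in $j$; this is standard (Bessel/Gegenbauer asymptotics) but must be stated cleanly. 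I would also need to confirm that the sequence $\lambda_j$ can be taken increasing to $\infty$, which follows since \eqref{AD-local} is a $\limsup$ as $r \to 0$, giving infinitely many admissible $r_j \searrow 0$.
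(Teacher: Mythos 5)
Your proposal takes a genuinely different route from the paper, and in step (3) it is in fact a shorter and stronger argument. The paper proves Theorem~\ref{mainthm-sharpness} in two stages: first, Proposition~\ref{smooth-spectral-proj-prop} establishes a lower bound on the \emph{operator norm} $\|\mathcal T_\lambda\|_{L^2(M)\to L^p(\nu)}$ on a general manifold by the $TT^\ast$ method (constructing test functions $f_\lambda$ localized at scale $1/\lambda$ near $y_0$, controlling $\|f_\lambda\|_{L^{p'}(\nu)}$ from above via \eqref{mu-ball-condition} --- this is where $\Psi$ enters --- and $\|\mathcal T_\lambda\mathcal T_\lambda^\ast f_\lambda\|_{L^p(\nu)}$ from below via the kernel lower bound of Lemma~\ref{normal-kernel-lower-bound-lemma} and \eqref{AD-local} --- this is where $\Xi$ enters); then Corollary~\ref{rough-spectral-proj-prop} transfers this to the rough projector $\mathbf P_{m/2}$, and only at the final step does the arithmetic gap structure of $\mathrm{Spec}(-\Delta_{\mathbb S^n})$ convert an operator-norm lower bound into an actual eigenfunction lower bound. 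Your approach bypasses all of this: you directly test on an $L^2$-normalized zonal harmonic $Z_{\lambda_j}$ with pole at $y_0$, use the standard concentration $|Z_{\lambda_j}|\gtrsim\lambda_j^{(n-1)/2}$ on $B_g(y_0;c/\lambda_j)$, extract good radii $r_j\searrow 0$ from the $\limsup$ in \eqref{AD-local}, and integrate. This yields $\|Z_{\lambda_j}\|_{L^p(\nu)}\gtrsim \lambda_j^{(n-1)/2-\alpha/p}[\Xi(C\lambda_j)]^{-1/p}$, which is \emph{stronger} than \eqref{sphere-lower-bound} because $[\Xi(C\lambda)]^{-1/p}\geq[\Lambda_p(\lambda)]^{-1}$ for $p\geq 1$ (indeed $\Lambda_p\,\Xi^{-1/p}=\Psi^{1/(2p')}\Xi^{1/2-1/(2p)}\geq 1$). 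The paper's weaker blow-up factor $\Lambda_p$ is an artifact of the $TT^\ast$ duality, which is unavoidable when no explicit extremizer is available (e.g.\ on a general $M$), but is not needed on the sphere where you can exhibit the extremizer directly. The trade-off is that the paper's route is modular and gives the lower bound for spectral projectors on arbitrary manifolds (Proposition~\ref{smooth-spectral-proj-prop}), which is of independent interest.

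There is, however, a confusion in your step (4) that you should clear up. Substituting $p_\ast=4\alpha/(n-1)$ into $\varkappa_p=\alpha\bigl(\tfrac{2}{p_\ast}-\tfrac1p\bigr)$ gives $\varkappa_p=\tfrac{n-1}{2}-\tfrac{\alpha}{p}$ \emph{for every} $p$, not just for $p\geq p_\ast$. Your step (3) therefore already proves \eqref{sphere-lower-bound} in the whole range $p\geq\max(1,p_\ast/2)$ (and indeed for all $p\geq 1$); the superposition-of-bumps argument you sketch in step (4) is unnecessary and would only matter if you were trying to reproduce the exact $\Lambda_p$-factor, which is not required since \eqref{sphere-lower-bound} is a one-sided bound. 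The point you may have been chasing --- that the estimate of Theorem~\ref{mainthm-restriction-ball} is only \emph{sharp} for $p\geq p_\ast$ --- is a statement about the upper bound, where $\vartheta_p=\tfrac{n-1}{4}>\varkappa_p$ for $p<p_\ast$; the lower bound $\varkappa_p$ from a single zonal harmonic is correct in the full range, it simply does not match the upper bound when $p<p_\ast$. Once you delete step (4) and record the zonal-harmonic concentration estimate (Gegenbauer asymptotics) carefully, including the observation that $\lambda_j\sim 1/r_j$ can be rounded to admissible frequencies $\sqrt{k(k+n-1)}$ with $k\in\mathbb N$, the argument is complete.
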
 
\noindent The conditional sharpness statement for Theorem \ref{mainthm-restriction-ball} leads naturally to a corresponding one for Theorem \ref{mainthm-restriction-arbitrary}. We record this below.  
\begin{corollary} \label{maincor-sharpness} 
Suppose that $M = \mathbb S^n$, and that $\Sigma$, $\Gamma$ and $p_0$ are as in Theorem \ref{mainthm-restriction-arbitrary}. For $\kappa > 0$, let $\nu^{(\kappa)}$ be a probability measure obeying \eqref{eigenfunction-restriction-estimate}. Suppose there exists $\kappa > 0$, a choice of $\Psi_{\kappa}, \Xi_{\kappa} \in \mathfrak F$ and $y_{\kappa} = y_0(\kappa) \in \Gamma$ for which $\nu^{(\kappa)}$
%the pull-back measure $\mu_{\kappa} = \nu^{(\kappa)} \circ \mathfrak{u}$ 
obeys both \eqref{mu-ball-condition} and \eqref{AD-local} with $\alpha = d(1 - \epsilon) - \kappa$. 
\vskip0.1in
\noindent Then  for every $p \geq \max(p_{\ast}, 2)$, there exists a constant $c_{\kappa} = c(\kappa, p) > 0$ and a sequence of $L^2$-normalized spherical harmonics $\{ \varphi_{\lambda_j} : j \geq 1 \}$ with $\lambda_j \nearrow \infty$ such that  
\begin{equation}  \| \varphi_{\lambda_j} \|_{L^p(\Gamma, \nu^{(\kappa)})} \geq  c_{\kappa}(1 + \lambda_j)^{\theta_p + \kappa/p} \bigl[\Lambda_p(\lambda_j) \bigr]^{-1}\text{ for all sufficiently large $j$.}\label{sphere-lower-bound-2}  \end{equation}  
Here $\theta_p$ is as in Theorem \ref{mainthm-restriction-arbitrary};  $\Lambda_p$ is defined as in \eqref{Lambda-in-Psi-Xi}, with $(\Psi, \Xi)$ replaced by $(\Psi_{\kappa}, \Xi_{\kappa})$.
\end{corollary}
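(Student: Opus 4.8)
The plan is to obtain Corollary \ref{maincor-sharpness} as a direct consequence of Theorem \ref{mainthm-sharpness}: the measure $\nu^{(\kappa)}$ of Theorem \ref{mainthm-restriction-arbitrary} plays the role of the measure $\nu$ in Theorem \ref{mainthm-sharpness}, with the dimension parameter specialized to $\alpha = d(1-\epsilon) - \kappa$. All the analytic content sits in Theorem \ref{mainthm-sharpness}; what remains is bookkeeping, namely matching the exponents and the admissible ranges of $p$. Concretely, I would fix the $\kappa > 0$ furnished by the hypothesis, together with the associated $\Psi_\kappa, \Xi_\kappa \in \mathfrak F$ and the point $y_\kappa \in \Gamma$, and set $\alpha := d(1-\epsilon) - \kappa$, which we may assume lies in $(0,d]$ (this is the only relevant regime, $\kappa$ being small, and $\epsilon \in [0,1)$ guarantees $d(1-\epsilon) > 0$). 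With this choice, the critical exponent of Theorem \ref{mainthm-restriction-ball} becomes $p_\ast = 4\alpha/(n-1) = p_0 - 4\kappa/(n-1)$ by the definition \eqref{def-p0} of $p_0$.

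Next I would invoke Theorem \ref{mainthm-sharpness} on $M = \mathbb S^n$ with the given $\Sigma$, the probability measure $\nu = \nu^{(\kappa)}$ supported on $\Gamma \subseteq \mathfrak u([0,1]^d)$, the slow-growth functions $\Psi = \Psi_\kappa$ and $\Xi = \Xi_\kappa$, and base point $y_0 = y_\kappa$. Conditions \eqref{mu-ball-condition} and \eqref{AD-local} for this data, with this very value $\alpha = d(1-\epsilon) - \kappa$, are exactly the hypotheses we have been handed in the statement of the corollary. Theorem \ref{mainthm-sharpness} then supplies, for every $p \geq \max(1, p_\ast/2)$, a constant $c = c(\kappa, p) > 0$ and a sequence of $L^2$-normalized spherical harmonics $\varphi_{\lambda_j}$, $\lambda_j \nearrow \infty$, with
\[ \|\varphi_{\lambda_j}\|_{L^p(\Gamma, \nu^{(\kappa)})} \geq c\,(1+\lambda_j)^{\varkappa_p}\,\bigl[\Lambda_p(\lambda_j)\bigr]^{-1}, \qquad \varkappa_p = \alpha\Bigl(\frac{2}{p_\ast} - \frac1p\Bigr), \]
where $\Lambda_p$ is given by \eqref{Lambda-in-Psi-Xi} with $(\Psi, \Xi) = (\Psi_\kappa, \Xi_\kappa)$ — precisely the $\Lambda_p$ named in the corollary.

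It remains to match the exponent and the range. Since $2/p_\ast = (n-1)/(2\alpha)$,
\[ \varkappa_p = \frac{n-1}{2} - \frac{\alpha}{p} = \frac{n-1}{2} - \frac{d(1-\epsilon)}{p} + \frac{\kappa}{p}. \]
For $p \geq \max(p_\ast, 2)$ one has both $p \geq p_\ast = p_0 - 4\kappa/(n-1)$ and $p \geq 2$, so the defining formula \eqref{our-exponent} for $\theta_p$ — in the case $p_0 > 2$ this is the regime $p \geq p_0 - 4\kappa/(n-1)$, and in the case $p_0 \leq 2$ it holds for all $p \geq 2$ — gives $\theta_p = (n-1)/2 - d(1-\epsilon)/p$, whence $\varkappa_p = \theta_p + \kappa/p$. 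Moreover $\max(p_\ast,2) \geq \max(1, p_\ast/2)$, so the range $p \geq \max(p_\ast,2)$ required by the corollary lies inside the range where Theorem \ref{mainthm-sharpness} applies. Substituting $\varkappa_p = \theta_p + \kappa/p$ into the displayed lower bound and writing $c_\kappa := c(\kappa,p)$ yields \eqref{sphere-lower-bound-2}.

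There is no genuine obstacle here; the corollary is a formal consequence of Theorem \ref{mainthm-sharpness}. The only points meriting (mild) care are the three verified above: that the hypotheses of the corollary are literally those of Theorem \ref{mainthm-sharpness} under the identification $\alpha = d(1-\epsilon) - \kappa$; that $\max(p_\ast,2) \geq \max(1, p_\ast/2)$, so the $p$-ranges are compatible; and that the substitution $\alpha = d(1-\epsilon) - \kappa$ converts $\varkappa_p$ into $\theta_p + \kappa/p$ uniformly over $p \geq \max(p_\ast, 2)$.
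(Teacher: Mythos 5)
Your proposal is correct and takes essentially the same route as the paper: both proofs apply Theorem \ref{mainthm-sharpness} directly with $\alpha = d(1-\epsilon) - \kappa$, $\Psi = \Psi_\kappa$, $\Xi = \Xi_\kappa$, and then observe that $\varkappa_p = (n-1)/2 - \alpha/p = \theta_p + \kappa/p$ on the range $p \geq \max(2, p_\ast)$. The paper's version is terser (omitting the explicit check that $\max(p_\ast, 2) \geq \max(1, p_\ast/2)$ and the case split on $p_0 \lessgtr 2$), but the content is the same.
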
 

\noindent {\bf{Remarks: }} 
\begin{enumerate}[1.]
\item We observe that the growth exponent $\varkappa_p$ in Theorem \ref{mainthm-sharpness} agrees with the growth exponent $\vartheta_p$ in Theorem \ref{mainthm-restriction-ball} for $p$ in the range $p \geq \max(2, p_{\ast})$. Thus, while the lower bound on the eigenfunction growth provided by Theorem \ref{mainthm-sharpness} is valid for all $p \geq \max(1, p_{\ast}/2)$, it matches the upper bound in Theorem \ref{mainthm-restriction-ball} (upto factors of $\mathfrak F$) only when $p \geq \max(2, p_{\ast})$. This establishes the sharpness of the estimate \eqref{eigenfunction-growth-nu} for $p \in [\max(2, p_{\ast}), \infty]$.   
\vskip0.1in 
\item As in Remark \ref{interchange-ball} following Theorem \ref{mainthm-restriction-ball}, we observe that the condition \eqref{AD-local} is equivalent to its Euclidean counterpart:
\begin{equation} 
\limsup_{r \rightarrow 0} \bigl[ r^{-\alpha} \; \Xi(1/r) \mu(B(u_0;r)) \bigr] =: \kappa_0 > 0 \quad \text{ for some } u_0 \in E. 
\label{AD-local-3} \end{equation} 
We will use this version in many of our proofs.   
\vskip0.1in
\item Theorem \ref{mainthm-sharpness} and Corollary \ref{maincor-sharpness} show that our estimates \eqref{eigenfunction-restriction-estimate} and \eqref{eigenfunction-growth-nu} are sharp for $M = \mathbb S^n$ and any $\Sigma$ therein, for large $p$.  More precisely, if $\mu$ is a probability measure supported on $E \subseteq [0,1]^d$ obeying \eqref{mu-ball-condition-2} and \eqref{AD-local-3}, then there exists some manifold $M$ (namely $\mathbb S^n$), such that for any submanifold $\Sigma \subseteq M$ and any coordinate chart $(U, \mathfrak u)$, the estimate \eqref{eigenfunction-growth-nu} enjoyed by $\nu = \mu \circ \mathfrak{u}^{-1}$ cannot be improved for $p \geq \max(2, p_{\ast})$. This however does not preclude the possibility of an improved estimate (i.e., a smaller growth exponent than $\vartheta_p$) on other manifolds or on special submanifolds, even with the same range of $p$. Indeed, the growth estimate \eqref{eigenfunction-growth-nu} can be improved in certain other situations, for instance, when $M = \mathbb T^n$, the $n$-dimensional flat torus (which admits a stronger Weyl law), or if the submanifold $\Sigma$ in a general manifold $M$ has additional geometric properties, for example if $\Sigma$ is a curve of nonvanishing geodesic curvature. This is consistent with similar results of this type for smooth submanifolds, see for example \cite[Theorem 2]{BGT07}, \cite{{Bourgain-geodesic2009}, {BoRu12}, {Hassell-Tacy},  {Blair-Sogge}} and the bibliography therein. We expect random Cantor subsets of such $M$ and $\Sigma$ to present better eigenfunction restriction estimates than \eqref{eigenfunction-growth-nu}, and pursue this direction in greater detail in upcoming work. 
\end{enumerate} 
\subsubsection{Genericity of the sharp eigenfunction estimates} 
Measure-theoretic assumptions of the form \eqref{mu-ball-condition} and \eqref{AD-local} are instrumental in obtaining the sharp eigenfunction estimates in Theorems \ref{mainthm-restriction-ball} and \ref{mainthm-sharpness}. One is thus naturally led to the following questions: {\em{do there exist measures satisfying these assumptions, and if so, are they ubiquitous in any quantifiable sense?}} Our next theorem answers both these questions in the affirmative. 
\vskip0.1in
\noindent In preparation for the statement, let us define a family of continuous nondecreasing functions $\Phi_R: [1, \infty) \rightarrow [1, \infty)$ indexed by a large real parameter $R$, as follows, 
 \begin{equation} \Phi_R(t) := \exp(R \sqrt{\log t}) \quad \text{ for $t \geq 1$.}  \label{our-Phi} \end{equation} 
In view of the definition \eqref{functions-of-slow-growth}, we observe that $\Phi_R \in \mathfrak F$ for every $R \geq 0$, i.e., every $\Phi_R$ is a function of slow growth. 
\begin{theorem} \label{mainthm-example}
Fix $\epsilon \in [0,1)$, and positive integers $n \geq 2$ and $1 \leq d \leq n$. 
%Let $\Sigma$ be a smooth, $d$-dimensional submanifold of $M$. 
Then there is a probability space $(\Omega, \mathcal{B}, \mathbb{P}^*)$ depending on these parameters such that for every $\omega \in \Omega$, there exists a Cantor-like set $E = E(\omega) \subseteq [0,1]^d$ equipped with a probability measure $\mu = \mu (\omega)$ that obeys the following properties $\mathbb P^{\ast}$-almost surely.
\begin{enumerate}[(a)]
\vskip0.1in 
\item The random set $E$ has Hausdorff dimension $d(1-\epsilon)$. For $\epsilon = 0$, the random measure $\mu$ is singular with respect to the $d$-dimensional Lebesgue measure on $[0,1]^d$.   \label{parta} 
%Moreover, for $\mathbb{P}^*$-almost every $\omega$, we have the estimate $\hat{\mu_{\omega}}(\xi) \leq C |\xi|^{-d(1-\epsilon)/2}$ for all $\xi \in \mathbb{R}^n$.
\vskip0.1in
\item There exist a positive deterministic constant $C_1 = C_1(n, d, \epsilon)$ and a positive $\omega$-dependent constant $C_2=C_2(\omega,n,d,\epsilon)$ such that 
\begin{equation} 
\sup \left\{ \mu  \bigl(B(x; r)\bigr) r^{-d(1 - \epsilon)} \bigl[ \Phi_{C_1}(1/r) \bigr]^{-1} : r > 0, x \in [0,1]^d  \right\} \leq C_2 < \infty.  \label{ball-condition-upper-bound} 
\end{equation} 
As a result, for any compact manifold $(M, g)$, any sub-manifold $\Sigma$ and any coordinate chart $(U, \mathfrak{u})$, the measure $\nu = \mu \circ \mathfrak{u}$ given by \eqref{mu-nu-relation} obeys the ball condition \eqref{mu-ball-condition} with $\alpha = d(1 - \epsilon)$, and $\Psi \equiv \Phi_{C_1}$.  
%every $\alpha$ of the form $\alpha = d(1 - \epsilon) - \kappa$, where $\kappa > 0$. 
\label{mainthm-mainest}
\vskip0.1in
\item For every $v_0 \in E$, there exists a constant $C_0 = C(v_0 ; \omega) > 0$ such that  
\begin{equation} 
\inf \left\{ \mu \bigl(B(v_0; r)\bigr) r^{-d(1 - \epsilon)} \bigl[ \Phi_{C_1}(1/r) \bigr] : 0 < r < 1 \right\} \geq C_0^{-1}.
%C^{-1} r^{d(1 - \epsilon)} \bigl[\Phi(1/r) \bigr]^{-1} \leq  \mu_{\omega} \bigl(B(v_0; r)\bigr) \leq C r^{d(1 - \epsilon)} \Phi(1/r).  
\label{ball-condition-lower-bound} \end{equation}   \label{sharpness-part}
\noindent Thus $\nu = \mu \circ \mathfrak{u} $ obeys \eqref{AD-local} at every point $y_0 = \mathfrak{u}(v_0) \in \Gamma$, with $\alpha = d(1 - \epsilon)$, $\kappa_0 \geq C_0^{-1}$ and $\Xi \equiv \Phi_{C_1}$.  \label{mainthm-example-partc} 
\end{enumerate}
\end{theorem}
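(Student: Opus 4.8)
The plan is to realize $E(\omega)$ via a random homogeneous Cantor construction whose generation scales decay so fast that the number of generations reaching radius $r$ is comparable to $\sqrt{\log(1/r)}$; this is precisely the feature that converts a slowly growing fluctuation per generation into the modulus $\Phi_{C_1}(1/r)=\exp(C_1\sqrt{\log(1/r)})$. Fix side lengths $1=\ell_0>\ell_1>\cdots$ with $\ell_m=\ell_{m-1}/M_m$ for integers $M_m\to\infty$ chosen so that $\log(1/\ell_m)\asymp m^2$, together with a constant $C_1=C_1(n,d,\epsilon)$ to be taken sufficiently large. Starting from $E_0=[0,1]^d$ and having constructed the $m$-th generation $\mathcal{Q}_m$ — a disjoint family of closed cubes of side $\ell_m$ — subdivide each $Q\in\mathcal{Q}_m$ into $M_{m+1}^d$ congruent grid subcubes and let $\mathcal{Q}_{m+1}$ retain exactly $N_{m+1}$ of them, chosen uniformly at random, independently over $Q$ and over $m$. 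The integers $N_m$ are fixed so that $\mathcal{N}_m:=\prod_{k\le m}N_k$ obeys $\mathcal{N}_m^{-1}\asymp\ell_m^{d(1-\epsilon)}$ when $\epsilon>0$ (a vanishing fraction $\asymp M_m^{-d\epsilon}$ of subcubes is kept) and $\mathcal{N}_m^{-1}\asymp\ell_m^{d}\exp\bigl(c\sqrt{\log(1/\ell_m)}\bigr)$ for a fixed $0<c<C_1$ when $\epsilon=0$ (a fixed fraction is kept, so that $\vol(E_m)\to0$). Put $E_m:=\bigcup_{Q\in\mathcal{Q}_m}Q$, $E:=\bigcap_m E_m$, and let $\mu$ be the $\mathbb{P}^{\ast}$-a.s.\ weak-$\ast$ limit of the normalized Lebesgue measures on $E_m$, equivalently the mass distribution assigning mass $\mathcal{N}_m^{-1}$ to each $Q\in\mathcal{Q}_m$. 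Take $(\Omega,\mathcal{B},\mathbb{P}^{\ast})$ to be the product of the finite uniform-selection spaces attached to all cubes of all generations.

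The crux is the uniform upper ball bound \eqref{ball-condition-upper-bound}. Fix $r\in(0,1)$ and let $m$ satisfy $\ell_m\le r<\ell_{m-1}$. A ball of radius $r$ meets only $O(1)$ cubes of $\mathcal{Q}_{m-1}$, and within a given parent $Q'\in\mathcal{Q}_{m-1}$ it meets at most $O\bigl((r/\ell_m)^d+1\bigr)$ grid subcubes, of which the random fraction $N_m/M_m^d$ survives into $\mathcal{Q}_m$. A Chernoff/Serfling tail bound for the hypergeometric distribution shows that, outside an event whose probability is summable over an $\ell_m$-net of ball centres, over the $O(1)$ parents, and over the countable family of radii $r\in\{\ell_k\}_k\cup\{2^{-j}\}_j$, the number of surviving subcubes of $Q'$ inside the ball is $\lesssim(r/\ell_m)^d(N_m/M_m^d)+O(\log(1/\ell_m))$; Borel--Cantelli then upgrades this to a $\mathbb{P}^{\ast}$-a.s.\ statement valid for all small $r$, the finitely many remaining radii being absorbed into $C_2(\omega)$. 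Multiplying the count by the per-cube mass $\mathcal{N}_m^{-1}$ and performing the bookkeeping $\mathcal{N}_m=N_m\mathcal{N}_{m-1}$, $M_m=\ell_{m-1}/\ell_m$, $\mathcal{N}_m^{-1}\lesssim\ell_m^{d(1-\epsilon)}\Phi_{C_1}(1/\ell_m)$, $r<\ell_{m-1}$: the deterministic main term contributes $\lesssim(r/\ell_{m-1})^d\mathcal{N}_{m-1}^{-1}\lesssim r^{d(1-\epsilon)}$, while the $O(\log(1/\ell_m))$ fluctuation term — which is relevant only when $r$ and $\ell_m$ are comparable up to a subpolynomial factor, so that $\Phi_{C_1}(1/\ell_m)\asymp\Phi_{C_1}(1/r)$ — contributes $\lesssim\log(1/\ell_m)\,\ell_m^{d(1-\epsilon)}\lesssim r^{d(1-\epsilon)}\Phi_{C_1}(1/r)$, the exponential $\Phi_{C_1}$ comfortably absorbing the logarithmic loss. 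Hence $\mu(B(x,r))\lesssim r^{d(1-\epsilon)}\Phi_{C_1}(1/r)$, which is \eqref{ball-condition-upper-bound}; the consequence for $\nu=\mu\circ\mathfrak{u}$ on an arbitrary $(M,g,\Sigma,\mathfrak{u})$ then follows from the bi-Lipschitz dictionary of Remark \ref{interchange-ball}.

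The remaining assertions are softer. Covering $E$ by $\mathcal{Q}_m$ gives $\mathcal{H}^{d(1-\epsilon)+\eta}(E)\lesssim\mathcal{N}_m\ell_m^{d(1-\epsilon)+\eta}\to0$ for every $\eta>0$, so $\text{dim}_{\mathbb H}(E)\le d(1-\epsilon)$ deterministically; combined with the mass distribution principle applied to $\mu$ and the bound just proved (which gives $\mu(B(x,r))\lesssim r^{d(1-\epsilon)-\eta}$ for small $r$), this yields $\text{dim}_{\mathbb H}(E)=d(1-\epsilon)$ $\mathbb{P}^{\ast}$-a.s. For $\epsilon=0$ one has $\vol(E)=\lim_m\vol(E_m)=0$ while $\mu(E)=1$, so $\mu$ is singular with respect to Lebesgue measure. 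The lower ball bound \eqref{ball-condition-lower-bound} is deterministic: for $v_0\in E$ and $\ell_m\le r<\ell_{m-1}$, the ball $B(v_0,r)$ contains the generation-$m$ cube through $v_0$ if $r\ge\sqrt{d}\,\ell_m$, and otherwise its generation-$(m+1)$ cube, so $\mu(B(v_0,r))\ge\mathcal{N}_{m+1}^{-1}\gtrsim\ell_{m+1}^{d(1-\epsilon)}$; since $\ell_{m+1}/\ell_m\asymp\exp(-c'm)$ (forced by $\log(1/\ell_m)\asymp m^2$) and $C_1$ is large relative to $c'$, the right side is $\gtrsim\ell_m^{d(1-\epsilon)}\Phi_{C_1}(1/\ell_m)^{-1}\gtrsim r^{d(1-\epsilon)}\Phi_{C_1}(1/r)^{-1}$, which is \eqref{ball-condition-lower-bound}. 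Transferring through $\mathfrak{u}$ and comparing with the Euclidean form \eqref{AD-local-3} of \eqref{AD-local} gives the final statement, with $\kappa_0\ge C_0^{-1}$ and $\Xi\equiv\Phi_{C_1}$.

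The main obstacle is the simultaneity demanded by \eqref{ball-condition-upper-bound}: one must control $\mu(B(x,r))$ for \emph{all} balls at once with a single almost-surely finite constant. This forces the hypergeometric tail estimate to beat a union bound over $\asymp\ell_m^{-d}$ candidate balls (and their $O(1)$ parent cubes) at each of infinitely many scales, and — more delicately — requires the per-generation fluctuation to telescope, over the $\asymp\sqrt{\log(1/r)}$ generations between scale $1$ and scale $r$, to no worse than $\exp(C_1\sqrt{\log(1/r)})$. Reconciling these two budgets is exactly what dictates the growth rate $\log(1/\ell_m)\asymp m^2$ and constrains the admissible size of $C_1=C_1(n,d,\epsilon)$, which must simultaneously be taken large enough for the scale-mismatch step in the proof of \eqref{ball-condition-lower-bound}.
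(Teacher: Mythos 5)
Your construction is correct, and it is a genuinely different route from the paper's. The paper selects each grid subcube independently with a Bernoulli($p_k$) coin flip, so the number of survivors per parent is random. That single choice propagates technical cost through the whole argument: the set $E$ can die out, forcing the conditioned measure $\mathbb P^\ast = \mathbb P(\cdot \mid E\neq\emptyset)$ of \eqref{P-star}; the per-cube mass under $\mu = \lim\mu_k$ is random, so the weak-$\ast$ existence of $\mu$ is a nontrivial statement (Lemma \ref{weak-star-limit-lemma} and Proposition \ref{random-measure-existence-prop}); and $\mu(Q(\mathbf i_{\ell^\ast}))$ must be recovered as a limit $\lim_k q_k[\mathbf i_{\ell^\ast}]/P_k$, requiring the multi-generation descendant-count estimate of Lemma \ref{descendant-count-lemma} (proved via Bernstein). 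You instead select \emph{exactly} $N_{m}$ children per parent, uniformly at random without replacement. This collapses the machinery: $E$ is automatically nonempty, so no conditioning is needed; $|\mathcal Q_m| = \mathcal N_m$ is deterministic, so $\mu(Q(\mathbf i_m)) = \mathcal N_m^{-1}$ exactly and your $\mu$ is simultaneously the mass-distribution measure $\widetilde\mu$ and the normalized-Lebesgue weak-$\ast$ limit (the paper itself observes in \S\ref{Cantor-measure-section} that these coincide precisely when the number of surviving children per parent is constant); and the ball estimate \eqref{ball-condition-upper-bound} reduces to a \emph{single-scale} fluctuation statement about hypergeometric counts of selected children within a parent cube, rather than a telescoping estimate across all generations. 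Both constructions achieve the same scale growth $\log(1/\ell_m)\asymp m^2$, which is what turns a polynomial (in $m$) per-step loss into the final modulus $\exp(C_1\sqrt{\log(1/r)})$. What the paper's Bernoulli model buys is full independence over cubes at a given scale (useful for the Salem/Fourier-decay properties mentioned in the remarks, and continuity with the line of work in \cite{LP09, LP11}), at the cost of the die-out and weak-$\ast$ apparatus. What your model buys is a substantially shorter route to the two ball conditions, because the measure is deterministic on cylinders and only one generation of randomness matters for any fixed radius $r$.

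Two small caveats for you to tighten. First, the final paragraph of your proposal describes the ``per-generation fluctuation'' as needing to ``telescope over $\asymp\sqrt{\log(1/r)}$ generations'', but that is the obstacle in the \emph{paper's} model, not yours: your mass $\mathcal N_m^{-1}$ per generation-$m$ cube is deterministic, so only the hypergeometric count at the single transition $m-1\to m$ is random, and no compounding occurs. Second, the hypergeometric tail bound should be stated in the Bernstein form: for a count $X$ with mean $\mu_0$, one has $\mathbb P(X > \mu_0 + t) \leq \exp(-ct^2/(\mu_0 + t))$, so setting $t = K\log(1/\ell_m)$ with $K$ large relative to $d$ yields a tail $\lesssim \ell_m^{Kc/2}$, which beats the $\ell_m^{-d}\log(1/\ell_m)$ union-bound budget (over net centres, over $O(1)$ parents, and over dyadic radii) once $K c/2 > d$. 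The resulting fluctuation $K\log(1/\ell_m)\cdot\mathcal N_m^{-1} \asymp m^2\ell_m^{d(1-\epsilon)}$ is then absorbed by $\Phi_{C_1}(1/r) \geq \exp(C_1(m-1))$ as you indicate, and the constraint $C_1 > 4d$ coming from the mismatch $\ell_{m-1}/\ell_{m+1} \asymp e^{4m}$ in the lower-bound step is the binding one. With those points made explicit, the proof is complete.
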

\vskip0.1in
\noindent Our construction will show that the support of $\mathbb P^{\ast}$ contains uncountably many points $\omega \in \Omega$, each of which generates a distinct set $E = E(\omega) \subseteq [0,1]^d$; see the discussion following Lemma \ref{nonemptyE-lemma} on page \pageref{uncountable-sets}. In view of Theorems \ref{mainthm-restriction-ball} and \ref{mainthm-example}, almost every such set $E$ gives rise to an eigenfunction restriction estimate of the form \eqref{eigenfunction-growth-nu} on every $n$-dimensional manifold $M$. As indicated in remark \ref{sharpness-remark} following Theorem \ref{mainthm-restriction-arbitrary}, the estimate that we obtain for these random sets $E$ is stronger than the statement provided by Theorem \ref{mainthm-restriction-arbitrary} for a general set $E$, in the sense that one finds a single measure $\nu$ that obeys \eqref{eigenfunction-restriction-estimate} with $\kappa = 0$ (and hence for all $\kappa > 0$), at the expense of a slowly growing $\Phi$. Theorem \ref{mainthm-sharpness} shows that this generic estimate is sharp when $M = \mathbb S^n$.  We summarize these findings in the corollary below. 
\begin{corollary} \label{maincor-example}
Let $E = E(\omega)$, equipped with $\mu = \mu(\omega)$, denote respectively the random Cantor set and Cantor measure given by Theorem \ref{mainthm-example}. Then for $\mathbb P^{\ast}$-almost every $\omega \in \Omega$, the set $E = E(\omega)$ satisfies the conclusions below. 
\begin{enumerate}[(a)]
\item Choose any manifold $(M, g)$ and any submanifold $\Sigma$ therein, with $\Sigma$ parametrized by the local coordinate chart $(U, \mathfrak{u})$. Set $\Gamma = \mathfrak{u}(E) \subseteq \Sigma \subseteq M$.  
\vskip0.1in
\noindent Then $\Gamma$,  equipped with the probability measure $\nu = \mu \circ \mathfrak{u}^{-1}$, enjoys the following eigenfunction restriction property. For every $p \in [2, \infty]$, there exist positive finite constants $R = R(n, d, \epsilon)$ and $C = C(\omega, p, \Sigma, M)$ such that for all $\lambda \geq 1$, 
\begin{equation}  ||\varphi_{\lambda}||_{L^p(\Gamma, \nu)} \leq C (1 + \lambda)^{\theta_p} \; \Phi_{R/p}(\lambda) \; ||\varphi_{\lambda}||_{L^2(M, dV_g)}. \label{generic-sharp} \end{equation}  
Here $\theta_p$ is as in Theorem \ref{mainthm-restriction-arbitrary} but with $\kappa = 0$, and $\Phi_R$ is as in \eqref{our-Phi}. 
\vskip0.1in 
\item If $M = \mathbb S^n$, then for every $p \in [2, \infty]$ we can find a constant $C = C(\omega, p) > 0$ and a sequence of $L^2$-normalized eigenfunctions $\{\varphi_{\lambda_j} : j \geq 1\}$ with $\lambda_j \nearrow \infty$ such that
\[ \| \varphi_{\lambda_j} \|_{L^p(\Gamma, \nu)} \geq C^{-1} (1 + \lambda_j)^{\theta_p} \bigl[\Phi_{R}(\lambda_j) \bigr]^{-1}\text{ for all sufficiently large $j$.} \]
\end{enumerate}  
\end{corollary}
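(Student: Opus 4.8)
\textit{Strategy.} The final corollary should come out of substituting the random set and measure produced by Theorem \ref{mainthm-example} into the two general mechanisms already in place: Theorem \ref{mainthm-restriction-ball} gives the upper bound of part (a), and Theorem \ref{mainthm-sharpness} (equivalently Corollary \ref{maincor-sharpness}) gives the lower bound of part (b). Fix once and for all an $\omega$ in the $\mathbb P^{\ast}$-full event on which conclusions (a)--(c) of Theorem \ref{mainthm-example} hold, and write $E = E(\omega)$, $\mu = \mu(\omega)$, $\Gamma = \mathfrak u(E)$, $\nu = \mu\circ\mathfrak u^{-1}$.

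\textit{Part (a).} By part (b) of Theorem \ref{mainthm-example}, the Euclidean ball bound \eqref{ball-condition-upper-bound} for $\mu$ (with $\Phi_{C_1}$, $C_1 = C_1(n,d,\epsilon)$) transfers, up to the bi-Lipschitz constant of the chart $\mathfrak u$, to the hypothesis \eqref{mu-ball-condition} of Theorem \ref{mainthm-restriction-ball} for $\nu$, with $\alpha = d(1-\epsilon)$, $\Psi\equiv\Phi_{C_1}$ and $C_0 = C_0(\omega,\Sigma)$. Theorem \ref{mainthm-restriction-ball} then yields \eqref{eigenfunction-growth-nu}, and it remains only to match notation. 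First, $p_{\ast} = 4\alpha/(n-1)$ coincides with $p_0$ in \eqref{def-p0}, so the exponent $\vartheta_p$ of \eqref{our-exponent-2} is exactly $\theta_p$ of \eqref{our-exponent} evaluated at $\kappa = 0$. Second, $\overline{\Psi}_p(\lambda) = [\Phi_{C_1}(\lambda)]^{1/p}$ off the critical exponent and $[\Phi_{C_1}(\lambda)\log\lambda]^{1/p}$ at $p = p_{\ast}$; since $[\Phi_{C_1}(\lambda)]^{1/p} = \exp\!\bigl(\tfrac{C_1}{p}\sqrt{\log\lambda}\bigr) = \Phi_{C_1/p}(\lambda)$ and $(\log\lambda)^{1/p}\le\exp\!\bigl(\tfrac1p\sqrt{\log\lambda}\bigr)$ for all large $\lambda$, we get $\overline{\Psi}_p(\lambda)\le\Phi_{R/p}(\lambda)$ with the deterministic choice $R := C_1 + 1$. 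Absorbing the bounded range of $\lambda$ into the constant gives \eqref{generic-sharp} with $C = C(\omega,p,\Sigma,M)$.

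\textit{Part (b), the range $p\ge\max(2,p_{\ast})$.} Now let $M = \mathbb S^n$. Part (b) of Theorem \ref{mainthm-example} again supplies \eqref{mu-ball-condition} with $\Psi\equiv\Phi_{C_1}$, and part (c), applied at an arbitrary $v_0\in E$ and its image $y_0 = \mathfrak u(v_0)\in\Gamma$, promotes the lower bound \eqref{ball-condition-lower-bound} into the condition \eqref{AD-local} with $\Xi\equiv\Phi_{C_1}$, $\alpha = d(1-\epsilon)$ and some $\kappa_0 > 0$. The hypotheses of Theorem \ref{mainthm-sharpness} thus hold, and it produces, for each $p\ge\max(1,p_{\ast}/2)$, a sequence of $L^2$-normalized spherical harmonics $\{\varphi_{\lambda_j}\}$ with $\|\varphi_{\lambda_j}\|_{L^p(\Gamma,\nu)}\ge c(1+\lambda_j)^{\varkappa_p}[\Lambda_p(\lambda_j)]^{-1}$. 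Because $2\alpha/p_{\ast} = (n-1)/2$, one has $\varkappa_p = (n-1)/2 - \alpha/p$, which equals $\theta_p$ (at $\kappa = 0$) precisely when $p\ge\max(2,p_{\ast})$; and with $\Psi = \Xi = \Phi_{C_1}$ and $\tfrac1{2p'}+\tfrac1{2p} = \tfrac12$, the blowup factor \eqref{Lambda-in-Psi-Xi} collapses to $\Lambda_p(\lambda) = \Phi_{C_1}(C\lambda)$, which is $\lesssim\Phi_{C_1}(\lambda)\le\Phi_R(\lambda)$ (using $\sqrt{\log C + \log\lambda}\le\sqrt{\log C}+\sqrt{\log\lambda}$). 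This gives the claimed lower bound for all $p\ge\max(2,p_{\ast})$, with $C = C(\omega,p)$.

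\textit{Part (b), the subcritical range, and the main difficulty.} When $p_{\ast}\le 2$ the previous paragraph already covers all of $p\in[2,\infty]$. The remaining case is $p_{\ast} > 2$ and $p\in[2,p_{\ast})$, where $\theta_p\equiv(n-1)/4$ is constant but the exponent $\varkappa_p$ delivered by Theorem \ref{mainthm-sharpness} is strictly smaller, so the abstract input does not suffice. Here I would argue directly on $\mathbb S^n$ using the highest-weight (Gaussian-beam) spherical harmonic $\varphi_{\lambda_j}$ of degree $\asymp\lambda_j$, rotated so that its equatorial great circle passes through the point $y_0\in\Gamma$ fixed above: then $|\varphi_{\lambda_j}|\gtrsim\lambda_j^{(n-1)/4}$ on a $\lambda_j^{-1/2}$-tube about that geodesic, which contains $B_g(y_0,c\lambda_j^{-1/2})$, so that by \eqref{ball-condition-lower-bound},
\[
\|\varphi_{\lambda_j}\|_{L^p(\Gamma,\nu)}\ \gtrsim\ \lambda_j^{(n-1)/4}\,\nu\!\bigl(B_g(y_0,c\lambda_j^{-1/2})\bigr)^{1/p}\ \gtrsim\ \lambda_j^{(n-1)/4 - \frac{d(1-\epsilon)}{2p}}\,[\Phi_{C_1}(\lambda_j)]^{-1/p}.
\]
The main obstacle is exactly the residual loss $\lambda_j^{-d(1-\epsilon)/(2p)}$: to reach the asserted exponent $(n-1)/4$ one must show that $\varphi_{\lambda_j}$ stays comparable to $\lambda_j^{(n-1)/4}$ on a $\nu$-non-negligible subset of $\Gamma$, not merely on a single ball of radius $\lambda_j^{-1/2}$. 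This is where the self-similar, dimension-$d(1-\epsilon)$ structure of the random Cantor set $E(\omega)$ has to be exploited — for instance by covering a positive-$\nu$-measure portion of $\Gamma$ by $\lambda_j^{-1/2}$-tubes around a family of geodesics and summing, or by passing to a coordinate chart in which $\Sigma$ is flat along the relevant geodesic — and is the one step that does not reduce to a formal assembly of the earlier theorems.
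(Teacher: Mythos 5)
Your arguments for part (a) and for part (b) in the range $p \ge \max(2, p_\ast)$ reproduce the paper's proof: feed the ball-growth estimate of Theorem \ref{mainthm-example}(\ref{mainthm-mainest}) into Theorem \ref{mainthm-restriction-ball} for the upper bound, feed the local lower-density estimate of Theorem \ref{mainthm-example}(\ref{mainthm-example-partc}) into Theorem \ref{mainthm-sharpness} for the lower bound, and then match the slow-growth factors with the elementary estimates you supply. Your normalization $R := C_1 + 1$ and your check that $p_0 = p_\ast$, $\vartheta_p = \theta_p$ and $\varkappa_p = \theta_p$ when $p \ge \max(2, p_\ast)$ are all exactly what the paper does.

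The difficulty you raise in the subcritical range $2 \le p < p_\ast$ (when $p_\ast > 2$) is a genuine observation, and you were right to flag it, but the effort you spend trying to repair it is misplaced. The paper's own proof of part (b) asserts only ``$\varkappa_p = \vartheta_p = \theta_p$ for $p \geq \max(2, p_\ast) = \max(2, p_0)$'' and stops there; it makes no attempt to establish the stated lower bound for $p < p_\ast$. This is consistent with Remark~2 following Theorem \ref{mainthm-restriction-arbitrary}, which says that the upper bound ``is not sharp for $2 \le p < p_0$ when $p_0 > 2$''; the phrase ``for every $p \in [2,\infty]$'' in the corollary should be read as ``for every $p \ge \max(2, p_0)$''. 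Your Gaussian-beam construction cannot close the gap because the exponent $(n-1)/4$ is not the right target in the subcritical range: already for smooth $\Sigma$ with $d < n-1$, the sharp exponent in \eqref{dlessthann-3} is $(n-1)/2 - d/p$, which lies strictly below $(n-1)/4$ precisely when $p < 4d/(n-1)$. The rate $\varkappa_p$ produced by Theorem \ref{mainthm-sharpness} is therefore already the expected sharp lower bound for small $p$; it is the paper's convolution-based upper bound $\theta_p = (n-1)/4$, not the lower bound, that overshoots in this range.
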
 
\noindent {\bf{Remarks: }}  
%Let us pause for a moment to contextualize some of the important features of our result, and expand on directions of further improvement. 
%\vskip0.1in       
\begin{enumerate}[1.]
%for the hold not is more general in the sense that we establish the estimates, and its analogous sharpness for $p \geq p_0$, for the smoothed spectral projector on $M$.
%this is the first result of its kind in terms of \textit{general eigenfunction bounds over non-smooth sets of positive Hausdorff dimension}.  It should be emphasized that Theorem \ref{mainthm}, and the techniques used in establishing it, is an attempt to slowly bridge the previous gap between the methods microlocal analysis and geometric measure theory, the latter methodology playing a major influence in problems stemming from Fourier analysis.  The understanding of oscillatory integrals in settings where standard techniques, such as integration-by-parts or even stationary phase, are not viable has lead to some major breakthroughs in spectral theory in the last few years, starting with pioneering work of Naud \cite{Na05} on resonance gaps on infinite-area hyperbolic surfaces and culminating in the spectacular results of Bourgain-Dyatlov \cite{BoDy17, BoDy18}.  §
\item The specific form \eqref{our-Phi} of the blow-up factor $\Phi_R(\lambda)$, which is super-logarithmic but sub-polynomial, is an artifact of the parameter choices needed for the random Cantor construction, see Section \ref{parameters-choice-section}. Many alternative choices are possible within the framework of this construction, some of which yield slower blow-up than $\Phi_R(\lambda)$, at the cost of additional technical challenges. We have opted not to pursue these improvements here. However, all the ``almost sure'' probabilistic statements of this type that employ our strategy will be accompanied by some blow-up. Using a different random construction mechanism, one can find a {\em{single}} Cantor-type set $E$ that avoids such losses; see remark \ref{remark-losses} in Section \ref{parameters-choice-section}. However, we do not know whether such sets are generic in a quantifiable sense.  
\vskip0.1in
\item The random measures $\mu = \mu(\omega)$ that we construct and their supporting sets $E = E(\omega)$ have many analytic and geometric properties that are not directly exploited in the proof. In particular, these measures have optimal Fourier decay subject to the Hausdorff dimension of their support. More precisely, for almost every $\omega$, our measures obey \[ \bigl| \widehat{\mu} (\xi) \bigr| \leq C \Psi(\xi) \bigl(1 + |\xi| \bigr)^{-d(1 - \epsilon)/2},  \quad |\xi| \geq 1, \; \xi \in \mathbb R^d, \]
for some $\Psi \in \mathfrak F$. Here $\widehat{\mu}$ denotes the Fourier transform of $\mu$, given by $\widehat{\mu}(\xi) = \int e^{- i x \cdot \xi} d\mu(x)$. In other words, the sets $E$ in Theorem \ref{mainthm-example} have the same Fourier dimension as their Hausdorff dimension, i.e. they are almost surely Salem. 
\vskip0.1in 
\noindent Measures that enjoy Fourier decay have long been known to play an important role in eigenfunction restriction problems. For instance, it appears in the work of Bourgain and Rudnick \cite{BoRu12}, where the authors obtain significant improvements on the general estimates of \cite{BGT07} in the special case of $M = \mathbb T^n$, $n = 2,3$.  More generally, the study of harmonic-analytic principles (such as Fourier decay, fractal analogues of the uncertainty principle, study of oscillatory integrals and operators) in settings where standard techniques (such as integration by parts or stationary phase) are not viable have led to major developments in spectral theory, for instance in the work surrounding resonance gaps in infinite-area hyperbolic surfaces \cite{{Na05}, {BoDy17}, {BoDy18}}. 
%We explore the mapping properties of convolution operators on random Cantor measure spaces, and establish Young-type inequalities for such measures. 
However, our methods are not Fourier-analytic in nature. This is reminiscent of \cite{LP11}, where a similar random Cantor set was constructed, but whose Fourier-analytic properties were not directly relevant to the proof.     
%\textbf{FILL}: Paragraph about Fourier decay and our choice of sets, Bourgain-Rudnick method of using Fourier decay, examples of sets that optimize decay rate and dimension, inspiration from an article by Laba-Pramanik \cite{LP11}.  Untapped potential?
\vskip0.1in 
\item Restriction of eigenfunctions to fractals has appeared in a related but distinct line of inquiry that addresses spatial equidistribution of eigenfunctions restricted to subsets of manifolds, instead of norm growth. Most recently Hezari and Rivi\`ere  \cite{HR18} have established, in the specific setting of the flat torus $\mathbb T^n$, spatial equidistribution of a density one subsequence of eigenfunctions on sets of possibly fractional Hausdorff dimension. In fact, Corollary 2.7 of \cite{HR18} proves $L^2$-restriction estimates of this density one subsequence for a wider class of measures than the one used in this paper. This addresses a special case of a conjecture in \cite{BoRu12} on $L^2$-restriction to hypersurfaces. It would be of interest to explore the issue of equidistribution for the entire sequence of toral eigenfunctions for the class of random Cantor sets considered in this paper. 
%Note that in the specific setting of the flat torus $\mathbb{T}^n$, Hezari-Rivi\`ere \cite{HR18} were able to obtain spatial equidistribution results for density-1 subsequences $\{ \varphi_{\lambda_{j_k}} \}_{j_k}$ on sets of non-integral Hausdorff dimension en route to addressing the Bourgain-Rudnick conjecture  \cite{BoRu12} for hypersurface restriction of toral eigenfunctions.  This result is orthogonal to ours in the sense of finding sharp estimates for subsequences on $\mathbb{T}^n$.  
\end{enumerate} 
\subsection{Overview of the proof}
The broad strokes of our approach follow that of \cite[Theorem 1]{BGT07}, so we briefly review the main ideas involved there. 
%The key steps in \cite[Theorem 1]{BGT07} involve: 
\begin{enumerate}[1.]
\item One starts with a microlocal approximation $\mathscr{T}_{\lambda}$ of the smoothed spectral projector $\chi(\sqrt{-\Delta_g} - \lambda)$. The approximation $\mathscr{T}_{\lambda}$ is an oscillatory integral operator, whose phase function is essentially the distance function in the ambient Riemannian metric.   
\vskip0.1in
\item The $TT^{\ast}$ method applied to $\mathscr{T}_{\lambda} \mathscr{T}_{\lambda}^{\ast}$ reduces the problem to estimating the $L^p$ norm of the latter operator on the restricted set $\gamma$, which for \cite[Theorem 1]{BGT07} was a smooth curve on $M$. 
\vskip0.1in 
\item The integration kernel of $\mathscr{T}_{\lambda} \mathscr{T}_{\lambda}^{\ast}$ is itself an oscillatory integral, with a nondegenerate phase function. The method of stationary phase, applied to this oscillatory integral, yields a pointwise upper bound on the kernel, leading to a pointwise bound on the operator $\mathscr{T}_{\lambda} \mathscr{T}_{\lambda}^{\ast}$. The dominating operator is a convolution, with an explicit convolving factor.
\vskip0.1in
\item The proof is then completed by invoking Young's convolution inequality for the Lebesgue measure on $\mathbb R$. The admissible exponents of the inequality are precisely those for which the convolving factor is integrable.  
\end{enumerate} 
%In broad terms, the first steps of our approach are similar to that of \cite{BGT07} in that if we consider a sufficient microlocal approximation $\mathscr{T}_{\lambda}$ to the smoothed spectral projector $\rho(\lambda - \sqrt{-\Delta_g})$ and $E_{\omega}$, for $\omega \in \Omega$ a probability space where $\dim_{H}(E_{\omega})$, then we obtain an estimate on $\| \mathscr{T}_{\lambda} \mathscr{T}_{\lambda}^* \|_{L^p(E_{\omega})}$ with some large probability (in fact, we provide an almost-sure estimate with respect to a naturally conditioned measure $\mathbb{P}^*$).
%\vskip0.1in
\noindent A careful analysis of \cite[Theorem 1]{BGT07}, which we carry out in Section \ref{microlocal-preliminaries-section}, shows that steps 1, 2 and 3 above extend with minor revisions to the setting of an arbitrary measure space, with the smooth curve $\gamma$ replaced by $\Gamma = \mathfrak{u}(E)$. A noteworthy point of departure is the following. Whereas the natural measure on the curve $\gamma$ used in \cite{BGT07} is absolutely continuous with respect to the translation-invariant Lebesgue measure on $\mathbb R$, a general measure $\mu$ accompanying an arbitrary Borel set $E$ need not be translation invariant any more. 
%unlike the Lebesgue measure on $\mathbb R$, which parametrizes the curve $\gamma$. 
The proof thus fails critically at the last step, since Young's convolution inequality is unavailable, indeed known to be false, in general measure spaces. However, two key facts remain in place: 
\vskip0.1in
\begin{enumerate}[1.]
\item {\em{A generalization of Young's inequality}}, proved in Section \ref{young-section} (a version of which appears in the literature as Schur's lemma), {\em{continues to hold for measures that obey a ball growth condition}} of the type \eqref{mu-ball-condition}. Thus the fourth step in the strategy of \cite{BGT07} can be completed in the presence of this measure-theoretic assumption. This is the main ingredient in the proof of Theorem \ref{mainthm-restriction-ball}.  
\vskip0.1in
\item {\em{Ball growth conditions like \eqref{mu-ball-condition} are guaranteed for any Borel set of a given Hausdorff dimension}}. In fact, a standard result in geometric measure theory known as Frostman's lemma characterizes the Hausdorff dimension of a Borel set in terms of such measures. This, combined with the preceding observation, leads to the eigenfunction restriction estimates for arbitrary Borel sets $E$ of prescribed dimension, as stated in Theorem \ref{mainthm-restriction-arbitrary}. 
\end{enumerate} 
\vskip0.1in 
\subsection{Layout of the paper} The organization of our article and the logical dependencies among various sections are explained in Figure 1. Section \ref{microlocal-preliminaries-section} sets up the microlocal analysis background regarding the smooth spectral projector $\mathscr{T}_{\lambda}$. In particular, we recall an explicit asymptotic expansion for it, whose leading term becomes the main object of interest. Our adaptation of Young's inequality for general measures appears in Proposition \ref{gen-Young-prop} of this section. Its proof, based on standard techniques, has been relegated to Section \ref{young-section}. Application of this generalized Young's inequality in our context requires an estimate on the integration kernel of $\mathscr{T}_{\lambda} \mathscr{T}_{\lambda}^{\ast}$. This estimate is obtained in Section \ref{integration-kernel-section}, using the ball growth condition \eqref{mu-ball-condition}. This is a key step in the proofs of Theorems \ref{mainthm-restriction-arbitrary} and \ref{mainthm-restriction-ball}, which appear in Section \ref{proofs-mainthms-section}. In Section \ref{lower-bound-section}, we obtain a conditional lower bound on the spectral projectors, which pave the way for the sharpness results Theorems \ref{mainthm-sharpness} and \ref{maincor-sharpness}. The theorems themselves have been proved in  Section \ref{sharpness-section}.  The remainder of the paper is devoted to the proofs of Theorem \ref{mainthm-example} and Corollary \ref{maincor-example}. Section \ref{s:randomcantor} is dedicated to a discussion of a general Cantor-type construction. Section \ref{random-cantor-section} sets up the framework for Theorem \ref{mainthm-example}, namely the measure space $(\Omega, \mathcal B, \mathbb P^{\ast})$ and the associated random Cantor sets $E$ and measures $\mu$. The properties derived in this section are then used to prove Theorem \ref{mainthm-example} and Corollary \ref{maincor-example} in Section \ref{ball-measures}. 
The key probabilistic tools and associated results that have been used repeatedly throughout Section \ref{random-cantor-section} have been collected in  Section \ref{Appendix-section}.  
\begin{figure} 
\begin{tikzpicture}[node distance=1.5cm]
\node (thm) [box] {Section 2};
\node (Yineq) [box, xshift=4cm] {Section 10}; 
\node (PartA) [box, below of=thm, xshift=-4cm] {Section 5};
\node (PartB) [box, below of=thm] {Section 3};
\node (PartC) [box, below of=thm, xshift = 4cm] {Section 7};
\node (29) [box, below of=PartA, yshift=-1.5cm] {\S 6: Theorem \ref{mainthm-sharpness} and Corollary \ref{maincor-sharpness}};
\node (35) [box, below of=PartB] {\S 4: Theorems \ref{mainthm-restriction-arbitrary} and \ref{mainthm-restriction-ball}};
%\node (5) [box, below of=35] {Section 7};
\node (10) [box, below of=PartC] {Section 8};
\node (11) [box, below of= PartC, xshift=3cm] {Section 11}; 
%\node (6) [box, below of=5] {Section 6};
\node (29b) [box, below of=10] {\S 9: Theorem \ref{mainthm-example} and Corollary \ref{maincor-example}};
\draw [arrow] (PartA)--(29);
\draw [arrow] (PartB)--(35);
\draw [arrow] (PartC)--(10);
\draw [arrow] (35)--(29b);
\draw [arrow] (29) -- (29b);
\draw [arrow]  (10)--(29b);
\draw [arrow] (11) -- (10);
%\draw [arrow] (6) -- (29);
%
\draw [arrow] (Yineq) -- (thm);
\draw [arrow] (thm) -- (PartB);
\draw [arrow] (thm) -- (PartA);
\end{tikzpicture}
\caption{Layout}
\end{figure} 

\section{Microlocal tools} \label{microlocal-preliminaries-section}
\subsection{The smoothed spectral projector} \label{ss:specproj}
%\subsection{Microlocal blackboxes and Ahlfors-David-type measures}

%First of all, we assume to have a space of random Cantor sets $E(\omega)$, where $\omega$ lies in some non-empty sample space $\Omega$, and corresponding probability measures $\mu_{\omega}$ that are supported on $E(\omega)$; it should be noted that we will rigorously construct such sets and measures in Section \ref{s:randomcantor}.  In this current section however, we emphasize that our use of smoothed spectral projectors (or rather, the microlocal analysis of it) is actually independent of the randomization procedure given in Section \ref{s:randomcantor} and moreover of sets with any fractal-type structure.  That is, all of the facts provided here go through almost immediately as long as our measures $\mu$ (random or not) satisfy an Ahlfors-David-type upper bound; see Section \ref{ball-measures}.  We will make this observation more apparent in Section \ref{s:reduction-A-estimate}.

Our study of eigenfunction restriction estimates is based on an explicit integral representation of an underlying operator $\mathscr T_{\lambda}$, called the smoothed spectral projector, in local coordinates.  The formulation of the spectral projector $\mathscr{T}_{\lambda}$ as a Fourier integral operator is well-known and ubiquitous in the literature (see for example, \cite[Theorem 4]{BGT07}, \cite{So} and the references therein). We briefly recall the set-up here for completeness, and state the explicit representation of $\mathscr{T}_{\lambda}$ in Theorem \ref{t:smoothproj} below.  
%After restricting $\mathscr{T}_{\lambda}$ to our transported random Cantor set $\Gamma \subset M$, with this procedure being more explicitly described in Section \ref{manifold-fractal-section}, we arrive at
%The formulation of the spectral projector $\mathscr{T}_{\lambda}$ as a Fourier integral operator is well-known and has been used extensively in the literature. 
%A microlocal analysis of the integral kernel of the resulting symmetrized operator $\mathcal T_{\lambda} \mathcal T_{\lambda}^{\ast}$ is a key tool in determining Lebesgue mapping properties of $\mathcal T_{\lambda}$. We will relate this kernel with that of the unrestricted normal operator $\mathscr{T}_{\lambda} \mathscr{T}_{\lambda}^{\ast}$. In Theorem \ref{thm:kernel_expansion}, we recall a standard asymptotic expansion of the kernel of the latter operator. This section is devoted to a recollection of classical facts, stated largely without proof but with suitable references. Collectively, they provide a roadmap leading up to the proof of Theorem \ref{mainthm} in Section \ref{section-mainthm-proof}, modulo a probabilistic kernel estimate that has been specified in Proposition \ref{reduction-mainprop} and will be proved in later sections.   
%We borrow a particular formulation of the smooth spectral projector, which plays a major role in our work, from Section 2 of \cite.
\vskip0.1in
\noindent Let us consider a Riemannian uniformly normal neighbourhood of a base point $x_0 \in M$. Let $\rho_0 > 0$ denote the radius of a small geodesic ball centred at $x_0$ contained in this neighbourhood, whose exact value will be specified later.  Without loss of generality and due to our normal coordinate system, we identify $x_0$ with the origin in $\R^n$.  We set $V = \{ y \in \mathbb{R}^n : |y| \leq \rho_0 \}$ and fix a neighbourhood $W$ of $x_0$ contained in $V$. Without loss of generality, we will assume that $\mathfrak{u}(U) \subseteq W$, where $\mathfrak u: U \rightarrow \Sigma$ is a local coordinate chart on $\Sigma$. 
\vskip0.1in
\noindent Let us consider the first-order pseudo-differential operator $\sqrt{-\Delta}$ given by the spectral theorem. Then
\[ \sqrt{-\Delta} = \sum_{j=0}^{\infty} \lambda_j \mathscr{P}_j, \qquad I = \sum_{j=0}^{\infty} \mathscr{P}_j, \]
where $\mathscr{P}_j$ is the projection operator onto the finite-dimensional eigenspace corresponding to the eigenvalue $\lambda_j$. Furthermore \[ e^{it \sqrt{-\Delta}} = \sum_{j=0}^{\infty} e^{it \lambda_j} \mathscr{P}_j. \]
Let us now fix a function $\chi \in \mathcal S(\mathbb R^n)$ with $\chi(0) = 1$ and  supp$(\widehat{\chi}) \subseteq (\rho/2, \rho)$, with $\rho < \rho_0$. This leads to the smooth projection operators
\begin{equation}  
%\chi_{\lambda}
\mathscr{T}_{\lambda} := \chi(\sqrt{-\Delta} - \lambda) = \sum_{j} \chi(\lambda_j - \lambda) \mathscr{P}_j. \label{def-chi-lambda} \end{equation} 
We observe that \begin{equation} \chi(\sqrt{-\Delta} -\lambda) \varphi_{\lambda} = \varphi_{\lambda} \text{ for all } \lambda \in \{ \lambda_j : j \geq 1\}. \label{projector-and-eigenfunction}  \end{equation}  
Furthermore, a formal operator calculus made rigorous by the spectral theorem shows that 
\begin{align}
\mathscr{T}_{\lambda} := \chi(\sqrt{-\Delta} - \lambda) &= \sum_{j} \chi(\lambda_j - \lambda) \mathscr{P}_j  \label{Tlambda-def}\\ 
&= \sum_j \left[ \frac{1}{2 \pi} \int e^{it(\lambda_j - \lambda)} \hat{\chi}(t) \, dt \right] \circ \mathscr{P}_j \nonumber \\ 
&= \frac{1}{2 \pi} \int e^{-it \lambda} e^{it \sqrt{-\Delta}} \hat{\chi}(t) \, dt. \nonumber
\end{align}   
For $\rho$ small enough, it is a classical result that $e^{it \sqrt{-\Delta}}$ can be represented as a Fourier integral operator in local coordinates, see for instance \cite{{Ho68}}, \cite[Theorem 4.1.2, Lemma 5.1.3]{So}. A stationary phase argument then leads to the following well-known theorem:   
\begin{theorem}{\cite[Lemma 2.3]{BGT05}, \cite[Theorem 4]{BGT07}} \label{t:smoothproj}
Given the setup described above, one can find a small radius $\rho_0 > 0$ such that for every $\rho \in (0, \rho_0)$ and every positive integer $N \geq 1$, there exist distinct positive constants $0 < c_0' < c_0 < c_1 < c_3 < c_4 < c_2 < 1$ depending on these parameters with the properties listed below.  
\vskip0.1in
\noindent For every $\lambda \geq 1$, there exists a smooth function $a_{\lambda}: W \times V  \rightarrow \mathbb{C}$ such that:
\begin{equation} \label{def-S} 
\text{supp}(a_{\lambda}) = S := \bigl\{ (x,y) \in W \times V : |x| \leq c_0 \rho < c_1 \rho \leq |y| \leq c_2 \rho < \rho \bigr\},
\end{equation}
and the following conclusions hold.  
\begin{enumerate}[(a)]
\item The function $a_{\lambda}(x, y)$  is a polynomial in $\lambda^{-1}$, and does not vanish for $(x,y) \in S$ with  $|x| \leq c_0' \rho$ and $d_g(x,y) \in [c_3 \rho, c_4 \rho]$. \label{t:smoothproj-parta}
\vskip0.1in
\item The function $a_{\lambda}$ and its spatial derivatives are uniformly bounded, i.e., for every multi-index $\alpha \in \mathbb Z_{\geq 0}^{2n}$, there exists a constant $C_{\alpha} > 0$ such that $|\partial_{x,y}^{\alpha} a_{\lambda}(x,y)| \leq C_{\alpha}$.  
\vskip0.1in 
\item The function $a_{\lambda}$ appears in the representation of the integral kernel for the smoothed spectral projector $\mathscr{T}_{\lambda}$ defined as in \eqref{Tlambda-def}. Specifically, 
%First, consider $\chi \in \scal(\mathbb{R}^n)$ with $\chi(0)=1$ and $f \in L^2(W)$.  Then 
for all $x \in W$ and all $f \in L^2(V)$ 
%$x \in U := W \cap \{ x: |x| \leq c_0 \epsilon\}$ 
%we have that for all $x \in U$ the following approximation of the smoothed spectral projector, namely
\begin{equation} \label{e:smoothproj}
\mathscr{T}_{\lambda}(f)(x) = \lambda^{\frac{n-1}{2}} \int_{y \in V} e^{- i \la d_g(x,y)} a_{\lambda}(x,y) f(y) \, dy + \mathscr{R}_{\lambda}(f), \qquad |x| < c_0' \rho. 
\end{equation}
The remainder term $\mathscr{R}_{\lambda}$ is a uniformly bounded smoothing operator in the sense that for every $0 \leq r \leq N$, there exists a constant $C_{r,N}$ independent of $\lambda$ such that 
\begin{equation} \label{R-smoothing} 
||\mathscr{R}_{\lambda}||_{\mathtt H^r(W) \rightarrow L^2(M)} \leq C_{r, N} \lambda^{r-N}  \text{ for all $\lambda \geq 1$.} 
\end{equation} 
 Here $\mathtt H^r(W)$ denotes the space of square-integrable functions on $W$, whose weak derivatives of order $\leq r$ are also square-integrable.
\end{enumerate} 
\end{theorem}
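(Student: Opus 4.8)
The plan is to recall H\"ormander's parametrix for the half-wave group and insert it into the oscillatory representation \eqref{Tlambda-def} of $\mathscr T_\lambda$; a stationary phase argument in the time and frequency variables then produces both the amplitude $a_\lambda$ and the phase $-\lambda d_g(x,y)$. The result is classical (see \cite[Lemma 2.3]{BGT05}, \cite[Theorem 4]{BGT07}, \cite{So}); we outline the argument. First I would work in the geodesic normal coordinates already fixed on $V$ and choose $\rho_0$ so small that $2\rho_0$ lies below the injectivity radius; then for $t \in \supp(\widehat\chi) \subseteq (\rho/2,\rho)$ the half-wave operator admits, modulo a smoothing remainder $\wt{\mathscr R}_N(t)$, the representation
\[ \bigl(e^{it\sqrt{-\Delta}}f\bigr)(x) = (2\pi)^{-n}\int_{\R^n}\!\!\int_{\R^n} e^{i(\psi(t,x,\xi) - \langle y,\xi\rangle)}\, q(t,x,y,\xi)\, f(y)\, dy\, d\xi + \bigl(\wt{\mathscr R}_N(t)f\bigr)(x), \]
where $\psi$ is homogeneous of degree one in $\xi$, solves the eikonal equation $\partial_t\psi = |\nabla_x\psi|_g$ with $\psi(0,x,\xi) = \langle x,\xi\rangle$ (so $d_x\psi$ generates the geodesic flow), and $q$ is a classical symbol of order $0$ obtained by solving the transport equations modulo $S^{-N}$, whose leading term $q_0$ is the nonvanishing half-density factor built from the Van Vleck--Morette determinant. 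Since $\chi$ is Schwartz, the part of \eqref{Tlambda-def} coming from frequencies $|\xi|$ not comparable to $\lambda$ is $O(\lambda^{-\infty})$, so I may localize to that cone at the outset.

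Next I would substitute this into \eqref{Tlambda-def}, rescale $\xi = \lambda\eta$ (using homogeneity of $\psi$), and pass to polar coordinates $\eta = r\omega$ with $r > 0$, $\omega \in \Ss^{n-1}$; the main term of $\mathscr T_\lambda f(x)$ then has kernel a constant multiple of
\[ \lambda^n \int_{\Ss^{n-1}}\!\!\int_0^\infty\!\!\int_{\R} e^{i\lambda\Phi(t,r,\omega;\,x,y)}\, \widehat\chi(t)\, q(t,x,y,\lambda r\omega)\, r^{n-1}\, dt\, dr\, d\omega, \qquad \Phi = r\bigl(\psi(t,x,\omega) - \langle y,\omega\rangle\bigr) - t. \]
After inserting a cutoff confining $r$ near $1$ (the complement being negligible by non-stationary phase in $r$), I would apply stationary phase in the $n+1$ variables $(t,r,\omega)$. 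The critical point equations force $r \approx 1$, $\omega$ along the initial direction of the minimizing geodesic from $x$ to $y$, and $t$ equal to the corresponding travel time, so that $\Phi = -d_g(x,y)$ at the critical point; for $\rho_0$ small there are no conjugate points and the Hessian is nondegenerate. Stationary phase thus yields the factor $\lambda^{-(n+1)/2}$, which against the $\lambda^n$ prefactor produces the claimed $\lambda^{(n-1)/2}$ in \eqref{e:smoothproj} and the phase $e^{-i\lambda d_g(x,y)}$. Each term $\lambda^{-j}q_{-j}$ of the symbol expansion of $q(t,x,y,\lambda r\omega)$ carries a factor $\lambda^{-j}$, so the resulting amplitude $a_\lambda(x,y)$ is a polynomial in $\lambda^{-1}$ whose coefficients are smooth on the compact set $W\times V$ with all derivatives bounded; this gives the polynomial claim and part (b).

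For the support and nonvanishing statements I would argue as follows. The $t$-integral contributes to the main term only where its critical value $d_g(x,y)$ lies in $\supp(\widehat\chi) \subseteq (\rho/2,\rho)$; otherwise the $t$-phase has no critical point and the contribution is $O(\lambda^{-\infty})$, to be absorbed into $\mathscr R_\lambda$. Hence, after multiplying by a fixed spatial cutoff, $a_\lambda$ is supported on a set of the form \eqref{def-S}, and the constants $c_0 < c_1 < c_2$ can be arranged so that $d_g(x,y) \asymp \rho$ there. Choosing $\chi$ so that $\widehat\chi > 0$ on $[c_3\rho, c_4\rho]$, the leading coefficient of $a_\lambda$ — essentially $\widehat\chi(d_g(x,y))$ times the Van Vleck--Morette determinant (bounded below away from caustics) times the reciprocal square root of the nondegenerate Hessian — is bounded away from $0$ when $|x| \le c_0'\rho$ and $d_g(x,y) \in [c_3\rho, c_4\rho]$, and the remaining terms are $O(\lambda^{-1})$; this is part (a). Finally, $\mathscr R_\lambda$ is the sum of: the contribution of $\wt{\mathscr R}_N(t)$ integrated against $e^{-it\lambda}\widehat\chi(t)$, which stays smoothing with the uniform bound after integrating by parts in $t$; the order $-N$ tail of the symbol expansion, whose kernel is $O(\lambda^{(n-1)/2-N})$ and hence obeys \eqref{R-smoothing} once $N$ is renamed; and the $O(\lambda^{-\infty})$ pieces from non-stationary regions and from frequencies away from $|\xi|\asymp\lambda$. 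Assembling these yields \eqref{R-smoothing}.

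The hard part will be the uniform-in-$\lambda$ bookkeeping of the stationary-phase expansion with the curved (eikonal) phase: one must verify that all of the error terms — from truncating the symbol at order $-N$, from the smoothing remainder of the wave parametrix, from the low- and high-frequency tails, and from the non-stationary regions — combine into a single operator satisfying the Sobolev bound \eqref{R-smoothing}, and one must identify the leading amplitude precisely (as the Van Vleck--Morette determinant) in order to obtain the nonvanishing and support conclusions with explicit constants $c_0', \dots, c_4$.
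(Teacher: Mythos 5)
The paper does not prove this theorem; it cites it as \cite[Lemma 2.3]{BGT05}, \cite[Theorem 4]{BGT07} and, in the two sentences preceding the statement, merely alludes to the strategy (parametrize $e^{it\sqrt{-\Delta}}$ as a Fourier integral operator in normal coordinates, then apply stationary phase, citing \cite{Ho68} and \cite[Theorem 4.1.2, Lemma 5.1.3]{So}). Your sketch is exactly that classical route — H\"ormander's eikonal parametrix, rescaling $\xi=\lambda\eta$, polar decomposition, stationary phase in $(t,r,\omega)$ yielding $\lambda^{n}\cdot\lambda^{-(n+1)/2}=\lambda^{(n-1)/2}$ and the phase $-d_g(x,y)$, support and nonvanishing controlled by $\widehat\chi$ and the Van Vleck--Morette factor, remainder collected into $\mathscr R_\lambda$ — so it matches the approach the paper is implicitly invoking, and you correctly flag the uniform-in-$\lambda$ bookkeeping and the explicit construction of the constants $c_0',\ldots,c_4$ as the places where the full argument requires care.
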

\noindent If we choose $r=N$ with sufficiently large $N$ in \eqref{R-smoothing}, the Sobolev embedding theorem ensures that $\mathscr{R}_{\lambda}f$ is continuous on $W$ for every $f \in L^2(M)$. In particular, this means that \[ ||\mathscr{R}_{\lambda} f||_{L^{\infty}(W)} \leq C ||\mathscr{R}_{\lambda}||_{\mathtt H^r(W)} \leq C ||f||_{L^2(M)}.  \] Combining this with the continuity of $\mathscr{R}_{\lambda}f$, we have the following estimate: for any probability measure $\nu$ supported on a set $\Gamma \subseteq M$, 
\[ ||\mathscr{R}_{\lambda} f||_{L^2(\nu)} \leq ||\mathscr{R}_{\lambda} f||_{L^{\infty}(\nu)} \leq  ||\mathscr{R}_{\lambda} f||_{L^{\infty}(W)} \leq C ||f||_{L^2(M)} \]   for all functions $f \in L^{2}(V)$. Thus $\mathscr{R}_{\lambda}$ does not contribute any significant power of $\lambda$ in eigenfunction restriction estimates, and we ignore it in the sequel. By a slight abuse of notation, we will rename as $\mathscr{T}_{\lambda}$ the leading term in \eqref{e:smoothproj}.

\subsection{Restriction of $\mathscr{T}_{\lambda}$ to $\Gamma$ and reduction to a normal operator}
%Let us recall from Sections \ref{manifold-fractal-section} and \ref{Cantor-measure-section} the construction of a random Cantor set $\Gamma$ on a (sub)manifold $\Sigma  \subseteq M$, endowed with its natural measure $\nu$., we now embark on the study of  
 Equipped with the representation \eqref{e:smoothproj}, let us now consider the smooth spectral projection operator $\mathscr T_{\lambda}$ {\em{restricted}} to $\Gamma$:
\begin{equation} \label{restriction-operator-def}
\mathcal T_{\lambda} := \mathscr{T}_{\lambda} \Bigr|_{\Gamma}, 
\end{equation} 
which is the main operator of interest. The reason for this is that $\varphi_{\lambda} = \mathscr{T}_{\lambda} \varphi_{\lambda}$ in view of \eqref{def-chi-lambda} and \eqref{projector-and-eigenfunction}, which in turn equals $\mathcal T_{\lambda} \varphi_{\lambda}$ on $\Gamma$. Thus eigenfunction estimates of the desired form \eqref{eigenfunction-growth-nu} follows if we show that $\mathcal T_{\lambda}$ maps $L^2(M, dV)$ boundedly to $L^p(\Gamma, \nu)$, with the operator norm bounded by a constant multiple of $(1 + \lambda)^{\vartheta_p} \overline{\Psi}_p(\lambda)$. By a standard duality principle known as the $TT^{\ast}$-method (used in a similar context in \cite[Theorem 1]{BGT07}), we have for any linear operator $\mathcal T_{\lambda}$: 
\begin{equation} \label{T-TTstar}  \bigl| \bigl|\mathcal T_{\lambda} \bigr| \bigr|_{L^2(M) \rightarrow L^p(\nu)}^2 = \bigl| \bigl| \mathcal  T_{\lambda} \mathcal T_{\lambda}^{\ast} \bigr|\bigr|_{L^{p'}(\nu) \rightarrow L^p(\nu)} \quad \text{ where } \frac{1}{p} + \frac{1}{p'} = 1. \end{equation}  Therefore, the desired mapping property of $\mathcal T_{\lambda}$  is true if and only if $\mathcal  T_{\lambda} \mathcal T_{\lambda}^{\ast}$ maps $L^{p'}(\Gamma, \nu)$ to $L^p(\Gamma, \nu)$, with the appropriate norm bound. Let us summarize this discussion by writing: 
\begin{equation} \label{hope-to-show}  
\text{ if } \bigl| \big| \mathcal  T_{\lambda} \bigr| \bigr|_{L^{2}(M) \rightarrow L^p(\nu)} = \bigl| \big| \mathcal  T_{\lambda} \mathcal T_{\lambda}^{\ast}\bigr| \bigr|_{L^{p'}(\nu) \rightarrow L^p(\nu)} ^{\frac{1}{2}}\leq C (1 + \lambda)^{\vartheta_p} \overline{\Psi}_p(\lambda), \; \text{ then } \;  \eqref{eigenfunction-growth-nu} \text{ holds}. 
\end{equation} 
We are thus led to examine the normal operator $\mathcal  T_{\lambda} \mathcal T_{\lambda}^{\ast}$, with the aim of proving the operator norm estimate in \eqref{hope-to-show}. 
 
\vskip0.1in
\noindent Let  $\mathscr{K}_{\lambda}(x, x')$ denote the Schwarz kernel of the operator $\mathscr{T}_{\lambda} \mathscr{T}_{\lambda}^*$, where $\mathscr{T}_{\lambda}$ is the operator given by the leading term in \eqref{e:smoothproj}. If $(U, \mathfrak{u})$ is a local coordinate chart on $\Sigma$ such that $\mathfrak{u}(U) \subseteq W$, it follows that 
\begin{equation} \mathcal T_{\lambda} \mathcal T_{\lambda}^{\ast} f \bigl(\mathfrak{u}(u) \bigr)= \int \mathscr{K}_{\lambda}(\mathfrak{u}(u), \mathfrak{u}(v)) f \circ \mathfrak{u}(v)  \; d\mu(v), \qquad u \in U, \; \mathfrak{u}(u) \in \Sigma. \label{normal-op} \end{equation}    
In other words, the Schwarz kernel of $\mathcal T_{\lambda} \mathcal T_{\lambda}^{\ast}$ is the restriction of $\mathscr{K}_{\lambda}$ to $\Gamma \times \Gamma$. The first step in establishing Lebesgue boundedness of the normal operator $\mathcal T_{\lambda} \mathcal T_{\lambda}^{\ast}$ is therefore studying the Schwarz kernel $\mathscr{K}_{\lambda}$. Our next result gives an asymptotic expansion of this kernel.
\begin{theorem}{\cite[Lemma 6.1]{BGT07}}\label{thm:kernel_expansion}
Let $W \subseteq \mathbb R^n$ be as in Theorem \ref{t:smoothproj}. Then for $(x, x') \in W \times W$ and  $\mathscr{K}_{\lambda}(x, x')$ as in the preceding paragraph, the following conclusions hold:
\vskip0.1in 
\begin{enumerate}[(a)]
\item The function $(x, x') \mapsto \lambda^{1-n} \mathscr{K}_{\lambda}(x, x')$ is uniformly bounded on $W \times W$ by a constant independent of $\lambda$. \label{K-lambda-uniform-bound} 
\vskip0.1in 
\item There exist constants $\rho < 1 < C$ and a sequence of real-valued symbols $(\mathfrak a_m^{\pm}, \mathfrak b_m) \in C^{\infty}(W \times W \times \R)$ such that for $|x - x'| \geq C \lambda^{-1}$ and any $N \in \N$, the following expansion holds: 
\begin{equation} \label{asymptotic-expansion-kernel} 
\lambda^{1-n} \mathscr{K}_{\lambda}(x, x') = \sum_{\pm} \sum_{m=0}^{N-1} \frac{e^{\pm i \lambda d_g(x,x')}}{(\lambda | x - x' |)^{\frac{n-1}{2} + m}} \mathfrak a_m^{\pm}(x, x', \lambda) + \mathfrak b_N(x,x', \lambda).
\end{equation}
Each of the symbols $a_{m}^{\pm}$ has support in $W \times W$ of size $\mathcal{O}(\rho)$ (independent of $\lambda$) and is uniformly bounded in $\lambda$.  The remainder $\mathfrak b_N$ obeys the estimate \[| \mathfrak b_N(x,x', \lambda)| \leq C_N \left(\lambda | x - x' | \right)^{-\frac{n-1}{2} - N}.\] \label{K-lambda-asymptotic} 
\item Setting $N=1$ in \eqref{asymptotic-expansion-kernel} and combining it with part \eqref{K-lambda-uniform-bound} leads to the following statement: if $\{x = \mathfrak{u}(u): u \in U\}$ is a local parameterization of a $d$-dimensional smooth embedded (sub)manifold $\Sigma \subseteq M$, then there exists a constant $C > 1$ such that 
\begin{equation} \label{kernel-domination} \bigl| \mathscr{K}_{\lambda}(x,x') \bigr| \leq C \lambda^{n-1} \langle \lambda (u-v) \rangle^{-\frac{n-1}{2}}, 
%\frac{C \lambda^{n-1}}{(1 + \lambda |u - v|)^{\frac{n-1}{2}}} 
\end{equation} for all $x = \mathfrak{u}(u)$, $x' = \mathfrak{u}(v) \in \Sigma$, $u,v \in U$.  Here $\langle \cdot \rangle$ denotes the Japanese bracket given by $\langle u \rangle := (1 + |u|^2)^{1/2}$.    
\end{enumerate} 
\end{theorem}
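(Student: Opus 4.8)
\medskip\noindent\emph{Proof sketch.} The first move is to write the kernel of $\mathscr{T}_\lambda\mathscr{T}_\lambda^{\ast}$ explicitly. Inserting the oscillatory-integral representation \eqref{e:smoothproj} for $\mathscr{T}_\lambda$ (with the smoothing remainder already discarded, as explained above) and passing to the adjoint, one gets, for $x,x'$ where \eqref{e:smoothproj} applies,
\[
\mathscr{K}_\lambda(x,x') = \lambda^{n-1}\int_V e^{-i\lambda\Phi(x,x',y)}\,a_\lambda(x,y)\,\overline{a_\lambda(x',y)}\,dy,\qquad \Phi(x,x',y):=d_g(x,y)-d_g(x',y),
\]
an oscillatory integral in $y$ whose amplitude is a polynomial in $\lambda^{-1}$, supported in the annulus $c_1\rho\le |y|\le c_2\rho$, and bounded with all of its $x$- and $y$-derivatives uniformly in $\lambda$. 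Part \eqref{K-lambda-uniform-bound} follows at once: bounding the integrand by $\sup|a_\lambda|^2$ and integrating over the bounded set $V$ gives $|\mathscr{K}_\lambda(x,x')|\le C\lambda^{n-1}$.

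For part \eqref{K-lambda-asymptotic} the plan is a stationary-phase analysis of the $y$-integral, the point being that the effective large parameter is $\mu:=\lambda|x-x'|$ rather than $\lambda$, which is exactly why the hypothesis $|x-x'|\ge C\lambda^{-1}$ enters. First I would locate the critical set of $y\mapsto\Phi(x,x',y)$ in the support annulus. Since $\nabla_y d_g(x,y)$ is the unit covector at $y$ pointing along the geodesic away from $x$, the equation $\nabla_y\Phi=0$ holds precisely when $x$, $x'$ and $y$ lie on a common geodesic with $y$ beyond both of them — that is, on the two arcs of the geodesic through $x$ and $x'$ that meet the annulus. Along these two one-dimensional critical manifolds $\Phi$ is locally constant, equal to $+d_g(x,x')$ and $-d_g(x,x')$ respectively. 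Transverse to each arc the Hessian $\nabla_\omega^2\Phi$ is non-degenerate; a short computation with the transverse Hessian of the distance function (which equals $r^{-1}$ times the transverse metric, up to curvature corrections of size $O(r)$) shows $\|\nabla_\omega^2\Phi\|\sim d_g(x,x')\,\rho^{-2}$, so it degenerates as $x'\to x$. Off a fixed neighbourhood of the two arcs one has $|\nabla_y\Phi|\gtrsim |x-x'|/\rho$, so repeated integration by parts (non-stationary phase) contributes $O(\mu^{-N})$ for every $N$; near the arcs, stationary phase in the $n-1$ transverse variables — after rescaling so as to expose $\mu$ — yields the oscillation $e^{\pm i\lambda d_g(x,x')}$, the factor $\mu^{-(n-1)/2}$ from $|\det\nabla_\omega^2\Phi|^{-1/2}$, an innocuous residual integration along the critical arc, and a complete asymptotic expansion in powers of $\mu^{-1}$. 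This is exactly \eqref{asymptotic-expansion-kernel}: each symbol $\mathfrak a_m^{\pm}$ is supported in a set of size $O(\rho)$ (inherited from $\supp a_\lambda$), real-valued, smooth, and uniformly bounded in $\lambda$ (built from polynomials in $\lambda^{-1}$), while the tail $\mathfrak b_N$ is $O(\mu^{-(n-1)/2-N})$. An alternative — perhaps cleaner — route that ties the computation to the classical literature is to note that $\mathscr{T}_\lambda\mathscr{T}_\lambda^{\ast}=|\chi|^2(\sqrt{-\Delta}-\lambda)$ with $\widehat{|\chi|^2}$ smooth and supported near the origin (hence within the injectivity radius), and to plug the Hadamard parametrix for $e^{it\sqrt{-\Delta}}$ into $\frac{1}{2\pi}\int e^{-it\lambda}\widehat{|\chi|^2}(t)\,e^{it\sqrt{-\Delta}}\,dt$; this reproduces the same expansion.

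Part \eqref{kernel-domination} is then pure bookkeeping. Taking $N=1$ in \eqref{asymptotic-expansion-kernel} and using $|\mathfrak a_0^{\pm}|\le C$ and $|\mathfrak b_1|\le C\mu^{-(n-1)/2-1}$ gives $|\lambda^{1-n}\mathscr{K}_\lambda(x,x')|\le C\,\mu^{-(n-1)/2}$ whenever $|x-x'|\ge C\lambda^{-1}$, while part \eqref{K-lambda-uniform-bound} gives $|\lambda^{1-n}\mathscr{K}_\lambda(x,x')|\le C$ when $|x-x'|<C\lambda^{-1}$; combining the two regimes, $|\lambda^{1-n}\mathscr{K}_\lambda(x,x')|\le C\langle\lambda|x-x'|\rangle^{-(n-1)/2}$. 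Finally, since $\mathfrak u$ is a smooth embedding and $[0,1]^d$ is compact, $\mathfrak u$ is bi-Lipschitz, so $|x-x'|=|\mathfrak u(u)-\mathfrak u(v)|$ is comparable to $|u-v|$, whence $\langle\lambda|x-x'|\rangle\approx\langle\lambda(u-v)\rangle$ and \eqref{kernel-domination} follows.

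I expect the main obstacle to be the stationary-phase step in part \eqref{K-lambda-asymptotic}: correctly identifying the single degenerate (geodesic) direction and the two critical arcs, verifying the non-degeneracy and the $O(|x-x'|)$ size of the transverse Hessian, and — most delicately — organizing the expansion so that it is genuinely a series in $(\lambda|x-x'|)^{-1}$. This last point needs the rescaling $\mu=\lambda|x-x'|$ together with the observation that the cubic and higher Taylor coefficients of $\Phi$ in the transverse variable also carry a factor of $|x-x'|$, so that they stay lower order under stationary phase; handling the transition between the two regimes of $|x-x'|$ and pushing the $\lambda$-dependence of the amplitude into the symbols $\mathfrak a_m^\pm,\mathfrak b_N$ are the remaining routine-but-careful points.
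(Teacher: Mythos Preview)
The paper does not supply its own proof of this theorem: it is quoted verbatim as \cite[Lemma 6.1]{BGT07} and used as a black box. Your sketch is a correct outline of the standard argument behind that lemma --- write $\mathscr{K}_\lambda$ as an oscillatory integral in $y$ with phase $\Phi(x,x',y)=d_g(x,y)-d_g(x',y)$, observe that the critical set consists of the two geodesic arcs through $x,x'$ inside the annulus, and apply stationary phase transversely to extract the factor $(\lambda|x-x'|)^{-(n-1)/2}$ from the $O(|x-x'|)$-sized transverse Hessian. The alternative you mention (rewriting $\mathscr{T}_\lambda\mathscr{T}_\lambda^\ast$ as $|\chi|^2(\sqrt{-\Delta}-\lambda)$ and feeding in the Hadamard parametrix) is exactly the route taken in \cite[Lemma 5.1.3]{So} and is indeed cleaner for producing the full asymptotic series. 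Either way, the content of parts \eqref{K-lambda-uniform-bound} and \eqref{K-lambda-asymptotic} is classical, and your derivation of part (c) from them via the bi-Lipschitz property of $\mathfrak{u}$ is exactly right.
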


\subsection{Reduction to a generalized Young-type inequality} The kernel estimate \eqref{kernel-domination} allows us to bound $\mathcal T_{\lambda} \mathcal T_{\lambda}^{\ast}$ pointwise by a convolution operator. 
\begin{proposition} \label{gen-Young-prop}
Given a set $\Gamma \subseteq \Sigma$, let $\mathcal T_{\lambda}$ denote the restriction to $\Gamma$ of the smooth spectral projection operator, as defined by \eqref{restriction-operator-def}. Suppose that $\Gamma$ is parameterized by $E \subseteq [0,1]^d$ via a coordinate chart $\mathfrak{u}$, i.e., $\Gamma = \mathfrak{u}(E)$.  Let $\mu$ be a non-negative measure supported on $E$. 
\begin{enumerate}[(a)] 
\item Then for all functions $f$ and all $u \in E$,  we have the pointwise inequality 
\begin{align} 
\bigl|\mathcal T_{\lambda} \mathcal T_{\lambda}^{\ast} f \bigl(\mathfrak{u}(u) \bigr)| &\leq C \lambda^{n-1} \bigl[\mathcal L_{\lambda} |f| \bigr](\mathfrak{u}(u)), \quad \text{ where }  \label{pointwise-domination} \\  \mathcal L_{\lambda}f \bigl(\mathfrak{u}(u) \bigr) &:= \int_E \mathcal K_{\lambda}(u-v) (f \circ \mathfrak{u})(v) \, d\mu(v), \text{ and } \nonumber \\  \mathcal K_{\lambda}(u) &:= \langle \lambda u \rangle^{-\frac{n-1}{2}}. 
%\frac{1}{(1 + \lambda |u|)^{\frac{n-1}{2}}}.   
\nonumber
\end{align}  
\item \label{gen-Young-b} For $p \geq 2$, the operator $\mathcal T_{\lambda}$ is bounded as a linear operator from $L^2(M, dV)$ to $L^p(\Gamma, \nu)$ provided 
\begin{equation}  \mathtt a_p^{\frac{p}{2}} := \sup_{u \in E} \int_E \big|\mathcal K_{\lambda}(u - v) \bigr|^{\frac{p}{2}} \, d\mu(v) < \infty, \label{def-a}  \end{equation} 
with the operator norm of $\mathcal T_{\lambda}$ is bounded above by a constant multiple of $\sqrt{ \lambda^{n-1} \mathtt a_p}$.  \label{T-op-norm} 
\end{enumerate} 
\end{proposition}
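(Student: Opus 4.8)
\textbf{Proof proposal for Proposition \ref{gen-Young-prop}.}

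The plan is to prove part (a) by directly combining the kernel bound \eqref{kernel-domination} from Theorem \ref{thm:kernel_expansion} with the integral representation \eqref{normal-op} of the normal operator. Indeed, writing $x = \mathfrak u(u)$ and $x' = \mathfrak u(v)$, the representation \eqref{normal-op} gives
\[ \mathcal T_\lambda \mathcal T_\lambda^\ast f(\mathfrak u(u)) = \int_E \mathscr K_\lambda(\mathfrak u(u), \mathfrak u(v)) \, (f \circ \mathfrak u)(v) \, d\mu(v), \]
and taking absolute values inside the integral and applying the pointwise domination $|\mathscr K_\lambda(\mathfrak u(u), \mathfrak u(v))| \le C\lambda^{n-1} \langle \lambda(u-v)\rangle^{-(n-1)/2} = C\lambda^{n-1}\mathcal K_\lambda(u-v)$ yields exactly \eqref{pointwise-domination}. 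This step is essentially bookkeeping; the only mild subtlety is confirming that $\Gamma \subseteq W$ so that Theorem \ref{thm:kernel_expansion} applies on all of $\Gamma \times \Gamma$, which is guaranteed by the standing assumption $\mathfrak u(U) \subseteq W$ and $\Gamma = \mathfrak u(E) \subseteq \mathfrak u([0,1]^d)$.

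For part (b), the strategy is to run the $TT^\ast$ reduction already recorded in \eqref{T-TTstar}–\eqref{hope-to-show}: it suffices to bound $\|\mathcal T_\lambda \mathcal T_\lambda^\ast\|_{L^{p'}(\nu) \to L^p(\nu)}$, and by \eqref{pointwise-domination} this is controlled by $C\lambda^{n-1}$ times the $L^{p'}(\mu) \to L^p(\mu)$ norm of the convolution-type operator $\mathcal L_\lambda$ (after transporting everything to $E$ via the measure-preserving identification \eqref{mu-nu-relation}). To estimate $\mathcal L_\lambda$, I would invoke the generalized Young / Schur-type inequality that the paper promises to prove in Section \ref{young-section} (the ``main ingredient'' referred to in the overview): an integral operator with kernel $\mathcal K_\lambda(u-v)$ acting on $L^{p'}(\mu) \to L^p(\mu)$ has norm bounded by
\[ \sup_{u} \Bigl( \int_E |\mathcal K_\lambda(u-v)|^{p/2}\, d\mu(v)\Bigr)^{2/p} \cdot \sup_{v} \Bigl( \int_E |\mathcal K_\lambda(u-v)|^{p/2}\, d\mu(u)\Bigr)^{2/p}, \]
which by symmetry of $\mathcal K_\lambda$ collapses to $\mathtt a_p$ as defined in \eqref{def-a}. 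Hence $\|\mathcal L_\lambda\|_{L^{p'}(\mu)\to L^p(\mu)} \lesssim \mathtt a_p$, so $\|\mathcal T_\lambda \mathcal T_\lambda^\ast\|_{L^{p'}(\nu)\to L^p(\nu)} \lesssim \lambda^{n-1}\mathtt a_p$, and taking square roots as in \eqref{hope-to-show} gives the operator norm bound $\|\mathcal T_\lambda\|_{L^2(M)\to L^p(\nu)} \lesssim \sqrt{\lambda^{n-1}\mathtt a_p}$, as claimed.

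The main obstacle here is not in Proposition \ref{gen-Young-prop} itself — once part (a) and the generalized Young inequality are in hand, part (b) is a short formal manipulation — but rather in the correct statement and proof of that generalized Young inequality for arbitrary (non-translation-invariant) measures $\mu$, which is why it is isolated as a separate proposition proved in Section \ref{young-section}. Within the present proof, the one point demanding a little care is matching exponents: one must check that the interpolation/Schur argument genuinely produces the exponent $p/2$ appearing in \eqref{def-a} (this is the natural exponent because $\mathcal L_\lambda$ enters at the level of $\mathcal T_\lambda \mathcal T_\lambda^\ast$ and the $TT^\ast$ identity halves the Lebesgue exponent on the $L^2$ side), and that the duality $L^{p'} \to L^p$ with $p \ge 2$ is exactly the regime in which the Schur-type bound with a single supremum (rather than two distinct ones) is available. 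I would therefore present part (a) in full, then state the needed consequence of the Section \ref{young-section} inequality, and close part (b) with the $TT^\ast$ computation.
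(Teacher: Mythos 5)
Your argument follows the paper's proof essentially verbatim: part (a) is read off from \eqref{normal-op} and \eqref{kernel-domination}, and part (b) is the $TT^{\ast}$ reduction \eqref{T-TTstar} together with the Schur-type inequality of Section \ref{young-section} applied with $r=p$, $q=p'$, $s=p/2$. The only slip is in your displayed Schur bound: as stated it is the full product $A_s \cdot B_s = \mathtt a_p^2$, whereas the correct estimate from Proposition \ref{l:youngsineq} is the geometric mean $A_s^{1-s/r}B_s^{s/r} = A_s^{1/2}B_s^{1/2}$, which by the symmetry of $\mathcal K_\lambda$ equals $\mathtt a_p$; your final conclusion is correct, so this is merely a misprint in the displayed exponents rather than a gap.
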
 
\begin{proof} 
The inequality \eqref{pointwise-domination} follows from \eqref{normal-op} combined with \eqref{kernel-domination}.
%once we recall from Section \ref{manifold-fractal-section} that the weight factor $\sqrt{\text{det}(g)}$ is uniformly bounded above and below by positive constants. 
Part (\ref{T-op-norm}) is a consequence of a generalized Young's inequality, stated and proved in Proposition \ref{l:youngsineq} in an appendix below (Section \ref{young-section}). We have used this proposition with $T$ replaced by $\mathcal L_{\lambda}$, $\tau$ replaced by $\mu$, $r = p$, $q=p'$ and $s = p/2$. In view of the symmetry and the translation-invariance of the kernel $\mathcal K_{\lambda}$, both the quantities $A_s$ and $B_s$ in \eqref{AsBs} equal $\mathtt a_p$ in this context. If $\mathtt a_p$ is finite, the conclusion of Proposition \ref{l:youngsineq} asserts that $\lambda^{n-1} \mathcal L_{\lambda}$ is bounded as a linear operator from $L^{p'} \rightarrow L^p$ with norm at most $\lambda^{n-1} \mathtt a_p$. In view of \eqref{pointwise-domination}, $\mathcal T_{\lambda} \mathcal T_{\lambda}^{\ast}$ has the same property. By the duality principle \eqref{T-TTstar}, $\mathcal T_{\lambda}$ maps $L^2(M, dV_g)$ to $L^p(\Gamma, \nu)$ with norm bounded by the square root of $\lambda^{n-1} \mathtt a_p$.   
\end{proof} 
\noindent \label{Young-prop-remark} {\bf{Remark: }} Following up on remark \ref{remark-nonoptimal} on page \pageref{remark-nonoptimal}, we pause for a moment to point out to the reader that the inequality \eqref{pointwise-domination}, which replaces $\mathcal T_{\lambda} \mathcal T_{\lambda}^{\ast}$ by a multiple of a convolution operator $\mathcal L_{\lambda}$, is the step where oscillations are ignored. The phase function $\lambda d_g(x,x')$ present in $\mathscr{K}_{\lambda}(x,x')$ disappears upon the application of triangle inequality, leaving an integral operator with Schwartz kernel $|\mathscr{K}_{\lambda}|$, which is pointwise bounded by $\mathcal K_{\lambda}$. This loss of information results in an eventual loss in the range of $p$ where our estimates are sharp.   

\section{An integration kernel estimate} \label{integration-kernel-section} 
\noindent In view of Proposition \ref{gen-Young-prop}, the problem of Lebesgue boundedness of $\mathcal T_{\lambda}$ reduces to an estimation of the quantity $\mathtt a_p$ in \eqref{def-a}. Given a probability measure $\mu$ supported on a Borel set $E \subseteq [0,1]^d$ and any $p>0$, let us define the function 
\begin{equation}  \mathfrak A(u, \lambda; p)  := \int \langle \lambda (u-v) \rangle^{-\frac{p(n-1)}{4}} \, d\mu(v).  \label{random-function} 
%\int \frac{d \mu(v)}{(1 + \lambda| u-v|)^{p(n-1)/4}}. 
\end{equation} 
%For a random set $\Gamma$ and its associated measure $\nu$ as described in Section \ref{s:randomcantor}, the integral representing $a_p$ is random, so we aim to prove a quantitative estimate for it that holds almost surely. .       
The relevance of $\mathfrak A$ is that its supremum over $u \in E$ yields $\mathtt a_p^{p/2}$. It is our claim that a ball growth condition on $\mu$ of the form \eqref{mu-ball-condition} is sufficient to generate an quantitative estimate for $\mathfrak A$, and hence $\mathtt a_p$. The following proposition, which makes this precise, is the main step towards Theorems \ref{mainthm-restriction-ball} and \ref{mainthm-restriction-arbitrary}. 

\begin{proposition} \label{reduction-mainprop}
Fix $\alpha \in [0, d)$ and $\Psi \in \mathfrak F$. Suppose that $E \subseteq [0,1]^d$ is a Borel set supporting a probability measure $\mu$ that obeys \eqref{mu-ball-condition}. Let $p \in (0, \infty)$. 
%For $\mathbb P^{\ast}$-almost every $\omega$, 
Then there exists a constant $C > 0$ depending only on $p, n, \alpha$ such that for all $\lambda \geq 1$, the quantity $\mathfrak A$ in \eqref{random-function} admits the following estimate:   
\begin{equation} \sup_{u \in [0,1]^d} |\mathfrak A(u, \lambda;p)| \leq  C \Psi(\lambda) \lambda^{-\beta_p} \times \begin{cases} 1 &\text{ if } p \ne p_{\ast}, \\ \log \lambda &\text{ if } p = p_{\ast}. \end{cases}  
%(\log \lambda)^{\beta_p}. 
\label{main-estimate} \end{equation} 
%p_0 &:= \frac{4d(1-\epsilon)}{n-1}, \\  
Here $p_{\ast}$ is the exponent defined in \eqref{our-exponent-2}. The exponent $\beta_p$ is given by 
\begin{equation} 
\beta_p = \beta(p, n, \alpha) := \left\{ \begin{aligned} &\frac{p(n-1)}{4} &\text{ if } 0 < p \leq p_{\ast}, \\ &\alpha &\text{ if } p \geq p_{\ast}. \end{aligned} \right\} \label{def-alphap} 
\end{equation}
\end{proposition}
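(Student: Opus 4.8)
The plan is to estimate the integral $\mathfrak A(u,\lambda;p) = \int \langle \lambda(u-v)\rangle^{-p(n-1)/4}\,d\mu(v)$ by decomposing the domain of integration into dyadic annuli around the point $u$ and using the ball growth hypothesis \eqref{mu-ball-condition} on each piece. Concretely, first I would fix $u \in [0,1]^d$ and write the integral as $\int_{|u-v|\le 1/\lambda} + \sum_{k\ge 0}\int_{2^k/\lambda < |u-v|\le 2^{k+1}/\lambda}$, where the sum runs over all $k$ with $2^k/\lambda \lesssim \operatorname{diam}(E) \lesssim 1$, i.e.\ $0 \le k \le K$ with $K \approx \log_2\lambda$. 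On the innermost ball the integrand is $\asymp 1$ and the measure of the ball is $\le C_0 (1/\lambda)^\alpha \Psi(\lambda)$ by \eqref{mu-ball-condition}. On the $k$-th annulus, $\langle\lambda(u-v)\rangle \asymp 2^k$ so the integrand is $\asymp 2^{-kp(n-1)/4}$, and the measure of the annulus is at most the measure of the ball $B(u; 2^{k+1}/\lambda)$, which by \eqref{mu-ball-condition} is $\le C_0 (2^{k+1}/\lambda)^\alpha \Psi(2^{k+1}\lambda/2^{k+1}) \lesssim C_0 2^{k\alpha}\lambda^{-\alpha}\Psi(\lambda)$ (using monotonicity of $\Psi$ and $\Psi(ct)\lesssim_c\Psi(t)$, a standard property of functions of slow growth that should be recorded). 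Here I use $\Psi(1/r)$ with $r = 2^{k+1}/\lambda$, so the argument is $\lambda/2^{k+1} \le \lambda$, and nondecreasingness of $\Psi$ gives the bound $\Psi(\lambda)$.

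Assembling these pieces yields
\[
\mathfrak A(u,\lambda;p) \lesssim C_0\,\Psi(\lambda)\,\lambda^{-\alpha}\Bigl(1 + \sum_{k=0}^{K} 2^{k\alpha}\,2^{-kp(n-1)/4}\Bigr)
= C_0\,\Psi(\lambda)\,\lambda^{-\alpha}\Bigl(1 + \sum_{k=0}^{K} 2^{k(\alpha - p(n-1)/4)}\Bigr).
\]
Now the geometric sum is controlled by the sign of the exponent $\alpha - p(n-1)/4$, and this is precisely where the critical exponent $p_\ast = 4\alpha/(n-1)$ enters. If $p > p_\ast$ the exponent is negative, the series converges to a constant, and one gets $\mathfrak A \lesssim \Psi(\lambda)\lambda^{-\alpha}$, matching $\beta_p = \alpha$. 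If $p < p_\ast$ the exponent is positive, so the sum is dominated by its last term $2^{K(\alpha - p(n-1)/4)} \asymp \lambda^{\alpha - p(n-1)/4}$, giving $\mathfrak A \lesssim \Psi(\lambda)\lambda^{-\alpha}\cdot\lambda^{\alpha - p(n-1)/4} = \Psi(\lambda)\lambda^{-p(n-1)/4}$, matching $\beta_p = p(n-1)/4$. If $p = p_\ast$ the exponent is $0$, each term contributes $1$, and there are $K+1 \asymp \log\lambda$ of them, giving the logarithmic factor in \eqref{main-estimate}. Taking the supremum over $u$ completes the argument, since none of the constants depend on $u$.

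The main obstacle — really a bookkeeping point rather than a genuine difficulty — is handling the slow-growth function $\Psi$ cleanly: one must check that $\Psi(\lambda/2^{k+1})$ can be replaced by $\Psi(\lambda)$ uniformly in $k$, which follows from monotonicity, and that the factor $\Psi(\lambda)$ genuinely factors out of the dyadic sum rather than interacting with it (it does, because the worst case for $\Psi$ is the largest argument $\lambda$, attained at $k=0$, while the geometric decay/growth in $2^k$ is governed entirely by the power $2^{k(\alpha-p(n-1)/4)}$). A secondary point is being careful that the hypothesis $\alpha \in [0,d)$ and $E\subseteq[0,1]^d$ guarantee the dyadic decomposition terminates at $K \asymp \log\lambda$ (the diameter of $E$ is at most $\sqrt d$), so that the count of annuli in the critical case is indeed $O(\log\lambda)$; and that the innermost contribution $1$ never dominates in the regime $p \le p_\ast$, which it does not since $\beta_p \le \alpha$ there.
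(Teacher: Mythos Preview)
Your proposal is correct and takes essentially the same approach as the paper: a dyadic decomposition of the domain into annuli around $u$, application of the ball growth condition \eqref{mu-ball-condition} on each shell, and a geometric sum whose behaviour splits according to the sign of $\alpha - p(n-1)/4$. The only cosmetic difference is that the paper indexes the shells by the scale $2^{-j}$ (with $j$ running up to $\log_2\lambda$) rather than by $2^k/\lambda$, and you need only monotonicity of $\Psi$ (not a doubling property) to factor it out.
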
 
%\subsection*{Proof of Proposition \ref{reduction-mainprop}} \label{s:reduction-A-estimate}
%assumming (\ref{ball-condition-upper-bound}) in Proposition \ref{ball-measures}}
%The key ingredient in this proof is the
%almost immediately after considering an appropriate dyadic decomposition along with  ball estimate (\ref{ball-condition-upper-bound}) in Proposition \ref{ball-prop}.  Consider any $\omega \in \Omega$ for which the measure $\mu_{\omega}$ obeys (\ref{ball-condition-upper-bound}). The collection of such $\omega$ constitutes a set of full measure, by Proposition \ref{ball-prop}. 
\begin{proof} 
Let us fix $u \in [0,1]^d$, and define the sets
\begin{align*} 
U_j &= \{v \in [0,1]^d : 2^{-j-1} \leq |u-v| \leq 2^{-j} \} \text{ for } - C_d = \log_2(1/\sqrt{d}) \leq j < j_0 = \log_2 \lambda, \\ 
U_{j_0} &= \{v \in [0,1]^d : |u-v| \leq \lambda^{-1} \}.   
\end{align*} 
%Notice that for $\mathbb{P}^*$-almost every $\omega$ and recalling that $\mu = \mu_{\omega}$, 
The relevance of these sets is that they cover $[0,1]^d$ and are essentially disjoint, allowing the following decomposition of the integral representing  $\mathfrak A(u, \lambda; p)$: 
\begin{align*}
\mathfrak A(u, \lambda; p) &=  \int \frac{d \mu(v)}{(1 + \lambda| u-v|)^{p(n-1)/4}} \\
&  \leq \sum_{j=-C_d}^{j_0} 
%\int_{2^{-(j+1)} \leq |u-v| \leq 2^{-j}} 
\int_{U_j} \frac{d \mu(v)}{(1 + \lambda| u-v|)^{p(n-1)/4}}  \\
%+  \int_{ |u-v| \leq c_0 \lambda^{-1}} \frac{d \mu(v)}{(1 + \lambda| u-v|)^{p(n-1)/4}} \\
& \leq \sum_{j=-C_d}^{j_0-1} (\lambda 2^{-j-1})^{-p(n-1)/4} \mu(B(u; 2^{-j})) + \mu(B(u; \lambda^{-1})).
\end{align*} 
Invoking the crucial ball growth condition \eqref{mu-ball-condition}, we can estimate $\mu(B(u; r))$, obtaining  
\begin{align*} 
\mathfrak A(u, \lambda; p) & \leq C_{0} 2^{p(n-1)/4}  \lambda^{-p(n-1)/4} \sum_{j=-C_d}^{j_0-1} 2^{j(\frac{p(n-1)}{4} - \alpha)} \Psi(2^{j})  + C_{0} \lambda^{- \alpha} \Psi(\lambda)  \\
& \leq C  \lambda^{-p(n-1)/4} \, \Psi(\lambda) \times \left\{ \begin{aligned} 
&\lambda^{\frac{p(n-1)}{4} - \alpha}, & \mbox{ if } p > p_\ast, \\
&\log \lambda, & \mbox{ if } p = p_\ast, \\
&1, & \mbox{ if } p < p_\ast
\end{aligned} \right\} + C \lambda^{-\alpha} \Psi(\lambda)   \\
& \leq C \lambda^{-\beta_p} \Psi(\lambda) \times \begin{cases} 1 &\text{ if } p \ne p_{\ast}, \\ \log \lambda &\text{ if } p = p_{\ast}. \end{cases}
\end{align*}
This concludes the proof.
%for some uniform-in-$\lambda$ 
%The constant $C_{\omega} > 0$ in the steps above depend on $\omega$ as inferred by Proposition \ref{ball-prop}, but is uniform in $\lambda$.  
%which is changing from line-to-line.
\end{proof} 
\section{Proof of Theorems \ref{mainthm-restriction-ball} and \ref{mainthm-restriction-arbitrary}} \label{proofs-mainthms-section} 
\begin{proof}[{\bf{Proof of Theorem \ref{mainthm-restriction-ball}}}]
In view of \eqref{T-TTstar}, it suffices to the prove the $L^2(M) \rightarrow L^p(\nu)$ estimate on $\mathcal T_{\lambda}$ mentioned in \eqref{hope-to-show}. Combining Propositions \ref{gen-Young-prop}\eqref{gen-Young-b}  and \ref{reduction-mainprop}, we obtain for $p \geq 2$, 
\begin{align*} 
\bigl| \bigl| \mathcal T_{\lambda} \bigr|\bigr|_{L^2(M) \rightarrow L^p(\nu)} &\leq \lambda^{\frac{n-1}{2}} \mathtt a_p^{\frac{1}{2}} \text{ (with $\mathtt a_p$ as in \eqref{def-a})} \\   
&\leq \lambda^{\frac{n-1}{2}} \bigl[ \sup_{u \in [0,1]^d} |\mathfrak A(u, \lambda;p)| \bigr]^{\frac{1}{p}} \\ 
&\leq C (1 + \lambda)^{\frac{n-1}{2}} (1 + \lambda)^{-\beta_p/p} \overline{\Psi}_p(\lambda) \\ 
&\leq C (1 + \lambda)^{\vartheta_p} \overline{\Psi}_p(\lambda).
\end{align*} 
It is easy to verify that $\vartheta_p = (n-1)/2 - \beta_p/p$ and $\overline{\Psi}_p$ are defined as in the statement of Theorem \ref{mainthm-restriction-ball}. 
\end{proof}   
\begin{proof}[{\bf{Proof of Theorem \ref{mainthm-restriction-arbitrary}}}]
In addition to Theorem \ref{mainthm-restriction-ball}, our main tool here is Frostman's lemma (see for example Theorem 8.8 in \cite{M95} or Section 4.1 of \cite{F90}), which characterizes the Hausdorff dimension of a Borel set in Euclidean space in terms of ball growth conditions of the form \eqref{mu-ball-condition}. Specifically, it says that for every Borel set $E \subseteq \mathbb R^d$, 
\begin{equation}  \text{dim}_{\mathbb H}(E) = \sup \left\{s : \begin{aligned} &\exists \text{ a probability measure } \tau \text{ supported on $E$}  \\ & \text{ and a finite constant $C$ such that } \\ &\tau(B(x;r)) \leq C r^s \text{ for all } x \in \mathbb R^d \text{ and }  r > 0. \end{aligned} \right\}. \label{Frostman} \end{equation}  
This means that given any Borel set $E \subseteq [0,1]^d$ of Hausdorff dimension $d(1- \epsilon)$ and any small $\kappa > 0$, there exists a probability measure $\mu^{(\kappa)}$ supported on $E$ that obeys \eqref{mu-ball-condition} with $\alpha = d(1 - \epsilon) - \kappa$, $\Psi \equiv 1$ and a positive finite constant $C_0 = C_0(\kappa)$. Theorem \ref{mainthm-restriction-ball} then ensures that the estimate \eqref{eigenfunction-growth-nu} holds with $\nu = \nu^{(\kappa)} = \mu^{(\kappa)} \circ \mathfrak{u}^{-1}$. We observe that for our choice of $\alpha$, the critical threshold $p_{\ast}$ in Theorem \ref{mainthm-restriction-ball} is given by $p_{\ast}^{(\kappa)} = p_0 - 4\kappa/(n-1)$, and $\vartheta_p$ of Theorem  \ref{mainthm-restriction-ball} takes on the value
\[ \vartheta_p = \vartheta_p^{(\kappa)} = \left\{ \begin{aligned} &\frac{n-1}{4} &\text{ for } 2 \leq p \leq p_0 - \frac{4 \kappa}{n-1}, \\ &\frac{n-1}{2} - \frac{d(1 - \epsilon)}{p} + \frac{\kappa}{p} &\text{ for } p \geq p_0 - \frac{4 \kappa}{n-1}. \end{aligned} \right\}  \]
%Thus $\vartheta_p \leq \theta_p$ for all $p \ne p_{\ast}^{(\kappa)}$. When $p = p_{\ast}^{(\kappa)}$, the logarithmic factor in $\overline{\Psi}_p$ is bounded above by $C_{\kappa, p} \lambda^{\kappa/(2p)}$ for a large enough choice of $C_{\kappa, p}$. Combining these observations, we arrive at the estimate \eqref{eigenfunction-restriction-estimate}.   
Recalling the definitions of $\theta_p$ and $\Theta$ from Theorem \ref{mainthm-restriction-arbitrary}, we  observe that \[ (1 + \lambda)^{\vartheta_p} \overline{\Psi}_p(\lambda) \leq C_{\kappa, p} (1 + \lambda)^{\theta_p} \Theta(\lambda; \kappa, p)  \]
for a large enough choice of constant $C_{\kappa, p}$. Thus restating \eqref{eigenfunction-growth-nu} in terms of $\theta_p$ and $\Theta$ leads us to the claimed estimate \eqref{eigenfunction-restriction-estimate}, completing the proof.  
\end{proof} 

\section{Lower bounds for spectral projectors} \label{lower-bound-section}
\noindent We now turn our attention to estimating the norms of spectral projectors from below. The main result of this section, Proposition \ref{smooth-spectral-proj-prop}, achieves this under certain measure-theoretic conditions. This lower bound will play an important role in the proof of Theorem \ref{mainthm-sharpness}, which appears in the next section. 
\subsection{Conditional lower bound for smooth spectral projectors} 
\begin{proposition}\label{smooth-spectral-proj-prop} 
Let us fix $\alpha \in (0, d]$, and a probability measure $\mu$ that obeys the hypotheses of Theorem \ref{mainthm-sharpness}. Then for any Riemannian manifold $(M, g)$, any submanifold $\Sigma \subseteq M$ and any $p \geq \max(1, p_{\ast}/2)$, the following is true:
\begin{equation}  \limsup_{\lambda \rightarrow \infty} \bigl[ \lambda^{-\varkappa_p} \; \Lambda_p(\lambda) \bigr]\;  || \mathcal{T}_{\lambda}||_{L^2(M) \rightarrow L^p(\nu)} > 0.  \label{limsup-condition-2} \end{equation}  
%The conclusion \eqref{limsup-condition} of Proposition \ref{rough-spectral-proj-prop} holds with $\mathbf 1_{m/2}$ replaced by 
Here $\mathcal{T}_{\lambda}$ is the restricted spectral projector defined in \eqref{restriction-operator-def},  and $\nu$ is the push-forward measure of $\mu$ supported on $\Gamma = \mathfrak{u}(E)$, as defined in \eqref{mu-nu-relation}. The exponent $\varkappa_p$ and the blowup factor $\Lambda_p \in \mathfrak F$ are is as in \eqref{our-exponent-2} and \eqref{Lambda-in-Psi-Xi} respectively. \end{proposition}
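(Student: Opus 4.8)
\textbf{Proof proposal for Proposition \ref{smooth-spectral-proj-prop}.} The plan is to construct an explicit lower bound by testing the normal operator $\mathcal{T}_\lambda \mathcal{T}_\lambda^\ast$ against a suitable probe function supported near the point $y_0 \in \Gamma$ furnished by hypothesis \eqref{AD-local}. Because of the duality identity \eqref{T-TTstar}, proving a lower bound on $\|\mathcal{T}_\lambda\|_{L^2(M)\to L^p(\nu)}$ is equivalent to proving a lower bound on $\|\mathcal{T}_\lambda\mathcal{T}_\lambda^\ast\|_{L^{p'}(\nu)\to L^p(\nu)}$, and by the asymptotic expansion \eqref{asymptotic-expansion-kernel} of Theorem \ref{thm:kernel_expansion}, the Schwartz kernel $\mathscr{K}_\lambda(x,x')$ has a leading term of size $\sim \lambda^{n-1}$ on the ``diagonal block'' $|x-x'|\lesssim \lambda^{-1}$, where the oscillatory factor $e^{\pm i\lambda d_g(x,x')}$ is essentially constant. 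First I would fix a small $\delta>0$, use \eqref{AD-local} to choose a sequence $r_j \to 0$ with $r_j^{-\alpha}\Xi(1/r_j)\nu(B_g(y_0;r_j)) \geq \kappa_0/2$, and set $\lambda_j := c/r_j$ for an appropriate constant $c$ so that $B_g(y_0; r_j)$ lies inside the near-diagonal region at scale $\lambda_j$. Then I would take $f_j := \mathbf 1_{B_g(y_0; r_j)}$ and evaluate $\langle \mathcal{T}_{\lambda_j}\mathcal{T}_{\lambda_j}^\ast f_j, f_j\rangle_{L^2(\nu)}$ from below, using the fact that on $B_g(y_0;r_j)\times B_g(y_0;r_j)$ the real part of the kernel is $\gtrsim \lambda_j^{n-1}$ (after possibly adjusting $c$ so the phase stays in a small neighbourhood of $0$ and the leading symbol $\mathfrak a_0^\pm$ does not vanish — this is where part \eqref{t:smoothproj-parta} of Theorem \ref{t:smoothproj} is used).

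The heart of the computation is the chain of inequalities
\begin{align*}
\|\mathcal{T}_{\lambda_j}\mathcal{T}_{\lambda_j}^\ast\|_{L^{p'}(\nu)\to L^p(\nu)}
&\geq \frac{\langle \mathcal{T}_{\lambda_j}\mathcal{T}_{\lambda_j}^\ast f_j, f_j\rangle_{L^2(\nu)}}{\|f_j\|_{L^{p'}(\nu)}\|f_j\|_{L^p(\nu)}} \\
&\gtrsim \frac{\lambda_j^{n-1}\,\nu(B_g(y_0;r_j))^2}{\nu(B_g(y_0;r_j))^{1/p'}\,\nu(B_g(y_0;r_j))^{1/p}}
= \lambda_j^{n-1}\,\nu(B_g(y_0;r_j)).
\end{align*}
Invoking the lower bound $\nu(B_g(y_0;r_j)) \gtrsim r_j^\alpha \Xi(1/r_j)^{-1} \sim \lambda_j^{-\alpha}\Xi(C\lambda_j)^{-1}$ from \eqref{AD-local}, this yields $\|\mathcal{T}_{\lambda_j}\mathcal{T}_{\lambda_j}^\ast\|_{L^{p'}(\nu)\to L^p(\nu)} \gtrsim \lambda_j^{n-1-\alpha}\,\Xi(C\lambda_j)^{-1}$. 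Taking square roots via \eqref{T-TTstar} would then give $\|\mathcal{T}_{\lambda_j}\|_{L^2(M)\to L^p(\nu)} \gtrsim \lambda_j^{(n-1-\alpha)/2}\,\Xi(C\lambda_j)^{-1/2}$, which is a lower bound of the right \emph{polynomial} order only when $p = \infty$; for finite $p$ one does not recover the exponent $\varkappa_p = \alpha(2/p_\ast - 1/p) = (n-1)/2 - \alpha/p$. The fix is to test not against a single bump but against a sum of many well-separated bumps, or equivalently to exploit the full off-diagonal decay $\langle\lambda(u-v)\rangle^{-(n-1)/2}$ of the kernel in \eqref{kernel-domination} together with a matching \emph{lower} kernel bound on a positive-measure set of pairs; this is the standard mechanism (as in \cite[Section 6]{BGT07}) by which a good $L^{p'}\to L^p$ lower bound, rather than merely an $L^1\to L^\infty$ one, is produced. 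Concretely, I would cover a fixed macroscopic ball by $\sim (\lambda_j r_j^{(\mathrm{macro})})^d$-many disjoint balls of radius $\lambda_j^{-1}$, but only those actually charged by $\nu$ contribute; the ball growth upper bound \eqref{mu-ball-condition} controls the number of such balls from above, and \eqref{AD-local} guarantees enough of them near $y_0$ carry mass $\gtrsim \lambda_j^{-\alpha}\Xi(C\lambda_j)^{-1}$.

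The main obstacle is precisely this interplay: one must choose the test function adapted to $\nu$ (not to Lebesgue measure) so that both the bilinear form $\langle \mathcal{T}_{\lambda_j}\mathcal{T}_{\lambda_j}^\ast f_j, f_j\rangle$ is large and the norms $\|f_j\|_{L^p(\nu)}, \|f_j\|_{L^{p'}(\nu)}$ are small, and the lack of a lower regularity (Ahlfors-type) bound on $\nu$ — we only have the one-sided \eqref{AD-local} along a sequence $r_j\to 0$ — means the ``number of charged balls'' argument must be run only along that sequence and only locally near $y_0$. This forces the conclusion to be a $\limsup$ rather than a uniform bound, which is exactly the form of \eqref{limsup-condition-2}. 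I would also need to verify that the leading oscillatory term of $\mathscr{K}_\lambda$ genuinely dominates: on the relevant range of scales $r_j^{(\mathrm{micro})} \lesssim |x-x'| \lesssim r_j^{(\mathrm{macro})}$ one chooses the macroscopic scale so that $d_g(x,x')$ ranges over an interval on which $\mathfrak a_0^\pm$ is bounded below (part \eqref{t:smoothproj-parta} of Theorem \ref{t:smoothproj}), and a stationary phase / non-stationary phase dichotomy guarantees cancellation among the two $\pm$ terms does not destroy the main contribution — this is where the choice $p \geq \max(1, p_\ast/2)$ enters, ensuring the diagonal contribution outweighs the oscillatory tail after integration against $\nu$. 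Finally, the blowup factor $\Lambda_p(\lambda)$ in \eqref{Lambda-in-Psi-Xi}, carrying the powers $\Psi(C\lambda)^{1/(2p')}$ and $\Xi(C\lambda)^{1/2+1/(2p)}$, emerges from balancing the upper bound \eqref{mu-ball-condition} (which bounds $\|f_j\|_{L^{p'}(\nu)}$ from above, introducing $\Psi$) against the lower bound \eqref{AD-local} (which bounds $\nu(B_g(y_0;r_j))$ from below, introducing $\Xi$) in the quotient above, and I would track these exponents carefully at the end.
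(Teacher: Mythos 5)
Your overall strategy — probe near the distinguished point $y_0$ at scale $\lambda^{-1}$, use the kernel lower bound from Theorem \ref{t:smoothproj} on the near-diagonal block, and invoke \eqref{AD-local} along a sequence $r_j\to 0$ — is precisely the paper's strategy. However, your detour into multi-bump / stationary-phase constructions is prompted by an error in the duality quotient, and the conclusion you draw from that error (that a single bump only gives the right exponent at an endpoint) is incorrect. For the $L^{p'}(\nu)\to L^p(\nu)$ operator norm, the correct bilinear-form lower bound is
\[
\bigl\| \mathcal T_\lambda \mathcal T_\lambda^\ast \bigr\|_{L^{p'}(\nu)\to L^p(\nu)} \;\ge\; \frac{\langle \mathcal T_\lambda\mathcal T_\lambda^\ast f,\, f\rangle_{L^2(\nu)}}{\|f\|_{L^{p'}(\nu)}^{2}},
\]
because the test function $g$ dualizing the $L^p$ target must itself lie in $L^{p'}(\nu)$; the denominator $\|f\|_{L^{p'}(\nu)}\|f\|_{L^p(\nu)}$ you wrote is not a valid lower-bound certificate (for $p\ge 2$ it strictly overestimates the quotient). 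With $f_j = \mathbf 1_{B_g(y_0;r_j)}$, $\lambda_j\sim 1/r_j$, and the kernel lower bound $\Re \mathscr K_{\lambda_j} \gtrsim \lambda_j^{\,n-1}$ on $B\times B$ (this is exactly Lemma \ref{normal-kernel-lower-bound-lemma}), the corrected quotient becomes
\[
\frac{\lambda_j^{\,n-1}\,\nu(B)^2}{\nu(B)^{2/p'}} \;=\; \lambda_j^{\,n-1}\,\nu(B)^{2/p} \;\gtrsim\; \lambda_j^{\,n-1-2\alpha/p}\,\bigl[\Xi(C\lambda_j)\bigr]^{-2/p}
\;=\; \lambda_j^{\,2\varkappa_p}\,\bigl[\Xi(C\lambda_j)\bigr]^{-2/p},
\]
where the $\Xi$-lower bound on $\nu(B)$ comes from \eqref{AD-local}, and $n-1 - 2\alpha/p = 2\varkappa_p$. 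Taking square roots gives $\|\mathcal T_{\lambda_j}\|_{L^2\to L^p(\nu)} \gtrsim \lambda_j^{\varkappa_p}\,\Xi(C\lambda_j)^{-1/p}$, which is of the correct polynomial order for every $p$ in the claimed range, not just $p=\infty$ (and note: even with your faulty exponent $(n-1-\alpha)/2$, that matches $\varkappa_p$ at $p=2$, not at $p=\infty$).

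Consequently the entire second half of your proposal — covering a macroscopic ball by $\lambda^{-1}$-cubes, counting those charged by $\nu$, invoking a stationary/non-stationary phase dichotomy, and attributing the constraint $p \geq \max(1, p_\ast/2)$ to a diagonal-versus-oscillatory competition — is an unnecessary detour, and two of its ingredients are not even available from the hypotheses. Condition \eqref{AD-local} is a limsup statement at a \emph{single} point $y_0$ along a sequence of scales; it gives you no information about the mass of neighbouring $\lambda^{-1}$-balls, so the step ``enough of them near $y_0$ carry mass $\gtrsim \lambda_j^{-\alpha}\Xi^{-1}$'' cannot be justified. Also, the hypothesis $p \geq \max(1,p_\ast/2)$ is not used in this proposition at all: it enters only in Corollary \ref{rough-spectral-proj-prop}, to ensure $\varkappa_p\ge 0$ so that the series $\sum_r r^{\varkappa_p - N}$ and the monotonicity/doubling manipulations there go through. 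The paper's actual proof is the single-bump argument you began with, except with a smooth bump $f_\lambda(\mathfrak u(v)) = \lambda^{\alpha/p'}\psi(\lambda(v-v_0))$ and the direct quotient $\|\mathcal T_\lambda\mathcal T_\lambda^\ast f_\lambda\|_{L^p(\nu)}/\|f_\lambda\|_{L^{p'}(\nu)}$ rather than a bilinear form; the $\Psi$-factor in $\Lambda_p$ comes from bounding $\|f_\lambda\|_{L^{p'}(\nu)}$ from above via \eqref{mu-ball-condition}, and the $\Xi$-factor from bounding $\|\mathcal T_\lambda\mathcal T_\lambda^\ast f_\lambda\|_{L^p(\nu)}$ from below via \eqref{AD-local}. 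You should replace the duality denominator and delete the multi-bump machinery.
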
 
%\subsection{Proof of Proposition \ref{smooth-spectral-proj-prop}} 
\begin{proof}
The overall structure of the proof of Proposition \ref{smooth-spectral-proj-prop} is very similar to \cite[Lemma 5.1]{BGT07}. However, for the sake of completeness, we include it in its entirety, since at critical junctures of the argument, well-known properties of the Lebesgue measure have to be replaced by their analogues for $\mu$. 
\vskip0.1in
\noindent The conclusion of Proposition \ref{smooth-spectral-proj-prop} will follow if we prove the existence of a constant $c > 0$ and an infinite sequence $\lambda_j \nearrow \infty$ such that 
\begin{equation}  || \mathcal{T}_{\lambda_j}||_{L^2(M) \rightarrow L^p(\nu)}^2 =   ||\mathcal T_{\lambda_j} \mathcal T_{\lambda_j}^{\ast}||_{L^{p'}(\nu) \rightarrow L^p(\nu)} \geq c \bigl[\Lambda_p(\lambda_j)\bigr]^{-2} \lambda_j^{2 \vartheta_p}  \quad \text{ for all large $\lambda_j$.} \label{normal-conclusion} 
\end{equation} 
We will construct a family of test functions $f_{\lambda}$ such that for a suitable choice of constants $0 < c < 1 < C$, 
\begin{align}
||f_{\lambda}||_{L^{p'}(\nu)} &\leq C \bigl[\Psi(C \lambda) \bigr]^{\frac{1}{p'}} \text{ for all } \lambda \geq 1,   \label{f-norm} \\ 
\bigl| \bigl| \mathcal T_{\lambda} \mathcal T_{\lambda}^{\ast} f_{\lambda}\bigr| \bigr|^p_{L^p(\nu)} &\geq c \bigl[ \Xi(C \lambda) \bigr]^{-p-1} \lambda^{2p \vartheta_p}  \text{ for infinitely many large } \lambda \geq 1. \label{op-f-norm}
\end{align} 
Clearly \eqref{f-norm} and \eqref{op-f-norm} together imply  \eqref{normal-conclusion}, $\lambda_j \nearrow \infty$ being the sequence that verifies \eqref{op-f-norm}.  
\vskip0.1in
%\noindent  For $\mathbb P^{\ast}$-almost every $\omega \in \Omega$ obeying the conclusion of Proposition \ref{ball-prop}, let us fix $v_0 \in E_{\omega}$. We are thus ensured of the two estimates \eqref{ball-condition-upper-bound} and \eqref{ball-condition-lower-bound}.  
\noindent Let us fix a point $v_0 \in E$ for which \eqref{AD-local} holds, and a small absolute constant $\sigma > 0$ soon to be specified. For $\lambda \geq 1$, we choose a test function $f_{\lambda}$ of the form 
\begin{equation}  f_{\lambda} (\mathfrak{u}(v)) = \lambda^{\frac{\alpha}{p'}} \psi (\lambda (v-v_0)). \label{test-function-f-lambda} \end{equation} 
Here $\psi: \mathbb R^d \rightarrow [0,1]$ is a smooth function, supp$(\psi) \subseteq B(0; \sigma)$, $\psi \equiv 1$ on $B(0, \sigma/2)$. 
 %supported in a small ball of radius $\sigma$  centred at the origin and identically one on a concentric ball of half the radius.  
 It follows from \eqref{mu-ball-condition} and its equivalent formulation \eqref{mu-ball-condition-2} that 
\[ ||f_{\lambda}||_{L^{p'}(\nu)}^{p'} = \int |f_{\lambda} (\mathfrak{u}(v))|^{p'} \; d\mu(v) \leq  \lambda^{\alpha}  \mu \bigl( B(v_0; \sigma \lambda^{-1} ) \bigr) \leq C \Psi(C\lambda). \] This proves \eqref{f-norm}. 
\vskip0.1in
\noindent We turn now to the proof of \eqref{op-f-norm}. Let us recall from \eqref{restriction-operator-def} that $\mathcal T_{\lambda}$ is the restriction to $\Gamma$ of the operator $\mathscr{T}_{\lambda}$ defined in \eqref{def-chi-lambda}. By Theorem \ref{t:smoothproj}, $\mathscr{T}_{\lambda}$ admits the representation given by \eqref{e:smoothproj}. As before, the operator $\mathscr{R}_{\lambda}$ will be ignored, since its operator norm is uniformly bounded in $\lambda$ and hence does not contribute to the growth we seek in \eqref{limsup-condition-2}:  \[||\mathscr{R}_{\lambda}||_{L^2(M) \rightarrow L^p(\nu)} \leq [||\mathscr{R}_{\lambda}||_{L^2(M) \rightarrow L^{\infty}(\nu)} \leq C \text{ for any $p \geq 2$.} \]
Let us continue to denote by $\mathscr{T}_{\lambda}$ the leading term in \eqref{e:smoothproj}. %where $\mathcal T_{\lambda}$ denotes the restriction of the operator $\mathscr{T}_{\lambda}$ as given by \eqref{restriction-operator-def}.  From the discussion in Section \ref{microlocal-preliminaries-section}, 
From this, we find that $\mathcal T_{\lambda} \mathcal T_{\lambda}^{\ast}$ is an integral operator of the form \eqref{normal-op}, whose integration kernel $\mathscr{K}_{\lambda}$ is given by    
\begin{equation} \begin{aligned} \mathscr{K}_{\lambda}(\mathfrak{u}(u), \mathfrak{u}(v)) = \lambda^{n-1} \int \exp \bigl[-i \lambda \bigl( d_g(\mathfrak{u}(u), y) - &d_g(\mathfrak{u}(v), y) \bigr) \bigr] \\ &  \times a_{\lambda} \bigl(\mathfrak{u}(u), y \bigr) 
\overline{a_{\lambda} \bigl(\mathfrak{u}(v), y \bigr)} \, dy,  \end{aligned} \label{normal-kernel}  \end{equation} 
for $u, v \in E$. We will show momentarily, in Lemma \ref{normal-kernel-lower-bound-lemma} below,  that there is a small constant $c_0 > 0$ such that $\mathscr{K}_{\lambda}$ obeys the pointwise bound 
\begin{equation} 
\text{Re} \bigl[ \mathscr{K}_{\lambda}(\mathfrak{u}(u), \mathfrak{u}(v)) \bigr] \geq c_0 \lambda^{n-1} \text{ for } u, v \in \text{supp}(f_{\lambda} \circ \mathfrak{u}), \label{real-K-lower} 
\end{equation} 
provided $\sigma$ is sufficiently small. Assuming this for the moment, the rest of the calculation proceeds as follows. For $\mathfrak{u}(u) \in \text{supp}(f_{\lambda})$,  
\begin{align} 
\bigl|\mathcal T_{\lambda} \mathcal T_{\lambda}^{\ast} f_{\lambda} (\mathfrak{u}(u)) \bigr| &\geq  \int \text{Re} \bigl[\mathscr{K}_{\lambda} (\mathfrak{u}(u), \mathfrak{u}(v)) \bigr] f_{\lambda}(\mathfrak{u}(v)) \, d\mu(v) \nonumber
 \\ &\geq c_0 \lambda^{n-1} \int f_{\lambda}(\mathfrak{u}(v))  \, d\mu(v) \nonumber 
 \\ &\geq c_0 \lambda^{n-1 + \frac{\alpha}{p'}} \mu \bigl[B(v_0; \sigma/(2\lambda)) \bigr], \label{last-step} 
\end{align}
where we have substituted \eqref{real-K-lower} at the second step and \eqref{test-function-f-lambda} at the third step. In view of \eqref{AD-local}, there exists $\kappa_0 > 0$ and a sequence $\lambda_j \nearrow \infty$ such that 
\begin{equation}  \mu \bigl[B(v_0; \sigma/(2\lambda_j)) \bigr] \geq \frac{\kappa_0}{2}  \Bigl( \frac{\sigma}{2\lambda_j} \Bigr)^{\alpha} \; \bigl[ \Xi(2\lambda_j/\sigma) \bigr]^{-1} \text{ for all large $j$.} \label{AD-local-2} \end{equation}   
Inserting \eqref{AD-local-2} into \eqref{last-step}, we conclude that there exist constants $c > 0$ small and $C > 0$ large, both depending only on $c_0$, $\kappa_0$ and $\sigma$ such that for all large $\lambda = \lambda_j$ and all $\mathfrak{u}(u) \in \text{supp}(f_{\lambda})$,  
 \[  \bigl|\mathcal T_{\lambda} \mathcal T_{\lambda}^{\ast} f_{\lambda} (\mathfrak{u}(u)) \bigr| \geq c \lambda^{n-1 + \frac{\alpha}{p'}} \lambda^{-\alpha} \; \bigl[\Xi(C\lambda) \bigr]^{-1} = c \lambda^{n-1 - \frac{\alpha}{p}} \; \bigl[ \Xi(C\lambda) \bigr]^{-1}.    \]
%where the last inequality is a consequence of \eqref{ball-condition-lower-bound}. 
This pointwise bound on the operator allows us to estimate its $L^p(\nu)$ norm from below: 
\begin{align} 
|| \mathcal T_{\lambda} \mathcal T_{\lambda}^{\ast} f_{\lambda}||_{L^{p}(\nu)}^p &\geq c^p \lambda^{(n-1)p - \alpha} \; \bigl[ \Xi(C\lambda) \bigr]^{-p} \; \nu \bigl( \text{supp}(f_{\lambda}) \bigr) \nonumber  \\ 
&\geq c^p \lambda^{(n-1)p - \alpha}  \; \bigl[ \Xi(C\lambda) \bigr]^{-p} \; \mu \bigl[ B(v_0; \sigma \lambda^{-1}/2) \bigr] \nonumber \\ &\geq c^p \lambda^{(n-1)p - 2\alpha}   \; \bigl[ \Xi(C\lambda) \bigr]^{-p-1} \; = c^p  \; \bigl[ \Xi(C\lambda) \bigr]^{-p-1} \;  \lambda^{2p \varkappa_p}, \label{operator-f-norm}  
\end{align} 
where we have used \eqref{AD-local} again in the last inequality, in the form of \eqref{AD-local-2}. This establishes our claim \eqref{op-f-norm}, completing the proof. 
%Combining \eqref{f-norm} with \eqref{operator-f-norm} leads us to the desired conclusion \eqref{normal-conclusion}.  
\end{proof}
\begin{lemma} \label{normal-kernel-lower-bound-lemma} 
In the notation of Proposition \ref{smooth-spectral-proj-prop}, there exist small constants $\mathfrak c_0, \sigma>0$ depending on $v_0$ but independent of $\lambda$ for which \eqref{real-K-lower} holds. 
\end{lemma}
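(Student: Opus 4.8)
The proof is a direct adaptation of \cite[Lemma~5.1]{BGT07}: once the test function $f_\lambda$ of \eqref{test-function-f-lambda} is in play, both $u$ and $v$ appearing in \eqref{normal-kernel} are confined to a Euclidean ball of radius $\sigma/\lambda$ about $v_0$, and on this scale the oscillatory factor $e^{-i\lambda(d_g(\mathfrak u(u),y)-d_g(\mathfrak u(v),y))}$ is uniformly close to $1$ while the amplitude $a_\lambda(\mathfrak u(u),y)\overline{a_\lambda(\mathfrak u(v),y)}$ is uniformly close to the non-negative quantity $|a_\lambda(\mathfrak u(v),y)|^2$. The only input beyond the triangle inequality and the uniform bounds of Theorem~\ref{t:smoothproj} is a lower bound on $\int_V|a_\lambda(\mathfrak u(v),y)|^2\,dy$, for which we invoke the non-vanishing clause in Theorem~\ref{t:smoothproj}(a). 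All estimates below are uniform over $u,v\in\mathrm{supp}(f_\lambda\circ\mathfrak u)$, which is what \eqref{real-K-lower} demands.

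\textbf{Localization.} Since $\psi$ is supported in $B(0;\sigma)$, any $u$ with $\mathfrak u(u)\in\mathrm{supp}(f_\lambda)$ satisfies $|u-v_0|\le\sigma/\lambda$, so $|u-v|\le2\sigma/\lambda$ and, $\mathfrak u$ being bi-Lipschitz on a fixed neighbourhood of $v_0$, $d_g(\mathfrak u(u),\mathfrak u(v))\le C_{\mathfrak u}\,\sigma/\lambda$ for all $u,v\in\mathrm{supp}(f_\lambda\circ\mathfrak u)$. The reverse triangle inequality then yields, for every $y$,
\[ \bigl|d_g(\mathfrak u(u),y)-d_g(\mathfrak u(v),y)\bigr|\le d_g(\mathfrak u(u),\mathfrak u(v))\le C_{\mathfrak u}\,\sigma/\lambda, \]
so the phase $\theta(y):=\lambda\bigl(d_g(\mathfrak u(u),y)-d_g(\mathfrak u(v),y)\bigr)$ obeys $|\theta(y)|\le C_{\mathfrak u}\sigma$ uniformly in $y$ and $\lambda$; choosing $\sigma$ small gives $\cos\theta(y)\ge\tfrac12$ and $|\sin\theta(y)|\le C_{\mathfrak u}\sigma$. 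On the amplitude side, Theorem~\ref{t:smoothproj}(b) bounds $a_\lambda$ and $\partial_x a_\lambda$ uniformly in $\lambda$, so $a_\lambda(\mathfrak u(u),y)-a_\lambda(\mathfrak u(v),y)=O(\sigma/\lambda)$ and hence $w(y):=a_\lambda(\mathfrak u(u),y)\overline{a_\lambda(\mathfrak u(v),y)}=|a_\lambda(\mathfrak u(v),y)|^2+O(\sigma/\lambda)$, the error uniform in $y,\lambda$. Combining the two and using $\lambda\ge1$,
\[ \mathrm{Re}\bigl[e^{-i\theta(y)}w(y)\bigr]=\cos\theta(y)\,\mathrm{Re}\,w(y)+\sin\theta(y)\,\mathrm{Im}\,w(y)\ \ge\ \tfrac12\,|a_\lambda(\mathfrak u(v),y)|^2-C\sigma \]
pointwise in $y$.

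\textbf{Amplitude lower bound and conclusion.} By Theorem~\ref{t:smoothproj}(a), for $x=\mathfrak u(v)$ with $|x|\le c_0'\rho$ the function $a_\lambda(x,\cdot)$ does not vanish on the geodesic annulus $\{y:\,(x,y)\in S,\ c_3\rho\le d_g(x,y)\le c_4\rho\}$, whose volume is bounded below by a positive constant independent of $x$ in a fixed neighbourhood of $\mathfrak u(v_0)$. Since $a_\lambda$ is a polynomial in $\lambda^{-1}$ with coefficients bounded together with their derivatives, non-vanishing on this compact set is uniform over $\lambda\in[1,\infty)$ (the $\lambda^{0}$ coefficient, the principal symbol, being non-vanishing there by the parametrix construction, cf.\ \cite{So}), whence there is $c_\ast>0$ with $\int_V|a_\lambda(\mathfrak u(v),y)|^2\,dy\ge c_\ast$ for all $\lambda\ge1$ and all $v$ within $\sigma$ of $v_0$. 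This uses that the normal chart was chosen, shrinking $\rho$ if needed, so that $\mathfrak u(U)\subseteq\{|x|\le c_0'\rho\}$, hence $|\mathfrak u(v)|\le c_0'\rho$ for such $v$; this is the source of the dependence of $\sigma$ on $v_0$. Integrating the pointwise bound of the preceding paragraph over $y\in V$ and inserting this estimate,
\[ \mathrm{Re}\bigl[\mathscr K_\lambda(\mathfrak u(u),\mathfrak u(v))\bigr]=\lambda^{n-1}\!\int_V\mathrm{Re}\bigl[e^{-i\theta(y)}w(y)\bigr]\,dy\ \ge\ \lambda^{n-1}\Bigl(\tfrac{c_\ast}{2}-C'\sigma\,\vol(V)\Bigr), \]
and choosing $\sigma$ small enough that $C'\sigma\,\vol(V)\le c_\ast/4$ gives \eqref{real-K-lower} with $\mathfrak c_0=c_\ast/4$.

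\textbf{Main difficulty.} Every step except the amplitude lower bound is bookkeeping with the triangle inequality and the uniform $C^1$ bounds of Theorem~\ref{t:smoothproj}(b). The one genuine point is the third paragraph: one must pass from the non-vanishing of $a_\lambda$ at each fixed $\lambda$ to a bound that is uniform in $\lambda$, which the polynomial-in-$\lambda^{-1}$ structure renders routine, provided one has arranged that the support of $f_\lambda$ sits inside the region where the parametrix representation \eqref{e:smoothproj} and the non-degeneracy of its amplitude are valid.
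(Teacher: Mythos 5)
Your argument is correct and follows essentially the same route as the paper's: both localize $u,v$ to a ball of radius $\sigma/\lambda$ about $v_0$, compare $\mathscr K_\lambda$ with the non-oscillatory quantity $\lambda^{n-1}\int a_\lambda(\mathfrak u(u),\cdot)\overline{a_\lambda(\mathfrak u(v),\cdot)}$ whose real part is bounded below via the nonvanishing clause of Theorem~\ref{t:smoothproj}(a), and absorb the difference by shrinking $\sigma$. The only stylistic difference is that you carry out the comparison pointwise in $y$ (splitting $\cos\theta$ from $\sin\theta$), whereas the paper first integrates and then applies H\"older; you are also somewhat more explicit than the paper about why the amplitude lower bound is uniform in $\lambda$ (via the polynomial-in-$\lambda^{-1}$ structure and nonvanishing of the principal symbol), which is a point the paper leaves implicit.
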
 
\begin{proof} 
Let us recall the representation of $\mathscr{K}_{\lambda}$ from \eqref{normal-kernel}. According to Theorem \ref{t:smoothproj}(\ref{t:smoothproj-parta}), the amplitude $a_{\lambda}$ is a smooth bounded function, with bounds uniform in $\lambda$, that does not vanish for $(x,y) \in S$ with $|x| < c_0' \rho$ and $d_g(x,y) \in [c_3\rho, c_4 \rho]$. Here $\rho > 0$ is a fixed small constant, and $S$ has been defined in \eqref{def-S}. Without loss of generality, we may assume that $\mathfrak{u}(v_0) \in W \cap \{x : |x| < c_0' \rho \}$ and that the intersection $S \cap \bigl[ \mathfrak{u}(v_0) \times V \bigr]$ has positive $n$-dimensional Lebesgue measure. We claim that there is an absolute constant $\mathfrak c_0 > 0$ depending on $\rho$ but independent of $\lambda$, such that
\begin{align} &\int \bigl| a_{\lambda} \bigl(\mathfrak{u}(v_0), y \bigr) \bigr|^2 \, dy =   \int a_{\lambda} \bigl(\mathfrak{u}(v_0), y \bigr)  \overline{a_{\lambda} \bigl(\mathfrak{u}(v_0), y \bigr)} \, dy \geq 4 \mathfrak c_0,  \text{ and hence } \nonumber \\  &\int \bigl| a_{\lambda} \bigl(\mathfrak{u}(v), y \bigr) \bigr|^2 \, dy \leq 8 \mathfrak c_0 \text{ for all } v \in \text{supp}(f_{\lambda} \circ \mathfrak{u}),  \text{ and } \label{local-integral-1} \\ & \text{Re}\Bigl[ \int a_{\lambda} \bigl(\mathfrak{u}(u), y \bigr)  \overline{a_{\lambda} \bigl(\mathfrak{u}(v), y \bigr)} \, dy \Bigr] \geq 2 \mathfrak c_0  \text{ for all } u, v \in \text{supp}(f_{\lambda} \circ \mathfrak{u}), \; \lambda \gg \rho^{-1}. \label{local-integral-2}
\end{align}   
We pause for a moment to justify these claims. The positivity of $\mathfrak c_0$ is assured since the map $y \mapsto a_{\lambda}(\mathfrak{u}(v_0), y)$ is not identically zero by Theorem \ref{t:smoothproj}(\ref{t:smoothproj-parta}). The remaining two relations follow from the smoothness of the integrand. 
\vskip0.1in
\noindent On the other hand, 
\begin{align} 
\Bigl| \mathscr{K}_{\lambda} \bigl(\mathfrak{u}(u), &\mathfrak{u}(v) \bigr) - \lambda^{n-1} \int a_{\lambda} \bigl(\mathfrak{u}(u), y \bigr) 
\; \overline{a_{\lambda} \bigl(\mathfrak{u}(v), y \bigr)} \, dy \Bigr| \nonumber \\ &\leq  \lambda^{n-1} \Bigl| \int \Bigl[ \exp \bigl[ -i \lambda \bigl( d_g(\mathfrak{u}(u), y) - d_g(\mathfrak{u}(v), y) \bigr) \bigr] - 1\Bigr] a_{\lambda} \bigl(\mathfrak{u}(u), y \bigr) 
\; \overline{a_{\lambda} \bigl(\mathfrak{u}(v), y \bigr)} \, dy  \Bigr| \nonumber \\ 
&\leq \lambda^{n} \int \bigl| d_g(\mathfrak{u}(u), y) - d_g(\mathfrak{u}(v), y)  \bigr| \times \bigl| a_{\lambda} \bigl(\mathfrak{u}(u), y \bigr) 
\; \overline{a_{\lambda} \bigl(\mathfrak{u}(v), y \bigr)}  \bigr| \, dy  \nonumber \\ 
&\leq \lambda^{n} d_g \bigl(\mathfrak{u}(u), \mathfrak{u}(v)  \bigr) \int  \bigl| a_{\lambda} \bigl(\mathfrak{u}(u), y \bigr) 
\; \overline{a_{\lambda} \bigl(\mathfrak{u}(v), y \bigr)}  \bigr| \, dy \nonumber \\ 
&\leq 64 \mathfrak c_0^2 \lambda^{n}  \Bigl( \frac{C\sigma}{\lambda} \Bigr) \leq \mathfrak c_0 \lambda^{n-1} \text{ for all } u, v \in \text{supp}(f_{\lambda} \circ \mathfrak{u}). \label{local-integral-3}
 \end{align} 
Here we have used the triangle inequality in $d_g$ at the third step. The bound on $d_{g}(\mathfrak{u}(u), \mathfrak{u}(v))$ in the last step follows from the fact that the diameter of supp$(f_{\lambda})$ is bounded by a constant multiple of $\sigma/\lambda$. This last step also uses \eqref{local-integral-1}, along with H\"older's inequality. The constant $\sigma$ is chosen so as to satisfy $64C \mathfrak c_0 \sigma < 1$. The desired estimate \eqref{real-K-lower} now follows from the reverse triangle inequality:  \begin{align*} \text{Re}(\mathscr{K}_{\lambda}) &=  \text{Re}\Bigl[ \lambda^{n-1} \int a_{\lambda} \bigl(\mathfrak{u}(u), y \bigr)  \overline{a_{\lambda} \bigl(\mathfrak{u}(v), y \bigr)} \, dy \Bigr] \\ &\hskip1in +  \text{Re}(\mathscr{K}_{\lambda}) - \text{Re}\Bigl[ \lambda^{n-1}\int a_{\lambda} \bigl(\mathfrak{u}(u), y \bigr)  \overline{a_{\lambda} \bigl(\mathfrak{u}(v), y \bigr)} \, dy \Bigr] \\ &\geq  \text{Re}\Bigl[ \lambda^{n-1} \int a_{\lambda} \bigl(\mathfrak{u}(u), y \bigr)  \overline{a_{\lambda} \bigl(\mathfrak{u}(v), y \bigr)} \, dy \Bigr] \\ &\hskip1in- \Bigl| \mathscr{K}_{\lambda} \bigl(\mathfrak{u}(u), \mathfrak{u}(v) \bigr) - \lambda^{n-1} \int a_{\lambda} \bigl(\mathfrak{u}(u), y \bigr) 
\; \overline{a_{\lambda} \bigl(\mathfrak{u}(v), y \bigr)} \, dy \Bigr|,  \end{align*} and then substiuting \eqref{local-integral-2} and \eqref{local-integral-3} into the last expression.    

%By choosing a smaller $\sigma$ if necessary, we can ensure that the function $(v,y) \mapsto a(x(v), y)$ does not change sign and stays bounded away from zero for $(x(v), y) \in S$ and $x(v) \in \text{supp}(f_{\lambda})$.
%An additional relevance of $\sigma$ is that 
%\[ |d_g(x(u), y) - d_g(x(v), y)| \leq d_g(x(u), x(v)) \leq C \sigma \lambda^{-1} \quad \text{ for } x(u), x(v) \in \text{supp}(f_{\lambda}),   \]
%so for $\sigma$ small enough, we can ensure that 
%\begin{align*} &\text{Re}\Bigl[\mathscr{K}_{\lambda}(x(u), x(v)) \Bigr] \\ &\qquad \geq \lambda^{n-1} \int \text{Re} \Bigl[\exp \bigl \{-i \lambda \bigl( d_g(x(u), y) - d_g(x(v), y) \bigr) \bigr\} \Bigr]a \bigl(x(u), y \bigr) a \bigl(x(v), y \bigr) \, dy  \\ 
%&\qquad \geq \cos(C \sigma) \lambda^{n-1} \int  a \bigl(x(u), y \bigr) a \bigl(x(v), y \bigr) \, dy  \geq c_0 \lambda^{n-1}
%\end{align*}   
%for some fixed $c_0 > 0$. Thus for $x(u) \in \text{supp}(f_{\lambda})$, with $f_{\lambda}$ as in \eqref{test-function-f-lambda}, 

\end{proof}

\subsection{Conditional lower bound for rough spectral projectors} 
The lower bound \eqref{limsup-condition-2} on the smooth spectral projector implies the same for its rough counterpart. 
\begin{corollary}\label{rough-spectral-proj-prop} 
In the notation and under the hypotheses of Proposition \ref{smooth-spectral-proj-prop}, we have 
\begin{equation}  \limsup_{\begin{subarray}{c} m \rightarrow \infty \\ m \in \mathbb N \end{subarray}} \bigl[ m^{-\varkappa_p} \Lambda_p(m) \bigr] \;  ||  \mathbf P_{m/2}||_{L^2(M) \rightarrow L^p(\nu)} > 0,  \label{limsup-condition} \end{equation}  
where $\mathbf P_{\lambda}$ denotes the rough spectral projector $\mathbf P_{\lambda} = \mathbf 1_{\sqrt{-\Delta} \in [\lambda, \lambda + \frac{1}{2})}$.  
\vskip0.1in 
\noindent In other words, there exists a countably infinite increasing sequence of spectral parameters $\{ \ell_k : k \geq 1\}$ (not necessarily eigenvalues) with the following properties.
\begin{enumerate}[(a)] 
\item Each $\ell_k$ is a non-negative half-integer such that $[\ell_k, \ell_k + 1/2) \cap \text{Spec}(-\Delta_g) \neq \emptyset$. 
\vskip0.1in
\item For every exponent $p \geq \max(1, p_{\ast}/2)$, one can find a constant $c = c_p > 0$ satisfying 
\begin{equation}  || \mathbf P_{\ell_k}||_{L^2(M) \rightarrow L^p(\nu)} \geq c \ell_k^{\varkappa_p} \bigl[ \Lambda_p(\ell_k) \bigr]^{-1}  \quad \text{ for all $k \geq 1$}. \label{rough-spec-proj-bound} \end{equation}  
%for each $k$, the interval $[\theta_k, \theta_k + 1/2)$ contains the square root of at least one eigenvalue of the Laplace-Beltrami operator $- \Delta_g$.  Let 
\end{enumerate} 
\end{corollary}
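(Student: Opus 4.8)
The plan is to dominate the smoothed projector $\mathscr T_{\lambda}$ by a rapidly decaying superposition of the rough projectors $\mathbf P_{\mu}$, $\mu\in\tfrac12\Z_{\geq 0}$, and then to transfer the lower bound of Proposition \ref{smooth-spectral-proj-prop} to a single well-chosen $\mathbf P_{\mu}$. Since $\sum_{\mu\in\frac12\Z_{\geq 0}}\mathbf P_{\mu}=I$ on $L^2(M)$ and the ranges of the $\mathbf P_{\mu}$ are spanned by eigenfunctions whose $\sqrt{-\Delta}$-eigenvalues lie in pairwise disjoint intervals of length $\tfrac12$, the functional calculus gives $\mathscr T_{\lambda}f=\sum_{\mu}\mathscr T_{\lambda}\mathbf P_{\mu}f$ with $\mathscr T_{\lambda}\mathbf P_{\mu}f=\mathbf P_{\mu}\mathscr T_{\lambda}\mathbf P_{\mu}f$ for every $f\in L^2(M)$. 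Because $\chi\in\mathcal S(\R^n)$, so that $|\chi(t)|\leq C_N(1+|t|)^{-N}$ for every $N$, one gets $\|\mathscr T_{\lambda}\mathbf P_{\mu}f\|_{L^2(M)}\leq C_N(1+|\mu-\lambda|)^{-N}\|f\|_{L^2(M)}$; combining this with $\|\mathbf P_{\mu}\mathscr T_{\lambda}\mathbf P_{\mu}f\|_{L^p(\nu)}\leq\|\mathbf P_{\mu}\|_{L^2(M)\to L^p(\nu)}\,\|\mathscr T_{\lambda}\mathbf P_{\mu}f\|_{L^2(M)}$ and summing in $\mu$ would yield, for every $N$,
\[ \|\mathscr T_{\lambda}\|_{L^2(M)\to L^p(\nu)}\ \leq\ C_N\sum_{\mu\in\frac12\Z_{\geq 0}}(1+|\mu-\lambda|)^{-N}\,\|\mathbf P_{\mu}\|_{L^2(M)\to L^p(\nu)}. \]

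To make this serviceable I would feed in the crude a priori bound $\|\mathbf P_{\mu}\|_{L^2(M)\to L^p(\nu)}\leq\|\mathbf P_{\mu}\|_{L^2(M)\to L^{\infty}(M)}\leq C(1+\mu)^{(n-1)/2}$, valid because $\nu$ is a probability measure, the last inequality being the classical unit-band spectral cluster estimate of Avakumovi\'c--Levitan--H\"ormander \cite{Ho68,So}. With $N$ large this controls the tail $|\mu-\lambda|>\lambda/2$ of the sum, whose contribution is $O(\lambda^{(n-1)/2+1-N})$ and hence negligible compared with $\lambda^{\varkappa_p}[\Lambda_p(\lambda)]^{-1}$, since $\Lambda_p\in\mathfrak F$ grows slower than any power of $\lambda$. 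On the remaining range $|\mu-\lambda|\leq\lambda/2$ the weights $(1+|\mu-\lambda|)^{-N}$ form a summable sequence, so up to the negligible tail the whole sum is bounded by a constant times $\max_{|\mu-\lambda|\leq\lambda/2}\|\mathbf P_{\mu}\|_{L^2(M)\to L^p(\nu)}$.

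Next I would apply this inequality along the sequence $\lambda=\lambda_j\nearrow\infty$ furnished by Proposition \ref{smooth-spectral-proj-prop}, for which $\|\mathcal T_{\lambda_j}\|_{L^2(M)\to L^p(\nu)}\geq c\,\lambda_j^{\varkappa_p}[\Lambda_p(\lambda_j)]^{-1}$ (recall $\mathcal T_{\lambda}=\mathscr T_{\lambda}|_{\Gamma}$). Discarding the negligible tail, this forces a half-integer $\mu_j$ with $|\mu_j-\lambda_j|\leq\lambda_j/2$ and $\|\mathbf P_{\mu_j}\|_{L^2(M)\to L^p(\nu)}\geq c'\lambda_j^{\varkappa_p}[\Lambda_p(\lambda_j)]^{-1}$. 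Since $\mu_j\asymp\lambda_j$, the slow-growth inequalities $\Lambda_p(2t)\leq 2\Lambda_p(t)$ and $\Lambda_p(\tfrac32 t)\leq\tfrac32\Lambda_p(t)$ for large $t$ (immediate from the definition \eqref{functions-of-slow-growth} of $\mathfrak F$ with exponent $\kappa=1$) let me rewrite the right-hand side in terms of $\mu_j$, giving $\|\mathbf P_{\mu_j}\|_{L^2(M)\to L^p(\nu)}\geq c_p\,\mu_j^{\varkappa_p}[\Lambda_p(\mu_j)]^{-1}$. Passing to a subsequence so that the $\mu_j$ are strictly increasing and relabelling them $\{\ell_k\}$ would prove \eqref{rough-spec-proj-bound}; each $\ell_k$ is a non-negative half-integer, and since the displayed lower bound is strictly positive one must have $\mathbf P_{\ell_k}\neq 0$, i.e. $[\ell_k,\ell_k+\tfrac12)\cap\text{Spec}(-\Delta_g)\neq\emptyset$, which is (a). Statement \eqref{limsup-condition} is just a reformulation of \eqref{rough-spec-proj-bound}.

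The only delicate point will be the tail estimate in the second step: the sharp upper bound of Theorem \ref{mainthm-restriction-ball} is phrased for $\mathscr T_{\lambda}$ and for eigenfunctions rather than directly for the rough projector $\mathbf P_{\mu}$, so one is obliged to bring in the classical Weyl cluster bound to absorb the high-frequency part of the superposition. Everything else is routine bookkeeping with the slowly varying factor $\Lambda_p$.
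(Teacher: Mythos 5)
Your argument is correct, and it takes a genuinely different route from the paper's. Both proofs start from the same identity $\mathscr T_{\lambda}=\sum_{\mu}\mathbf P_{\mu}\mathscr T_{\lambda}\mathbf P_{\mu}$ (the paper writes it as $\mathscr T_\lambda=\sum_m\mathbf P_{m/2}\circ\mathbf P_{m/2}\circ\mathscr T_\lambda$, but since $\mathbf P_\mu$ commutes with $\mathscr T_\lambda$ these are identical), and both exploit the Schwartz decay of $\chi$ to get $\|\mathbf P_\mu\mathscr T_\lambda\|_{L^2\to L^2}\leq C_N(1+|\mu-\lambda|)^{-N}$. The paper then argues by contradiction: assuming $m^{-\varkappa_p}\Lambda_p(m)\|\mathbf P_{m/2}\|\to 0$, it extracts a decaying envelope $\kappa_m$, regularizes it so that $f(\lambda)=\kappa_\lambda\lambda^{\varkappa_p}/\Lambda_p(\lambda)$ is eventually increasing and doubling, and feeds the hypothetical bound $\|\mathbf P_{m/2}\|\leq\kappa_m m^{\varkappa_p}/\Lambda_p(m)$ into all three regions of the $m$-sum to conclude $\|\mathcal T_\lambda\|\leq C\kappa_\lambda\lambda^{\varkappa_p}/\Lambda_p(\lambda)$, contradicting Proposition \ref{smooth-spectral-proj-prop}. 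You argue directly: you leave the near-diagonal part of the sum as a maximum of $\|\mathbf P_\mu\|$ over $|\mu-\lambda|\leq\lambda/2$, and dispose of the far tail unconditionally using the Weyl cluster bound $\|\mathbf P_\mu\|_{L^2\to L^p(\nu)}\leq\|\mathbf P_\mu\|_{L^2\to L^\infty}\leq C(1+\mu)^{(n-1)/2}$ together with the fact that $\Lambda_p\in\mathfrak F$ is sub-polynomial, so the tail is $O(\lambda^{(n-1)/2+1-N})=o(\lambda^{\varkappa_p}/\Lambda_p(\lambda))$. Applying this along the sequence $\lambda_j$ from Proposition \ref{smooth-spectral-proj-prop} and using the almost-multiplicativity $\Lambda_p(2t)\leq 2\Lambda_p(t)$ (which is indeed immediate from monotonicity of $t\mapsto\Lambda_p(t)/t$) yields the desired $\mu_j$. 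What your approach buys is that it avoids the need to regularize the envelope $\kappa_m$ into a monotone doubling function, replacing that book-keeping by one explicit deterministic estimate; the price is importing the classical $L^\infty$ Weyl bound, which the paper's contradiction argument does not need. The two routes give the same constants up to unimportant factors.
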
 
%\subsection{Proof of Proposition  \ref{rough-spectral-proj-prop} assuming Proposition  \ref{smooth-spectral-proj-prop}} 
\begin{proof} 
We prove this by contradiction. If the limit superior in \eqref{limsup-condition} is zero, then 
%A sequence $\{\theta_k\}$ obeying the conclusions of the proposition exists if and only if \[ \limsup_{m \rightarrow \infty} m^{-\delta_p} ||\mathbf 1_{m/2}||_{L^2(M) \rightarrow L^p(\mu)} > 0.  \] If there is no such sequence, that means that the limsup above must be zero, i.e.,  
we can find $\kappa_m \rightarrow 0$ such that 
\begin{equation} 
||\mathbf P_{\frac{m}{2}}||_{L^2(M) \rightarrow L^p(\nu)} \leq \kappa_m \bigl[ \Lambda_p(m) \bigr]^{-1} m^{\varkappa_p} \quad \text{ for all $m$}.  \label{kappam}
\end{equation} 
We will show that this condition results in the conclusion \eqref{limsup-condition-2} of Proposition \ref{smooth-spectral-proj-prop} being violated. 
\vskip0.1in
\noindent Without loss of generality and after choosing a slower decaying function if necessary, we may assume that $\lambda \mapsto \kappa_{\lambda}$ is a continuous function on $\mathbb R_{\geq 0}$ decreasing to zero at infinity, such that $f(\lambda) = \kappa_\lambda \lambda^{\varkappa_p} /\Lambda_p(\lambda)$ obeys the following properties:
%\lambda \mapsto \kappa_{\lambda} \text{ decreasing in $\lambda$, \\
\begin{itemize}
\item $f$ is increasing in $\lambda$ for sufficiently large $\lambda$,  \label{kappam-assumptions} 
\item $f$ satisfies a doubling condition, i.e., there exists a constant $C$ such that
\begin{equation} f(2 \lambda) \leq C f(\lambda) \text{ for all large $\lambda$.} \label{kappam-doubling} \end{equation} 
\end{itemize} 
\vskip0.1in 
\noindent The hypotheses above will be applied towards estimating the operator norm of $\mathcal{T}_{\lambda}$, in the following way. We observe that 
\begin{align} \text{Id} &= \sum_{m \in \mathbb Z} \mathbf P_{m/2} = \sum_{m \in \mathbb Z} \mathbf P_{m/2} \circ \mathbf P_{m/2}, \quad \text{ which implies }  \nonumber \\
\mathscr{T}_{\lambda} &= \sum_{m \in \mathbb Z} \mathbf P_{m/2} \circ \mathbf P_{m/2} \circ \mathscr{T}_{\lambda} \quad \text{ for all $\lambda \geq 1$}. \label{chi-lambda-sum}
\end{align}  
Taking the operator norm of both sides of \eqref{chi-lambda-sum} and invoking \eqref{kappam}, we arrive at the estimate 
\begin{align}
||\mathcal{T}_{\lambda}||_{L^2(M) \rightarrow L^p(\nu)}  &\leq \sum_{m \in \mathbb Z} || \mathbf P_{m/2} \circ \mathbf P_{m/2} \circ \mathcal{T}_{\lambda} ||_{L^2(M) \rightarrow L^p(\nu)} \nonumber \\ 
&\leq \sum_{m \in \mathbb Z} ||\mathbf P_{m/2}||_{L^2(M) \rightarrow L^p(\nu)} \times ||\mathbf P_{m/2} \circ \mathscr{T}_{\lambda} ||_{L^2(M) \rightarrow L^2(M)} \nonumber \\
&\leq \sum_{m \in \mathbb Z} \kappa_m \bigl[ \Lambda_p(m) \bigr]^{-1}  m^{\varkappa_p} \sup_{j: \lambda_j \in J_m} |\chi(\lambda_j - \lambda)| \nonumber \\
&\leq C_N \sum_{m \in \mathbb Z} \kappa_m \bigl[ \Lambda_p(m) \bigr]^{-1} m^{\varkappa_p} \bigl( 1 + \text{dist}(\lambda, J_m) \bigr)^{-N}. \label{Tlambda-bound} 
\end{align}   
Here $J_m$ denotes the interval $[m/2, (m+1)/2)$. The third inequality in the sequence above follows from \eqref{kappam} and the spectral theorem. The fourth inequality, which holds for any positive integer $N$, uses the fact that $\chi$ is Schwartz. We now proceed to estimate the sum in \eqref{Tlambda-bound} in three parts, depending on the relative position of the running index $m$ with respect to $m^{\ast}$, where $m^{\ast}$ denotes the unique integer such that $\lambda \in J_{m^{\ast}}$. Thus $m^{\ast} \leq 2 \lambda$. 
\vskip0.1in
\noindent  For $m \leq m^{\ast}-2$ we set $r = m^{\ast} - m$, so that $\text{dist}(\lambda, J_m) \geq \text{dist}(J_m, J_{m^{\ast}}) = (r-1)/2$. Using the fact that $f(\lambda) = \kappa_{\lambda} \lambda^{\varkappa_p} \bigl[\Lambda_p(\lambda) \bigr]^{-1}$ increases with $\lambda$, we obtain 
\begin{align} 
\sum_{m \leq m^{\ast}-2} \kappa_m \bigl[\Lambda_p(m) \bigr]^{-1} m^{\varkappa_p} \bigl( 1 + \text{dist}(\lambda, J_m) \bigr)^{-N} &\leq  f(m^{\ast}) \sum_{r \geq 2} ((r-1)/2)^{-N} \nonumber \\  &\leq C f(2 \lambda) \leq C f(\lambda) = C \kappa_{\lambda} \bigl[\Lambda_p(\lambda)\bigr]^{-1} \lambda^{\varkappa_p},  \label{small-m}
\end{align} 
where the last inequality follows from the doubling property \eqref{kappam-doubling}. For $m$ in the range $|m - m^{\ast}| \leq 1$, the estimate 
\begin{equation} 
 \sum_{m = m^{\ast}-1}^{m^{\ast} + 1} \kappa_m \bigl[ \Lambda_p(m) \bigr]^{-1} m^{\varkappa_p} \bigl( 1 + \text{dist}(\lambda, J_m) \bigr)^{-N} \leq C \kappa_{\lambda} \bigl[ \Lambda_p(\lambda) \bigr]^{-1}  \lambda^{\varkappa_p} \label{medium-m} 
 \end{equation} 
 is easy to verify from the properties of $f$; in fact, each one of the three summands is comparable to the right hand side, by the doubling property \eqref{kappam-doubling}. For $m \geq m^{\ast} + 2$, we set $r = m - m^{\ast}$, so that once again we have dist$(\lambda, J_m) \geq (r-1)/2$. Since $\kappa_m$  and $1/\Lambda_p(m)$ both decrease with $m$, this leads to 
 \begin{align} 
 \sum_{m = m^{\ast}+2}^{\infty} \kappa_m \bigl[ \Lambda_p(m) \bigr]^{-1} m^{\varkappa_p} \bigl( 1 &+ \text{dist}(\lambda, J_m) \bigr)^{-N} \nonumber \\ &\leq \kappa_{m^{\ast}} \sum_{r=2}^{\infty} \bigl[\Lambda_p(m^{\ast} + r)\bigr]^{-1} (m^{\ast} + r)^{\varkappa_p} ((r-1)/2)^{-N}  \nonumber \\   &\leq \kappa_{m^{\ast}} \bigl[\Lambda_p(m^{\ast}) \bigr]^{-1}  \sum_{r=2}^{\infty} (m^{\ast} + r)^{\varkappa_p} ((r-1)/2)^{-N} \nonumber  \\ &\leq C_N \kappa_{m^{\ast}}  \bigl[\Lambda_p(m^{\ast})\bigr]^{-1} \sum_{r=2}^{\infty} (m^{\ast})^{\varkappa_p} r^{\varkappa_p-N} \nonumber \\  &\leq C f(m^{\ast}) \leq C f(\lambda) = C \kappa_{\lambda} \bigl[ \Lambda_p(\lambda) \bigr]^{-1} \lambda^{\varkappa_p}. \label{large-m} 
\end{align} 
The third step is a consequence of the inequality $m^{\ast} + r \leq m^{\ast} r$ and the fact that $\varkappa_p \geq 0$ for $p \geq \max[1, p_{\ast}/2]$. The fourth inequality holds provided one chooses $N > \varkappa_p+1$. The subsequent inequality is justified exactly as in \eqref{small-m}. 
\vskip0.1in
\noindent Combining the estimates obtained in \eqref{Tlambda-bound}-\eqref{large-m}, we find that 
\[ ||\mathcal{T}_{\lambda}||_{L^2(M) \rightarrow L^p(\nu)} \leq C \kappa_{\lambda} \lambda^{\varkappa_{p}} \text{ for all sufficiently large $\lambda$.} \] 
Since $\kappa_{\lambda} \rightarrow 0$, this implies that \[ \limsup_{\lambda \rightarrow \infty} \bigl[ \lambda^{-\varkappa_p} \Lambda_p(\lambda) \bigr] \; ||\mathcal{T}_{\lambda}||_{L^2(M) \rightarrow L^p(\nu)}  = 0, \] contradicting the conclusion of Proposition \ref{smooth-spectral-proj-prop}. 
\end{proof}

\section{Conditional sharpness: Proof of Theorem \ref{mainthm-sharpness} and Corollary \ref{maincor-sharpness}} \label{sharpness-section} 
%\noindent Let us recall the random Cantor construction laid out in Section \ref{random-cantor-section}. Given a Riemannian manifold $(M, g)$ and a submanifold $\Sigma$ of dimension $d$, we obtained a measure space $(\Omega, \mathbb P^{\ast})$ such that $\mathbb P^{\ast}$-almost every point $\omega \in \Omega$ generated a Cantor-like set $E = E(\omega) \subseteq \Sigma$ of Hausdorff dimension $d(1- \epsilon)$, equipped with a natural measure $\mu = \mu(\omega)$. Our main objective in this section is to prove the almost sure lower bound \eqref{sphere-lower-bound} for $M = \mathbb S^n$.   
%\vskip0.1in
%\noindent The important tools in the proof are the following.  
%\subsection{Summary of  results} 

\begin{proof}[{\bf{Proof of Theorem \ref{mainthm-sharpness} and Corollary \ref{maincor-sharpness} }}] 
For $M = \mathbb S^n$, we have explicit information about the location of the eigenvalues of the Laplace-Beltrami operator: 
\[ \text{Spec}(-\Delta_{\mathbb S^n}) = \bigl\{ \lambda_{\ell}^2 = \ell(\ell + n-1) : \ell \in \mathbb N  \bigr\}. \] 
The gap between $\lambda_{\ell+1}$ and $\lambda_{\ell}$ approaches 1 as $\ell \rightarrow \infty$. Thus for any large integer $m \in \mathbb N$, the interval $[m/2, m/2+1/2)$ contains at most one value of $\lambda_{\ell}$. Thus in this context, Proposition \ref{rough-spectral-proj-prop} provides an increasing sequence of positive half-integers $\ell_k$ such that each interval $[\ell_k , \ell_k + 1/2)$ contains the square root of exactly one eigenvalue, say $\lambda_{j_k}$. Since
$\mathbf P_{\ell_k} \varphi_{j_k} = \varphi_{j_k}$ for any eigenfunction $\varphi_{j_k}$ associated with the eigenvalue $\lambda_{j_k}$, the desired conclusion \eqref{sphere-lower-bound} follows from \eqref{rough-spec-proj-bound} for the subsequence $\varphi_{j_k}$ of $L^2$-normalized spherical harmonics.   
\end{proof} 

\begin{proof}[{\bf{Proof of Corollary \ref{maincor-sharpness}}}] 
The corollary is an easy consequence of Theorem \ref{mainthm-sharpness}. Indeed, the hypotheses of the corollary coincide with the hypotheses of the theorem, with $\alpha = d(1 - \epsilon) - \kappa$, $\Psi = \Psi_{\kappa}$ and $\Xi = \Xi_{\kappa}$. Hence by Theorem \ref{mainthm-sharpness} there exists a $\kappa$-dependent choice of $L^2$-normalized spherical harmonics $\{ \varphi_{\lambda_j} : j \geq 1\}$ for which the lower bound \eqref{sphere-lower-bound} holds for all $p \geq \max(1, p_{\ast}/2)$, with $c = c(\kappa, p)$. In the range $p \geq \max(2, p_{\ast})$, 
\[ \varkappa_p = \vartheta_p = \frac{n-1}{2} - \frac{\alpha}{p} = \theta_p + \frac{\kappa}{p}, \]
completing the proof. 
\end{proof}

\section{Cantor-type Sets and Constructions} \label{s:randomcantor}
\noindent Our next task is to prove Theorem \ref{mainthm-example} and Corollary \ref{maincor-example}. These results aim to show that a stronger version of eigenfunction growth of the form \eqref{eigenfunction-restriction-estimate} holds generically, with a single measure $\nu$ and $\kappa = 0$, for a large class of sets $E$. This section and the next are given over to the construction of this family of sets. 
%\subsection{Basic constructions} \label{ss:basic}
\subsection{A general Cantor-type construction} \label{section-basic}
\noindent All the subsets $E \subseteq [0,1]^d$ considered in this paper are obtained using a Cantor-type iteration, whose basic features appear in \cite[Section 2]{LP11}.  We recall the important points here, referring the reader to \cite{LP11} for a more detailed discussion. There are two main ingredients in the construction; namely, a choice of successive scales and a selection mechanism at each scale. 
\vskip0.1in
\noindent Fix a sequence of positive integers $\{ N_k : k \geq 1 \}$ with $N_k \geq 2$ for all $k \geq 1$. Set 
\begin{equation} \delta_k^{-1} = M_k = N_1 N_2 \dots N_k. \label{def-deltak} \end{equation}  Using the notation $\mathbb Z_m := \{1, \cdots, m \}$, we define a class of multi-indices with integer entries:
\begin{align} \mathbb I(k, d) &:= \bigl\{\mathbf i_k = (\overline{i}_1,\dots, \overline{i}_k); \; \overline{i}_j \in \mathbb Z_{N_j}^d, \; 1 \leq j \leq k \bigr\} \subseteq \mathbb Z^{dk}, \text{ and } \label{def-Ikd} \\
 \mathbb I^{\ast} &:= \bigcup \bigl\{ \mathbb I(k, d): k \geq 1 \bigr\}. \label{def-Istar} 
 \end{align} 
The interpretation of the integers $N_k$ and the multi-indices $\mathbf i_k$ is the following. At step $k$, the unit cube $[0,1]^d$ is partitioned into subcubes of sidelength $\delta_k$ with sides parallel to the coordinate axes. These subcubes, which we term {\em{cubes of the $k$-th generation}}, are indexed by $\mathbf i_k$. Each such cube is of the form 
\begin{align} 
Q(\mathbf i_k) &= \alpha(\mathbf i_k) + [0, \delta_k]^d, \quad \text{ with } \nonumber \\  
 \alpha(\textbf{i}_k) &= \frac{\overline{i}_1 - \bar{1}}{N_1} + \frac{\overline{i}_2 - \bar{1}}{N_1 N_2} + \dots + \frac{\overline{i}_k -  \bar{1}}{N_1 \dots N_k}. \label{digit-expansion}  
\end{align}   
Here $\bar{1} = (1, \ldots, 1) \in \mathbb R^d$. The expression \eqref{digit-expansion} above should be thought of as a finite ``digit expansion'' of $\alpha(\mathbf i_k)$ with respect to the base string $(N_1, N_2, \cdots)$. Every point in the unit cube has a possibly infinite digit expansion with respect  to this base sequence. Further, such a digit expansion is unique, except for countably many points in the unit cube. We note that the cubes of any given generation have disjoint interiors. Further, each $k$-th generation cube gives rise to exactly $N_{k+1}^d$ children, as follows: \[ Q(\mathbf i_k) = \bigcup \bigl\{ Q(\mathbf i_k, \overline{i}_{k+1}) : \; \overline{i}_{k+1} \in \mathbb Z_{N_{k+1}}^d \bigr\}. \] 
Thus any two distinct cubes $Q(\mathbf i)$ and $Q(\mathbf j)$ with $\mathbf i, \mathbf j \in \mathbb I^{\ast}$ must satisfy exactly one of the relations
\[ Q(\mathbf i) \subsetneq Q(\mathbf j), \; \text{ or } \; Q(\mathbf j) \subsetneq Q(\mathbf i), \;\text{ or } \; \text{int}(Q(\mathbf i)) \cap \text{int}(Q(\mathbf j)) = \emptyset. \] 

\vskip0.1in
\noindent To specify a selection algorithm, we fix for each $k \geq 1$ a sequence $\mathbf Y_k := \{Y_k(\mathbf i_k); \mathbf i_k \in \mathbb I(k,d)\}$ whose elements are either 0 or 1.  In other words, $\mathbf Y_k$ is a finite binary string indexed by $\mathbb I(k, d)$. Set $X_1(\overline{i}_1) := Y_1(\overline{i}_1)$, 
\begin{align} 
X_{k}({\textbf{i}}_{k}) &:= X_{k-1}(\textbf{i}_{k-1}) Y_{k}({\textbf{i}}_{k})  \text{ where } {\textbf{i}}_{k}=(\mathbf i_{k-1}, \overline{i}_{k}), \label{X-and-Y} \\ 
\textbf{X}_k &:= \{ X_k(\textbf{i}_k) : \mathbf i_k \in \mathbb I(k,d) \}, \quad 
 P_k := \#\{ \textbf{i}_k : X_k(\textbf{i}_k) = 1\}, \nonumber \\
 \mathcal Q_k &:= \{ Q(\mathbf i_k) : \mathbf i_k \in \mathbb I(k,d), X_k(\mathbf i_k) = 1 \}, \quad \mathcal Q^{\ast} := \bigcup_k \mathcal Q_k.  \label{basic-cubes-collection} 
\end{align}
The relevance of these definitions is the following. A total of $P_k$ cubes of the $k$-th generation are chosen at step $k$, the marker of selection being $X_k(\mathbf i_k)=1$. We call the selected ones the {\em{basic cubes}} of the $k$-th generation. The collection $\mathcal Q_k$, which is indexed by $\mathbf i_k$ with $X_k(\mathbf i_k) = 1$, specifies the cubes $Q(\mathbf i_k)$ that are selected. If $X_k(\mathbf i_k) = 0$, then  so is $X_{\ell}(\mathbf i_{\ell})$ for any $\ell > k$ with $\pi_k(\mathbf i_{\ell}) = \mathbf i_k$, by \eqref{X-and-Y}. Here $\pi_k$ denotes the projection onto the first $k$ vector coordinates in $\mathbb R^d$. Thus, once a cube is discarded at a given step, its descendants are eliminated from consideration for the remainder of the construction. The union of the cubes in $\mathcal Q_k$ therefore gives rise to a decreasing sequence of closed sets.    
\vskip0.1in
\noindent Figure 2 depicts the first two steps of such a construction, with $d=1$. At the first step, only the first and fourth intervals of length $1/N_1$ are chosen, corresponding to the  selection choices: $X_1(i_1) = 1$ if $i_1 = 1, 4$ and $X_1(i_1) = 0$ otherwise. At the second step, we have picked $N_2 = 7$, and the basic intervals of the second generation, highlighted in thickened lines, are chosen based on $X_2(1, i_2) = 1$ if and only if $i_2 = 1, 3,4, 7$; $X_2(4, i_2) = 1$ if and only if $i_2 = 2, 3, 6$. 
\vskip0.1in  
\noindent Given these quantities, we define the successive nested iterates $E_k$ of the construction, and the limiting set $E$:  
%Starting with the higher-dimensional cube $[1,2]^d$ equipped with the Lebesgue measure, we subdivide it into $N_1^d$ subcubes 
%\begin{equation}
%\{I_1(\overline{i}_1) : \overline{i}_1 = (i_1^{(1)}, i_1^{(2)}, \dots, i_1^{(d)}), 1 \leq i^{(l)}_1 \leq N_1, 1 \leq l \leq d \}
%\end{equation}
%of equal volume $\delta_1^d$.  We choose the $P_1$ cubes $I_1(\overline{i}_1)$ for which $X_1(\overline{i}_1) =1$ and assign weight $P_1^{-1}$ to each one.  At the second step, we subdivide each of the cubes chosen at the first step into $N_2^d$ subcubes of equal volume $\delta_2^d$, and choose from $I_1(\overline{i}_1)$ the subcubes $\{I_2(\textbf{i}_2), \textbf{i}_2 = (\overline{i}_1,\overline{i}_2)\}$ such that $Y_2(\textbf{i}_2)=1$.  The total number of chosen subcubes at this stage is therefore $P_2$, and each one is assigned a weight of $P_2^{-1}$.  We continue to iterate the procedure, selecting at the $(k+1)$-stage some subcubes of those chosen at the $k$-th step, based on the sequences $Y_{k+1}(\textbf{i}_k)$.  In summary, the sets $S_k$ are chosen according to the scheme 
\begin{equation}
E_0 := [0,1]^d, \quad E_k := \bigcup_{X_k(\textbf{i}_k) = 1} Q(\textbf{i}_k) = \bigcup_{Q \in \mathcal Q_k} Q , \quad
E := \bigcap_{k=1}^{\infty} E_k \subseteq [0,1]^d. \label{def-E} 
\end{equation}
We observe that $|E_k| = P_k \delta^d_k$; so, if $P_k \delta_k^d \rightarrow 0$, then $E$ is a Lebesgue-null set. Our current hypotheses do not require $P_k \delta_k^d$ to go to zero; so in principle $E$ could have positive Lebesgue measure. However, the convergence to zero will hold almost surely in our random construction in the next section (see the remark following Lemma \ref{PkRk-lemma} on page \pageref{Lebesgue-null-remark}). On the other hand, 
\begin{equation} \label{E-not-empty-nasc} 
E \neq \emptyset \text{ if and only if } P_k \ne 0 \text{ for each } k \geq 1.
\end{equation}

\begin{figure} %% remove figure env if not wanting to center
\begin{tikzpicture}[scale=5] %% Change scaling as needed

%\uncover<1->{
\draw [|-]  (0,0) node [anchor = south east] {\small $0$}  -- (4/7,0);
\foreach \x in {1/7,2/7,3/7,4/7, 6/7}
\draw (\x,-0.5pt) -- (\x,0.5pt);

\foreach \x in {1,2,3}
\draw (\x/7, -1pt) node[anchor= north] {\tiny $\frac{\x}{N_1}$};

\draw (6/7, -1pt) node[anchor= north] {\tiny 1 - $\frac{1}{N_1}$};

\draw [dotted] (4/7,0) -- (6/7,0);
\draw [-|] (6/7,0) -- (1,0) node [anchor = south west] {\small $1$};
%}
%\uncover<2->{

\draw [magenta] (-0.3,0.1) -- (-0.31,0.1) -- (-0.31,-0.0)
node[anchor=east]{\small Step 1} -- (-0.31,-0.1) -- (-0.3,-0.1);

\draw [blue, ultra thick]  (0,0) -- (1/7,0);
\draw [blue, ultra thick]  (3/7,0) -- (4/7,0);

%\uncover<3->{

\draw [magenta] (-0.3,-0.2) -- (-0.31,-0.2) -- (-0.31,-0.45)
node[anchor=east]{\small Step 2} -- (-0.31,-0.7) -- (-0.3,-0.7);

\def\endA{(-0.25,-0.28)}

\draw [green, ultra thick]  (1/14,0) ellipse (1/12 and 1/16);
\draw [->] [green, ultra thick] (1/14 - 1/12,0) -- \endA;

\draw [|-|] \endA ++(0,0) -- ++(1,0);

\foreach \x in {1/7,2/7,3/7,4/7,5/7,6/7}
\draw \endA  +(\x,-0.5pt) -- +(\x,0.5pt);

\draw \endA ++(1.5/7,0) node [anchor = north] {\tiny $\frac{1}{N_1 N_2}$};

%\foreach \x in {1,2,3}
%\draw  \endA ++(\x/7, -1pt) node[anchor= north] {\tiny $\frac{\x}{N_1
%N_2}$};
%
%\draw \endA ++(6/7, -1pt) node[anchor= north] {\tiny 1- $\frac{1}{N_1
%N_2}$};

\draw [blue, ultra thick] \endA  -- ++(1/7,0);
\draw [blue, ultra thick] \endA  ++(2/7,0) -- ++(2/7,0);
\draw [blue, ultra thick] \endA  ++(6/7,0) -- ++(1/7,0);
%}

%\uncover<3->{

\def\endB{(0.5,-0.5)}

\draw [green, ultra thick]  (7/14,0) ellipse (1/12 and 1/16);
\draw [->] [green, ultra thick] (1/2 - 1/12,0) -- \endB;

\draw [|-|] \endB ++(0,0) -- ++(1,0);

\foreach \x in {1/7,2/7,3/7,4/7,5/7,6/7}
\draw \endB  +(\x,-0.5pt) -- +(\x,0.5pt);

\draw \endB ++(2.5/7,0) node [anchor = north] {\tiny $\frac{1}{N_1 N_2}$};

%\foreach \x in {1,2,3}
%\draw  \endB ++(\x/7, -1pt) node[anchor= north] {\tiny $\frac{\x}{N_1
%N_2}$};
%
%\draw \endB  ++(6/7, -1pt) node[anchor= north] {\tiny 1- $\frac{1}{N_1
%N_2}$};

\draw [blue, ultra thick] \endB  ++(1/7,0) -- ++(2/7,0);
\draw [blue, ultra thick] \endB  ++(5/7,0) -- ++(1/7,0);
%}

\end{tikzpicture}
\caption{The first two steps of a general Cantor construction} \label{cantor-picture} 
\end{figure}
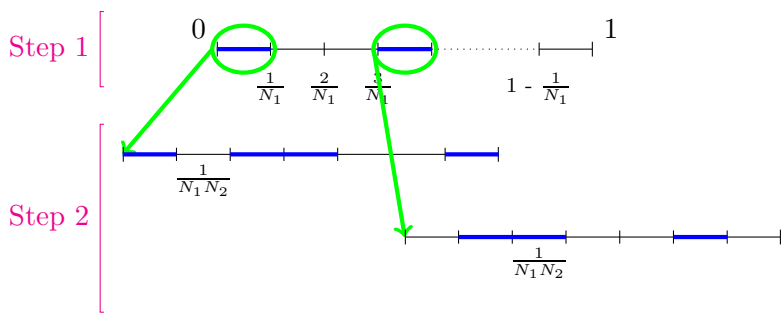

\subsection{A Cantor measure on $E$} \label{Cantor-measure-section} Our next task is to define a probability measure $\mu$ on $E$, which will be used to verify the crucial assumptions \eqref{mu-ball-condition-2} and \eqref{AD-local-3}. 
%This is a standard procedure, so we briefly sketch the details. 
For each $k \geq 1$, we define the function
\begin{equation} \label{def-muk} 
\mu_k := {1_{E_k}}/{|E_k|} = \frac{1}{P_k \delta_k^d} \sum_{\mathbf i_k} X_k(\mathbf i_k) 1_{Q(\mathbf i_k)},  
\end{equation} 
which is the density function of the normalized Lebesgue measure on $E_k$. The Banach-Alaoglu theorem ensures that any family of probability measures on a locally compact space has limit points in the weak-$\ast$ topology. However, apriori one does not know whether the limit point is unique, or even if it is, whether the limiting measure is a probability measure. The following result provides a sufficient condition for the sequence $\{ \mu_k: k \geq 1\}$ in \eqref{def-muk} to be weak-$\ast$ convergent, and have a probability measure $\mu$ as the weak-$\ast$ limit. 
\begin{lemma} \label{weak-star-limit-lemma} 
Suppose that the distribution of the chosen cubes $\{Q(\mathbf i_k) : X_k(\mathbf i_k) = 1 \}$ within $E_{k-1}$ is approximately uniform in the following sense:
\begin{equation} \label{weak-star-limit-condition} 
\sup_{k': k' \geq k} \sum_{\begin{subarray}{c} \mathbf i_k \\ X_k(\mathbf i_k) = 1 \end{subarray}} \Bigl| \int_{Q_k(\mathbf i_k)} \bigl[ \mu_{k'} - \mu_k \bigr](x) \, dx \Bigr| \longrightarrow 0 \text{ as } k \longrightarrow \infty.
\end{equation} 
Then there exists a probability measure $\mu$ supported on $E$ such that $\mu_k \longrightarrow \mu$ in the weak-$\ast$ topology, i.e., for all continuous functions $f: [0,1]^d \rightarrow \mathbb C$, 
\begin{equation}  \int f \, d\mu_k \longrightarrow \int f \, d\mu \quad \text{ as } k \longrightarrow \infty. \label{conv-cts-fns} \end{equation}  
\end{lemma}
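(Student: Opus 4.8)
The plan is to show that $\{\mu_k\}$ is weak-$\ast$ Cauchy, identify its limit as a measure $\mu$, and check that $\mu$ is a probability measure supported on $E$. First I would observe that each $\mu_k$ is a probability measure, so by Banach--Alaoglu the sequence has weak-$\ast$ cluster points; it suffices to prove that the sequence is weak-$\ast$ Cauchy, which forces a unique limit. The mechanism is the nested structure: if $k' \geq k$, then $E_{k'} \subseteq E_k$, and in fact $E_{k'}$ is a union of $k'$-th generation subcubes, each contained in some basic cube $Q(\mathbf i_k)$ with $X_k(\mathbf i_k) = 1$. Hence for any continuous $f$ on $[0,1]^d$, writing $f$ as approximately constant on each $Q(\mathbf i_k)$ (uniform continuity, with modulus of continuity $\omega_f(\delta_k) \to 0$), I would estimate
\begin{align*}
\Bigl| \int f \, d\mu_{k'} - \int f \, d\mu_k \Bigr|
&\leq \sum_{\mathbf i_k : X_k(\mathbf i_k)=1} \Bigl| \int_{Q(\mathbf i_k)} f \, (\mu_{k'} - \mu_k)(x)\, dx \Bigr| \\
&\leq \|f\|_{\infty} \sum_{\mathbf i_k : X_k(\mathbf i_k)=1} \Bigl| \int_{Q(\mathbf i_k)} (\mu_{k'} - \mu_k)(x)\, dx \Bigr| + 2\,\omega_f(\sqrt d\,\delta_k).
\end{align*}
Here the second term comes from replacing $f$ by its value at a fixed point of $Q(\mathbf i_k)$ in each integral, using that both $\mu_{k'}$ and $\mu_k$ have total mass $1$ and are supported in $E_k$, so the constant contributions cancel. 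The first term is precisely the quantity controlled by hypothesis \eqref{weak-star-limit-condition}, and the second tends to $0$ as $k \to \infty$. Taking $k \to \infty$ (uniformly in $k' \geq k$) shows $\int f\, d\mu_{k'} - \int f\, d\mu_k \to 0$, so $\{\int f\, d\mu_k\}$ is a Cauchy sequence in $\mathbb{C}$ for every $f \in C([0,1]^d)$.

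Next I would define the limit functional $\ell(f) := \lim_k \int f\, d\mu_k$, which is linear, positive, and bounded by $\|f\|_\infty$ since each $\mu_k$ is a probability measure; by the Riesz representation theorem $\ell(f) = \int f\, d\mu$ for a unique Borel measure $\mu$ on $[0,1]^d$. Taking $f \equiv 1$ gives $\mu([0,1]^d) = 1$, so $\mu$ is a probability measure, and \eqref{conv-cts-fns} holds by construction. Finally, to see $\supp \mu \subseteq E$: for each fixed $k_0$ and any continuous $f$ supported in the open complement of $E_{k_0}$, we have $\supp f \cap E_{k'} = \emptyset$ for all $k' \geq k_0$ (since $E_{k'} \subseteq E_{k_0}$), hence $\int f\, d\mu_{k'} = 0$ for $k' \geq k_0$ and therefore $\int f\, d\mu = 0$. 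Thus $\mu$ assigns no mass to the open set $[0,1]^d \setminus E_{k_0}$; letting $k_0 \to \infty$ and using $E = \bigcap_{k_0} E_{k_0}$ gives $\mu([0,1]^d \setminus E) = 0$, and since $E$ is closed, $\supp \mu \subseteq E$.

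The only genuinely delicate point is the cancellation argument that produces the bound above: one must be careful that, because $\mu_{k'}$ and $\mu_k$ are both probability densities concentrated on $E_k = \bigcup_{X_k(\mathbf i_k)=1} Q(\mathbf i_k)$, replacing $f$ by a constant on each basic cube $Q(\mathbf i_k)$ leaves an error controlled by $\omega_f$ times the \emph{total} mass (which is $1$ for each measure, hence $2$ in the difference after splitting), rather than by a sum of masses that might not be summable. Everything else is a routine application of Banach--Alaoglu, Riesz representation, and the nestedness $E_{k'} \subseteq E_k$. I would also remark that condition \eqref{weak-star-limit-condition} will be verified in the random construction of Section \ref{random-cantor-section} via concentration estimates, but that is outside the scope of this lemma.
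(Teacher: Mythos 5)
Your proposal is correct and follows essentially the same argument as the paper's proof: reduce to the Cauchy property of $\{\int f\,d\mu_k\}$, split $f$ on each basic cube into a constant plus a modulus-of-continuity error (the error being absorbed by the total mass $1+1=2$ of $\mu_{k'}+\mu_k$, and the constant term bounded via the hypothesis), then invoke Riesz representation, set $f\equiv 1$ for the total-mass claim, and observe that $\mu$ vanishes on the open complement of each $E_{k_0}$. The paper phrases the support argument via the observation that $[0,1]^d\setminus E$ is a countable union of open cubes each of which is eventually $\mu_\ell$-null, while you deduce it from the vanishing of $\int f\,d\mu$ for $f$ supported off $E_{k_0}$; these are equivalent.
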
 
\begin{proof}  
The main point here is to prove the convergence \eqref{conv-cts-fns}. Once this is proved, the fact that the limit $\mu$ is a probability measure follows easily by setting $f \equiv 1$. In order to show that $\mu$ is supported on $E$, we observe from the construction that $[0,1]^d \setminus E$ is a countable union of open cubes of the form $U$ such that the closure $\overline{U} = Q(\mathbf i)$, for some $\mathbf i \in \mathbb I^{\ast}$. Suppose that $U = \text{int}(Q(\mathbf i))$ for some $\mathbf i \in \mathbb I(k, d)$. Then $X_k(\mathbf i) = 0$, hence \eqref{def-muk} dictates that $\mu_{\ell}(U) = 0$ for every $\ell \geq k$. The weak-$\ast$ convergence \eqref{conv-cts-fns} then ensures that $\mu(U) = 0$, completing the claim.  
\vskip0.1in
\noindent  The proof of  \eqref{conv-cts-fns} in general dimensions is an adaptation of its univariate counterpart in \cite[Lemma 2.2]{LP11}; we recall it here. 
Since \[ \bigl|\int f d\mu_k \bigr| \leq ||f||_{\infty}  \quad \text{ for all $k$}, \]  
the existence of the limiting measure $\mu$ would follow from the Riesz representation theorem, provided the sequence $\{ \int f d\mu_k : k \geq 1\}$ is convergent for every continuous function $f$ on $[0,1]^d$, or equivalently, if it is Cauchy. 
\vskip0.1in
\noindent In order to prove the Cauchy property, let us fix $f \in C[0,1]^d$, $f \not\equiv 0$, and $\epsilon > 0$. Since $f$ is uniformly continuous, there exists $\delta > 0$ such that 
\begin{equation}  |f(x) - f(y)| < \frac{\epsilon}{4} \quad \text{ whenever } \quad |x-y| < \delta. \label{unif-cty} \end{equation} 
In view of \eqref{weak-star-limit-condition}, there also exists an integer $K \geq 1$ such that $\delta_K < \delta$ and 
\begin{equation}   \sup_{k': k' \geq k} \sum_{\begin{subarray}{c} \mathbf i_k \\ X_k(\mathbf i_k) = 1 \end{subarray}} \Bigl| \int_{Q_k(\mathbf i_k)} \bigl[ \mu_{k'} - \mu_k \bigr](x) \, dx \Bigr| < \frac{\epsilon}{2 ||f||_{\infty}} \quad \text{ for all } k \geq K. \label{weak-star-2} \end{equation} 
Then for all $k' \geq k \geq K$, we find that 
\begin{align*}
\Bigl| \int f(x)  \bigl[ \mu_{k'} - \mu_k \bigr](x) \, dx  \Bigr| &= \sum_{\begin{subarray}{c} \mathbf i_k \\ X_k(\mathbf i_k) = 1 \end{subarray}} \Bigl| \int_{Q_k(\mathbf i_k)} f(x)  \bigl[ \mu_{k'} - \mu_k \bigr](x) \, dx  \Bigr| \\
&\leq \sum_{\begin{subarray}{c} \mathbf i_k \\ X_k(\mathbf i_k) = 1 \end{subarray}} \Bigl| \int_{Q_k(\mathbf i_k)} \Bigl[ f(x) - f \bigl( \alpha(\mathbf i_k) \bigr)  \Bigr] \bigl[ \mu_{k'} - \mu_k \bigr](x) \, dx  \Bigr| \\ 
&\hskip1in+  \sum_{\begin{subarray}{c} \mathbf i_k \\ X_k(\mathbf i_k) = 1 \end{subarray}} \Bigl| f(\alpha(\mathbf i_k)) \int_{Q_k(\mathbf i_k)}   \bigl[ \mu_{k'} - \mu_k \bigr](x) \, dx  \Bigr| \\
&\leq \frac{\epsilon}{4} \sum_{\begin{subarray}{c} \mathbf i_k \\ X_k(\mathbf i_k) = 1 \end{subarray}}  \int_{Q_k(\mathbf i_k)} \bigl[ \mu_{k'} + \mu_k \bigr](x) \, dx  \\ &\hskip1in + ||f||_{\infty} \sum_{\begin{subarray}{c} \mathbf i_k \\ X_k(\mathbf i_k) = 1 \end{subarray}} \Bigl| \int_{Q_k(\mathbf i_k)} \bigl[ \mu_{k'} - \mu_k \bigr](x) \, dx \Bigr| \\ 
&\leq \frac{\epsilon}{4} \times 2 + ||f||_{\infty} \frac{\epsilon}{2 ||f||_{\infty}} = \epsilon.
\end{align*}
We have used \eqref{unif-cty} and \eqref{weak-star-2} in the third and fourth steps. This concludes the proof.  
\end{proof} 
\vskip0.1in
\noindent We pause for a moment to discuss the measure $\mu$ in a bit more detail. A general Cantor-type construction as described above permits many measures to be defined on the resulting set, some of which are natural and have applications in geometric measure theory.  The measure $\mu$ given by Lemma \ref{weak-star-limit-lemma}, if it exists, is one of them. Yet another natural measure, say $\widetilde{\mu}$, is provided by the classical ``mass distribution principle" \cite[Proposition 1.7]{F90}. We briefly compare the two procedures. In the construction of $\widetilde{\mu}$, the mass assigned to each selected basic cube is divided equally between its selected children in the next generation. This is different from the construction of $\mu$, where at each stage, the unit total mass is divided equally between {\em{all}} the selected basic cubes of the next generation. This means that if two basic cubes $Q(\mathbf i_k)$ and $Q(\mathbf i'_k)$ of the $k^{\text{th}}$ generation have the same mass but different number of survivors at the next stage, then the mass assigned by $\widetilde{\mu}$ to a child of $Q(\mathbf i_k)$ could be very different from that of $Q(\mathbf i_k')$. This gives rise to an inherent non-uniformity in the mass distribution of $\widetilde{\mu}$ across its support. In contrast, every selected basic child of $Q(\mathbf i_k)$ and $Q(\mathbf i_k')$ at the $(k+1)^{\text{th}}$ level gets the same mass, namely $1/P_{k+1}$, under $\mu$. Thus the two measures $\mu$ and $\widetilde{\mu}$ are in general distinct, though they coincide in the special case when every basic cube in any generation has the same number of selected children in the next.  This would happen, for instance, in the construction of the classical Cantor middle-third set.
\vskip0.1in
\noindent The advantage of $\widetilde{\mu}$ is that, unlike $\mu$, it is guaranteed to exist for every Cantor-like construction, as shown in \cite[Proposition 1.7]{F90}. However, the construction of $\widetilde{\mu}$ is not in general mass-preserving, since a basic cube with no survivors loses its mass. Thus if $E \ne \emptyset$, the measure $\widetilde{\mu}$ supported on $E$ would have positive but possibly arbitrarily small total mass $<1$. More important for us is the following point: even in the case of no die-out so that $\widetilde{\mu}$ is a probability measure, the critical volume growth criteria of the type \eqref{mu-ball-condition-2} and \eqref{AD-local-3} are not in general true for $\widetilde{\mu}$, since the mass assigned to balls could in principle vary widely within its support. For instance, let us consider the following example in $d = 1$:
\begin{align*} &N_k = 2 \text{ for all $k \geq 1$}, \quad Y_k(\mathbf i_k) = \left\{ \begin{aligned} 1 &\text{ if } k =1, \\ 1 &\text{ if } i_1=1 \text{ and } k \geq 2, \\  1 &\text{ if } i_1 = 2 \text{ and } i_2 = \cdots= i_k = 1, k \geq 2, \\  0 &\text{ otherwise. } \end{aligned}  \right\} \text{ so that }  \\ 
&E_0 = E_1 = [0,1], \; E_k = \Bigl[0, \frac{1}{2} \Bigr] \cup \Bigl[\frac{1}{2}, \frac{1}{2} + \frac{1}{2^{k}} \Bigr] \text{ for $k \geq 2$}, \; E = \Bigl[0, \frac{1}{2} \Bigr], \; \widetilde{\mu} = 1_{[0, \frac12]} + \frac{1}{2} \delta_{\frac12}.
\end{align*} 
In other words, $\widetilde{\mu}$ is a probability measure that assigns half its mass uniformly to the interval $[0, \frac12]$ and the other half as a point mass to the point $\frac12$. While \[\alpha = \dim_{\mathbb H}  \bigl[\text{supp}(\widetilde{\mu}) \bigr] = 1,  \]  the ball growth condition \eqref{mu-ball-condition} holds only with $\alpha = 0$ and $\Psi \equiv 1$, due to the presence of the Dirac delta. As a consequence, the measure $\widetilde{\mu}$ under-performs significantly in terms of eigenfunction restriction estimates given the dimension of its support.  Specifically, a sharp estimate of the form \eqref{eigenfunction-growth-nu} holds for $\widetilde{\nu} = \widetilde{\mu} \circ \mathfrak u^{-1}$, with the growth exponent $\vartheta_p$ in that estimate replaced by the much larger exponent $\widetilde{\vartheta}_p = (n-1)/2$, as dictated by the Weyl law. 
\vskip0.1in 
\noindent The measure $\mu$, on the other hand, need not exist without additional assumptions, such as the one in Lemma \ref{weak-star-limit-lemma}.  But when it does, it enjoys the feature of uniform mass distribution across its support that closely emulates the corresponding feature of Euclidean Lebesgue measure, an essential component of the analytical machinery of this article. 
%For $\alpha = d$, the measure $\mu$ provides uniform mass distribution while ensuring singularity with respect to Lebesgue measure.       
\subsection{Hausdorff dimension} The set $E$ defined in \eqref{def-E} obeys certain dimensionality bounds given in terms of the construction parameters. These bounds are presented in \cite[Lemma 2.1]{LP11} for the case $d=1$. The proof for a general $d$ involves only minor variations, but we include it for completeness. 
\begin{lemma} \label{l:dimension}
Let $\dim_{\mathbb H}(E)$ denote the Hausdorff dimension of the set $E$ constructed in Section \ref{section-basic}. 
\begin{enumerate}[(a)]
\item Then 
\begin{equation} \dim_{\mathbb H}(E) \leq \liminf_{k \rightarrow \infty} \frac{\log(P_k)}{-\log(\delta_k)}. \label{Hdim-upper-bound} \end{equation} 
\item  Suppose further that the sequence $\mu_k$ given by \eqref{def-muk} converges to a probability measure $\mu$ in the weak-$\ast$ topology. Then 
\begin{equation} \dim_{\mathbb H}(E) \geq s_0 := \liminf_{k \rightarrow \infty} \frac{\log(P_k/N^d_k)}{- \log(\delta_{k-1})}. \label{Hdim-lower-bound} 
\end{equation} 
\end{enumerate} 
\end{lemma}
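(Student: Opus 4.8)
The plan is to establish the upper bound via a direct covering argument, and the lower bound via Frostman's lemma applied to the measure $\mu$. For part (a), the nested iterates $E_k$ provide a natural cover of $E$: the set $E$ is contained in the union of the $P_k$ basic cubes $Q(\mathbf{i}_k)$ of the $k$-th generation, each of diameter $\sqrt{d}\,\delta_k$. For any $s > \liminf_{k} \log(P_k)/(-\log\delta_k)$, I would pick a subsequence $k_j$ along which $\log(P_{k_j})/(-\log\delta_{k_j}) \to \liminf$, so that eventually $P_{k_j}\delta_{k_j}^s \le \delta_{k_j}^{-\varepsilon}\cdot\delta_{k_j}^{s}\cdot P_{k_j}\delta_{k_j}^{-s+s}\to 0$; more precisely $P_{k_j} \le \delta_{k_j}^{-s+\varepsilon/2}$ for large $j$ and some small $\varepsilon$, so $\sum_{Q\in\mathcal Q_{k_j}} (\operatorname{diam} Q)^s \le d^{s/2} P_{k_j}\delta_{k_j}^s \to 0$. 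Hence the $s$-dimensional Hausdorff measure of $E$ is zero, giving $\dim_{\mathbb H}(E) \le s$; letting $s$ decrease to the liminf yields \eqref{Hdim-upper-bound}.

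For part (b), I would use the mass distribution principle / Frostman's lemma in the form \eqref{Frostman}: it suffices to show that for every $s < s_0$ there is a constant $C$ with $\mu(B(x;r)) \le C r^s$ for all $x$ and all $r>0$. Given $r$, choose $k$ so that $\delta_k \le r < \delta_{k-1}$. A ball of radius $r < \delta_{k-1}$ meets at most a bounded number (depending only on $d$) of cubes $Q(\mathbf{i}_{k-1})$ of the $(k-1)$-st generation, and within each such cube it meets at most $C_d (r/\delta_k)^d$ cubes of the $k$-th generation. Since $\mu$ distributes unit mass uniformly over the $P_k$ selected $k$-th generation cubes — this is exactly the uniform-mass feature of $\mu$ discussed in Section \ref{Cantor-measure-section}, and follows from $\mu_k$ assigning mass $1/P_k$ to each selected $k$-cube together with the weak-$\ast$ convergence — each basic $k$-cube carries $\mu$-mass at most $C/P_k$ (one gets an upper bound on $\mu(Q(\mathbf i_k))$ by testing against suitable continuous bump functions supported slightly inside the dilated cube and using \eqref{conv-cts-fns}). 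Combining, $\mu(B(x;r)) \le C_d \big(1 + (r/\delta_k)^d\big)\cdot \frac{C}{P_k} \le C' \frac{r^d}{\delta_k^d P_k} = C' r^d \cdot \frac{N_k^d}{\delta_{k-1}^d P_k}$. Using $r < \delta_{k-1}$ and the definition of $s_0$, for $k$ large one has $P_k/N_k^d \ge \delta_{k-1}^{-(s_0-\eta)}$ for any $\eta>0$, so $\mu(B(x;r)) \le C' r^d \delta_{k-1}^{-d}\delta_{k-1}^{s_0-\eta} = C' r^d \delta_{k-1}^{-(d - s_0+\eta)} \le C' r^d r^{-(d-s_0+\eta)} = C' r^{s_0-\eta}$, where I used $d - s_0 + \eta \ge 0$ (true since $s_0 \le d$). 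Taking $\eta \to 0$ and invoking \eqref{Frostman} gives $\dim_{\mathbb H}(E) \ge s_0$.

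The main obstacle I anticipate is the clean passage from the uniform mass $1/P_k$ carried by $\mu_k$ on each selected $k$-th generation cube to a genuine bound $\mu(Q(\mathbf i_k)) \le C/P_k$ for the weak-$\ast$ limit $\mu$, since weak-$\ast$ convergence only directly controls integrals of continuous functions, not measures of cubes (whose boundaries could, a priori, carry mass). I would handle this by a standard sandwiching: choose continuous functions $g^+, g^-$ with $\mathbf 1_{Q(\mathbf i_k)} \le g^+ \le \mathbf 1_{Q'}$ where $Q'$ is a slightly dilated cube still contained in $E_{k-1}$ and meeting only $O(1)$ selected $k$-cubes, so $\int g^+ \, d\mu_{k'} \le C/P_k$ for all $k' \ge k$; passing to the limit gives $\mu(Q(\mathbf i_k)) \le \int g^+\,d\mu \le C/P_k$. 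A secondary technical point is counting, for a ball of radius $r \in [\delta_k, \delta_{k-1})$, how many $k$-cubes it can meet — this is a routine volume/packing count in $[0,1]^d$ and contributes only dimensional constants — and verifying the index bookkeeping in the exponent arithmetic, which is elementary once one is careful that $\delta_k = \delta_{k-1}/N_k$.
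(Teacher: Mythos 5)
Both parts of your proposal follow the paper's general approach: part (a) is the standard argument underlying ``Hausdorff dimension $\leq$ lower box-counting dimension'' (which the paper cites as a black box from \cite{F90}), and part (b) is the Frostman/cube-counting argument; your version of the arithmetic avoids the $\min(a,b)\leq a^{1-\theta}b^\theta$ interpolation the paper uses, which is a harmless simplification.

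There is, however, a genuine gap in part (b), at exactly the step you single out as the main obstacle. The claim that each basic $k$-cube carries $\mu$-mass at most $C/P_k$ --- and in particular the subsidiary claim that $\int g^+\,d\mu_{k'} \leq C/P_k$ for all $k' \geq k$ --- does not follow from weak-$\ast$ convergence. For $k' > k$, one has $\mu_{k'}(Q(\mathbf i_k)) = q_{k'}[\mathbf i_k]/P_{k'}$, where $q_{k'}[\mathbf i_k]$ counts the surviving $k'$-generation descendants of $Q(\mathbf i_k)$, and this fraction can exceed $1/P_k$ by an unbounded factor if the descendants of a fixed basic cube are over-represented among the survivors. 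A concrete instance: take $d=1$, $N_k\equiv 2$, and at each stage select both children of the leftmost surviving interval and only the left child of every other surviving interval. Then $P_k = k+1$, while for the leftmost $k$-cube $[0,2^{-k}]$ one has $\mu_m([0,2^{-k}]) = (m-k+1)/(m+1)\to 1$ as $m\to\infty$, so $\mu([0,2^{-k}])=1$, which is not $O(1/P_k)$. (Here $\mu = \delta_0$ and $s_0 = 0$, so the lemma's conclusion is trivially true, but the step you propose is false.) The paper sidesteps this by bounding $\mu_{k+1}(J)$ rather than $\mu(J)$, using the exact identity $\mu_{k+1}(Q(\mathbf i_{k+1})) = 1/P_{k+1}$ from \eqref{def-muk}; the passage from the bound on $\mu_{k+1}$ to a Frostman bound on $\mu$ is handled only briefly, but it does not rest on the false estimate you invoke. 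In the random setting of Section~\ref{random-cantor-section}, an analogue of your claim is indeed established --- this is essentially Lemma~\ref{descendant-count-lemma} on descendant counts --- but it requires probabilistic input well beyond weak-$\ast$ convergence, so it cannot be used inside the general Lemma~\ref{l:dimension}.
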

\begin{proof}
The relation \eqref{Hdim-upper-bound} follows immediately from a standard result, namely that the Hausdorff dimension is bounded above by the lower box-counting dimension; see  Proposition 4.1 in \cite{F90}.  The proof of \eqref{Hdim-lower-bound} relies on Frostman's lemma \cite[Theorem 8.8]{M95} or \cite[Section 4.1]{F90}, which has already appeared in the proof of Theorem \ref{mainthm-restriction-arbitrary}. According to this lemma, the claim \eqref{Hdim-lower-bound} would follow if we prove the following: for every $s < s_0$, there exists a probability measure $\tau^{[s]}$ supported on $E$ and a positive, finite constant $C = C_s$ such that the ball condition on the right hand side of \eqref{Frostman} holds. 
% is an easy adaptation of Lemma 2.1 of \cite{LP11}, which we include for completeness.  
%\begin{prop}
%Let $\mu$ be a mass distribution on a set $F \subset \mathbb{R}^d$ and suppose that for some $s$ there are numbers $c>0$ and $\delta > 0$ such that
%\begin{equation} \label{e:semi_ad}
%\mu(U) \leq c \, \mbox{diam}(U)^s
%\end{equation}
%for all sets $U$ with $\mbox{diam}(U) \leq \delta$.  Then $\mathcal{H}^s(F) \geq \mu(F)/c$ and 
%\begin{equation}
%s \leq \dim_H F.
%\end{equation}
%\end{prop}
Since any ball of radius $r$ can be covered by an axes-parallel cube of sidelength $2r$,  
%for $\mu$ defined as in Section \ref{Cantor-measure-section} and every $s < s_0$, there is  a constant $C_s > 0$ for which 
this amounts to proving that the estimate \begin{equation} \tau^{[s]}(J) \leq C_s r^{s} \label{to-show-dim} \end{equation} holds for all axes-parallel cubes $J$ of sidelength $r$. If $\mu = \lim_{k \rightarrow \infty} \mu_k$ exists in the weak-$\ast$ topology, we will prove that \eqref{to-show-dim} holds with $\tau^{[s]} = \mu$, for all $s < s_0$. 
%Here  we aim to show that Thus, we  on $S$ such that the estimate (\ref{e:out_meas}) holds for a given cube $J$ (as any set $U$ of diameter $\delta$ may be covered with a cube of comparable diameter).  It is natural to take this measure to be $\mu$ as defined in Lemma \ref{l:uniformity} and exploit the fact that $\lim^* \mu_k = \mu$.  We see this as follows.  
\vskip0.1in
\noindent To this end, we fix a small number $0 < r \leq 1$ and let $k = k(J)$ denote the unique index such that $\delta_{k+1} \leq r < \delta_k$.  The number of basic cubes in $\mathcal{Q}_{k+1}$ that can intersect $J$ is either: (i) at most $2^d N^d_{k+1}$ as $J$ may intersect at most $2^d$ adjacent cubes in $\mathcal{Q}_k$, or (ii) the natural upper bound of $|J|/\delta_{k+1}^d$, since the cubes in $\mathcal{Q}_{k+1}$ have disjoint interiors. From the definition \eqref{def-muk}, we see that $\mu_{k+1}$ assigns each cube in $\mathcal Q_{k+1}$ a mass of $1/P_{k+1}$. Therefore, 
\begin{align}
\mu_{k+1}(J) &\leq 
%\frac{1}{P_{k+1} \delta_{k+1}^d} \sum_{\mathbf{i}_{k+1} \mbox{ with } X_{k+1}(\mathbf{i}_{k+1})=1} \mbox{vol}\left( I(\mathbf{i}_{k+1}) \cap J \right) \\
P_{k+1}^{-1} \min \left[ 2^d N_{k+1}^d, \frac{|J|}{\delta_{k+1}^d} \right] \nonumber \\
& \leq P_{k+1}^{-1}  \left( 2^d N_{k+1}^d \right)^{1-\theta} \left( \frac{|J|}{\delta_{k+1}^d} \right)^{\theta} \nonumber \\ 
&\leq 2^d \frac{N_{k+1}^d}{P_{k+1} \delta_k^{d \theta}} r^{d \theta}, \text{ hence } \nonumber \\ 
\frac{\mu_{k+1}(J)}{r^{d \theta}} &\leq  2^d \frac{N_{k+1}^d}{P_{k+1} \delta_k^{d \theta}}.  \label{towards-Frostman}
\end{align}
Here $0 \leq \theta \leq 1$ is a constant to be determined shortly. Setting $s = d\theta$, we observe that the quantity ${N_{k+1}^d}/({P_{k+1} \delta_k^{s}})$ on the right hand side is uniformly bounded in $k$ precisely when $s < s_0$. Letting $k \rightarrow \infty$ in \eqref{towards-Frostman} completes the proof of \eqref{to-show-dim}. 
%As $\mbox{vol}(J) = \left( \frac{diam(J)}{\sqrt{d}} \right)^d \leq \left( \sqrt{d} \delta_{k} \right)^d$, our conclusion follows by demanding that $\frac{(2d)^{1-s} N_{k+1}^{d(1-s)}}{P_{k+1} \delta_{k+1}^d}$ be bounded above by a uniform constant, which then forces $s < s_0$.
\end{proof}

\section{Random Cantor sets} \label{random-cantor-section} 
\noindent We now delve into the probabilistic construction that generates our desired Cantor-type sets.  The basic procedure is as in Section \ref{section-basic}, with the crucial additional point that the sequence $\mathbf X_k$ is now randomized. Recall the definitions of $M_k, N_k$ and $\mathbb I(k,d)$ from \eqref{def-Ikd} and the discussion preceding it. 
\vskip0.1in
\subsection{Construction of the measure space $(\Omega, \mathcal B, \mathbb P^{\ast})$ in Theorem \ref{mainthm-example}} \label{measure-space-section}  The measure space under consideration is 
\[ \Omega = \prod_{k=1}^{\infty} \Omega_k, \quad \text{ where }  \quad \Omega_k = \prod_{\mathbf i_k \in \mathbb I(k,d)} \{ 0, 1\} = \{0, 1\}^{\mathbb I(k, d)}.  \]
%\quad \text{ with } \quad \Xi_k(\mathbf i_k) = \{0,1\}. \]  
Thus $\Omega_k$ consists of all binary strings of length $M_k^d$. We denote a ``random'' string in $\Omega_k$ by $\mathbf Y_k = \{Y_k(\mathbf i_k) : \mathbf i_k \in \mathbb I(k,d) \}$. The set $\Omega$ is the collection of all infinite binary strings $\omega = (\mathbf Y_1, \mathbf Y_2, \cdots)$, with $\mathbf Y_k \in \Omega_k$. Clearly, $\Omega$ is uncountable, with cardinality same as that of the continuum. As  described in Section \ref{section-basic}, every $\omega \in \Omega$ generates a Cantor-type set $E(\omega)$. At this stage, depending on the choice of $\omega$, some of the sets $E(\omega)$ could be empty. 
\vskip0.1in
\noindent We now assign an auxiliary probability measure $\mathbb P$ to $\Omega$, which will be related, but not identical to, the final probability measure $\mathbb P^{\ast}$ that we put on it. For a sequence of small positive numbers $\{\epsilon_k : k \geq 1\}$ to be specified, set  
\begin{equation} p_k = N_k^{- d \epsilon_k} \quad \text{ with the requirement that } p_k \leq \frac{1}{2}.  \label{def-pk}
\end{equation}  The numbers $p_k$ will serve as the ``selection probability''  at step $k$. For each $k \geq 1$ and $\mathbf i_k \in \mathbb I(k,d)$, the two-point set $\Xi_k(\mathbf i_k) = \{0, 1\}$ is endowed with the probability measure $\lambda_k$, where  
\begin{align*}  \lambda_k (\{1\}) &= \text{ probability of the event } \{Y_k(\mathbf i_k) = 1 \} = p_k, \quad \text{ and } \\ \lambda_k(\{0\}) &= \text{ probability of the event } \{Y_k(\mathbf i_k) = 0 \} = 1-p_k. \end{align*}   
The space $\Omega_k$ is equipped with a product probability measure that is a finite $\# (\mathbb I(k,d))$-fold Cartesian product of the measures $\lambda_k$, with one copy of $\lambda_k$ for each $\mathbf i_k \in \mathbb I(k,d)$. In other words, 
\[\mathbb P_k = \prod_{\mathbf i_k \in \mathbb I(k,d)} \lambda_k.\] This means that for every binary string $\eta = (\eta_1, \cdots, \eta_{M_k^d}) \in \Omega_k$, 
\begin{align*}   
\mathbb P_k(\{\eta\}) &= \text{probability of } \{\mathbf Y_k = \eta \} = p_k^{|\eta|} (1 - p_k)^{M_k^d - |\eta|} \text{ where } \\ |\eta| &= \eta_1 + \cdots + \eta_{M_k^d} = \text{number of 1-s in the string $\eta$}. \end{align*} 
The underlying $\sigma$-algebra for $\mathbb P_k$ is of course $\mathcal B_k$, the power set of $\Omega_k$. Finally, the measure $\mathbb P$ on $\Omega$ is the product probability measure of the measures $\mathbb P_k$. It is defined on the $\sigma$-algebra $\mathcal B$, the product $\sigma$-algebra with components $\mathcal B_k$. 
\vskip0.1in
\noindent In summary, the random vectors $\{ \mathbf Y_k : k \geq 1\}$ across different scales are independent. The scalar entries $\{ Y_k(\mathbf i_k) : \mathbf i_k \in \mathbb I(k,d) \}$ within a single scale $\mathbf Y_k$ are independent as well; in addition, within a single scale $k$, the individual entries $Y_k(\mathbf i_k)$ are identically distributed as Bernoulli random variables with success probability $p_k$. On the other hand, it is important to note that the random variables $\{\mathbf X_k : k \geq 1\}$ defined as in \eqref{X-and-Y} are {\em{not}} independent. 
\vskip0.1in
\noindent Before proceeding further, we need to ensure that the limiting sets $E = E(\omega)$ obtained in this manner are nonempty, with nonzero probability.
\begin{lemma} \label{nonemptyE-lemma}
Assume that the construction parameters $N_k$ and $\epsilon_k$ defining \eqref{def-pk} are chosen so that \begin{equation} \sum_{k=1}^{\infty} (1-p_k)^{N_k^d} < 1. \label{nonemptyE-condition} \end{equation}  
Then for the construction described above, $\mathbb P(E \text{ is nonempty}) > 0$. 
\end{lemma}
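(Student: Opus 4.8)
The plan is to recognize the sequence $\{P_k\}_{k\ge 1}$ of counts of basic cubes as a branching process in a varying environment and to bound its extinction probability by a telescoping estimate; the hypothesis \eqref{nonemptyE-condition} is precisely what forces the telescoped series to sum to less than $1$.

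First I would reduce $\{E=\emptyset\}$ to $\bigcup_{k}\{P_k=0\}$. The iterates $E_k$ of \eqref{def-E} form a decreasing sequence of compact sets, so $E=\bigcap_k E_k$ is nonempty as soon as every $E_k$ is, while any $E_k=\emptyset$ forces $E=\emptyset$; moreover $E_k=\emptyset$ exactly when $P_k=0$ (equivalently, invoke \eqref{E-not-empty-nasc} directly). Since $\{P_k=0\}$ is increasing in $k$ — a generation with no basic cubes has no descendants — continuity of $\mathbb P$ gives $\mathbb P(E=\emptyset)=\lim_{k\to\infty}\mathbb P(P_k=0)$, and it suffices to show this limit is strictly below $1$.

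Next I would set $\theta_k:=\mathbb P(P_k=0)$ and $r_k:=(1-p_k)^{N_k^d}$ (so $r_k\in(0,1)$ because $p_k\le 1/2$), with the convention $P_0=1$ and $\theta_0=0$, and establish the recursion $\theta_k\le\theta_{k-1}+r_k$. Conditioning on $\mathbf Y_1,\dots,\mathbf Y_{k-1}$ fixes the generation-$(k-1)$ family $\mathcal Q_{k-1}$ and hence $P_{k-1}=\#\mathcal Q_{k-1}$; by \eqref{X-and-Y}, a cube $Q(\mathbf i_{k-1})\in\mathcal Q_{k-1}$ contributes no child to $\mathcal Q_k$ exactly when $Y_k(\mathbf i_{k-1},\overline i_k)=0$ for every $\overline i_k\in\mathbb Z_{N_k}^d$, an event of probability $r_k$, and these $P_{k-1}$ die-out events involve pairwise disjoint blocks of the independent family $\{Y_k(\mathbf i_k):\mathbf i_k\in\mathbb I(k,d)\}$, which is also independent of $\mathbf Y_1,\dots,\mathbf Y_{k-1}$. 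Hence $\mathbb P(P_k=0\mid \mathbf Y_1,\dots,\mathbf Y_{k-1})=r_k^{P_{k-1}}$, and so $\theta_k=\mathbb E[r_k^{P_{k-1}}]=\mathbb P(P_{k-1}=0)+\mathbb E[r_k^{P_{k-1}}\mathbf 1_{\{P_{k-1}\ge 1\}}]\le \theta_{k-1}+r_k$, using $r_k^{P_{k-1}}\le r_k$ on $\{P_{k-1}\ge 1\}$. Iterating yields $\theta_k\le\sum_{j=1}^k r_j$ for all $k$, so by the previous paragraph $\mathbb P(E=\emptyset)=\lim_k\theta_k\le\sum_{j\ge 1}(1-p_j)^{N_j^d}<1$ under \eqref{nonemptyE-condition}, which is the claim.

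The only genuinely delicate point — indeed the main obstacle — is the conditional-independence identity $\mathbb P(P_k=0\mid\mathbf Y_1,\dots,\mathbf Y_{k-1})=r_k^{P_{k-1}}$: one must keep careful track of which coordinates $Y_k(\cdot)$ govern the offspring of which surviving cube, and then invoke the product structure of $\mathbb P$ built in Section \ref{measure-space-section}. It is essential here that the vectors $\mathbf Y_k$, rather than the accumulated markers $\mathbf X_k$, are the independent objects. Everything else is the routine telescoping estimate above.
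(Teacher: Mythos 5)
Your proof is correct and takes essentially the same route as the paper: decompose the extinction event by the first index $k$ at which $P_k=0$ and bound each increment by $(1-p_k)^{N_k^d}$. The only difference is that you make the conditional-independence step $\mathbb P(P_k=0\mid\mathbf Y_1,\dots,\mathbf Y_{k-1})=r_k^{P_{k-1}}$ explicit, whereas the paper passes somewhat informally through a containment in an existence event and states the resulting bound directly.
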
    
\begin{proof} 
It suffices to show that the probability of the complementary event, namely when $E$ is empty, is bounded from above by the left hand side of \eqref{nonemptyE-condition}. Accordingly, we express this event as a disjoint union:
\begin{align*}
\{E \text{ is empty} \} &= \bigcup_{k=1}^{\infty}\bigl\{ \text{$\exists k$ such that $E_k $ is empty}\bigr\} \\ 
&= \bigcup_{k=1}^{\infty} \bigl\{ P_{k} = 0 \text{ but } P_{k-1} > 0 \bigr\} \\ 
&\subseteq \bigcup_{k=1}^{\infty} \bigl\{\exists \; \mathbf i \in \mathbb I(k-1,d) \text{ such that } Y_k(\mathbf i_k) = 0 \; \forall \; \mathbf i_k = (\mathbf i, i_k) \in \mathbb I(k,d)  \bigr\}. 
\end{align*} 
The $k$-th event in the last union of sets can happen only if there are at least $N_k^d$ independent Bernoulli random variables $Y_k(\mathbf i_k)$ at the $k$-th stage that vanish. Since each $Y_k(\mathbf i_k)$ assumes the value 1 with probability $p_k$, the probability of this $k$-th event is at most $(1-p_k)^{N_k^d}$, completing the proof.  
\end{proof}  
\noindent In view of Lemma \ref{nonemptyE-lemma}, we can define the conditional probability measure $\mathbb P^{\ast}$ as follows: for any measurable set $A$,  
\begin{equation} \label{P-star}
\mathbb P^{\ast}(A) := \frac{\mathbb P(A \cap \{E \neq \emptyset \})}{\mathbb P(E \neq \emptyset)}.   
\end{equation}  
All the probabilistic statements made in Theorem \ref{mainthm-example} and Corollary \ref{maincor-example} are with respect to $\mathbb P^{\ast}$ defined above. We pause for a moment to make the following observations:
\begin{itemize} 
\item The measure $\mathbb P^{\ast}$ is supported on the set $\Omega^{\ast} = \{\omega \in \Omega : E(\omega) \text{ is nonempty} \}$. 
\vskip0.1in
\item Since $p_k \leq 1/2$ for all $k$, any binary string $\omega$ that generates a nonempty set $E(\omega)$ receives zero measure from $\mathbb P$, and hence from $\mathbb P^{\ast}$. As a result, any countable collection of such strings $\omega$ is also $\mathbb P^{\ast}$-null. 
\vskip0.1in
\item Thus $\Omega^{\ast}$ is uncountable. \label{uncountable-sets} 
\end{itemize} 
\subsection{Deterministic estimates on the number of basic cubes} We have seen in Lemmas \ref{l:dimension} and \ref{weak-star-limit-lemma} that the quantity $P_k$ plays an important role in determining the Hausdorff dimension of $E$ and the measure $\mu$ on it.  For the random construction that we have just described in this section,  $P_k$ is a random variable. We now record some quantitative estimates for $P_k$ that will play a crucial role in the sequel. Set 
\begin{equation}  \label{PbarRk}
\overline{P}_k := N_k^{d(1-\epsilon_k)} P_{k-1}, \qquad R_k := \prod_{j=1}^{k} N_j^{d(1-\epsilon_j)}.  
\end{equation}  
Note that while $P_k$ is a random variable given by $\mathbf X_k$, the random variable $\overline{P}_k$ depends only on $\mathbf X_{k-1}$. The quantity $R_k$ on the other hand is purely deterministic. 
\begin{lemma} \label{PkRk-lemma}
Assume that the construction parameters $N_k$ and $\epsilon_k$ obey \eqref{nonemptyE-condition} and \begin{equation}  \label{another-sum} \sum_{k=1}^{\infty} (\log k) \; N_k^{-d(1 - \epsilon_k)/2} < \infty. \end{equation} 
Then for $\mathbb P^{\ast}$-almost every $\omega \in \Omega$, there exist constants $C_1, C_2 \geq 1$ depending on $\omega$ such that for every $k \geq 1$, the following two estimates hold:  
\begin{align}
&|P_k - \overline{P}_k| \leq C_1 \sqrt{\log(k+1)}   \max\bigl(\overline{P}_k, \log (k+1) \bigr)^{\frac{1}{2}}, \label{PkPk-bar}\\ 
&C_2^{-1} R_k \leq P_k \leq C_2 R_k. \label{PkRk}
\end{align}  
\end{lemma}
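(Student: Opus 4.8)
The plan is to first establish the concentration estimate \eqref{PkPk-bar} and then derive the two-sided bound \eqref{PkRk} from it by an induction/telescoping argument. For \eqref{PkPk-bar}, the key observation is that, conditioned on the configuration $\mathbf X_{k-1}$ at scale $k-1$, the random variable $P_k$ is a sum of independent Bernoulli-type contributions: each of the $P_{k-1}$ surviving cubes of generation $k-1$ has $N_k^d$ children, each selected independently with probability $p_k = N_k^{-d\epsilon_k}$. Hence the conditional expectation of $P_k$ given $\mathbf X_{k-1}$ is exactly $P_{k-1} N_k^d p_k = P_{k-1} N_k^{d(1-\epsilon_k)} = \overline{P}_k$, which explains the definition of $\overline{P}_k$. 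The deviation $|P_k - \overline{P}_k|$ is then controlled by a Bernstein/Chernoff-type inequality for sums of independent bounded random variables, whose variance is at most $\overline{P}_k$; one obtains, for a suitable absolute constant, a bound of the form $\Prob\bigl(|P_k - \overline{P}_k| > t \mid \mathbf X_{k-1}\bigr) \leq 2\exp(-c\,t^2/\max(\overline{P}_k, t))$. Choosing $t = C_1 \sqrt{\log(k+1)}\,\max(\overline{P}_k, \log(k+1))^{1/2}$ makes the right-hand side summable in $k$ (here the $\sqrt{\log(k+1)}$ factor provides the room), so by Borel--Cantelli the bound \eqref{PkPk-bar} holds for all but finitely many $k$, $\mathbb P$-a.s.; enlarging $C_1$ to absorb the finitely many exceptional scales, and then passing from $\mathbb P$ to $\mathbb P^{\ast}$ (which only rescales probabilities by the positive constant $\mathbb P(E \neq \emptyset)$, hence preserves almost-sure statements on $\{E \neq \emptyset\}$), gives \eqref{PkPk-bar} for $\mathbb P^{\ast}$-a.e. $\omega$.

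Next I would deduce \eqref{PkRk} from \eqref{PkPk-bar}. Writing $Q_k := P_k / R_k$, the relation $\overline{P}_k = N_k^{d(1-\epsilon_k)} P_{k-1}$ together with $R_k = N_k^{d(1-\epsilon_k)} R_{k-1}$ gives $\overline{P}_k / R_k = P_{k-1}/R_{k-1} = Q_{k-1}$, so \eqref{PkPk-bar} becomes
\begin{equation}
\Bigl| Q_k - Q_{k-1} \Bigr| \leq \frac{C_1 \sqrt{\log(k+1)}\;\max(\overline{P}_k, \log(k+1))^{1/2}}{R_k}. \label{Qk-diff}
\end{equation}
On the event where \eqref{PkRk} has been verified up to stage $k-1$, we have $\overline{P}_k \leq C_2 N_k^{d(1-\epsilon_k)} R_{k-1} = C_2 R_k$, so the numerator in \eqref{Qk-diff} is $\lesssim \sqrt{\log(k+1)}\, R_k^{1/2}$ (the $\max$ with $\log(k+1)$ only helps when $\overline{P}_k$ is small, in which case the bound is even better), giving
\begin{equation}
\Bigl| Q_k - Q_{k-1} \Bigr| \lesssim \frac{\sqrt{\log(k+1)}}{R_k^{1/2}} \lesssim (\log k)\, N_k^{-d(1-\epsilon_k)/2},
\end{equation}
since $R_k \geq N_k^{d(1-\epsilon_k)}$. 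By the summability hypothesis \eqref{another-sum}, the series $\sum_k |Q_k - Q_{k-1}|$ converges, so $Q_k$ is a Cauchy sequence converging to some finite limit $Q_\infty$; moreover $Q_\infty > 0$ because on $\{E \neq \emptyset\}$ we have $P_k \geq 1$ for all $k$, and a convergent sequence of positive reals whose increments are summable cannot accumulate at $0$ unless... — more carefully, one runs the bootstrap: pick $k_0$ large enough (depending on $\omega$) that the tail $\sum_{k > k_0} |Q_k - Q_{k-1}| < \tfrac12 Q_{k_0}$, which is possible since $Q_{k_0} \geq 1/R_{k_0} > 0$; then for all $k \geq k_0$, $\tfrac12 Q_{k_0} \leq Q_k \leq \tfrac32 Q_{k_0}$, and absorbing the finitely many early stages $k < k_0$ (where $P_k/R_k$ ranges over finitely many positive values) into the constant yields \eqref{PkRk} with $C_2 = C_2(\omega)$.

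The main obstacle I anticipate is the circularity in the induction just sketched: the concentration bound \eqref{PkPk-bar} involves $\overline{P}_k$, which depends on $P_{k-1}$, so to conclude that the bootstrap closes one must be slightly careful to first establish \eqref{PkPk-bar} as an unconditional almost-sure statement (valid for all $k$ simultaneously, via Borel--Cantelli applied to the conditional tail bounds — note the conditional probability estimates are uniform in the conditioning configuration, which is what makes the union bound over $k$ legitimate), and only afterwards feed it into the deterministic recursion for $Q_k$. A secondary technical point is verifying that the variance of the conditional sum is genuinely $\lesssim \overline{P}_k$ and that the Bernstein bound is applicable with the stated max structure; this is routine but needs the observation that each summand is bounded by $1$. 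Once these are in place, the telescoping argument for \eqref{PkRk} is elementary. The probabilistic tools invoked (conditional Chernoff/Bernstein inequality, Borel--Cantelli) are presumably the ones collected in Section~\ref{Appendix-section}, so I would cite those rather than reprove them.
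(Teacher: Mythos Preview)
Your overall strategy is correct and matches the paper's: establish \eqref{PkPk-bar} via a conditional Bernstein bound plus Borel--Cantelli, then deduce \eqref{PkRk} by a telescoping argument. The treatment of \eqref{PkPk-bar} is essentially the paper's argument.

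The one substantive difference is in the passage from \eqref{PkPk-bar} to \eqref{PkRk}, and here your diagnosis of the circularity is slightly off. There is no circularity in establishing \eqref{PkPk-bar} itself; the conditional Bernstein bound is uniform in the conditioning and Borel--Cantelli applies directly. The circularity lies instead in your additive recursion for $Q_k = P_k/R_k$: bounding $|Q_k - Q_{k-1}| = |P_k - \overline{P}_k|/R_k$ via \eqref{PkPk-bar} gives a numerator of size $\sqrt{\log(k+1)}\,\overline{P}_k^{1/2}$, and to control $\overline{P}_k^{1/2}/R_k$ you invoke $\overline{P}_k \leq C_2 R_k$, which is exactly the upper bound you are trying to prove. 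Your bootstrap sketch (``pick $k_0$ so the tail sum is $< \tfrac12 Q_{k_0}$'') presupposes a summable bound on $|Q_k - Q_{k-1}|$ that you do not yet have. This can be repaired --- first close the induction for the upper bound $Q_k \leq C_2$ (which does work, since the recursion $Q_k \leq Q_{k-1} + a_k Q_{k-1}^{1/2}$ with $\sum a_k < \infty$ stays bounded), and only then use that upper bound to get a summable control on $|Q_k - Q_{k-1}|$ for the lower bound --- but this two-pass structure is not what you wrote.

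The paper avoids the issue entirely by normalizing multiplicatively rather than additively: set $\eta_k := (P_k - \overline{P}_k)/\overline{P}_k$, so that $P_k = R_k \prod_{j \leq k}(1+\eta_j)$, and bound $|\eta_k| \leq C_1 \log(k+1)/\overline{P}_k^{1/2}$ directly from \eqref{PkPk-bar}. The point is that this requires only a \emph{lower} bound on $\overline{P}_k$, and on $\{E \neq \emptyset\}$ one has the crude but non-circular estimate $\overline{P}_k = N_k^{d(1-\epsilon_k)} P_{k-1} \geq N_k^{d(1-\epsilon_k)}$ since $P_{k-1} \geq 1$. Hypothesis \eqref{another-sum} then gives $\sum |\eta_k| < \infty$, and both bounds in \eqref{PkRk} follow at once from the convergence of the product $\prod(1+\eta_j)$ to a positive limit. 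This is the cleaner route; your additive $Q_k$ and the paper's multiplicative $\prod(1+\eta_j)$ are the same object, but dividing the deviation by $\overline{P}_k$ rather than $R_k$ is what breaks the loop.
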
 
\begin{proof} 
Estimates of this type are consequences of large deviation inequalities ubiquitous in the probabilistic literature, and have also appeared in previous work on random construction of sets, see for instance \cite{{LP09}, {LP11}}. For completeness, we include the proof of \eqref{PkPk-bar} and \eqref{PkRk} in Section \ref{PkRk-proof}. 
\end{proof} 
\noindent {\bf{Remark: }} \label{Lebesgue-null-remark}
In view of Lemma \ref{PkRk-lemma} and \eqref{def-pk}, we deduce that for $\mathbb P^{\ast}$-almost every $\omega \in \Omega$, 
\[ |E_k| = P_k \delta_k^d \leq CR_k \delta_k^d \leq C \prod_{j=1}^k p_j = C2^{-k} \rightarrow 0 \quad \text{ as } k \rightarrow \infty. \]
In other words, $E$ is Lebesgue-null almost surely.  In Lemma \ref{dim-count-lemma}, we will obtain more refined information on the size of $E$.    
\subsection{Estimating the number of descendants of a basic cube} \label{descendant-section} 
In addition to certain deterministic estimates on $P_k$, we will also need analogous bounds on the number of basic cubes descended from a given one. For indices $1 \leq r < \ell$, and a fixed $\mathbf i_r \in \mathbb I(r, d)$, let us define 
\begin{equation}  q_{\ell}[\mathbf i_r] := \sum' X_{\ell}(\mathbf i_{\ell}), \label{def-ql}  \end{equation} 
where $\overset{'}{\sum}$ ranges over all multi-indices $\mathbf i_{\ell} \in \mathbb I(\ell, d)$ whose projection onto the first $r$ coordinates yields $\mathbf i_r$.  Thus $q_{\ell}[\mathbf i_r]$ represents the number of basic cubes of the $k$-th generation descended from $Q(\mathbf i_r)$.  
\begin{lemma} \label{descendant-count-lemma}
Suppose that the construction parameters $N_k$ and $\epsilon_k$ obey, in addition to \eqref{nonemptyE-condition} and \eqref{another-sum}, the summability condition 
\begin{equation} 
\label{third-sum}\sum_{k > k'} k |\log \delta_{k'}|  N_k^{-d(1-\epsilon_k)/2} < \infty. 
\end{equation} 
Then for $\mathbb P^{\ast}$-almost every $\omega \in \Omega$, there exists a constant $C = C_{\omega} > 0$ such that for every choice of indices $r < \ell$, 
\begin{equation} \label{ql-est}  \sup \bigl\{ q_{\ell}[\mathbf i_r] : \mathbf i_r \in \mathbb I(r, d)   \bigr\} \leq C \left( \frac{\delta_{r}}{\delta_{\ell}}\right)^d \prod_{m=r+1}^{\ell} p_m.  \end{equation}  
\end{lemma}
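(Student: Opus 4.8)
The plan is to run, for each root generation $r$, a descendant-count analogue of the large-deviation argument that proves \eqref{PkRk} in Lemma~\ref{PkRk-lemma}, and to control the extra union bound over the $M_r^d$ choices of $\mathbf i_r\in\mathbb I(r,d)$ using the strengthened summability \eqref{third-sum}. First note that the target \eqref{ql-est} is simply $q_\ell[\mathbf i_r]\le C\,R_\ell/R_r$, since $(\delta_r/\delta_\ell)^d\prod_{m=r+1}^\ell p_m=\prod_{m=r+1}^\ell N_m^d\cdot\prod_{m=r+1}^\ell N_m^{-d\epsilon_m}=\prod_{m=r+1}^\ell N_m^{d(1-\epsilon_m)}=R_\ell/R_r$. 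Fix $r$ and $\mathbf i_r$; the estimate is trivial when $X_r(\mathbf i_r)=0$ (then $q_\ell[\mathbf i_r]\equiv0$), so assume $q_r[\mathbf i_r]=X_r(\mathbf i_r)=1$. Conditioning on $\mathbf X_{\ell-1}$, the numbers of selected children of the $q_{\ell-1}[\mathbf i_r]$ basic cubes of generation $\ell-1$ lying below $Q(\mathbf i_r)$ are independent $\mathrm{Bin}(N_\ell^d,p_\ell)$ variables, so $q_\ell[\mathbf i_r]\mid\mathbf X_{\ell-1}\sim\mathrm{Bin}\bigl(q_{\ell-1}[\mathbf i_r]N_\ell^d,\,p_\ell\bigr)$, with conditional mean $\overline q_\ell[\mathbf i_r]:=N_\ell^{d(1-\epsilon_\ell)}q_{\ell-1}[\mathbf i_r]$; by the tower property $\mathbb E\,q_\ell[\mathbf i_r]=R_\ell/R_r$. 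Thus \eqref{ql-est} is a one-sided concentration of $q_\ell[\mathbf i_r]$ about its mean, required uniformly in $\mathbf i_r$ and over all pairs $r<\ell$.

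Next I would fix deviation thresholds $\eta_{r,\ell}:=c_0\,|\log\delta_r|^{1/2}\,\ell^{1/2}\,N_\ell^{-d(1-\epsilon_\ell)/2}$ with $c_0$ a large absolute constant, capping $\eta_{r,\ell}$ at $1$ (which only affects finitely many pairs, by \eqref{third-sum}). On $\{q_{\ell-1}[\mathbf i_r]\ge1\}$ one has $\overline q_\ell[\mathbf i_r]\ge N_\ell^{d(1-\epsilon_\ell)}$, so the multiplicative Chernoff bound for the binomial — the same type of estimate used for Lemma~\ref{PkRk-lemma} and collected in Section~\ref{Appendix-section} — yields
\[
\mathbb P\bigl(q_\ell[\mathbf i_r]>(1+\eta_{r,\ell})\,\overline q_\ell[\mathbf i_r]\ \big|\ \mathbf X_{\ell-1}\bigr)\ \le\ \exp\!\Bigl(-\tfrac13\,\eta_{r,\ell}^2\,N_\ell^{d(1-\epsilon_\ell)}\Bigr)\ =\ \exp\!\bigl(-\tfrac{c_0^2}{3}\,\ell\,|\log\delta_r|\bigr),
\]
while on $\{q_{\ell-1}[\mathbf i_r]=0\}$ the event is empty. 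Taking expectations, union-bounding over the $M_r^d=\exp(d|\log\delta_r|)$ indices $\mathbf i_r$, and using $|\log\delta_r|\ge r\log2$, one gets $\mathbb P(B_{r,\ell})\le\exp\!\bigl((d-\tfrac{c_0^2}{3}\ell)\,|\log\delta_r|\bigr)$ for the bad event $B_{r,\ell}:=\{\exists\,\mathbf i_r\in\mathbb I(r,d):q_\ell[\mathbf i_r]>(1+\eta_{r,\ell})\overline q_\ell[\mathbf i_r]\}$; choosing $c_0$ large makes $\sum_{1\le r<\ell}\mathbb P(B_{r,\ell})$ finite, so by Borel--Cantelli $\mathbb P^\ast$-a.s.\ only finitely many $B_{r,\ell}$ occur.

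The final step is to telescope. Off every $B_{r,m}$ with $m>r$, iterating $q_m[\mathbf i_r]\le(1+\eta_{r,m})N_m^{d(1-\epsilon_m)}q_{m-1}[\mathbf i_r]$ from $m=r+1$ to $\ell$ gives, for all $\mathbf i_r\in\mathbb I(r,d)$,
\[
q_\ell[\mathbf i_r]\ \le\ \frac{R_\ell}{R_r}\,\prod_{m=r+1}^{\ell}\bigl(1+\eta_{r,m}\bigr),
\]
and the product is bounded uniformly in $r$: reading \eqref{third-sum} as the finiteness of the double sum $S=\sum_{1\le m'<m}m\,|\log\delta_{m'}|\,N_m^{-d(1-\epsilon_m)/2}$, one has $\sum_{m>r}\eta_{r,m}\le c_0\,|\log\delta_r|^{-1/2}\sum_{m>r}m\,|\log\delta_r|\,N_m^{-d(1-\epsilon_m)/2}\le c_0 S/(\log2)^{1/2}$, hence $\prod_{m>r}(1+\eta_{r,m})\le\exp(c_0 S/\sqrt{\log2})=:C$. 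This proves \eqref{ql-est} for every root generation $r$ at which no $B_{r,m}$ occurs. For the $\mathbb P^\ast$-a.s.\ finitely many ``bad'' root generations $r$ — for which some $B_{r,m}$ occurs, with all such $m$ below some finite $m_0=m_0(r)$ — I would restart the telescoping at generation $m_0$, bounding the finite count $q_{m_0}[\mathbf i_r]\le(\delta_r/\delta_{m_0})^d$ crudely and applying the product estimate above for generations $>m_0$; after absorbing the finitely many extra factors $N_m^{d\epsilon_m}$, $r<m\le m_0$, and the residual pairs $r<\ell\le m_0$ into an $\omega$-dependent constant $C_\omega$, one obtains $q_\ell[\mathbf i_r]\le C_\omega\,R_\ell/R_r$ in all cases.

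The hard part is the double uniformity — one constant good for all pairs $r<\ell$ and all $M_r^d$ roots $\mathbf i_r$. This is what forces $\eta_{r,\ell}$ to carry both a $|\log\delta_r|^{1/2}$ factor (to beat the union bound over $\mathbb I(r,d)$) and an $\ell^{1/2}$ factor (for Borel--Cantelli summability in $\ell$), and it is precisely hypothesis \eqref{third-sum} that then keeps $\sum_{m>r}\eta_{r,m}$, hence the telescoped product, bounded uniformly in $r$. The secondary technicality is that the finitely many generations where concentration fails cannot be handled by the deterministic bound $q_\ell[\mathbf i_r]\le(\delta_r/\delta_\ell)^d$ — which is off by the unbounded factor $\prod_{m=r+1}^\ell N_m^{d\epsilon_m}$ — and must instead be absorbed into $C_\omega$ via the restart described above.
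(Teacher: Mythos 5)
Your proof is correct and follows essentially the same route as the paper's: condition on $\mathcal F_{\ell-1}$ to realize $q_\ell[\mathbf i_r]$ as a binomial with conditional mean $\overline q_\ell[\mathbf i_r]$, apply a Chernoff/Bernstein-type concentration bound, union-bound over $\mathbb I(r,d)$, invoke Borel--Cantelli, and telescope the resulting multiplicative error factors using \eqref{third-sum}. The only substantive difference is that you prove just the one-sided upper concentration that \eqref{ql-est} requires, whereas the paper establishes the two-sided bound \eqref{q-difference} because the lower tail is reused later (Lemma \ref{descendant-count-lemma-2} and Corollary \ref{q-difference-corollary}) for the lower ball estimate.
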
 
\noindent The proof of the lemma appears in Section \ref{ql-bounds-section}. 
\subsection{The random Cantor measure $\mu$}
Having constructed the Cantor-like set $E$, we now describe the measure $\mu$ on $E$ that will eventually realize the conclusions of Theorem \ref{mainthm-example} and Corollary \ref{maincor-example}. 
\begin{proposition} \label{random-measure-existence-prop} 
Suppose that the construction parameters $N_k$ and $\epsilon_k$ obey, in addition to \eqref{nonemptyE-condition}, \eqref{another-sum} and \eqref{third-sum}, the following conditions: 
\begin{equation} 
\lim_{k \rightarrow \infty} \bigl| \log \delta_k \bigr| R_k^{\frac{1}{2}} \sum_{m=1}^{\infty} (k+m) R_{k+m}^{-\frac{1}{2}} = 0, \label{msum}
\end{equation}  
where $\{R_k : k \geq 1\}$ is the sequence of deterministic constants defined in \eqref{PbarRk}. Then, for $\mathbb P^{\ast}$-almost every $\omega \in \Omega$, the convergence condition \eqref{weak-star-limit-condition} in Lemma  \ref{weak-star-limit-lemma} holds. Hence there exists a probability measure  $\mu$ supported on $E$ such that 
\[ \mu_k = \frac{1_{E_k}}{|E_k|} \longrightarrow \mu \text{ as $k \rightarrow \infty$, in the weak-$\ast$ topology.} \]  
\end{proposition}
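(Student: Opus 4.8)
The plan is to verify the uniformity hypothesis \eqref{weak-star-limit-condition} of Lemma~\ref{weak-star-limit-lemma}; once that is established, the existence of the weak-$\ast$ limit $\mu$, the fact that it is a probability measure, and that it is supported on $E$ all follow directly from that lemma. First I would rephrase the two integrals in \eqref{weak-star-limit-condition} combinatorially. For a basic cube $Q(\mathbf i_k)$ (one with $X_k(\mathbf i_k)=1$) and any $k'\ge k$, the nesting of the cubes together with \eqref{def-muk} gives $\int_{Q_k(\mathbf i_k)}\mu_{k'}=q_{k'}[\mathbf i_k]/P_{k'}$, where $q_{k'}[\mathbf i_k]$ is the number of generation-$k'$ basic cubes contained in $Q(\mathbf i_k)$ as in \eqref{def-ql}, while $\int_{Q_k(\mathbf i_k)}\mu_k=1/P_k$. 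Thus it suffices to prove that
\[ \mathtt S_k:=\sup_{k'\ge k}\ \sum_{\mathbf i_k:\,X_k(\mathbf i_k)=1}\Bigl|\frac{q_{k'}[\mathbf i_k]}{P_{k'}}-\frac{1}{P_k}\Bigr|\longrightarrow 0\qquad\text{as }k\to\infty, \]
$\mathbb P^{\ast}$-almost surely.

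Next I would carry out a purely algebraic reduction that isolates the probabilistic content. Fix $k'\ge k$, write $\overline q:=R_{k'}/R_k=\prod_{m=k+1}^{k'}N_m^{d(1-\epsilon_m)}$ for the ``ideal'' descendant count, and set $q_{k'}[\mathbf i_k]=\overline q\,(1+\eta_{\mathbf i_k})$. Since $P_{k'}=\sum_{\mathbf j_k:\,X_k(\mathbf j_k)=1}q_{k'}[\mathbf j_k]=\overline q\,\bigl(P_k+\sum_{\mathbf j_k}\eta_{\mathbf j_k}\bigr)$, a brief computation with $\bar\eta:=P_k^{-1}\sum_{\mathbf j_k}\eta_{\mathbf j_k}$ gives
\[ \frac{q_{k'}[\mathbf i_k]}{P_{k'}}-\frac{1}{P_k}=\frac{1}{P_k}\cdot\frac{\eta_{\mathbf i_k}-\bar\eta}{1+\bar\eta}. \]
Consequently, if $\eta_k^{\ast}:=\sup_{k'\ge k}\sup_{\mathbf i_k:\,X_k(\mathbf i_k)=1}|\eta_{\mathbf i_k}|<\tfrac12$ (suppressing the dependence of $\eta_{\mathbf i_k}$ on $k'$), then $|\bar\eta|\le\eta_k^{\ast}$ and $\mathtt S_k\le 2\eta_k^{\ast}/(1-\eta_k^{\ast})$. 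The point is that this manipulation uses only the \emph{relative} fluctuations $\eta_{\mathbf i_k}$ of the descendant counts; it does not require $P_k/R_k$ to converge to $1$, which is convenient since \eqref{PkRk} only guarantees that this ratio stays bounded. The problem is thereby reduced to showing $\eta_k^{\ast}\to 0$ $\mathbb P^{\ast}$-almost surely.

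For this I would adapt the large-deviation scheme already used for Lemmas~\ref{PkRk-lemma} and~\ref{descendant-count-lemma}, relying on the probabilistic tools of Section~\ref{Appendix-section}. The structural observation is that, conditionally on the $\sigma$-algebra $\mathcal F_{m-1}$ generated by the construction through generation $m-1$, the count $q_m[\mathbf i_k]$ is a sum of $q_{m-1}[\mathbf i_k]\,N_m^d$ independent Bernoulli$(p_m)$ variables, so that $\mathbb E[\,q_m[\mathbf i_k]\mid\mathcal F_{m-1}\,]=q_{m-1}[\mathbf i_k]\,N_m^{d(1-\epsilon_m)}$, in exact parallel with $\overline P_m=N_m^{d(1-\epsilon_m)}P_{m-1}$ from \eqref{PbarRk}. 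A conditional Bernstein-type inequality then bounds the single-step relative fluctuation of $q_m[\mathbf i_k]$ by a constant multiple of $\bigl((\text{log-factor})\,(R_m/R_k)^{-1}\bigr)^{1/2}$, and telescoping these bounds from $m=k+1$ to $k'$ controls the total relative error $\eta_{\mathbf i_k}$ by $C\,R_k^{1/2}\sum_{m>k}(\text{log-factor}_m)\,R_m^{-1/2}$. A union bound over the $\asymp R_k$ generation-$k$ basic cubes $\mathbf i_k$, over the intermediate scales, and over $k'\ge k$ (to pass to the supremum) turns each ``log-factor'' into a power of $|\log\delta_{k'}|$ together with polynomial factors of the shape $(k+m)$; the resulting exceptional probability is summable in $k$, so Borel--Cantelli yields a $\mathbb P^{\ast}$-a.s.\ bound on $\eta_k^{\ast}$ by a constant multiple of $|\log\delta_k|\,R_k^{1/2}\sum_{m=1}^{\infty}(k+m)\,R_{k+m}^{-1/2}$, which tends to $0$ precisely by hypothesis \eqref{msum}.

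The hard part will be the bookkeeping in this final step rather than any individual estimate: one must produce a \emph{single} high-probability event that is simultaneously good for all $k'\ge k$ and all generation-$k$ basic cubes, and must track the accumulated logarithmic and polynomial losses from the nested union bounds carefully enough that the condition \eqref{msum} is exactly what closes the argument. The conditional large-deviation input and the telescoping itself are routine given the machinery developed for Lemmas~\ref{PkRk-lemma} and~\ref{descendant-count-lemma}. Once $\eta_k^{\ast}\to 0$ $\mathbb P^{\ast}$-a.s.\ is in hand, the algebraic reduction gives $\mathtt S_k\to 0$ $\mathbb P^{\ast}$-a.s., which is \eqref{weak-star-limit-condition}, and Lemma~\ref{weak-star-limit-lemma} then completes the proof.
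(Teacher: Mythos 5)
Your reduction is correct and takes a genuinely different route from the paper's. The paper proves \eqref{weak-star-limit-condition} by telescoping $\mu_{k'}-\mu_k=\sum_{m=0}^{k'-k-1}\varrho_{k+m}$ with $\varrho_j=\mu_{j+1}-\mu_j$, applying the triangle inequality term by term, and then splitting each $\int_{Q(\mathbf i_k)}\varrho_{k+m}$ into a piece $\mathfrak S_1$ coming from the fluctuation of $P_{k+m+1}$ around $\overline P_{k+m+1}$ and a piece $\mathfrak S_2$ coming from the fluctuation of $q_{k+m+1}[\mathbf i_k]$ around $\overline q_{k+m+1}[\mathbf i_k]$; these are estimated by directly invoking the already-established Lemma~\ref{PkRk-lemma}, Lemma~\ref{descendant-count-lemma}, and Corollary~\ref{q-difference-corollary}, with \eqref{msum} closing the resulting $m$-sum. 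You instead derive the exact identity $\frac{q_{k'}[\mathbf i_k]}{P_{k'}}-\frac{1}{P_k}=\frac{1}{P_k}\cdot\frac{\eta_{\mathbf i_k}-\bar\eta}{1+\bar\eta}$ (whose algebra I have checked and which is correct, using $P_{k'}=\sum_{\mathbf j_k}q_{k'}[\mathbf j_k]$) and reduce the entire claim to $\eta_k^{\ast}\to 0$ a.s., then propose to prove the latter with a fresh Bernstein/union-bound/Borel--Cantelli pass. Your route is algebraically tighter — it avoids the extra loss from the triangle inequality applied to the telescoping sum — but it requires re-running a version of the probabilistic machinery rather than just citing the lemmas: in particular, you need a \emph{two-sided} bound $q_{k'}[\mathbf i_k]\asymp R_{k'}/R_k$ uniformly over $k'\ge k$ and over all generation-$k$ basic cubes, which is not quite what Lemma~\ref{descendant-count-lemma} (upper bound) or Lemma~\ref{descendant-count-lemma-2} (lower bound only for cubes meeting a fixed $x\in E$) give off the shelf; and you must also dispose of the possibility that a generation-$k$ basic cube dies out at a later stage, which makes some $\eta_{\mathbf i_k}=-1$. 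Both issues are handled by the same Bernstein--plus--Borel--Cantelli framework (and a.s. only finitely many basic cubes ever die out, so the second is benign for large $k$), so the plan is sound; the paper's route is simply more modular in that it offloads all the probability onto the previously proved lemmas and corollary, at the cost of a slightly longer deterministic computation.
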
 
\noindent The proof has been given in Section \ref{limiting-measure-section}. 
\subsection{Choice of construction parameters}  \label{parameters-choice-section}
So far, we have not specified values of $N_k$ and $\epsilon_k$ that are used in the random construction of our Cantor sets. We do so now. Even though a vast majority of our results will continue to hold for very general choices of large $N_k$ and small $\epsilon_k$ obeying the summability conditions \eqref{nonemptyE-condition}, \eqref{another-sum}, \eqref{third-sum} and \eqref{msum}, we set down two specific choices of $(N_k, \epsilon_k)$-pairs that will be used as reference points for the rest of the analysis. They are 
\begin{align}
N_k &:= N^k, \quad \epsilon_k = \frac{\gamma}{k}, \text{ and } \label{dim1} \\ 
N_k &:= N^k, \quad \epsilon_k = \epsilon. \label{dim-epsilon}  
\end{align} 
In \eqref{dim1}, $N \geq 1$ is a fixed large integer and $0 < \gamma < 1$ is a small constant such that $N^{\gamma}$ is large. In \eqref{dim-epsilon}, $N$ is a fixed large integer and $0 < \epsilon < 1$ is an arbitrary constant independent of $N$. For the convenience of the reader, we have included the verification of the summability criteria in the appendix Section \ref{appendix-summability}. 
%We leave the reader to verify that both \eqref{nonemptyE-condition} and \eqref{another-sum} hold for these choices of $N_k$ and $\epsilon_k$, so that $\mathbb P^{\ast}$ is well-defined according to \eqref{P-star}.    
\vskip0.1in
\noindent {\bf{Remarks: }} A few comments about the choices of parameters \eqref{dim1} and \eqref{dim-epsilon} are in order. 
 \begin{enumerate}[1.]
 \item Both of these choices obey the hypotheses of Lemmas \ref{nonemptyE-lemma}, Lemma \ref{PkRk-lemma} and Proposition \ref{random-measure-existence-prop}. Thus, on one hand, Lemma \ref{nonemptyE-lemma} ensures that the measure space $(\Omega, \mathcal B, \mathbb P^{\ast})$ is well-defined for these choices. On the other, the crucial almost sure estimates on $P_k$ provided by Lemma \ref{PkRk-lemma} apply to the sets $E = E(\omega)$ constructed using these parameters. Almost every such set $E$ supports a natural measure $\mu$, as dictated by Proposition \ref{random-measure-existence-prop}. In Lemma \ref{dim-count-lemma} below, we will see that $N_k$ and $\epsilon_k$ also dictate the dimensionality of $E$. In the remainder of this paper, we will show that with $\mathbb P^{\ast}$-probability 1, the measure $\mu$ obeys (essentially optimum) ball conditions \eqref{ball-condition-upper-bound} and \eqref{ball-condition-lower-bound} given its dimension. These in turn will lead to the sharp eigenfunction restriction estimates on $E$ of the form \eqref{generic-sharp}.
 \vskip0.1in
 \item In view of \eqref{dim1} and \eqref{dim-epsilon}, it is natural to ask whether simpler choices of $N_k$ and $\epsilon_k$ might lead to the same conclusions; in particular, whether the seemingly natural choice, $N_k = N$ for all $k$, will work. The hypotheses of Lemmas \ref{nonemptyE-lemma}, \ref{PkRk-lemma} and Proposition \ref{random-measure-existence-prop} fail for this choice. At the moment, we do not know whether an almost sure statement analogous to Theorem \ref{mainthm-example} holds for a random construction involving this choice of parameters. However, as a point of interest, we refer the reader to \cite[Section 6]{LP09}, where the authors construct a Cantor-type set with $N_k = N$ for all $k$, that obeys the optimal ball condition \eqref{mu-ball-condition}, with $\alpha = \text{dim}(E)$ and $\Psi \equiv 1$. This construction, while randomized, is different from ours; for every $\alpha \in (0, d)$, it yields a {\em{single good set }} $E$ of dimension $\alpha$, not a large family as above.  \label{remark-losses} 
 \end{enumerate} 
\subsection{Almost sure Hausdorff dimension} \label{H-dim-section} 
The relevance of the choices of $\epsilon_k$ in \eqref{dim1} and \eqref{dim-epsilon} is further clarified in the following lemma. 
\begin{lemma} \label{dim-count-lemma}
For the random construction described in Section \ref{random-cantor-section}, and for $\mathbb P^{\ast}$-almost every $\omega \in \Omega$, the corresponding set $E = E(\omega)$ obeys the dimensional bound  
 \begin{equation} \label{dim-count}
 \text{dim}_{\mathbb H}(E) = \begin{cases} d &\text{for $N_k$ and $\epsilon_k$ as in \eqref{dim1}}, \\ d(1-\epsilon) &\text{for $N_k$ and $\epsilon_k$ as in \eqref{dim-epsilon}}. \end{cases} 
 \end{equation}  
\end{lemma}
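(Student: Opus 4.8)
The plan is to sandwich $\dim_{\mathbb H}(E)$ between the two bounds supplied by Lemma~\ref{l:dimension} and to show that, $\mathbb P^{\ast}$-almost surely, both bounds converge to the advertised value. The upper bound \eqref{Hdim-upper-bound} is unconditional; the lower bound \eqref{Hdim-lower-bound} requires the weak-$\ast$ limit $\mu = \lim_k \mu_k$ to exist as a probability measure, which is guaranteed almost surely by Proposition~\ref{random-measure-existence-prop}, since both parameter choices \eqref{dim1} and \eqref{dim-epsilon} satisfy the summability hypotheses \eqref{nonemptyE-condition}, \eqref{another-sum}, \eqref{third-sum}, \eqref{msum} (verified in the appendix). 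The only remaining input is the almost sure two-sided comparison $C_2^{-1} R_k \le P_k \le C_2 R_k$ from Lemma~\ref{PkRk-lemma}, which lets us replace the random quantity $P_k$ by the deterministic $R_k$ in both expressions.

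\textbf{Reduction to an explicit limit.} First I would substitute the estimate \eqref{PkRk} into \eqref{Hdim-upper-bound} and \eqref{Hdim-lower-bound}. Because $-\log\delta_k = \log M_k = \sum_{j=1}^k \log N_j \to \infty$ and $-\log\delta_{k-1} = \log M_{k-1} \to \infty$, the bounded error $\pm \log C_2$ introduced by \eqref{PkRk} is washed out in the $\liminf$, and we are left with
\[ \liminf_{k\to\infty} \frac{\log R_k - d\log N_k}{\log M_{k-1}} \;\le\; \dim_{\mathbb H}(E) \;\le\; \liminf_{k\to\infty} \frac{\log R_k}{\log M_k}. \]
Here $\log R_k = d\sum_{j=1}^k (1-\epsilon_j)\log N_j$ and $\log M_k = \sum_{j=1}^k \log N_j$, by the definitions \eqref{def-deltak} and \eqref{PbarRk}. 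It then remains to evaluate these two $\liminf$'s in each of the two parameter regimes.

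\textbf{The two regimes.} For \eqref{dim-epsilon}, where $\epsilon_j \equiv \epsilon$, one has exactly $\log R_k = d(1-\epsilon)\log M_k$, so the upper bound equals $d(1-\epsilon)$ on the nose; and since $N_k = N^k$ gives $\log N_k = k\log N$ while $\log M_{k-1} = \tfrac{(k-1)k}{2}\log N$, the correction $d\log N_k$ is of lower order, so the lower bound also tends to $d(1-\epsilon)$. For \eqref{dim1}, where $N_k = N^k$ and $\epsilon_k = \gamma/k$, one computes $\log M_k = \tfrac{k(k+1)}{2}\log N$ and $\log R_k = d\bigl(\tfrac{k(k+1)}{2} - \gamma k\bigr)\log N$, whence $\log R_k/\log M_k = d\bigl(1 - \tfrac{2\gamma}{k+1}\bigr) \to d$; the lower bound is treated the same way (the term $d\log N_k = d k\log N$ being again of lower order than $\log M_{k-1}$) and also tends to $d$. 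Combining the two sandwich inequalities yields \eqref{dim-count}.

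\textbf{Main obstacle.} There is no deep obstacle here: the probabilistic and measure-theoretic heavy lifting has already been done in Lemmas~\ref{l:dimension} and~\ref{PkRk-lemma} and Proposition~\ref{random-measure-existence-prop}, and what remains is careful bookkeeping of the two limits. The one point that genuinely uses the structure of the parameter choices — rather than mere summability — is that, in the lower bound, the correction term $d\log N_k$ must be negligible compared with $\log M_{k-1}$; this holds precisely because $N_k = N^k$ grows only geometrically in the index $k$, so that $\log N_k = o(\log M_{k-1})$. A construction with much faster growth of $N_k$ would require revisiting this step.
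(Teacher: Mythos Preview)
Your proposal is correct and follows essentially the same approach as the paper: both invoke Lemma~\ref{l:dimension} for the sandwich bounds, use Lemma~\ref{PkRk-lemma} to replace $P_k$ by $R_k$, and then compute the resulting deterministic limits explicitly for each parameter regime. You are slightly more explicit than the paper in citing Proposition~\ref{random-measure-existence-prop} to justify the weak-$\ast$ convergence hypothesis needed for the lower bound \eqref{Hdim-lower-bound}, but otherwise the arguments coincide.
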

\begin{proof} 
We appeal to Lemma \ref{l:dimension}, combined with the deterministic estimates on $P_k$ obtained in Lemma \ref{PkRk-lemma}. Note that $M_k = \delta_k^{-1} = N^{k(k+1)/2}$ for both choices \eqref{dim1} and \eqref{dim-epsilon}. In view of \eqref{PbarRk} and \eqref{PkRk}, the quantity $P_k$ is $\mathbb P^{\ast}$-almost surely bounded above and below by constant multiples of 
 \begin{equation} \label{def-Rk} 
 R_k = \prod_{j=1}^{k} N_j^{d(1-\epsilon_j)} = \left\{ \begin{aligned} &\prod_{j=1}^{k} N^{dj(1 - \gamma/j)} = N^{dk(k+1)/2 - d \gamma k} &\text{ for \eqref{dim1}}, \\ &\prod_{j=1}^{k} N^{dj(1 - \epsilon)} = N^{d(1 - \epsilon)k(k+1)/2} &\text{ for \eqref{dim-epsilon}}. \end{aligned} \right\}
 \end{equation} 
 With these choices of $\delta_k$ and $R_k$, it is now easy to check that \[ \liminf_{k \rightarrow \infty} \frac{\log(P_k/N_k^d)}{- \log(\delta_{k-1})} = \liminf_{k \rightarrow \infty} \frac{\log(P_k)}{-\log(\delta_k)} = \lim_{k \rightarrow \infty} \frac{\log(R_k)}{-\log(\delta_k)},  \]
 and that the value of the limit is precisely the quantity in the right hand side of \eqref{dim-count} in the two cases.  The desired conclusion now follows from Lemma \ref{l:dimension}.  
\end{proof}  
\noindent {\bf{Remarks: }} 
\begin{enumerate}[1.]
\item Lemma \ref{dim-count-lemma} says that $d\epsilon$ should be viewed as a marker of ``codimension'' of the set $E$, which therefore is independent of $\gamma$ (for \eqref{dim1}) and $N$ (for both \eqref{dim1} and \eqref{dim-epsilon}). We therefore fix $N$ and $\gamma$ as absolute constants that will not change in the sequel; for instance, $N = 10^{6d}$ and $\gamma = 1/3$ will suffice. The quantity $\epsilon$ will vary. For notational consistency, we will henceforth set $\epsilon = 0$ for the case given by \eqref{dim1}. 
\vskip0.1in
\item In order for Lemma \ref{l:dimension}, and hence Lemma \ref{dim-count}, to be applicable, the parameters $N_k$ cannot grow too fast. For instance, the upper bound in \eqref{Hdim-upper-bound} 
would not match the lower bound in \eqref{Hdim-lower-bound} for a choice of $N_k = N^{N^k}$.  
\vskip0.1in
\item Lemma \ref{dim-count-lemma} also highlights the truly probabilistic nature of the random construction. A binary string $\omega \in \Omega$ that results in $P_k = 1$ for all $k$ corresponds to a set $E$ that is a singleton, and hence of Hausdorff dimension zero. The same statement is true for strings $\omega$ that generate a countable set $E$. This shows that the exceptional set in Lemma \ref{dim-count-lemma} is nonempty, and moreover, of infinite cardinality. 
\end{enumerate}

\section{Ball conditions for random measures} \label{ball-measures}
\noindent In Sections \ref{s:randomcantor} and \ref{random-cantor-section}, we have described a measure space $(\Omega, \mathcal B, \mathbb P^{\ast})$, almost every element of which generates a Cantor set $E$ and an accompanying measure $\mu$ obeying certain properties. We are now ready to use these properties to establish the conclusions of Theorem \ref{mainthm-example}.    
\begin{proof}[{\bf{Proof of Theorem \ref{mainthm-example}}}]
Let the parameters $N_k$ and $\epsilon_k$ be chosen as in Section \ref{parameters-choice-section}. Then the dimensionality statements in part \eqref{parta} of the theorem follows from Lemma \ref{dim-count-lemma}. When $\epsilon = 0$, the remark following Lemma \ref{PkRk-lemma} on page \pageref{Lebesgue-null-remark} ensures that we still have the relation $|E| = \lim_{k \rightarrow \infty} |E_k| = \lim_{k \rightarrow \infty} P_k \delta_k^d = 0$, which proves that $\mu$ is singular with respect to Lebesgue.
\vskip0.1in   
\noindent We proceed to \eqref{mainthm-mainest}. Fix any $v_0 \in E$ and any $0<r<1$. According to Proposition \ref{random-measure-existence-prop}, $\mu$ is by definition the weak-star limit of $\mu_k$. Therefore, \[ \mu \bigl[ B(v_0; r) \bigr] = \lim_{k \rightarrow \infty} \mu_k \bigl[ B(v_0; r)\bigr]. \]
Thus, in order to establish \eqref{ball-condition-upper-bound}, it suffices to prove a similar inequality with $\mu_k$ replacing $\mu$, for all $k$ sufficiently large. It follows from \eqref{def-muk} that 
\begin{align}
\mu_k \bigl[ B(v_0; r)\bigr] &= \frac{1}{P_k \delta_k^d} \sum_{\mathbf i_k} X_k(\mathbf i_k) \int_{B(v_0;r)} 1_{Q(\mathbf i_k)}(x) \, dx \nonumber \\ 
&= \frac{1}{P_k \delta_k^d} \sum_{\mathbf i_k} X_k(\mathbf i_k) \bigl| B(v_0;r) \cap Q(\mathbf i_k) \bigr|.  \label{ball-sum}
\end{align}  
Given $r \in (0,1)$, let us choose the unique scale $\ell^{\ast}$ such that 
\begin{equation} 
\delta_{\ell^{\ast}+1} < r \leq \delta_{\ell^{\ast}}. \label{def-l-star} 
\end{equation}  
The relevance of $\ell^{\ast}$ is that $B(v_0;r)$ can be covered by cubes of the form $\{Q(\mathbf i_{\ell^{\ast}}): \mathbf i_{\ell^{\ast}} \in \mathbb I^{\ast}[v_0;r]\}$, where $\mathbb I^{\ast}[v_0;r] \subseteq \mathbb I(\ell, d)$, and the cardinality of $\mathbb I^{\ast}[v_0;r]$ is at most a constant $C_d$ depending only on $d$.  Thus the cubes $Q(\mathbf i_k)$ that contribute to the sum in \eqref{ball-sum} are those descended from $Q(\mathbf i_{\ell^{\ast}})$ for some $\mathbf i_{\ell^{\ast}} \in \mathbb I^{\ast}[v_0;r]$. In other words, the sum ranges over multi-indices $\mathbf i_k \in \mathbb I(k,d)$ whose projection onto the first $\ell^{\ast}$ coordinates yields  some $\mathbf i_{\ell^{\ast}} \in \mathbb I^{\ast}[v_0;r]$. This leads to the following estimate: 
\begin{align} 
\mu_k \bigl[ B(v_0; r)\bigr] &\leq \frac{1}{P_k \delta_k^d} \sum_{\mathbf i_{\ell^{\ast}} \in \mathbb I^{\ast}[v_0;r]} \sum' X_{k}(\mathbf i_k) \delta_k^d \nonumber \\ &= \frac{1}{P_k} \sum_{\mathbf i_{\ell^{\ast}} \in \mathbb I^{\ast}[v_0;r]} q_k[\mathbf i_{\ell^{\ast}}] \nonumber  \\ 
&\leq \frac{C_d}{P_k} \sup \Bigl\{ q_k[\mathbf i_{\ell^{\ast}}] : \mathbf i_{\ell^{\ast}} \in \mathbb I(\ell^{\ast}, d)\Bigr\}  \nonumber \\ 
&\leq \frac{C}{P_k} \left( \frac{\delta_{\ell^{\ast}}}{\delta_k}\right)^d \prod_{m=\ell^{\ast}+1}^{k} p_m \leq \frac{C}{R_k} \left( \frac{\delta_{\ell^{\ast}}}{\delta_k}\right)^d \prod_{m=\ell^{\ast}+1}^{k} p_m. \label{mu-upper} 
\end{align} 
The sum $\overset{'}{\sum}$ in the first displayed line above ranges over all multi-indices $\mathbf i_k$ which project onto some $\mathbf i_{\ell^{\ast}} \in \mathbb I^{\ast}[v_0;r]$ in the first $\ell^{\ast}$ coordinates. Thus the quantity $q_{k}[\mathbf i_{\ell^{\ast}}]$ that appears in the second line is the same as the one defined in \eqref{def-ql} in Section \ref{descendant-section}, namely the number of basic cubes of the $k$th generation descended from $Q(\mathbf i_{\ell^{\ast}})$.  Lemma \ref{descendant-count-lemma} then leads to the first upper bound in \eqref{mu-upper}. The second inequality in \eqref{mu-upper} is a consequence of \eqref{PkRk}. A simplification of this last expression using \eqref{def-pk}, \eqref{dim1} and \eqref{dim-epsilon} yields 
\[ \frac{1}{R_k} \left( \frac{\delta_{\ell^{\ast}}}{\delta_k}\right)^d \prod_{m=\ell^{\ast}+1}^{k} p_m = \begin{cases} \delta_{\ell^{\ast}}^{d(1-\epsilon)} &\text{ if } \epsilon > 0, \\ \delta_{\ell^{\ast}}^{d} N_{\ell^{\ast}}^{d \gamma} &\text{ if } \epsilon = 0. \end{cases} \] 
In view of \eqref{def-l-star}, both expressions above are dominated by $r^{d(1-\epsilon)} \Phi_{C_1}(1/r)$ for a deterministic constant $C_1$, completing the proof of \eqref{ball-condition-upper-bound}.  
\vskip0.1in
\noindent We turn to proving part \eqref{mainthm-example-partc}, namely the inequality \eqref{ball-condition-lower-bound}. For any $v_0 \in \mathbb R^d$, the ball $B(v_0; r)$ contains a cube $Q = Q(v_0; r)$ centred at $v_0$ and of sidelength $2r/\sqrt{d}$. It follows from \eqref{def-l-star} that there exists a multi-index $\widehat{\mathbf i}_{\ell^{\ast} + 2} \in \mathbb I(\ell^{\ast} + 2, d)$ such that  
\[ v_0 \in \widehat{Q} = Q(\widehat{\mathbf i}_{\ell^{\ast} + 2}) \subset Q, \] 
provided the parameter $N$ in \eqref{dim1} and \eqref{dim-epsilon} is chosen large enough relative to $d$. The relation \eqref{ball-sum} then leads to the following lower bound: for $k \geq \ell^{\ast} + 3$, 
\begin{equation}
\mu_k[B(v_0; r)] \geq \frac{1}{P_k \delta_k^d} \hat{\sum_{\mathbf i_k}} X_k(\mathbf i_k) \delta_k^d = \frac{1}{P_k} q_k[\widehat{\mathbf i}_{\ell^{\ast}+2}],  \label{ball-lower} 
\end{equation} 
where $\hat{\sum}$ indicates summation over all multi-indices $\mathbf i_k$ whose projection onto the first $\ell^{\ast} + 2$ coordinates yields $\widehat{\mathbf i}_{\ell^{\ast} + 2}$. In other words, we only restrict attention to cubes $Q(\mathbf i_k)$ that are descended from $\widehat{Q}$. If additionally we assume that $v_0 \in E$, then  the right hand side of \eqref{ball-lower} is guaranteed to be nonzero; in fact, by Lemmas \ref{PkRk-lemma} and \ref{descendant-count-lemma-2} we can estimate it from below by 
\[ \mu_k[B(v_0; r)] \geq  \frac{1}{P_k} q_k[\widehat{\mathbf i}_{\ell^{\ast}+2}] \geq \frac{1}{C R_k} \left( \frac{\delta_{\ell^{\ast} +2}}{\delta_k}\right)^d \prod_{m = \ell^{\ast} + 3}^{k} p_m.   \]
We leave the reader to verify, along the same lines of the proof of \eqref{mainthm-mainest}, that this last quantity is bounded from below by $r^{d(1-\epsilon)}/\Phi_{C_1}(1/r)$, completing the proof.  
\end{proof} 
\begin{proof} [{\bf{Proof of Corollary \ref{maincor-example}}}]
Part \eqref{mainthm-mainest} of Theorem \ref{mainthm-example} ensures that $\mu = \mu(\omega)$ obeys \eqref{mu-ball-condition} with $\alpha = d(1 - \epsilon)$ and $\Psi \equiv \Phi_{C_1}$. It follows from Theorem \ref{mainthm-restriction-ball} that \eqref{eigenfunction-growth-nu} holds with this choice of $\alpha$ and $\Psi$. Recalling the definitions of $p_0$, $p_{\ast}$, $\theta_p$ and $\vartheta_p$ from \eqref{def-p0}, \eqref{our-exponent} and \eqref{our-exponent-2}, we observe that $p_0 = p_{\ast}$, $\vartheta_p = \theta_p$ and $\overline{\Psi}_p \leq \Phi_{R/p}$ for some large deterministic constant $R$.    
\vskip0.1in
\noindent Part \eqref{mainthm-example-partc} verifies the condition \eqref{AD-local} for the measure $\mu = \mu(\omega)$, with $\alpha = d(1 - \epsilon)$ and $\Xi = \Phi_{C_1}$. Thus the conclusion of Theorem \ref{maincor-sharpness} holds for this measure. We observe that $\Lambda_p \leq \Phi_R$ for large $R$,  and $\varkappa_p = \vartheta_p = \theta_p$ for $p \geq \max(2, p_{\ast}) = \max(2, p_0)$. 
\end{proof}

\section{Appendix: A generalized Young-type inequality} \label{young-section}
\noindent The following proposition is the generalized Young's inequality that is used in the proof of Proposition \ref{gen-Young-prop}. Special cases of this inequality appear in the functional-analytic literature as Schur's lemma; see for instance Theorem 6.18 in \cite{Fol}. However, we were unable to find a textbook version of the result that states and proves it in the level of generality that we need, and have included a proof here for completeness. 
\begin{proposition} \label{l:youngsineq}
Let $\tau$ be a positive Borel measure supported on a set $E \subseteq \mathbb R^d$ that is not necessarily translation-invariant.  Given a measurable function $K(\cdot, \cdot)$, consider the integral operator:
\begin{equation}
Tf(x) := \int K(x, y) f(y) \, d\tau(y).
\end{equation}
Then for any choice of exponents $1 \leq s, q, r \leq \infty$ satisfying \begin{align} 1 + \frac{1}{r} &= \frac{1}{s} + \frac{1}{q}, \label{pqr} \\ A_s := \sup_{x \in E} \left[ \int |K(x, y)|^s \, d\tau(y) \right]^{\frac{1}{s}} < \infty, \quad & B_s := \sup_{y \in E} \left[ \int |K(x, y)|^s \, d\tau(x) \right]^{\frac{1}{s}} < \infty,  \label{AsBs}\end{align} 
the following inequality holds: 
\[ ||Tf||_{L^r(\tau)} \leq A_s^{1 - \frac{s}{r}} B_s^{\frac{s}{r}} ||f||_{L^q(\tau)}.  \]
%the norm of $T$ as an operator from $L^{q}(\mathbb R^d, \mu)$ to $L^r(\mathbb R^d, \mu)$ is bounded by $\max \{ \sup_{x\in I}\|K(x - \cdot)\|_{(L^{\frac{p}{2}}, \mu)}^{\frac{1}{2}}, \sup_{y \in I} \| K(\cdot - y) \|_{(L^{\frac{p}{2}}, \mu)}^{\frac{1}{2}} \}$. 
\end{proposition}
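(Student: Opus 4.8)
The statement is the classical Schur-test-type generalization of Young's inequality, so the plan is to follow the standard three-exponent interpolation argument, being careful to avoid translation invariance anywhere. The key observation is that the hypothesis $1 + 1/r = 1/s + 1/q$ rearranges to
\[
\frac{1}{r} = \Bigl(1 - \frac{s}{r}\Bigr)\frac{1}{s} + \frac{s}{r}\cdot\frac{1}{q'} + \Bigl(1 - \frac{s}{q}\Bigr),
\]
or more usefully that the three ``pieces'' one gets from splitting $|K(x,y)f(y)|$ have exponents that sum correctly; concretely one writes
\[
|K(x,y)f(y)| = \bigl( |K(x,y)|^s |f(y)|^q \bigr)^{\frac{1}{r}} \cdot |K(x,y)|^{s\left(\frac{1}{s} - \frac{1}{r}\right)} \cdot |f(y)|^{q\left(\frac{1}{q} - \frac{1}{r}\right)},
\]
and checks using \eqref{pqr} that the three exponents $\tfrac1r$, and the ones attached to the remaining factors of $|K|$ and $|f|$, are the reciprocals of a conjugate triple $(r, \ast, \ast)$.

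\textbf{Main steps.} First I would dispose of the degenerate cases $s = \infty$ (then $r = q$ and the inequality is immediate from $|Tf(x)| \le A_\infty \int |f|\,d\tau$ followed by... actually $r=q$ forces a direct estimate) and $r = \infty$ (then $s' = q$, and the bound follows from a single application of H\"older's inequality in $y$), so that in the main case all exponents are finite and $1 \le s \le r < \infty$, $1 \le s \le q$. For the main case, apply H\"older's inequality in the variable $y$ with the three exponents $r$, $p_2$, $p_3$ determined so that $\tfrac1r + \tfrac1{p_2} + \tfrac1{p_3} = 1$ — here $p_2$ is conjugate to the exponent governing the leftover power of $|K|$ and $p_3$ conjugate to that of $|f|$, and \eqref{pqr} is exactly what makes $\tfrac1r + \tfrac1{p_2} + \tfrac1{p_3} = 1$ hold. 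This yields the pointwise bound
\[
|Tf(x)|^r \le \Bigl( \int |K(x,y)|^s |f(y)|^q \, d\tau(y) \Bigr) \cdot \Bigl( \int |K(x,y)|^s \, d\tau(y) \Bigr)^{\frac{r}{p_2}} \cdot \|f\|_{L^q(\tau)}^{\frac{rq}{p_3}},
\]
where the middle factor is $\le A_s^{\,sr/p_2}$ by \eqref{AsBs}. Then integrate in $x$ against $d\tau$, use Tonelli to swap the order of integration in the first factor, and bound $\int |K(x,y)|^s \, d\tau(x) \le B_s^s$ by \eqref{AsBs}; the remaining $\int |f(y)|^q \, d\tau(y)$ combines with the $\|f\|_{L^q}$ factor already present. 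Finally take $r$-th roots and bookkeep the exponents of $A_s$, $B_s$, $\|f\|_{L^q(\tau)}$ — they must come out to $1 - s/r$, $s/r$, and $1$ respectively, which is a direct consequence of the relation \eqref{pqr} together with $p_2, p_3$ being defined by it.

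\textbf{Expected obstacle.} There is no deep obstacle; the entire difficulty is bookkeeping, i.e.\ choosing $p_2$ and $p_3$ correctly and verifying that the exponents of $A_s$ and $B_s$ collapse to $1-s/r$ and $s/r$. The one genuinely delicate point is handling the boundary exponents ($s=1$, $q=1$, $r=\infty$, $s=\infty$) uniformly, since in those regimes the triple H\"older step degenerates; I would treat $s=\infty$ and $r=\infty$ as separate short arguments up front (as indicated above) and then run the three-exponent H\"older argument assuming $1 \le s \le r < \infty$ and $q < \infty$, noting that the cases $s=1$ or $q=1$ are harmless since they simply make one of $p_2, p_3$ equal to $\infty$, turning the corresponding H\"older factor into an $L^\infty$ bound, which is exactly $A_s^{0}$ or handled by the sup in \eqref{AsBs}. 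Measurability of $x \mapsto \int|K(x,y)|^s\,d\tau(y)$ and the applicability of Tonelli are standard given that $K$ is jointly measurable and $\tau$ is $\sigma$-finite on its support (which it is, being a finite Borel measure in our application).
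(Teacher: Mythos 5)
Your proposal is correct and follows essentially the same route as the paper: the same three‑factor splitting of $|K(x,y)f(y)|$, the same triple H\"older with exponents $\bigl(r,\;sr/(r-s),\;qr/(r-q)\bigr)$, and the same Tonelli step to convert the remaining $x$‑integral into a $B_s^s$ bound. Two small blemishes in the preamble are worth noting but do not affect the argument: the displayed identity for $1/r$ at the start is not correct (and you rightly abandon it for the actual decomposition), and the parenthetical claim that $s=\infty$ forces $r=q$ is wrong — the constraint $1+1/r = 1/q$ together with $1\le q\le\infty$ forces $q=1$ and $r=\infty$, after which the estimate is indeed immediate. Your explicit handling of the degenerate endpoints is if anything a bit more careful than the paper, which simply asks the reader to verify that the two auxiliary H\"older exponents land in $[1,\infty]$.
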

\begin{proof}
We adapt the same method of proof as the classical Young's inequality, where $\tau$ is the Lebesgue measure, with the modifications needed to deal with the lack of translation-invariance. We begin with a pointwise bound for the operator $T$. 
\begin{align*}
\left| Tf(x) \right| &\leq \int |K(x, y)| |f(y)| \, d\tau(y) \\ &\leq \int |K(x, y)|^{1 + \frac{s}{r} - \frac{s}{r}} |f(y)|^{1 + \frac{q}{r} - \frac{q}{r}} \, d\tau(y) \\
& = \int  \left( | K(x, y)|^{\frac{s}{r}} |f(y)|^{\frac{q}{r}} \right) |K(x, y)|^{\frac{r-s}{r}} |f(y)|^{\frac{r-q}{r}} \, d\tau(y) \\
& \leq  \mathfrak T_1(x) \times \mathfrak T_2(x) \times \mathfrak T_3, \text{ where } \\
\mathfrak T_1(x) &= || \left( | K(x, \cdot )|^{s} |f(\cdot)|^{q} \right)^{\frac{1}{r}} ||_{L^r(\tau)} = \Bigl(\int |K(x, y)|^s |f(y)|^q \, d\tau(y) \Bigr)^{\frac{1}{r}}, \\
\mathfrak T_2(x) &= \| |K(x, \cdot)|^{\frac{r-s}{r}} \|_{L^{\frac{sr}{r-s}}(\tau)} = \Bigl(\int |K(x, y)|^s d \tau(y) \Bigr)^{\frac{r-s}{sr}} \leq A_s^{\frac{r-s}{r}} \\
\mathfrak T_3 &=  \| |f|^{\frac{r-q}{r}} \|_{L^{\frac{qr}{r-q}}(\tau)} = ||f||_{L^q(\tau)}^{\frac{r-q}{r}}.
\end{align*}
The last line in the estimation of $Tf(x)$ above uses the generalized H\"older's inequality with the triple of exponents $(r, sr/(r-s), {qr}/(r-q))$, whose reciprocals add up to one, by \eqref{pqr}. We ask the reader to verify that the relation \eqref{pqr} ensures that each of the exponents $sr/(r-s)$ and $qr/(r-q)$ lie in $[1, \infty]$. Using the pointwise bound on $Tf$ obtained above, we proceed to compute its $L^r(\tau)$-norm:
\begin{align*}
\| Tf\|_{L^r(\tau)}^r &\leq \int \left[\mathfrak T_1(x) \times \mathfrak T_2(x) \times \mathfrak T_3 \right]^r \, d\tau(x) \\ &\leq \int  \Bigl(\int |K(x, y)|^{s} |f(y)|^{q} \, d\tau(y) \Bigr) A_s^{r-s} \times ||f||_{L^q(\tau)}^{r-q} \, d\tau(x) \\ 
%\left( \int | K(x-y) |^{\frac{p}{2}} \, dy \right)^{\frac{1}{p}} \\
%& \cdot  \left( \int |g(y)|^{p'} \, dy \right)^{\frac{p-p'}{pp'}} \Bigg)^p \, dx \\
& \leq A_s^{r-s}  ||f||_{L^q(\tau)}^{r-q}  \iint |K(x, y)|^{s} |f(y)|^{q} \, d\tau(y) \, d\tau(x) \\ 
& \leq A_s^{r-s}  ||f||_{L^q(\tau)}^{r-q}  \times (B_s^s ||f||_{L^q(\tau)}^q) \\ 
%\sup_{x \in I} \| K(x - \cdot ) \|_{(L^{\frac{p}{2}}, \mu)}^{\frac{p}{2}} \, \|g\|_{(L^{p'}, \mu)}^{p-p'} \, \left( \int \left(\int |K(x-y)|^{p/2} |g(y)|^{p'} \, dy \right) \, dx \right) \\
& \leq A_s^{r-s} B_s^{s} ||f||_{L^q(\tau)}^r, 
%\sup_{x \in I} \| K(x - \cdot ) \|_{(L^{\frac{p}{2}}, \mu)}^{\frac{p}{2}} \, \|g\|_{(L^{p'}, \mu)}^{p-p'} \|g\|_{(L^{p'}, \mu)}^{p'} \sup_{y \in I} \| K(\cdot - y) \|_{(L^{\frac{p}{2}}, \mu)}^{\frac{p}{2}},
\end{align*}
which completes our proof.
\end{proof}

\section{Appendix: Probabilistic tools} \label{Appendix-section}
\subsection{Large deviation inequalities} 
The following well-known concentration inequality plays a pivotal role in the proofs that occur in Sections \ref{random-cantor-section} and \ref{ball-measures}, in particular Lemmas \ref{PkRk-lemma}, \ref{descendant-count-lemma} and Proposition \ref{random-measure-existence-prop}. 
\begin{theorem} (Bernstein's inequality \cite[p.205]{Usp}, \cite[Section 2.8]{Ver}) \label{t:bernstein}
Let $Z_1, \dots, Z_m$ be independent random variables with \[|Z_r| \leq M, \quad \mathbb{E}Z_r= 0 \quad \text{ and } \quad \mathbb{E}|Z_r|^2 = {\sigma}_r^2. \]  Let $\sum {\sigma}_r^2 \leq \sigma^2$. Then for all $t > 0$,  
\begin{equation} \label{bernstein-inequality}
\mathbb{P} \Bigl( \bigl| \sum_1^m Z_r \bigr| \geq t \Bigr) \leq \exp\left(-\frac{\frac{1}{2}t^2}{\sigma^2 + \frac{1}{3}Mt}\right).
\end{equation}
\end{theorem}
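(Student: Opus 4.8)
The plan is to use the classical exponential moment (Chernoff) method, which is the standard route to all Bernstein-type concentration bounds. First I would fix $t > 0$ and a free parameter $h > 0$ to be optimized at the end, and apply Markov's inequality to the exponentiated sum: writing $S = \sum_{r=1}^m Z_r$, one has
\[
\mathbb{P}(S \geq t) = \mathbb{P}\bigl(e^{hS} \geq e^{ht}\bigr) \leq e^{-ht}\, \mathbb{E}\,e^{hS} = e^{-ht} \prod_{r=1}^m \mathbb{E}\,e^{hZ_r},
\]
where the product factorization uses independence of the $Z_r$. The same argument applied to $-S$ (whose summands $-Z_r$ satisfy the identical hypotheses) gives the matching bound for $\mathbb{P}(S \leq -t)$, and the two combine via a union bound to control $\mathbb{P}(|S| \geq t)$; I would absorb the resulting factor of $2$ into the estimate, or note that it can be dropped by the standard refinement.

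The heart of the matter is the single-variable moment generating function estimate: for a mean-zero random variable $Z$ with $|Z| \leq M$ and $\mathbb{E}|Z|^2 = \sigma_Z^2$, one shows
\[
\mathbb{E}\,e^{hZ} \leq \exp\!\left( \frac{\sigma_Z^2\, (e^{hM} - 1 - hM)}{M^2} \right) \leq \exp\!\left( \frac{\sigma_Z^2\, h^2/2}{1 - hM/3} \right) \quad \text{for } 0 < h < 3/M.
\]
The first inequality follows by expanding $e^{hZ}$ in a power series, using $\mathbb{E}Z = 0$ to kill the linear term, bounding $\mathbb{E}|Z|^k \leq M^{k-2}\sigma_Z^2$ for $k \geq 2$, summing the series, and then using $1 + x \leq e^x$. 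The second inequality is the elementary numerical bound $e^u - 1 - u \leq \frac{u^2/2}{1 - u/3}$ for $0 \leq u < 3$, proved by comparing Taylor coefficients ($\frac{1}{k!} \leq \frac{1}{2 \cdot 3^{k-2}}$ for $k \geq 2$). Taking the product over $r$ and using $\sum_r \sigma_r^2 \leq \sigma^2$ yields $\mathbb{E}\,e^{hS} \leq \exp\bigl( \frac{\sigma^2 h^2/2}{1 - hM/3} \bigr)$, so that
\[
\mathbb{P}(S \geq t) \leq \exp\!\left( -ht + \frac{\sigma^2 h^2 / 2}{1 - hM/3} \right).
\]

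Finally I would optimize in $h$. Rather than minimize the awkward expression exactly, I would make the near-optimal choice $h = t/(\sigma^2 + Mt/3)$, which lies in $(0, 3/M)$, and verify by direct substitution that the exponent is bounded above by $-\frac{t^2/2}{\sigma^2 + Mt/3}$; this is a short algebraic check using $1 - hM/3 = \sigma^2/(\sigma^2 + Mt/3)$. This gives exactly the claimed bound \eqref{bernstein-inequality} for the one-sided deviation, and the two-sided statement follows as above. I do not anticipate a genuine obstacle here — this is a textbook argument — but the step requiring the most care is the single-variable MGF bound, specifically getting the constant $3$ in the denominator right, since that constant propagates verbatim into the final inequality and into all the summability conditions (such as \eqref{nonemptyE-condition}, \eqref{another-sum}, \eqref{third-sum}) that invoke this theorem downstream.
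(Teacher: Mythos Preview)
Your proof is the standard Chernoff-method argument and is correct; this is precisely the approach one finds in the references the paper cites (Uspensky, Vershynin). Note, however, that the paper itself does not prove this theorem at all: it is quoted as a classical result with citations to \cite{Usp} and \cite{Ver}, and is used as a black box in the subsequent lemmas. So there is no in-paper proof to compare against --- your proposal simply supplies what the paper deliberately outsources to the literature. One minor remark: your argument naturally produces a factor of $2$ on the right-hand side for the two-sided bound, which the paper's stated form of \eqref{bernstein-inequality} omits; this discrepancy is harmless for every application in the paper (the constant is absorbed into the choice of $B$ in the proofs of Lemmas~\ref{PkRk-lemma} and~\ref{descendant-count-lemma}), but strictly speaking the one-sided version is what your computation delivers exactly.
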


%\begin{theorem} (Azuma's inequality \cite[p.95]{Alon-Spencer}, \cite[Chapter 11, p.92 ]{Molloy-Reed}, \cite{Wellner}) \label{t:azuma}
%Suppose that $\{U_{r} : r = 0, 1, 2, \dots \}$ is a martingale and $\{c_r: r \geq 0 \}$ is a sequence of positive numbers such that $|U_{r+1} - U_r | \leq c_r$ almost surely. Then for all integers $m \geq 1$ and all $t \in \R$, 
%\begin{equation} \label{azuma-inequality}
%\mathbb{P}\left( \left| U_m - U_0 \right| \geq t \right) \leq 2 \exp \Bigl[ - t^2/ \bigr(2 \sum_{r=1}^{m} c_r^2 \bigr)\Bigr]
%\end{equation} 
%\end{theorem}

\subsection{Bounds on $P_k$} \label{PkRk-proof} 
\begin{proof}[{\bf{Proof of Lemma \ref{PkRk-lemma}}}]
The inequality \eqref{PkRk} is a consequence of \eqref{PkPk-bar}. We start by proving this implication. Define the auxiliary quantity 
\[ \eta_k := \frac{P_k - \overline{P}_k}{\overline{P}_k}, \quad \text{ so that } \quad P_k = \overline{P}_k (1 + \eta_k). \]
An iteration involving \eqref{PbarRk} then gives 
\begin{align*}
P_k &= \overline{P}_k (1 + \eta_k) = N_k^{d(1-\epsilon_k)} (1 + \eta_k) P_{k-1}= \cdots \\ 
&= \prod_{j=1}^{k} \bigl[ N_j^{d(1 - \epsilon_j)} (1 + \eta_j) \bigr] = R_k \prod_{j=1}^{k} (1 + \eta_j). 
\end{align*} 
Thus, in order to prove \eqref{PkRk} it suffices to establish the $\mathbb P^{\ast}$-almost sure existence of a constant $C_2 = C_2(\omega) > 0$ such that 
\begin{equation} \label{eta-product}
C_2^{-1} \leq \prod_{j=1}^{k} (1 + \eta_j) \leq C_2 \quad \text{ for all } k \geq 1. 
\end{equation} 
This follows from two observations: the first is that for $\mathbb P^{\ast}$-almost every random set $E$, 
\begin{equation} \label{nonvanishing-eta}\eta_k \ne -1, \text{ since } P_k \ne 0 \text{ for all } k \geq 1, \end{equation} so the product in \eqref{eta-product} is strictly positive. The second point to note is that the sequence $\eta_k$ is absolutely summable. To see this, we estimate the sum as follows,  
\begin{align*} \sum_{k=1}^{\infty} |\eta_k| = \frac{|P_k - \overline{P}_k|}{\overline{P}_k} &\leq C_1 \sum_k\sqrt{\log (k+1)} \; \max\bigl(\overline{P}_k, \log (k+1) \bigr)^{\frac{1}{2}} \times (\overline{P}_k)^{-1}   \\ &\leq C_1 \Biggl[\sum_{k: \overline{P}_k \leq \log(k+1)} \frac{\log(k+1)}{\overline{P}_k} + \sum_{k: \overline{P}_k > \log(k+1)} \frac{\sqrt{\log(k+1)}}{\overline{P}_k^{1/2}} \Biggr]\\
&\leq C_1 \sum_k \frac{\log(k+1)}{\overline{P}_k^{1/2}} \\
&\leq C_1 \sum_k \log (k+1) \; \bigl[ N_k^{d(1- \epsilon_k)} P_{k-1}\bigr]^{-\frac{1}{2}} \\ &\leq C_1 \sum_k \log (k+1) \; N_k^{-d(1-\epsilon_k)/2}.  
\end{align*}    
The first inequality above follows from \eqref{PkPk-bar}. In the second step, we have rewritten the sum in two parts, depending on the relative sizes of $\overline{P}_k$ and $\log(k+1)$.  The penultimate inequality makes use of the defining identity of $\overline{P}_k$ in \eqref{PbarRk}. The last inequality is a consequence of the fact that $P_{k-1} \geq 1$ on the support of $\mathbb P^{\ast}$.  Our summability hypothesis \eqref{another-sum} therefore implies that $\mathbb P^{\ast}$-almost surely there exists a large integer $k_0 = k_0(\omega) > 0$ depending on $C_1$ such that 
\[ \frac{1}{2} \leq \prod_{j=k'}^{k} (1+ \eta_j) \leq 2 \quad \text{ for all } k \geq k' \geq k_0. \] Set $C_2 \geq 1$ to be any constant such that $C_2= C_2(\omega) \geq \max \left(2C_3, 1/(2C_4) \right)$, where \[ C_3 =\sup_{1 \leq k \leq k_0}  \prod_{j=1}^{k}(1 + \eta_j)  \quad \text{ and } \quad C_4 =\sup_{1 \leq k \leq k_0}  \prod_{j=1}^{k}(1 + \eta_j)^{-1}. \]    Both $C_3$ and $C_4$ are strictly positive, by \eqref{nonvanishing-eta}. Then \eqref{eta-product} holds with this $C_2$, proving \eqref{PkRk}.  
\vskip0.1in 
\noindent It remains to prove \eqref{PkPk-bar}. For a fixed large $\omega$-independent constant $B$ soon to be specified ($B = 5$ will suffice), we define the event 
\[ \mathtt S_k := \left\{ \omega \in \Omega: |P_k - \overline{P}_k| \leq t_k \right\},   \]
where  
\begin{align} t_k &:= B \sqrt{\log(k+1)}\max \Bigl(\overline{P}_k, \log(k+1) \Bigr)^{\frac{1}{2}} \label{def-t0}\\  
&= \begin{cases} 
B \log (k+1) &\text{ for } \overline{P}_k \leq \log (k+1), \\  B 
\sqrt{\log (k+1)} \; \overline{P}_k^{\frac{1}{2}} &\text{ for } \overline{P}_k > \log (k+1).  \nonumber
\end{cases} \end{align}
We aim to show that 
\begin{equation}  \label{BC1} \sum_{k=1}^{\infty} \mathbb P^{\ast}(\mathtt S_k^c) < \infty. \end{equation}  
Once \eqref{BC1} is established, the Borel-Cantelli Lemma \cite[p 53, Theorem 4.3]{Billingsley} implies that for $\mathbb P^{\ast}$-almost every $\omega$, there exists an integer $k_0(\omega) \geq 1$ such that the event $\mathtt S_k$ occurs for all $k \geq k_0(\omega)$. Since  we have 
\[ \sup_{k \leq k_0(\omega)} \frac{|P_k - \overline{P}_k|}{\sqrt{\overline{P}_k \log(k+1)}} \leq \frac{1}{\sqrt{\log 2}}\sup_{k \leq k_0(\omega)} |P_k - \overline{P}_k| \leq \frac{2\delta_{k_0}^{-d}}{\sqrt{\log 2}} \]
for every such $\omega$, the desired inequality \eqref{PkPk-bar} holds $\mathbb P^{\ast}$-almost surely by setting \[ C_1 = \max(B, 2\delta_{k_0}^{-d}/\sqrt{\log 2}). \]  
\vskip0.1in
\noindent We now turn our attention to proving \eqref{BC1}. The description of the measure space $(\Omega, \mathcal B, \mathbb P^{\ast})$ from Section \ref{measure-space-section} will be helpful in this regard. Using the notation from that section, let $\mathcal F_{k}$ denote the product $\sigma$-algebra of $\mathcal B_1, \cdots, \mathcal B_k$, which is a sub $\sigma$-algebra of $\mathcal B$. Then, using the theory of conditional probability \cite[Chapter 6, Section 34]{Billingsley}, $\mathbb P^{\ast}(\mathtt S_k^c)$ can be written as  
%generated $\mathbf Y_{1}, \cdots, \mathbf Y_k$, i.e., the $\sigma$-algebra generated by the Since 
\[ \mathbb P^{\ast}(\mathtt S_k^c) = \frac{\mathbb P(\mathtt T_k)}{\mathbb P(E \ne \emptyset)} =  \frac{\mathbb E(\mathbb P(\mathtt T_k |\mathcal F_{k-1}))}{\mathbb P(E \ne \emptyset)} \quad \text{ with }  \mathtt T_k = \mathtt S_k^c \cap \{E \neq \emptyset \}, \]
where $\mathbb P(\mathtt T_k |\mathcal F_{k-1})$ denotes the conditional probability of $\mathtt T_k$, conditioned relative to $\mathcal F_{k-1}$. The notation $\mathbb E$ denotes expected value with respect to $\mathbb P$. In order to prove \eqref{BC1}, it therefore suffices to show that $\mathbb P(\mathtt T_k | \mathcal F_{k-1})$ is bounded above by a deterministic constant that is summable in $k$. We estimate $\mathbb P(\mathtt T_k | \mathcal F_{k-1})$ using Bernstein's inequality, quoted in Theorem \ref{t:bernstein} below. Conditioning on $\mathcal F_{k-1}$, i.e., holding $\{ X_{k-1}(\mathbf i_{k-1}) : \mathbf i_{k-1} \in \mathbb I(k-1, d) \}$ fixed, we observe that 
\[ P_k - \overline{P}_k = \sum_{\mathbf i_{k-1}} X_{k-1}(\mathbf i_{k-1}) \sum_{i_k = 1}^{N_k^d} \left( Y_k(\mathbf i_k) - N_k^{-d \epsilon_k} \right) \]  
is the sum of $\overline{P}_{k} = P_{k-1} N_k^d$ independent, centred random variables $(Y_k(\mathbf i_k) - N_k^{-d \epsilon_k})$, each of which is bounded above by 1 in absolute value and has variance $\leq p_k = N_k^{-d \epsilon_k}$. 
%The summands may therefore be thought of as members of a martingale difference sequence. 
Thus in the notation of Theorem \ref{t:bernstein}, $m = \overline{P}_k$, $M=1$ and $\sum \sigma_r^2 \leq \sigma^2 = \overline{P}_k$. We apply \eqref{bernstein-inequality} with these values and with $t= t_k$ as in \eqref{def-t0}. This yields  
\begin{align*} \mathbb P(\mathtt T_k | \mathcal F_{k-1}) &= \mathbb P(|P_k - \overline P_k| > t_k | \mathcal F_{k-1}) \leq \exp \left(-\frac{t_k^2}{{\overline{P}}_k + \frac{t_k}{3}}\right) \\ 
&\leq 
\begin{cases} 
\exp \left(-\frac{t_k^2}{4t_k/3} \right) & \text{ for } \overline{P}_k \leq \log (k+1), \\ \exp \left(-\frac{t_k^2}{4 B\overline{P}_k/3} \right) &\text{ for } \overline{P}_k > \log (k+1).  
\end{cases} \\
&\leq (k+1)^{-\frac{3B}{4}}.
\end{align*}       
The final step is obtained by substituting $t_0$ from \eqref{def-t0} into the two cases. The right hand side is summable for any choice of $B > 4$, completing the proof. 
\end{proof} 

\subsection{Bounds on $q_{\ell}[\mathbf i_r]$} \label{ql-bounds-section} 
%\noindent {\em{Remark: }} We observe that the condition \eqref{def-ql} is stronger that the other two conditions \eqref{nonemptyE-condition} and \eqref{another-sum} needed in earlier parts of the argument. Further, the choices of $N_k$ and $\epsilon_k$ as given in \eqref{dim1} and \eqref{dim-epsilon} satisfy \eqref{third-sum} and hence also \eqref{nonemptyE-condition} and \eqref{another-sum}.  
\begin{proof}[{\bf{Proof of Lemma \ref{descendant-count-lemma}}}]
The proof is very similar to that of Lemma \ref{PkRk-lemma} above, so we only sketch the details. For fixed $r$, we define  a partially averaged version of $q_{\ell}$, which we call $\overline{q}_{\ell}$: 
\begin{equation} \label{def-qbar} \overline{q}_{\ell}[\mathbf i_r] := N_{\ell}^{d} p_{\ell} q_{\ell-1}[\mathbf i_r] = N_{\ell}^{d(1 - \epsilon_{\ell})} q_{\ell-1}[\mathbf i_r], \end{equation}    
with $p_{\ell}$ and $\epsilon_{\ell}$ as in \eqref{def-pk}. 
\vskip0.1in 
\noindent For a fixed large $\omega$-independent constant $B$ to be specified, we set 
\begin{align} R_0(\ell, r) = R_0 &:= (\ell + B)^{1/2} |\log \delta_r|^{{1}/{2}} \max \bigl( \overline{q}_{\ell}[\mathbf i_r], (\ell + B) |\log \delta_r| \bigr)^{{1}/{2}} \nonumber \\
%B \log(\ell - r+10) \times |\log \delta_r|^{\frac{1}{2}} \max \left( \overline{q}_{\ell}[\mathbf i_r], \log \delta_r \right)^{\frac{1}{2}} \nonumber \\  
%|\log \delta_r| \sqrt{\log(\ell - r +1)} \max \left( \overline{q}_{\ell}[r], \log(\ell - r +1)\right)^{\frac{1}{2}} \nonumber \\ 
 %B \log(\ell - r+10) \times 
%B |\log \delta_r| \times \left\{ \begin{aligned} & \log(\ell - r + 1) &\text{ if } \overline{q}_{\ell}[r] \leq \log(\ell - r +1), \\ &  (\log(\ell - r+1)\overline{q}_{\ell}[r])^{\frac{1}{2}} &\text{ if }  \overline{q}_{\ell}[r] \geq \log(\ell - r +1),  
&\,= \left\{ 
\begin{aligned} 
& (\ell + B)^{\frac{1}{2}} |\log \delta_r|^{{1}/{2}} (\overline{q}_{\ell}[\mathbf i_r])^{{1}/{2}} &\text{ if } &\overline{q}_{\ell}[\mathbf i_r] \geq (\ell + B) |\log \delta_r| \\ 
& (\ell + B) |\log \delta_r| &\text{ if } &\overline{q}_{\ell}[\mathbf i_r]\leq (\ell + B) |\log \delta_r|, 
\end{aligned} \right\} 
\label{def-tau}
\end{align} 
and define the event 
\[ \mathtt T_{r} := \bigcap_{\ell=r+1}^{\infty} \mathtt T_{r \ell}, \quad \text{ where } \quad \mathtt T_{r \ell} := \bigl\{\omega \in \Omega: |q_{\ell}[\mathbf i_r] - \overline{q}_{\ell}[\mathbf i_r]| \leq R_0 \text{ for all } \mathbf i_r \in \mathbb I(r, d) \bigr \}.  \] 
We aim to show that \begin{equation} \sum_{r=1}^{\infty} \mathbb P^{\ast}(\mathtt T_r^c) = \sum_{r=1}^{\infty} \mathbb P^{\ast} \Bigl(\bigcup_{\ell = r+1}^{\infty} \mathtt T_{r \ell}^c \Bigr)\leq \sum_{r=1}^{\infty} \sum_{\ell = r+1}^{\infty}  \mathbb P^{\ast}(\mathtt T_{r \ell}^c) < \infty. \label{qt0}  \end{equation} 
The same Borel-Cantelli argument as in Lemma \ref{PkRk-lemma} would then imply that for almost every $\omega \in \Omega$, there is a constant $C > 0$ such that  
\begin{equation}  |q_{\ell}[\mathbf i_r] - \overline{q}_{\ell}[\mathbf i_r]| \leq C R_0 \text{ for all } r < \ell \text{ and all } \mathbf i_r. \label{q-difference}  \end{equation}  We will return to the proof of \eqref{qt0} shortly, but will leave the verification of \eqref{q-difference} from \eqref{qt0} to the reader. Assuming \eqref{q-difference} for the moment, the remainder of the proof is completed as follows. We fix indices $r < \ell$, and a multi-index $\mathbf i_r$, and for simplicity write $q_{\ell} = q_{\ell}[\mathbf i_r]$, $\overline{q}_{\ell} = \overline{q}_{\ell}[\mathbf i_r]$. Define an auxiliary quantity $\zeta_{\ell}$ according to the following relation \begin{equation} q_{\ell}  = \overline{q}_{\ell} (1 + \zeta_{\ell}).  \label{q-qbar}  \end{equation}
The definitions \eqref{def-ql} and \eqref{def-qbar}  of $q_{\ell}$ and $\overline{q}_{\ell}$ imply if $q_{\ell}$ is nonzero, then so is every $q_m$ and $\overline{q}_{m}$ for $r+1 \leq m \leq \ell$. As a result, $\zeta_{\ell}$ is well-defined for nonzero $q_{\ell}$. Further, \eqref{q-difference} implies that in this case,    
\begin{align}  
|\zeta_{\ell}| \leq \frac{C R_0}{{\overline{q}_{\ell}}} &\leq C (\ell + B) |\log \delta_r|  \times  (\overline{q}_{\ell})^{-1/2}  \nonumber \\
%& \leq C |\log \delta_r| \log(\ell -r+1) ( N_{\ell}^{d(1 - \epsilon_{\ell})}\overline{q}_{\ell})^{-1/2} \nonumber \\
&\leq C (\ell + B) |\log \delta_r| N_{\ell}^{-d(1 - \epsilon_{\ell})/2} \label{zetal-estimate}
\end{align} 
The second inequality in the sequence above follows from \eqref{def-tau}. The last inequality is a consequence of \eqref{def-qbar}, which says that if $\overline{q}_{\ell}$ is nonzero, it must be larger than $N_{\ell}^d p_{\ell}$. 
\vskip0.1in
\noindent The quantity $\zeta_{\ell}$ is analogous to $\eta_k$ in the proof of Lemma \ref{PkRk-lemma} and will play a similar role.  Iterating the relation \eqref{q-qbar} and applying \eqref{def-qbar} at every step, we arrive at 
\begin{align} 
q_{\ell} &= N_{\ell}^d p_{\ell} q_{\ell-1} (1 + \zeta_{\ell}) = \cdots  \nonumber \\
&= \Bigl[ \prod_{m=r+1}^{\ell} N_m^d p_m \Bigr] \Bigl[ \prod_{m=r+1}^{\ell} (1 + \zeta_m) \Bigr] q_{r} =  \Bigl( \frac{\delta_r}{\delta_{\ell}}\Bigr)^d  \Bigl[\prod_{m=r+1}^{\ell} p_m  \Bigr]\Bigl[ \prod_{m=r+1}^{\ell} (1 + \zeta_m) \Bigr].  \label{q-and-zeta} 
\end{align} 
Since $q_r = q_r[\mathbf i_r] = 1$, it suffices to show that the second product above is bounded above by a constant independent of $r$ and $\ell$ and depending only on $\omega$. The estimate \eqref{zetal-estimate} shows that $\zeta_m$ is small for $m \geq r$ and large $r$, so in order to establish the desired conclusion it suffices to show that  the sum of $|\zeta_m|$ in the range $r+1 \leq m \leq \ell$ is bounded above by a large $\omega$-dependent constant that is uniform in $r$ and $\ell$. The summability hypothesis \eqref{third-sum} ensures that this is the case.  
\vskip0.1in
\noindent It remains to prove \eqref{qt0}. We do this again with an application of Bernstein's inequality, as we did in the proof of Lemma \ref{PkRk-lemma}. Let us recall for a moment the definition of $q_{\ell}$ and the sum $\overset{'}{\sum}$ from \eqref{def-ql}. Using this notation and conditioning on $\mathcal F_{\ell-1}$, we can write $q_{\ell} - \overline{q}_{\ell}$ as a sum of independent centred random variables:  \[ q_{\ell} - \overline{q}_{\ell} = \sum_{\mathbf i_{\ell -1}}' X_{\ell-1}(\mathbf i_{\ell-1}) \sum_{\overline{i}_{\ell}} \bigl(Y_{\ell}(\mathbf i_{\ell}) - p_{\ell}). \] Setting \[ m = q_{\ell-1} N_\ell^d, \qquad \sigma^2 = N_{\ell}^d q_{\ell-1}p_{\ell} = \overline{q}_{\ell} \qquad \text{ and } \quad t = R_0 \]
in Theorem \ref{t:bernstein}, we deduce that  for each $\mathbf i_r \in \mathbb I(r,d)$, 
\[ \mathbb P^{\ast} \bigl(|q_{\ell} - \overline{q}_{\ell}| > R_0 | \mathcal F_{\ell-1}\bigr) \leq \exp \left( - \frac{R_0^2}{\overline{q}_{\ell} + \frac{R_0}{3}}\right)  \leq \delta_r^{\frac{3}{4}(\ell + B)}. \]  
Since the number of possible choices of multi-indices $\mathbf i_r$ is $\delta_r^{-d}$, summing the above estimate over all $\mathbf i_r$ yields 
\[ \mathbb P^{\ast}(\mathtt T_{r \ell}^c) \leq \delta_{r}^{\frac{3}{4}(\ell + B) -d}. \]
Choosing $B > 4d/3$ ensures that the last quantity is summable in $r$ and $\ell$ for all $\ell \geq r+1$. This completes the proof of \eqref{qt0} and hence the proof of the lemma.  
\end{proof} 
\vskip0.1in
\noindent A careful analysis of the proof of Lemma \ref{descendant-count-lemma} yields the following conclusions as well. 
\begin{lemma} \label{descendant-count-lemma-2} 
Assume the summability condition \eqref{third-sum}. Then for $\mathbb P^{\ast}$-a.e. $\omega \in \Omega$ and every $x \in E = E(\omega)$, the following property holds: 
\vskip0.1in
\noindent There exists a constant $C = C_{x, \omega} > 0$ such that for all indices $r < \ell$ and all multi-indices $\mathbf i_r$ and $\mathbf i_{\ell}$ such that 
\begin{equation}  x \in Q(\mathbf i_{\ell}) \subseteq Q(\mathbf i_r),  \label{where is x}\end{equation} 
we have 
\begin{equation} 
q_{\ell}[\mathbf i_r] \geq C^{-1} \left( \frac{\delta_{r}}{\delta_{\ell}}\right)^d \prod_{m=r+1}^{\ell} p_m. \label{ql-lower-bound} 
\end{equation}     
\end{lemma}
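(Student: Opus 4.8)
\textbf{Proof proposal for Lemma \ref{descendant-count-lemma-2}.}
The plan is to re-run the argument of Lemma \ref{descendant-count-lemma}, but tracking a \emph{lower} bound on $q_{\ell}[\mathbf i_r]$ along the specific chain of cubes that contains the fixed point $x \in E$. The almost sure estimate \eqref{q-difference}, namely $|q_{\ell}[\mathbf i_r] - \overline q_{\ell}[\mathbf i_r]| \le C R_0(\ell, r)$ for all $r < \ell$ and all multi-indices, is already established in the proof of Lemma \ref{descendant-count-lemma} and holds on a single $\mathbb P^{\ast}$-full event; I would work on that event. Fix $\omega$ in it, fix $x \in E(\omega)$, and fix indices $r < \ell$ together with multi-indices $\mathbf i_r$, $\mathbf i_\ell$ satisfying \eqref{where is x}. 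The key structural observation is that because $x \in E$, for \emph{every} intermediate scale $m$ with $r \le m \le \ell$ there is a (unique) basic cube $Q(\mathbf i_m)$ with $x \in Q(\mathbf i_m) \subseteq Q(\mathbf i_r)$ and $X_m(\mathbf i_m) = 1$; hence $q_m[\mathbf i_r] \ge 1 > 0$ for all such $m$. Consequently the auxiliary quantity $\zeta_\ell$ defined by $q_\ell = \overline q_\ell(1 + \zeta_\ell)$ in \eqref{q-qbar} is well-defined along this chain, and the product expansion \eqref{q-and-zeta},
\[ q_\ell[\mathbf i_r] = \Bigl(\frac{\delta_r}{\delta_\ell}\Bigr)^d \Bigl[\prod_{m=r+1}^{\ell} p_m\Bigr] \Bigl[\prod_{m=r+1}^{\ell}(1 + \zeta_m)\Bigr], \]
remains valid.

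The remaining work is to bound the last product from below by a positive constant uniform in $r, \ell$. For this I would reuse estimate \eqref{zetal-estimate}: on the chosen full-measure event, whenever $\overline q_m[\mathbf i_r] \ne 0$ one has $|\zeta_m| \le C(m + B)|\log\delta_r| N_m^{-d(1-\epsilon_m)/2}$, and here $\overline q_m[\mathbf i_r] \ne 0$ precisely because $q_{m-1}[\mathbf i_r] \ge 1$ along the chain through $x$. The summability hypothesis \eqref{third-sum}, $\sum_{m > r} m|\log\delta_r| N_m^{-d(1-\epsilon_m)/2} < \infty$, then shows that $\sum_{m=r+1}^{\ell} |\zeta_m|$ is bounded by an $\omega$-dependent constant uniform in $r$ and $\ell$; in particular, after possibly enlarging the constant, we may assume each $|\zeta_m| \le 1/2$ for $m$ beyond some threshold, and handle the finitely many early terms separately (they are all strictly positive since the corresponding $q_m \ne 0$). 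This gives $\prod_{m=r+1}^{\ell}(1 + \zeta_m) \ge C^{-1}_{x,\omega}$, and substituting back yields \eqref{ql-lower-bound}. Note the constant genuinely depends on $x$ (through which cubes are selected along its chain, and through how early the $|\zeta_m|$ become small) and on $\omega$, consistent with the statement.

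The main obstacle — and the reason this needs a separate lemma rather than a one-line corollary — is the well-definedness and positivity issue: the product expansion \eqref{q-and-zeta} is only meaningful along a chain of cubes that survives to level $\ell$, and the bound \eqref{zetal-estimate} requires $\overline q_m \ne 0$ at each step. Lemma \ref{descendant-count-lemma} as stated gives an upper bound valid for all $\mathbf i_r$ (vacuously when $q_\ell = 0$), but here we need a genuine lower bound, which forces us to restrict to the chain through a point $x \in E$ where non-extinction is guaranteed. Once one commits to quantifying over $x \in E$ and the corresponding nested cubes, the probabilistic content is entirely inherited from the Bernstein-inequality argument already carried out; the new content is purely the bookkeeping that the "no die-out" condition along the chain of $x$ licenses division by $\overline q_m$ at every scale. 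A minor secondary point to check is that the constant $C_{x,\omega}$ can be taken to depend measurably (or at least sensibly) on $x$; since for each $x$ it is obtained from the same uniform tail bound plus a finite product determined by the digit expansion of $x$, this is routine.
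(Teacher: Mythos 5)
Your proposal is correct and takes essentially the same approach as the paper: reuse the product identity \eqref{q-and-zeta} and the bound \eqref{zetal-estimate} from Lemma \ref{descendant-count-lemma}, use \eqref{where is x} to guarantee that $q_m$ (hence $\overline q_m$ and $1+\zeta_m$) is nonzero at every intermediate scale along the chain through $x$, then split the product $\prod_{m=r+1}^\ell(1+\zeta_m)$ into a tail that \eqref{third-sum} forces to converge to a nonzero limit and finitely many early factors, each nonzero, that are absorbed into the constant $C_{x,\omega}$. Your explicit framing of the "survival at every intermediate scale" point and the remark on measurable dependence on $x$ are slightly more detailed than what the paper writes, but they are the same argument.
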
 
\begin{proof} 
We proceed exactly as in Lemma \ref{descendant-count-lemma}, leading up to the relation \eqref{q-and-zeta}. The hypothesis  \eqref{where is x} implies that $q_{\ell}$ is nonzero for all the relevant choices of $\mathbf i_r$ and $\mathbf i_{\ell}$; in particular, none of the factors $1 + \zeta_m$ can be zero, for $r+1 \leq m \leq \ell$.  On the other hand, the estimate \eqref{zetal-estimate} and the summability hypothesis \eqref{third-sum}  imply that the tail product of $\prod_m (1 + \zeta_m)$  converges to a nonzero quantity, i.e., there are large absolute constants $R, C > 0$ such that 
\[ \inf \Bigl\{ \prod_{m=r+1}^{\ell} (1 + \zeta_m) : \ell > r \geq R \Bigr\} \geq C^{-1}.  \]
 This leaves at most $R$ factors unaccounted for, but since each factor is nonzero, we can reach \eqref{ql-lower-bound} simply by enlarging $C$ by a constant factor depending only on $R, x$ and $\omega$.  
\end{proof} 
\begin{corollary} \label{q-difference-corollary} 
Assume the summability condition \eqref{third-sum}. Then for $\mathbb P^{\ast}$-a.e. $\omega \in \Omega$ and every $x \in E = E(\omega)$, the following property holds: 
\vskip0.1in
\noindent There exists a constant $C = C(\omega) > 0$ such that for every $1 \leq r < \ell$ and every multi-index $\mathbf i_r$, 
\begin{equation} \label{qlql-bar} 
\Bigl| q_{\ell}[\mathbf i_r] - \overline{q}_{\ell}[\mathbf i_r]\Bigr| \leq C \ell |\log \delta_r| \Bigl( 1 + \overline{q}_{\ell}[\mathbf i_r]\Bigr)^{\frac{1}{2}}.  
\end{equation} 
\end{corollary}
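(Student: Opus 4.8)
The plan is to extract the estimate \eqref{qlql-bar} directly from the Bernstein-inequality step already carried out in the proof of Lemma \ref{descendant-count-lemma}, now applied to a single pair $(r,\ell)$ and a single multi-index $\mathbf i_r$ rather than uniformly. Recall that conditioning on $\mathcal F_{\ell-1}$, the difference $q_{\ell}[\mathbf i_r] - \overline{q}_{\ell}[\mathbf i_r]$ is a sum of $\overline q_\ell[\mathbf i_r]$ independent centred random variables bounded by $1$ with variances summing to at most $\overline q_\ell[\mathbf i_r]$. First I would apply Bernstein's inequality \eqref{bernstein-inequality} with $m = q_{\ell-1}N_\ell^d$, $\sigma^2 = \overline q_\ell[\mathbf i_r]$, $M=1$, and the threshold
\[
t = t_{r,\ell} := B\,\ell^{1/2}|\log\delta_r|^{1/2}\,\max\bigl(\overline q_\ell[\mathbf i_r],\,\ell|\log\delta_r|\bigr)^{1/2},
\]
which is the analogue of $R_0$ in \eqref{def-tau} but with the harmless simplification that it also dominates $(1 + \overline q_\ell[\mathbf i_r])^{1/2}$ times $\ell|\log\delta_r|$. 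Splitting into the two regimes $\overline q_\ell[\mathbf i_r] \gtrless \ell|\log\delta_r|$ exactly as in \eqref{def-tau}, the exponent $-t^2/(\overline q_\ell[\mathbf i_r] + t/3)$ is bounded above by $-\tfrac34 B\,\ell|\log\delta_r|$, so
\[
\mathbb P^{\ast}\Bigl(|q_\ell[\mathbf i_r] - \overline q_\ell[\mathbf i_r]| > t_{r,\ell}\,\big|\,\mathcal F_{\ell-1}\Bigr) \le \delta_r^{\frac34 B\,\ell}.
\]

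Next I would take a union bound over all $\delta_r^{-d}$ choices of $\mathbf i_r$ and over all $\ell > r$, obtaining
\[
\mathbb P^{\ast}\Bigl(\exists\,\ell>r,\ \mathbf i_r:\ |q_\ell[\mathbf i_r] - \overline q_\ell[\mathbf i_r]| > t_{r,\ell}\Bigr) \le \sum_{\ell>r}\delta_r^{\frac34 B\ell - d},
\]
and then sum over $r\ge 1$; choosing $B > 4d/3$ makes the double sum finite. The Borel–Cantelli lemma \cite[p.\ 53, Theorem 4.3]{Billingsley} then gives, for $\mathbb P^{\ast}$-a.e.\ $\omega$, an integer $r_0(\omega)$ past which $|q_\ell[\mathbf i_r] - \overline q_\ell[\mathbf i_r]| \le t_{r,\ell}$ holds for all $r \ge r_0(\omega)$, all $\ell > r$, and all $\mathbf i_r$. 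Since $t_{r,\ell} \le B\,\ell|\log\delta_r|\,(1+\overline q_\ell[\mathbf i_r])^{1/2}$ (treat the two regimes separately: if $\overline q_\ell \ge \ell|\log\delta_r|$ this is immediate, and if $\overline q_\ell \le \ell|\log\delta_r|$ then $t_{r,\ell} = B\ell|\log\delta_r| \le B\ell|\log\delta_r|(1+\overline q_\ell)^{1/2}$), this is precisely \eqref{qlql-bar} for $r \ge r_0(\omega)$. For the finitely many scales $r < r_0(\omega)$ one absorbs the discrepancy into the constant: there are only finitely many pairs $(r,\mathbf i_r)$ with $r < r_0(\omega)$, and for each the deterministic bound $|q_\ell[\mathbf i_r] - \overline q_\ell[\mathbf i_r]| \le q_\ell[\mathbf i_r] + \overline q_\ell[\mathbf i_r] \le C_{r,\omega}(1+\overline q_\ell[\mathbf i_r])$ can be made consistent with \eqref{qlql-bar} (using $|\log\delta_r| \ge |\log\delta_1| > 0$ and $\overline q_\ell \le (\delta_r/\delta_\ell)^d$, hence $\overline q_\ell \le C(1+\overline q_\ell)^{1/2}\cdot(1+\overline q_\ell)^{1/2}$, with the square-root factor controlled crudely) after enlarging $C$ to depend on $r_0(\omega)$ and hence on $\omega$.

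I do not anticipate a genuine obstacle here, since all the probabilistic machinery is identical to Lemma \ref{descendant-count-lemma}; the only point requiring a little care is the elementary comparison $t_{r,\ell} \le B\,\ell|\log\delta_r|(1+\overline q_\ell[\mathbf i_r])^{1/2}$, which replaces the $\max(\cdot,\cdot)^{1/2}$ appearing in \eqref{def-tau} by the cleaner factor $(1+\overline q_\ell[\mathbf i_r])^{1/2}$, and the bookkeeping needed to fold the finitely many small scales $r<r_0(\omega)$ into the $\omega$-dependent constant. Both are routine; the substance of the argument is entirely contained in the Bernstein estimate recycled from the proof of Lemma \ref{descendant-count-lemma}.
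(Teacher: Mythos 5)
Your proposal is essentially the paper's argument: the paper's own proof is a single line, citing the estimate \eqref{q-difference} (already obtained inside the proof of Lemma~\ref{descendant-count-lemma}) and observing that $R_0$ from \eqref{def-tau} is dominated by the right-hand side of \eqref{qlql-bar}. What you do is re-derive \eqref{q-difference} from scratch via the same Bernstein/Borel--Cantelli mechanism, with an immaterial replacement of $\ell+B$ by $\ell$ in the threshold; both that re-derivation and your comparison $t_{r,\ell}\le B\,\ell|\log\delta_r|(1+\overline q_\ell[\mathbf i_r])^{1/2}$ are sound.

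The one place your write-up does not close is the last paragraph, where you ``absorb the finitely many $r<r_0(\omega)$ into the constant.'' For a fixed small $r$ the range of $\ell>r$ is \emph{unbounded}, and the deterministic bound $|q_\ell-\overline q_\ell|\le q_\ell+\overline q_\ell$ is of order $\overline q_\ell$, not $\overline q_\ell^{1/2}$; the inequality you invoke, $\overline q_\ell\le C(1+\overline q_\ell)^{1/2}(1+\overline q_\ell)^{1/2}$, is trivially true but does not reduce the exponent, so it cannot deliver $|q_\ell-\overline q_\ell|\le C\ell|\log\delta_r|(1+\overline q_\ell)^{1/2}$ uniformly in $\ell$. (This is, incidentally, exactly the step the paper relegates to the reader when passing from \eqref{qt0} to \eqref{q-difference}.) The clean fix is to apply Borel--Cantelli to the doubly-indexed family $\{\mathtt T_{r\ell}^c\}_{r<\ell}$ rather than to $\{\mathtt T_r^c\}_{r}$: from $\sum_{r}\sum_{\ell>r}\mathbb P^{\ast}(\mathtt T_{r\ell}^c)<\infty$ one concludes that for $\mathbb P^{\ast}$-a.e.\ $\omega$ only finitely many \emph{pairs} $(r,\ell)$ admit some $\mathbf i_r$ with $|q_\ell[\mathbf i_r]-\overline q_\ell[\mathbf i_r]|>t_{r,\ell}$. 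For each such exceptional pair, and for each of the finitely many $\mathbf i_r\in\mathbb I(r,d)$, the ratio $|q_\ell[\mathbf i_r]-\overline q_\ell[\mathbf i_r]|/t_{r,\ell}$ is a fixed finite number, and taking the maximum over this finite exceptional set produces the $\omega$-dependent constant $C$ in \eqref{q-difference}, and hence in \eqref{qlql-bar}, for \emph{all} $r<\ell$.
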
 
\begin{proof} 
This follows from \eqref{q-difference} and the fact that $R_0$ is bounded above by the right hand side of \eqref{qlql-bar}.  
\end{proof} 
\subsection{Existence of the limiting measure $\mu$} \label{limiting-measure-section} 
\begin{proof}[{\bf{Proof of Proposition \ref{random-measure-existence-prop}}}]  
Conditions \eqref{nonemptyE-condition}, \eqref{another-sum} and \eqref{third-sum} ensure that the conclusions of Lemmas \ref{nonemptyE-lemma}, \ref{PkRk-lemma} and \ref{descendant-count-lemma} hold. In other words, the measure space $(\Omega, \mathcal B, \mathbb P^{\ast})$ is well-defined, and the estimates \eqref{PkPk-bar}, \eqref{PkRk} and \eqref{ql-est} hold on this measure space almost surely.  We will use these estimates to show that \eqref{weak-star-limit-condition} holds, under the additional assumption \eqref{msum}.  
\vskip0.1in
\noindent Let us define the signed measure $\varrho_k := \mu_{k+1} - \mu_k$, so that 
\[ \mu_{k'} - \mu_k = \sum_{m=0}^{k'-k-1} \varrho_{k+m}, \quad \text{ for all } k' \geq k. \] 
The left hand side of \eqref{weak-star-limit-condition} can therefore be estimated as follows; denoting by $\overset{'}{\sum}$ the summation over all indices $\mathbf i_k \in \mathbb I(k, d)$ such that $X_k(\mathbf i_k) = 1$, we obtain  
\begin{align}
\sup_{k' \geq k} \sum'_{\mathbf i_k} \Bigl| \int_{Q(\mathbf i_k)} \bigl[ \mu_{k'} - \mu_k \bigr](x) \, dx \Bigr| &\leq  \sup_{k' \geq k} \sum'_{ \mathbf i_k} \sum_{m=0}^{k'-k-1}  \Bigl| \int_{Q(\mathbf i_k)} \varrho_{k+m}(x) \, dx \Bigr| \nonumber  \\
&\leq P_k \sup_{\mathbf i_k : X_k(\mathbf i_k) = 1}\sum_{m=0}^{\infty} \Bigl| \int_{Q(\mathbf i_k)} \varrho_{k+m}(x) \, dx \Bigr|. \label{goes-to-zero} 
\end{align}
In order to verify \eqref{weak-star-limit-condition}, it suffices to show that the expression in \eqref{goes-to-zero} approaches zero as $k \rightarrow \infty$, $\mathbb P^{\ast}$-almost surely. 
\vskip0.1in
\noindent To this end, we fix an index $m \geq 0$ and a multi-index $\mathbf i_k \in \mathbb I(k, d)$ with $X_k(\mathbf i_k) = 1$. Recalling the description of $\mu_{k+m}$ from \eqref{def-muk} and substituting it into $\varrho_{k+m}$, we find that  
\begin{align} 
P_k \int_{Q(\mathbf i_k)} \varrho_{k+m}(x) \, dx &= \frac{P_k}{P_{k+m+1}} \sum_{\bar{\mathbf j}} X_{k+m+1} (\mathbf i_k, \bar{\mathbf j}) -  \frac{P_k}{P_{k+m}} \sum_{\mathbf j} X_{k+m}(\mathbf i_k, \mathbf j) \nonumber  \\
&= \mathfrak S_1(m, \mathbf i_k)  + \mathfrak S_2(m, \mathbf i_k), \nonumber \text{ where } \\
\mathfrak S_1(m, \mathbf i_k)  &:=  P_k \Bigl[ \frac{1}{P_{k+m+1}} - \frac{1}{\overline{P}_{k+m+1}}\Bigr] \sum_{\bar{\mathbf j}} X_{k+m+1} (\mathbf i_k, \bar{\mathbf j}), \label{Sfrak-1} \text{ and } \\ 
\mathfrak S_2(m, \mathbf i_k) &:= \frac{P_k}{\overline{P}_{k+m+1}} \sum_{\mathbf j} X_{k+m}(\mathbf i, \mathbf j) \sum_{j_{m+1}} \bigl[ Y_{k+m+1}(\mathbf i, \mathbf j, j_{m+1}) - p_{k+m+1} \bigr]. \label{Sfrak-2}
\end{align} 
In the above sums, $\mathbf j$ ranges over all multi-indices with $m$ entries such that $(\mathbf i_k, \mathbf j) \in \mathbb I(k+m, d)$. The index $\bar{\mathbf j} = (\mathbf j, j_{m+1})$ has a similar description, with $m$ replaced by $(m+1)$. The required convergence to zero of \eqref{goes-to-zero} will follow if we show that 
\begin{equation} \label{S1-S2-goes-to-zero}
\sup_{\mathbf i_k : X_k(\mathbf i_k) = 1} \sum_{m=0}^{\infty} \bigl|\mathfrak S_r(m, \mathbf i_k) \bigr| \longrightarrow 0 \text{ as } k \rightarrow \infty, \quad \text{ for } r=1,2.  
\end{equation} 
\vskip0.1in
\noindent Let us first consider the expression in \eqref{Sfrak-1}. The sum of $X_{k+m+1}(\mathbf i_k, \overline{\mathbf j})$ over $\bar{\mathbf j}$ represents the number of descendants of the basic cube $Q(\mathbf i_k)$ at step $(k+m+1)$. Hence, using the definition \eqref{def-ql}, we first simplify $|\mathfrak S_1|$ as 
\begin{equation} \label{S1-simplified}  \bigl| \mathfrak S_1(m, \mathbf i_k) \bigr|  =  P_k \Biggl[ \frac{\bigl| P_{k+m+1} - \overline{P}_{k+m+1}\bigr|}{P_{k+m+1} \overline{P}_{k+m+1}} \Biggr] q_{k + m+1} [\mathbf i_k]. \end{equation} 
%&\leq \frac{P_k q_{k+m+1}[\mathbf i_k] }{P_{k+m+1} \overline{P}_{k+m+1}} C_1 \sqrt{\log(k+m+1)} \max \Bigl( \overline{P}_{k+m+1}, \log(k+1) \Bigr)^{\frac{1}{2}}
%\end{align*}   
We can then use Lemma \ref{PkRk-lemma} and Lemma \ref{descendant-count-lemma} to estimate $\bigl| P_{k+m+1} - \overline{P}_{k+m+1}\bigr|$ and $q_{k+m+1}[\mathbf i_k]$ respectively. Substituting the estimates \eqref{PkPk-bar}, \eqref{PkRk}  and \eqref{ql-est} from these lemmas into \eqref{S1-simplified} above, and simplifying the resulting expression, we find that for some finite positive constant $C = C(\omega)$,
\begin{align*}
\sup_{\mathbf i_k : X_k(\mathbf i_k) = 1} \sum_{m=0}^{\infty} \bigl|\mathfrak S_r(m, \mathbf i_k) \bigr| &\leq C \sum_{m=0}^{\infty} R_k \Biggl[ \frac{\sqrt{\log(k+m+1)} \;  R_{k+m+1}^{\frac{1}{2}}}{R_{k+m+1}^2} \Biggr] \Biggl[ \Bigl(\frac{\delta_k}{\delta_{k+m+1}} \Bigr)^d \prod_{j=k+1}^{k+m+1} p_j  \Biggr]  \\
&\leq C \sum_{m=0}^{\infty} \sqrt{\log(k+m+1)} \; R_{k+m+1}^{-\frac{1}{2}}. 
%&\leq C k R_{k}^{-\frac{1}{2}} \sum_{m=0}^{\infty} (m+1) \prod_{j=k+1}^{k+m} N_j^{-d (1 - \epsilon_j)} \\  &\leq Ck R_k^{-\frac{1}{2}} \longrightarrow 0 \text{ as } k \rightarrow \infty. 
\end{align*}
Our hypothesis \eqref{msum} implies that this last quantity is bounded above by $R_k^{-1/2}$, which tends to zero as $k \rightarrow \infty$. This proves \eqref{S1-S2-goes-to-zero} for $r = 1$.  
\vskip0.1in
\noindent We now turn our attention to proving \eqref{S1-S2-goes-to-zero} for $r = 2$. A simplification of \eqref{Sfrak-2} using \eqref{def-ql} and \eqref{def-qbar} yields
\begin{align*} \bigl|  \mathfrak S_2(m, \mathbf i_k) \bigr| &= \frac{P_k}{\overline{P}_{k+m+1}} \times \bigl| q_{k+m+1}[\mathbf i_k] - \overline{q}_{k+m+1}[\mathbf i_k] \bigr|  \\
&\leq \frac{P_k}{\overline{P}_{k+m+1}} \Big[ C (k+m+1) \bigl| \log \delta_k \bigr|  \bigl( \overline{q}_{k+m+1}[\mathbf i_k] \bigr)^{\frac{1}{2}} \Bigr] \\ 
&\leq C (k+m+1) \frac{R_k}{R_{k+m+1}} \; \bigl| \log \delta_k \bigr| \; \Biggl[  \Bigl( \frac{\delta_k}{\delta_{k+m+1}} \Bigr)^d \times \Bigl( \prod_{j=k+1}^{k+m+1} p_j \Bigr) \Biggr] ^{\frac{1}{2}} \\
&\leq C (k+m+1) \; \bigl| \log \delta_k \bigr| \;  \Bigl( \frac{R_k}{R_{k+m+1}} \Bigr)^{\frac{1}{2}}.
\end{align*}  
Here we have invoked \eqref{qlql-bar} from Corollary \ref{q-difference-corollary} at the second step, and the estimates \eqref{PkRk}, \eqref{ql-est} at the third step. The last quantity, after summing in $m$, goes to zero as $k \rightarrow \infty$ by our hypothesis \eqref{msum}, completing the proof. 
\end{proof} 

\section{Appendix: Summability criteria} \label{appendix-summability}
\begin{lemma} 
The choice of parameters \eqref{dim1} and \eqref{dim-epsilon} obey the summability conditions \eqref{nonemptyE-condition}, \eqref{another-sum}, \eqref{third-sum} and \eqref{msum}.  
\end{lemma}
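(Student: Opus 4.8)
The statement asserts that the two parameter families $(N_k,\epsilon_k)=(N^k,\gamma/k)$ and $(N_k,\epsilon_k)=(N^k,\epsilon)$ satisfy the four summability conditions \eqref{nonemptyE-condition}, \eqref{another-sum}, \eqref{third-sum}, \eqref{msum}. The plan is to treat each of the four conditions in turn, carrying out the elementary estimates needed for both parameter choices simultaneously, since they differ only in whether $\epsilon_k\to 0$ like $\gamma/k$ or stays fixed at $\epsilon$. The key auxiliary quantities to record at the outset are: $M_k=\delta_k^{-1}=N^{k(k+1)/2}$, so $|\log\delta_k|\asymp k^2\log N$; the exponent $d(1-\epsilon_k)$, which is bounded below by $d(1-\gamma)$ in case \eqref{dim1} and by $d(1-\epsilon)$ in case \eqref{dim-epsilon}, hence by a positive constant $c_0>0$ uniformly in $k$; and $N_k^{d(1-\epsilon_k)}=N^{kd(1-\epsilon_k)}\geq N^{k c_0}$, which decays super-exponentially in $k$ when inverted. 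From $R_k=\prod_{j=1}^k N_j^{d(1-\epsilon_j)}$ one gets $R_k\geq N^{c_0 k(k+1)/2}$, which grows like $N^{c_0 k^2/2}$.

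\textbf{First condition \eqref{nonemptyE-condition}.} One needs $\sum_k (1-p_k)^{N_k^d}<1$, where $p_k=N_k^{-d\epsilon_k}$. Here the relevant fact is that $N_k^d=N^{kd}$ grows while $p_k$ decays only polynomially (case \eqref{dim1}: $p_k=N^{-d\gamma}$ is actually constant; case \eqref{dim-epsilon}: $p_k=N^{-kd\epsilon}$ decays geometrically). In either case $N_k^d p_k=N^{kd(1-\epsilon_k)}\to\infty$, so $(1-p_k)^{N_k^d}\leq e^{-p_k N_k^d}=e^{-N^{kd(1-\epsilon_k)}}$ decays doubly-exponentially and the series is not merely finite but, by taking $N$ large, can be made smaller than $1$; I would note explicitly that increasing $N$ (which we are free to do by the remark after Lemma \ref{dim-count-lemma}) forces the sum below $1$, and for the requirement $p_k\leq 1/2$ one again uses $N$ large. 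The remaining three conditions are all convergence statements with no smallness constraint, so they are easier.

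\textbf{Conditions \eqref{another-sum} and \eqref{third-sum}.} For \eqref{another-sum} one needs $\sum_k(\log k)N_k^{-d(1-\epsilon_k)/2}<\infty$; since $N_k^{-d(1-\epsilon_k)/2}=N^{-kd(1-\epsilon_k)/2}\leq N^{-kc_0/2}$, the general term is $\leq (\log k)N^{-kc_0/2}$, which is summable by comparison with a geometric series (the logarithm is absorbed). For \eqref{third-sum} one needs $\sum_{k>k'}k\,|\log\delta_{k'}|\,N_k^{-d(1-\epsilon_k)/2}<\infty$; here $|\log\delta_{k'}|\asymp (k')^2\log N\leq k^2\log N$, so the term is $\lesssim k^3 (\log N) N^{-kc_0/2}$, again summable over $k$ by geometric decay (the polynomial factor $k^3$ and the constant $\log N$ are harmless), and then summable over $k'<k$ because the $k$-sum already controls everything uniformly — more precisely one bounds $\sum_{k'}\sum_{k>k'}$ by $\sum_k k\cdot k^2(\log N)N^{-kc_0/2}$ after performing the inner sum over $k'\in\{1,\dots,k-1\}$, which contributes another factor $k$.

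\textbf{Condition \eqref{msum}, the main obstacle.} This is the delicate one: one must show $|\log\delta_k|R_k^{1/2}\sum_{m\geq 1}(k+m)R_{k+m}^{-1/2}\to 0$. The point is that $R_{k+m}/R_k=\prod_{j=k+1}^{k+m}N_j^{d(1-\epsilon_j)}=\prod_{j=k+1}^{k+m}N^{jd(1-\epsilon_j)}\geq N^{\,c_0\sum_{j=k+1}^{k+m}j}\geq N^{c_0 k m}$, so $(R_k/R_{k+m})^{1/2}\leq N^{-c_0 k m/2}$. Therefore $R_k^{1/2}\sum_m(k+m)R_{k+m}^{-1/2}=\sum_m(k+m)(R_k/R_{k+m})^{1/2}\leq\sum_m(k+m)N^{-c_0 km/2}$; for $k$ large this geometric-type sum in $m$ with ratio $N^{-c_0 k/2}\to 0$ is dominated by its first term up to a constant, giving a bound $\lesssim (k+1)N^{-c_0 k/2}$. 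Multiplying by $|\log\delta_k|\asymp k^2\log N$ yields $\lesssim k^3(\log N)N^{-c_0 k/2}\to 0$ as $k\to\infty$. The care required here is exactly in extracting the factor $N^{-c_0 km/2}$ rather than merely $N^{-c_0 m/2}$ — that is, in noticing that the \emph{product} $R_{k+m}/R_k$ benefits from both the growth of the base exponents $j$ in the range $[k+1,k+m]$ and their number $m$, giving the crucial double factor $km$ in the exponent; without it the prefactor $|\log\delta_k|R_k^{1/2}$ would not be beaten. Once this observation is in place all four verifications are routine geometric-series estimates, valid for any sufficiently large $N$ (and, in case \eqref{dim1}, any fixed $\gamma\in(0,1)$).
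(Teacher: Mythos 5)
Your proof is correct and follows essentially the same route as the paper: for both parameter families you compute $p_k=N_k^{-d\epsilon_k}$, deduce the doubly-exponential decay of $(1-p_k)^{N_k^d}$, absorb polynomial and logarithmic factors $(k,\ k^2\log N)$ into the geometric decay $N^{-c_0k/2}$ coming from $N_k^{-d(1-\epsilon_k)/2}$, and for \eqref{msum} exploit that $R_{k+m}/R_k\geq N^{c_0km}$ so the $m$-sum is dominated by its first term with ratio $N^{-c_0k/2}\to 0$. The only cosmetic difference is that you telescope $R_k\to R_{k+m}$ in one step to extract the factor $N^{-c_0km/2}$, whereas the paper first factors out $R_{k+1}^{-1/2}$ and bounds the residual sum by consecutive ratios $R_{j-1}/R_j\leq N^{-cj}$; both are the same geometric-series calculation.
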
 
\begin{proof} 
With $N_k$ and $\epsilon_k$ as in \eqref{dim1} and \eqref{dim-epsilon}, we find that 
\[ p_k = N_k^{-d \epsilon_k} = \begin{cases} N^{-d \gamma} &\text{ for } \eqref{dim1}, \\ N^{-kd \epsilon} &\text{ for } \eqref{dim-epsilon}.  \end{cases} \]  
This leads to the estimate 
\[ (1 - p_k)^{N_k^d} = \exp \left[ N_k^d \log(1 - p_k) \right] \leq e^{-N_k^d p_k} \leq \begin{cases} e^{-N^{d(k-\gamma)}} &\text{ for } \eqref{dim1}, \\ e^{-N^{kd(1 - \epsilon)}} &\text{ for } \eqref{dim-epsilon},   \end{cases}   \] 
which verifies \eqref{nonemptyE-condition} for large $N$. A direct substitution shows that 
\[ N_k^{-d(1 - \epsilon_k)/2}  =   \begin{cases} N^{-\frac{kd}{2}}  N^{\frac{d \gamma}{2}} &\text{ for \eqref{dim1}}, \\   N^{-\frac{kd}{2}(1 - \epsilon)} &\text{ for \eqref{dim-epsilon}}, \end{cases} \]
from which \eqref{another-sum} follows. In either of the two cases \eqref{dim1} and \eqref{dim-epsilon}, $N_k^{-d(1-\epsilon_k)/2}$ is bounded above by $N^{-ckd}$ for some small $c > 0$. This estimate, along with $\delta_k^{-1} = N^{k(k+1)/2}$, also gives 
\[ \sum_{k > k'} k \bigl| \log \delta_{k'}\bigr| N_k^{-d(1 - \epsilon_k)/2} \leq \sum_{k} k^2 \bigl| \log \delta_{k} \bigr| N^{-c kd} \leq \sum_k k^4 (\log N) N^{-c kd} < \infty, \] 
which establishes \eqref{third-sum}. It remains to check \eqref{msum}. For this, we use the expressions for $R_k$ derived in \eqref{def-Rk}, and note that both $R_k$ and $R_{k}/R_{k-1}$  are non-decreasing functions of $k$, with 
\[ \frac{R_{k}}{R_{k-1}} = \begin{cases} N^{dk - d\gamma} &\text{ for } \eqref{dim1}, \\ N^{d(1-\epsilon)k} &\text{ for } \eqref{dim-epsilon}, \end{cases} \; \text{ so } \; \frac{R_{k}}{R_{k-1}} \geq N^{ck} \text{ for some constant $c > 0$ in either case.}   \]  
Substituting this into the sum in \eqref{msum} yields
\begin{align*}  
\sum_{m=1}^{\infty} (k+m) R_{k+m}^{-\frac{1}{2}} &\leq  C k R_{k+1}^{-\frac{1}{2}} \sum_{m=1}^{\infty} m R_{k+1}^{\frac{1}{2}} R_{k+m}^{-\frac{1}{2}} \\
&\leq C k R_{k+1}^{-\frac{1}{2}}  \Bigl[ 1  + \sum_{m=2}^{\infty} m R_{k+m-1}^{\frac{1}{2}} R_{k+m}^{-\frac{1}{2}} \Bigr] \\ 
&\leq Ck  R_{k+1}^{-\frac{1}{2}} \Bigl[ 1 + \sum_m m N^{-c(k+m)}\Bigr] \leq Ck  R_{k+1}^{-\frac{1}{2}}. 
\end{align*}  
Since $k|\log \delta_k| R_k^{\frac{1}{2}} R_{k+1}^{-\frac{1}{2}} \leq C k^3 N^{-c k} \rightarrow 0$, the condition \eqref{msum} is verified. 
\end{proof} 
\section{Acknowledgements} 
\noindent The authors are grateful to Melissa Tacy for her careful reading of an earlier version of the manuscript that led to a simplification of the argument, and two anonymous referees for their substantive feedback that greatly improved the presentation of the main results. SE warmly thanks the Department of Mathematics at McGill University for its hospitality and working environment during a sabbatical period, when this work was partially done. 
%The present article contains a large simplification over that of a previous version thanks to a suggestion of Melissa Tacy on Proposition  \ref{reduction-mainprop} ; the authors warmly thank her for her careful reading and interest.  Our initial paper developed a rather technical machinery pertaining to random sums that although did not give our main result Theorem \ref{mainthm} in as direct a fashion as we currently present, in hindsight we are confident that the low-p problem is now within reach via this machinery.
%This work was partially done while SE was on sabbatical at McGill University. 
A Heilbronn Institute 10th Anniversary Grant funded SE during the initial stages of this project in 2016. MP was supported by a 2017 MSRI research membership at a thematic program in harmonic analysis, a 2018 Wall Scholarship from the Peter Wall Institute of Advanced Study, a 2019 Simons Fellowship and two NSERC Discovery grants.

}
\end{document}